\numberwithin{equation}{section}
\newtheorem{thm}{Theorem}[subsection]
\newtheorem{lem}[thm]{Lemma}
\newtheorem{prop}[thm]{Proposition}
\newtheorem{cor}[thm]{Corollary}
\theoremstyle{definition}
\newtheorem{definition}[thm]{Definition}
\newtheorem{rmk}[thm]{Remark}
\newtheorem{conj}[thm]{Conjecture}
\newcommand{\Z}{\mathbb{Z}}
\renewcommand{\k}{\mathbf{k}}
\newcommand{\stab}{\operatorname{stab}}
\newcommand{\Pf}{\noindent {\it Proof}}
\newcommand{\id}{\operatorname{id}}
\newcommand{\ov}{\overline}
\newcommand{\WW}{{\mathcal W}}
\newcommand{\EE}{{\mathcal E}}
\newcommand{\HH}{{\mathcal H}}
\renewcommand{\SS}{{\mathcal S}}
\newcommand{\MF}{\operatorname{MF}}
\newcommand{\Hom}{\operatorname{Hom}}
\newcommand{\Ext}{\operatorname{Ext}}
\renewcommand{\a}{\alpha}
\renewcommand{\b}{\beta}
\newcommand{\la}{\lambda}
\renewcommand{\th}{\theta}
\newcommand{\C}{{\mathbb C}}
\newcommand{\Ga}{\Gamma}
\newcommand{\wt}{\widetilde}
\newcommand{\ot}{\otimes}
\newcommand{\sub}{\subset}
\newcommand{\ed}{\qed\vspace{3mm}}
\renewcommand{\k}{\mathbf{k}}
\renewcommand{\mod}{\operatorname{mod}}
\newcommand{\OO}{{\mathcal O}}
\newcommand{\RR}{{\mathcal R}}
\newcommand{\rot}{\operatorname{rot}}
\newcommand{\eend}{\operatorname{end}}
\newcommand{\DD}{{\mathcal D}}
\newcommand{\II}{{\mathcal I}}
\newcommand{\BB}{{\mathcal B}}
\newcommand{\G}{{\mathbb G}}
\newcommand{\hra}{\hookrightarrow}
\newcommand{\lan}{\langle}
\newcommand{\ran}{\rangle}
\newcommand{\CC}{{\mathcal C}}
\newcommand{\si}{\sigma}
\newcommand{\ga}{\gamma}
\newcommand{\de}{\delta}
\newcommand{\eps}{\epsilon}
\renewcommand{\ker}{\operatorname{ker}}
\newcommand{\A}{{\mathbb A}}
\newcommand{\bideg}{\operatorname{bideg}}
\begin{document}
\title[HMS for chain polynomials]{On homological mirror symmetry for chain type polynomials}
\author{Umut Varolgunes}
\address{(UV) Stanford University}
\author{Alexander Polishchuk}
\address{(AP) University of Oregon, National Research University Higher School of Economics, and
Korea Institute for Advanced Study}

\begin{abstract}
	We consider Takahashi's categorical interpretation of the Berglund-Hubsch mirror symmetry conjecture for invertible polynomials in the case of chain polynomials. Our strategy is based on a stronger claim that the relevant categories satisfy a recursion of directed $A_{\infty}$-categories, which may be of independent interest. We give a full proof of this claim on the B-side. On the A-side we give a detailed sketch of an argument, which falls short of a full proof because of certain missing foundational results in Fukaya-Seidel categories, most notably a generation statement.
\end{abstract}
\maketitle


\section{Introduction}

Recall that a polynomial $w\in\C[x_1,\ldots,x_n]$ is called {\it  invertible} if 
$$w=\sum_{i=1}^n c_i\prod_{j=1}^n x_j^{a_{ij}}$$
for $c_i\in \C^*$ and a nondegenerate integer matrix $A=(a_{ij})$ and $w$ has an isolated critical point at the origin.
Such a polynomial is weighted homogeneous for a canonical system of weights, which is uniquely determined by requiring the weight of the action on $w$ to be $det(A)$. 
Rescaling the variables one can make all $c_i=1$.

For an invertible polynomial $w$ defined by the matrix $A$, the {\it dual} invertible  polynomial
$w^\vee$ is defined by the transposed matrix $A^t$.

Invertible polynomials can be classified by an elementary argument \cite{Kreuzer}. Every invertible polynomial is the sum of atomic ones in different sets of
variables. The atomic invertible polynomials are of the following three types:
\begin{itemize}
\item {\it Fermat} type: $x_1^{a_1}$, 
\item {\it chain} type: $x_1^{a_1}x_2+x_2^{a_2}x_3+\ldots +x_{n-1}^{a_{n-1}}x_n+x_n^{a_n}$, 
\item {\it loop} type: $x_1^{a_1}x_2+x_2^{a_2}x_3+\ldots +x_{n-1}^{a_{n-1}}x_n+x_n^{a_n}x_1$, 
\end{itemize}
where $n>1$ and all $a_i>1$. In fact, we will think of the Fermat polynomials as chain type polynomials with $n=1$.

The homological mirror symmetry conjecture for invertible polynomials states for
an invertible $w$ and its dual $w^\vee$ that there is an equivalence of triangulated categories
\begin{align}\label{conj-hms} D(F(w))\simeq D(\MF_{\Ga}(w^\vee))\end{align}
between the derived Fukaya-Seidel category of $w$ and the derived category of maximally graded matrix
factorizations of $w^\vee$ (see Conjecture 21 from \cite{Miami}, which seems to have been inspired by Conjecture 7.6 from \cite{Takahashi}). To be precise, here we use the Fukaya-Seidel category as constructed in Seidel's very first paper in the subject \cite{Seidelmutation}.

In the present work we consider this conjecture in the case of chain polynomials. Note that for the chain polynomial
$$p_a:=x_1^{a_1}x_2+x_2^{a_2}x_3+\ldots +x_{n-1}^{a_{n-1}}x_n+x_n^{a_n}$$
depending on the vector $a=(a_1,\ldots,a_n)\in \Z_{>1}^n$, the dual polynomial is
$p_{a^\vee}$ where 
$a^\vee=(a_n,\ldots,a_1)$. Let us mention that for chain polynomials in one and two variables, complete proofs of the conjecture exist (see \cite{Ueda} for the $n=1$ and $a=(2,a_2)$ cases, and \cite{Habermann} for the general $n=2$ case).



Our strategy is based on a recursive computation of the relevant categories which may be of independent interest. It is known that the categories on both sides admit full exceptional collections. On the A-side we use the Morsification and distinguished basis introduced in \cite{Seifert}, while on the B-side we use the full exceptional collection constructed by 
Aramaki and Takahashi in \cite{Aramaki} (to which we often refer as {\it AT-collection}). That these two full exceptional collections should correspond to each other under a homological mirror functor was conjectured in \cite{Seifert}. Thus, we can reformulate the conjecture as an equivalence
of the corresponding directed $A_{\infty}$-categories (with objects given by the specified full exceptional collections), 
which we denote as $F(p_a)$ and $AT(a^\vee)$.

\subsection{A recursion for directed $A_{\infty}$-categories and the Main Claim}
We say that two directed $A_{\infty}$-categories are equivalent if there is an $A_{\infty}$ quasi-isomorphism between them which preserves the ordering of the objects. 

Our recursion 
is based on the following operation 
for directed $A_\infty$-categories. 
Given 
a directed $A_\infty$-category $\CC$ with objects $e=(E_1,\ldots,E_n)$  and
a number $N>n$, we construct a new directed $A_\infty$-category $\CC^+$ with $N$ objects $e^+$, as follows. 

\begin{itemize}
\item Extend $e$ to a helix inside $Tw(\mathcal{C})$ and take the segment 
$f$ of length $N$ in this helix ending with $E_1$.
\item Note that $f$ is no longer an exceptional collection in general (it can even have repeated elements). We define $\mathcal{C}'$ as the directed $A_\infty$-category defined by the directed $A_\infty$-subcategory of $f$ (keeping track of only morphisms from left to right in the order of the helix).
\item Inside $Tw(\mathcal{C}'),$ we consider the right dual exceptional
collection $e^+$ and define $\CC^+$ to be the corresponding directed $A_\infty$-category. 
\end{itemize}

We will loosely say that a directed $A_\infty$-category is obtained from $\CC$ by the recursion $\mathcal{R}$ with number $N$ if it is equivalent (as a directed $A_\infty$-category) to $\CC^+$ described above.

For any directed $A_\infty$-category and an $m$-tuple of integers $\sigma=(\sigma_1,\ldots,\sigma_m)$, we can define the $\sigma$-shifted directed $A_\infty$-category by changing the grading of morphism spaces by $\sigma_i-\sigma_j$. If one directed $A_\infty$-category is equivalent to a shifted version of another, we say that these two are equivalent {\it up to shifts}. We say that a directed $A_\infty$-category is obtained from $\CC$ by the recursion $\mathcal{R}$ with number $N$ 
{\it up to shifts} if it is equivalent up to shifts to $\CC^+$ described above. Note that the application of $\mathcal{R}$ to directed $A_{\infty}$-categories equivalent up to shifts result in directed $A_{\infty}$-categories which are equivalent up to shifts.

Let us call the following our Main Claim for A- and B-sides. On the A-side we claim that
$F(p_{a_1,\ldots,a_n})$ 
is obtained from $F(p_{a_2,\ldots,a_n})$ 
by the the recursion $\RR$ up to shifts with $N=\mu(a_1,\ldots,a_n)$, the Milnor number of the singularity of $p_a$. On the B-side we claim that $AT(a_n,\ldots,a_1)$ 
is obtained by the recursion $\RR$ from $AT(a_n,\ldots,a_2)$ 
again with $N=\mu(a_1,\ldots,a_n)$, up to shifts. 

In fact, we make this claim starting from $n=0$, where the corresponding $A_{\infty}$ categories on both sides 
are the same: the category $\CC_{\varnothing}$ with one object $E$ and $Hom(E,E)=\mathbb{Z}$ 
concentrated in degree $0$. Therefore, our Main Claim for A- and B-sides lead to a proof of the homological mirror symmetry conjecture for the chain polynomials.

We prove the Main Claim for B-side fully. We are also able to compute the relevant shifts. On the A-side we give a detailed sketch of an argument that we believe the reader will find quite convincing.  We do not attempt to compute the shifts. A full proof on the A-side awaits the development of a couple of foundational results about Fukaya-Seidel categories of tame Landau-Ginzburg models. We explain these results in Section \ref{ss-fukaya-seidel}, specifically see Remarks \ref{rmk-C0-bounds} and \ref{rmk-generation}. There is a less major point in which our argument falls short of a full proof, which is explained in Remark \ref{rmk-homotopy-method}.

\begin{rmk}
The recursion $\mathcal{R}$ is bound to be related to the recursion of Seifert matrices that was used in \cite{Seifert}, but we do not know exactly how.
\end{rmk}



\subsection{An equivariant equivalence}\label{sssymetry} 
In this section, we introduce some notation that will be used later and also discuss  the  symmetries of both sides. How the symmetries on both sides correspond to each other    is an important guiding principle for our strategy. Of course symmetries played an important since inception of Berglund-Hubsch-Henningson mirror symmetry conjecture \cite{Berglund}, \cite{Henningson}.

We define the group of symmetries of $p_a$ to be 
\begin{align}\Gamma_a:=\{(\lambda_1,\ldots, \lambda_n,\lambda)\mid \lambda_1^{a_1}\lambda_2=\ldots= \lambda_{n-1}^{a_{n-1}}\lambda_n=\lambda_{n}^{a_{n}}=\lambda\}\subset (\mathbb{C}^*)^{n+1}.\end{align} It is easy to see that $\Gamma_a$ is a graph over $\{\lambda_1^{d(a)}=\lambda^{\mu(a_2,\ldots , a_n)}\}\subset (\mathbb{C}^*)^2.$

We also define: \begin{align}\Gamma_a^0:=\{(\lambda_1,\ldots, \lambda_n)\mid \lambda_1^{a_1}\lambda_2=\ldots= \lambda_{n-1}^{a_{n-1}}\lambda_n=\lambda_{n}^{a_{n}}\}\subset (\mathbb{C}^*)^{n+1},\end{align} which is isomorphic to the subgroup of $\Gamma_a$ given by $\lambda=1$. In what follows we denote the generator of $\Gamma_a^0$ with $\lambda_1=e^{\frac{2\pi i}{d(a)}}$ by $\phi_a$. By an abuse of notation we use $\phi_a$ also for the symplectomorphism of $\mathbb{C}^n$ given by the action of $\phi_a\in \Gamma_a^0$.

Now consider $\hat{\Gamma}_a^0$ the group of graded symplectomorphisms of $\mathbb{C}^n$ whose underlying symplectomorphism is given by the action of an element of $\Gamma_a^0$. There is a short exact sequence of groups:

$$0\to \mathbb{Z}\to \hat{\Gamma}_a^0 \to\Gamma_a^0\to 0.$$

The group $\hat{\Gamma}_a^0$ naturally acts on $D(F(p_a))$, with the image of $1$ in $\hat{\Gamma}_a^0$ acting as the shift $[1]$. We will not use this action except for stating Conjecture \ref{conj-hms-eq} and the remark proceeding it, so we omit the details.

Let us also consider the Pontrjagin dual of $\Gamma_a$, 
$$L_a:=Hom(\Gamma_a,\mathbb{C}^*),$$
which we identify with the abelian group with the generators $\ov{x}_1,\ldots,\ov{x}_n,\ov{p}$ and the defining relations
$$a_1\ov{x}_1+\ov{x}_2=\ldots=a_{n-1}\ov{x}_{n-1}+\ov{x}_n=a_n\ov{x}_n=\ov{p}.$$ 
The action of $\Gamma_a$ on $\mathbb{C}^n$ provides $\mathbb{C}[x_1,\ldots ,x_n]$ with an $L_a$-grading, so that
$x_i$ has degree $\ov{x}_i$ and $p_a$ has degree $\ov{p}$ (this is the maximal grading for which $p_a$ is homogenous). As a result, $L_a$ canonically acts on $D(\MF_{\Ga}(p_{a}))$ (see \cite[Sec.\ 2]{Aramaki}). 
In fact, it is more convenient to consider a $\mathbb{Z}/2$ extension of $L_a$ called $\wt{L}_a$, which has an
additional generator $T$ acting on $D(\MF_{\Ga}(p_{a}))$ by a shift. The group $\wt{L}_a$ is generated by
two elements: $T$ and $$\tau=(-1)^n\ov{x}_1$$ subject to the single relation
\begin{equation}\label{tau-T-relation}
d(a)\tau=(-1)^n2(d(a)-\mu(a))T,
\end{equation}
where $\mu(a)=\mu(a_1,\ldots,a_n)=a_1\ldots a_n-a_2\ldots a_n+a_3\ldots a_n-\ldots$ is the Milnor number, and $d(a)=a_1\ldots a_n$
(see Sec.\ \ref{AT-basic-sec}).

Finally, we set $$L^0_a:=Hom(\Gamma_a^0,\mathbb{C}^*),$$ and note the existence of the short exact sequence
$$0\to \mathbb{Z}\to \wt{L}_a \to L_a^0\to 0.$$

It is well known that $\Gamma_a^0$ is isomorphic to $L_{a^\vee}^0$, but  the following extension appears to be new.

\begin{prop}\label{prop-graded-lift}
$\hat{\Gamma}_a^0$ is isomorphic to $\wt{L}_{a^\vee}$ as an extension of $\Gamma_a^0=L_{a^\vee}^0$ by $\mathbb{Z}$. Under this isomorphism, the element $\tau\in \wt{L}_{a^\vee}$ corresponds to some explicit graded lift 
$\wt{\phi}_a$ of $\phi_a$.
\end{prop}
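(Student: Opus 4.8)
The plan is to recognize both $\hat\Gamma_a^0$ and $\wt L_{a^\vee}$ as \emph{abelian} central extensions of the cyclic group $\Gamma_a^0=L_{a^\vee}^0\cong\Z/d(a)$ by $\Z$, and to show they carry the same extension invariant with respect to the distinguished generators on the two sides. Recall that an abelian extension $0\to\Z\xrightarrow{\iota}G\xrightarrow{\pi}\Z/m\to 0$ is determined, up to the unique isomorphism which is the identity on $\Z$ and on $\Z/m$, by the residue $c\bmod m$ with $m\cdot\ti g=\iota(c)$ for a chosen lift $\ti g$ of a chosen generator of $\Z/m$. So it suffices to: (i) check that both groups are abelian extensions with the stated sub- and quotient-groups; (ii) compute $c$ on each side, with $\phi_a$ (resp.\ $\tau$) the generator of $\Gamma_a^0$ (resp.\ $L_{a^\vee}^0$) and the shift generating $\Z$; (iii) check that the two values agree; (iv) conclude that $\phi_a$ lifts to the graded symplectomorphism matched with $\tau$. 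Throughout set $S:=1-a_1+a_1a_2-\cdots+(-1)^{n-1}a_1\cdots a_{n-1}=\sum_{m=0}^{n-1}(-1)^m a_1\cdots a_m$ (empty product $=1$).

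On the B-side this unwinds the definitions. The short exact sequence $0\to\Z\to\wt L_{a^\vee}\to L_{a^\vee}^0\to 0$ from the text has $T$ generating the kernel and $\tau=(-1)^n\ov{x}_1$ mapping to a generator of $L_{a^\vee}^0$; the purpose of the normalization $\tau=(-1)^n\ov{x}_1$ is precisely that, under the well-known isomorphism $\Gamma_a^0\cong L_{a^\vee}^0$, the image of $\tau$ corresponds to $\phi_a$ --- a point which, sign included, should be spelled out. As $\wt L_{a^\vee}$ is abelian, relation \eqref{tau-T-relation} for $a^\vee$, namely $d(a)\,\tau=(-1)^n\,2\,(d(a)-\mu(a^\vee))\,T$, gives $c_B=(-1)^n\,2\,(d(a)-\mu(a^\vee))\bmod d(a)$. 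The elementary identity $\mu(a^\vee)=d(a)+(-1)^nS$ --- immediate from $a^\vee=(a_n,\ldots,a_1)$ and the definition of $\mu$ (with its final $(-1)^n$ term, so that $\mu(a_1)=a_1-1$) --- turns this into $c_B=-2S\bmod d(a)$.

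On the A-side, $\phi_a$ acts on $\C^n$ as the diagonal unitary map with eigenvalues $\lambda_j=e^{2\pi i\theta_j}$, $\theta_j=(-1)^{j-1}a_1\cdots a_{j-1}/d(a)$, so that $\sum_j\theta_j=S/d(a)$; since $\phi_a\in\Gamma_a^0$ we have $p_a\circ\phi_a=p_a$, so $\phi_a$ preserves the Landau-Ginzburg model on $\C^n$. A diagonal unitary map rescales the quadratic complex volume form $(dz_1\we\cdots\we dz_n)^{\ot2}$ (equivalently its residue giving the grading on the fibres of $p_a$) by the constant $(\prod_j\lambda_j)^2$, so its graded lifts act on phase functions by the constant translation by $2\sum_j\theta_j$ up to an integer, and lifts over commuting diagonal unitary maps commute because these constant translations add symmetrically; hence $\hat\Gamma_a^0$ is an abelian extension of $\Gamma_a^0$ by $\Z$, the copy of $\Z$ generated by the shift $[1]$. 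Let $\wt\phi_a$ be the lift of $\phi_a$ acting on phases by translation by exactly $2\sum_j\theta_j=2S/d(a)$; then $\wt\phi_a^{\,d(a)}$ is a graded lift of the identity acting by translation by $2S$, i.e.\ the shift $[c_A]$ with $c_A=-2S$ (the sign dictated by the convention, to be fixed once and for all, that $[1]$ lowers phase functions). Thus $c_A=c_B$, and $\wt\phi_a\mapsto\tau$, $[1]\mapsto T$ is a homomorphism $\hat\Gamma_a^0\to\wt L_{a^\vee}$ equal to the identity on $\Z$ and inducing the isomorphism $\Gamma_a^0\cong L_{a^\vee}^0$ on quotients; by the five lemma it is an isomorphism of extensions, and by construction it sends $\tau$ to the explicit graded lift $\wt\phi_a$ of $\phi_a$.

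The real work is not the algebra but the conventions: one must fix the sign with which the grading shift $[1]$ acts on phase functions and the precise isomorphism $\Gamma_a^0\cong L_{a^\vee}^0$ (so that it does carry $\phi_a$ to $(-1)^n\ov{x}_1$), and then verify that $c_A$ and $c_B$ agree \emph{on the nose} --- not merely up to sign, since $c$ and $-c$ label inequivalent extensions in general. Once the dictionary between the graded symplectomorphism group of $\C^n$ and the shift lattice $\wt L_{a^\vee}$ has been set up consistently, the only genuinely computational input is the identity $d(a)\sum_j\theta_j=S=(-1)^{n-1}(d(a)-\mu(a^\vee))$, which is elementary.
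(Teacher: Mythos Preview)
Your proof is correct and follows essentially the same route as the paper's: both pick a graded lift $\wt\phi_a$ of $\phi_a$ (yours via the constant phase translation $2S/d(a)$, the paper's via the time-$1$ map of the linear flow --- these are the same lift), compute $\wt\phi_a^{\,d(a)}$ as the shift by the winding number $2S$ with $S=1-a_1+\cdots+(-1)^{n-1}a_1\cdots a_{n-1}$, and match this against the defining relation $d(a)\tau=(-1)^n2(d(a)-\mu(a^\vee))T$ in $\wt L_{a^\vee}$ via the identity $(-1)^n(d(a)-\mu(a^\vee))=-S$.

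Your extension-theoretic framing is a mild but genuine improvement: the paper simply checks that the chosen generators satisfy the same relation, leaving implicit \emph{why} this suffices, whereas you make explicit that abelian extensions of $\Z/d(a)$ by $\Z$ are classified by $H^2(\Z/d(a);\Z)\cong\Z/d(a)$ and that the invariant is exactly the residue you compute. You are also right to flag that the sign conventions (the direction in which $[1]$ acts on phases, and the precise identification $\Gamma_a^0\cong L_{a^\vee}^0$ sending $\phi_a$ to the image of $\tau$) are where the actual care is needed --- the paper's proof is equally dependent on these but does not spell them out.
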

\begin{proof}
Let us take the graded lift $\tilde{\phi}_a$ of $\phi_a$ that comes from it being the time $1$ map of the flow $$(x_1,\ldots ,x_n)\to (e^{\frac{2\pi t i}{d(a)}}x_1,\ldots ,e^{(-1)^{n-1}\frac{2\pi t i}{a_n}}x_n).$$ 
We need to check that the generators $\tilde{\phi}_a$ and $1\in \Z$ of $\hat{\Gamma}_a^0$ satisfy the same
relation as $\tau$ and $T$ in $\wt{L}_{a^\vee},$ i.e. Equation \eqref{tau-T-relation}. 
We know that $\tilde{\phi}_a^{d(a)}$ is a graded lift of the identity symplectomorphism. We need to compute how
it differs from the trivial graded lift. For this we choose the holomorphic volume form $dx_1\wedge\ldots \wedge dx_n$ and use the fact that the origin is fixed. Thus, we need to find the winding number of the path 
$$e^{\frac{4\pi t i(1-a_1+\ldots +(-1)^na_1\ldots a_{n-1})}{d(a)}}$$ 
as $t$ goes from $0$ to $d(a)$. This number is $(-1)^n2(d(a)-\mu(a^\vee))$, which gives the required relation.
\end{proof}

\begin{conj}\label{conj-hms-eq}
There is an HMS equivalence \begin{align}\label{conj-hms-chain} D(F(p_a))\simeq D(\MF_{L_{a^\vee}}(p_{a^\vee}))\end{align} equivariant with respect to the actions of $\hat{\Gamma}_a^0=\wt{L}_{a^\vee}$.
\end{conj}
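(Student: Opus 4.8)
\emph{Proof strategy.} The plan is to upgrade the non-equivariant equivalence $D(F(p_a))\simeq D(\MF_{L_{a^\vee}}(p_{a^\vee}))$ — which already follows from the Main Claim, obtained by running the recursion $\RR$ starting from $\CC_{\varnothing}$ — to an equivariant one by tracking the symmetry actions through the recursion. Since $\wt L_{a^\vee}$ is generated by $T$ and $\tau$, and the image of $T$ on either side is the shift $[1]$ — with which any equivalence of $A_\infty$- (or triangulated) categories automatically commutes — it suffices to produce an equivalence intertwining the single autoequivalence $\tau$. By Proposition \ref{prop-graded-lift}, the action of $\tau$ on $D(F(p_a))$ is the action of the explicit graded lift $\wt\phi_a$ of the symplectomorphism $\phi_a$, while on $D(\MF_{L_{a^\vee}}(p_{a^\vee}))$ it is the action of $\tau\in\wt L_{a^\vee}$ in the sense of \cite[Sec.\ 2]{Aramaki}. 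Thus the conjecture reduces to the assertion that the HMS equivalence can be chosen so as to carry $(\wt\phi_a)_{*}$ to this grading autoequivalence.

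I would prove this as an equivariant refinement of the Main Claim, by induction on $n$. The base case $n=0$ is the category $\CC_{\varnothing}$ with its trivial symmetry, where there is nothing to check. For the inductive step one shows, on the level of directed $A_\infty$-categories, that the recursion $\RR$ producing $F(p_{a_1,\ldots,a_n})$ from $F(p_{a_2,\ldots,a_n})$ is compatible with symmetries: an autoequivalence of $F(p_{a_2,\ldots,a_n})$ which preserves (up to the canonical shifts) the helix used in the recursion induces, through the truncation to the segment $f$ of length $\mu(a_1,\ldots,a_n)$ and the passage to the right-dual collection, an autoequivalence of $F(p_{a_1,\ldots,a_n})$; and applied to $\wt\phi_{a_2,\ldots,a_n}$ this operation yields $\wt\phi_{a_1,\ldots,a_n}$, up to a shift and up to the rotation of the helix that is itself dictated by the numerics ($\mu$ and $d$) of the recursion step. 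The geometric input on the A-side is the explicit action of $\phi_a$ on the distinguished basis of Lefschetz thimbles of the Morsification of \cite{Seifert}, which permutes the thimbles by a rotation of the configuration of critical values, together with the grading shifts read off from Proposition \ref{prop-graded-lift} and \eqref{tau-T-relation}. On the B-side one argues in parallel: the AT-collection of $p_{a^\vee}$ is built from that of $p_{(a_2,\ldots,a_n)^\vee}$ by the same recursion, and the $\tau$-twist permutes it in the matching pattern — this should be extractable from \cite{Aramaki} with bookkeeping. Passing to derived categories and using Proposition \ref{prop-graded-lift} to identify the two $\Z$-extensions then yields \eqref{conj-hms-chain}.

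The main obstacle is on the A-side. The equivariant statement inherits every foundational gap of the A-side Main Claim — in particular the $C^0$-bounds and the generation statement of Remarks \ref{rmk-C0-bounds} and \ref{rmk-generation}, and the point of Remark \ref{rmk-homotopy-method} — and, beyond that, one needs a $\hat\Gamma_a^0$-equivariant enhancement of the generation result, so that the recursion step can be performed compatibly with the $\wt\phi_a$-action. Granting the foundations, the genuinely new content is the explicit computation of the action of the graded lift $\wt\phi_a$ on the distinguished basis of thimbles — both the underlying permutation and the grading shifts — and the verification that it is carried, one recursion step at a time, onto the $\tau$-action on the AT-collection. Matching the grading shifts is exactly what Proposition \ref{prop-graded-lift} and \eqref{tau-T-relation} are designed to handle; the harder part is checking that the permutation pattern survives the two non-formal operations in $\RR$ — the helix extension and the passage to the right-dual exceptional collection — performed in parallel on the A- and B-sides.
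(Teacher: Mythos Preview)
The statement you are attempting to prove is labeled as a \emph{Conjecture} in the paper, and the paper does \emph{not} provide a proof of it. There is therefore no ``paper's own proof'' to compare against. What the paper does offer is the Remark immediately following the conjecture, which takes a route entirely different from yours: rather than tracking the symmetry through the recursion $\RR$, the authors observe that $\tau^{\mu(a)}$ acts on the B-side as $T^{N(a)}S^{-1}$ (a Serre-functor relation), and that if the expected monodromy--Serre relationship holds on the A-side then $\wt\phi_a$ satisfies the same identity. This pins down the equivariance only for the subgroup generated by $T$ and $\tau^{\mu(a)}$, which equals all of $\wt L_{a^\vee}$ precisely when $\gcd(\mu(a),d(a))=1$. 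The paper explicitly states that when this gcd is not $1$, the equivariant conjecture does not seem to follow from the non-equivariant one.

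Your proposal is a strategy outline rather than a proof, and you are candid about this. But there are substantive gaps beyond the foundational ones you flag. First, your starting point --- the non-equivariant equivalence ``which already follows from the Main Claim'' --- is itself not established in the paper: the A-side Main Claim is only argued at the level of a detailed sketch (Conjecture \ref{conj-A-side-main}), so you are building on a conjecture. Second, and more seriously for the equivariant refinement, both sides of the recursion in the paper are proved only \emph{up to shifts}: on the A-side this is by design (Remark \ref{rem-A-grade} explicitly declines to track gradings), and on the B-side Theorem \ref{B-recursion-thm} is stated up to shifts. Matching the action of $\wt\phi_a$ with that of $\tau$ requires exactly the grading information that the recursion, as carried out, throws away. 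Your appeal to Proposition \ref{prop-graded-lift} and \eqref{tau-T-relation} controls the \emph{relation} satisfied by the generators, but not how the HMS functor produced by the recursion interacts with $\tau$ object-by-object. Third, the assertion that the autoequivalence induced on $F(p_{a_1,\ldots,a_n})$ from $\wt\phi_{a_2,\ldots,a_n}$ through the recursion \emph{is} $\wt\phi_{a_1,\ldots,a_n}$ is the entire content of the conjecture in inductive form; you have restated it, not reduced it to something checkable.
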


\begin{rmk}
We already know that the generator $T\in \wt{L}_{a^\vee}$ acts on both sides as the shift functor. It is also known (Proposition 3.1 of \cite{Aramaki} and Lemma \ref{Serre-Bside-lem} below) that the second generator 
$\tau\in \wt{L}_{a^\vee}$ acts on the B-side by the autoequivalence satisfying 
\begin{align}\tau^{\mu(a)}=T^{N(a)}S^{-1},\end{align} 
where $S$ is the Serre functor and $N(a)$ is an explicit integer. If the expected relationship between monodromy and Serre functor on the A-side is true (see e.g. \cite{Jeffs} for a survey), then it can be shown that the action of $\wt{\phi}_a$ on the A-side satisfies the same property as well. Therefore, for the subgroup of $\wt{L}_{a^\vee}$ generated by $T$ and $\tau^{\mu(a)}$, the equivariance 
follows from this. 
The relation \eqref{tau-T-relation} shows that this subgroup is the entire $\wt{L}_{a^\vee}$ in the case 
when $\mu(a)$ and $d(a)$ are coprime, so in this case the equivariant conjecture follows from the non-equivariant one. 
The equivariant conjecture does not seem to follow from the non-equivariant one if $\mu(a)$ and $d(a)$ are not coprime.
\end{rmk}

%
%
%
%
%
%

We will use the perturbation $x_1+p_a$ whose distinguishing property is that the symmetry by $\Gamma_a^0$ persists to it in a way that we can explicitly describe. First, note that $x_1+p_a$ is equivariant with respect to
the order $\mu(a)$ cyclic subgroup of $\Gamma_a$ given by $\lambda_1=\lambda$. Let us denote the generator of this group with $\lambda=e^{\frac{2\pi i}{\mu(a)}}$ by $\psi_a$. 
Let us also define a symplectomorphism $\rho_{a,\epsilon}$ of $\mathbb{C}^n$ by lifting (using parallel transport)
the following diffeomorphism $\varphi_{\epsilon}$ of the base of $\epsilon x_1+p_a$ for all $|\epsilon|\leq 1$: 
it does nothing inside a disk which contains all the critical points; then starts rotating in an annulus in clockwise direction; the amount of rotation increases until it reaches $\frac{2\pi}{\mu(n)}$; and everything outside the annulus gets rotated by $\frac{2\pi}{\mu(n)}$ clockwise (see the right side of Figure \ref{monodromy}). 
Recall that the group $\Gamma_a^0$ is generated by the 
symplectomorphism $\phi_a$. The following proposition gives a symmetry of $x_1+p_a$, isotopic to $\phi_a$.

\begin{prop}\label{prop-pert-symplecto}
 $\rho_{a,\epsilon}\circ \psi_a$ is isotopic to $\phi_a$ through symplectomorphisms. 
\end{prop}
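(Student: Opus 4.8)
The plan is to reduce the statement to an elementary factorization of $\phi_a$ inside the symmetry group, and then to identify the parallel transport map $\rho_{a,\epsilon}$ with one of the two factors by degenerating the Morsification $\epsilon x_1+p_a$ back to $p_a$.

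\emph{Step 1 (a factorization of $\phi_a$).} Viewing $\phi_a\in\Gamma_a^0$ inside $\Gamma_a$ (the subgroup $\lambda=1$), both $\phi_a$ and $\psi_a$ act on $\C^n$ by diagonal unitary matrices, so they commute. Set $S:=\psi_a^{-1}\circ\phi_a\in\Gamma_a$. Since $\lambda(\phi_a)=1$ and $\lambda(\psi_a)=e^{2\pi i/\mu(a)}$, one gets $\lambda(S)=e^{-2\pi i/\mu(a)}$, hence $p_a\circ S=e^{-2\pi i/\mu(a)}\,p_a$: thus $S$ is a symmetry of the (non-Morse) Landau-Ginzburg model $p_a\colon\C^n\to\C$ that covers the clockwise rotation of the base by $2\pi/\mu(a)$. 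Because $\phi_a$ and $\psi_a$ commute, $\phi_a=S\circ\psi_a$. It therefore suffices to show that $\rho_{a,\epsilon}\circ\psi_a$ is isotopic, through symplectomorphisms, to $S\circ\psi_a$; composing the isotopy with $\psi_a^{-1}$ (harmless, since $\psi_a$ itself respects the Landau-Ginzburg structure at infinity), this is equivalent to showing that $\rho_{a,\epsilon}$ is isotopic to $S$.

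\emph{Step 2 (degenerating $\epsilon\to0$).} I would let $\epsilon$ run from its given value to $0$. As $\epsilon\to0$ the Morsification $\epsilon x_1+p_a$ converges to $p_a$ and its $\mu(a)$ critical values collapse to the origin, so the disk and annulus used to define $\varphi_\epsilon$ may be taken to shrink to $\{0\}$; then $\varphi_\epsilon$ converges, away from the origin (which it fixes anyway), to the rotation $R$ of the base by $-2\pi/\mu(a)$. Parallel transport then exhibits $\rho_{a,\epsilon}$ converging, as $\epsilon\to0$, to a symplectic lift of $R$ for the fibration $p_a$; choosing the symplectic connection of the $\epsilon$-family equivariantly for the diagonal torus action, so that the diagonal unitary $S$ is itself realized as the parallel transport of $R$, this limit is exactly $S$. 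Hence $\epsilon\mapsto\rho_{a,\epsilon}$, together with its limiting value $S$, is an isotopy $\rho_{a,\epsilon}\simeq S$ through symplectomorphisms of $\C^n$ that stay compatible at infinity with the degenerating family, and composing with $\psi_a$ gives $\rho_{a,\epsilon}\circ\psi_a\simeq S\circ\psi_a=\phi_a$. As a sanity check, in the Fermat case ($n\le1$) the polynomial $p_a=x_1^{a_1}$ is a cyclic branched cover $\C\to\C$ and the lift of $R$ continuously joined to the identity is literally $x_1\mapsto e^{-2\pi i/(\mu(a)d(a))}x_1$, which is exactly $S$.

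\emph{Expected main obstacle.} Step 1 and the reduction are routine; all the content is in Step 2, and there are two delicate points. First, one needs enough uniform control of the family $\epsilon x_1+p_a$ near infinity in $\C^n$ for the phrase ``compatible at infinity'', and hence the relevant notion of isotopy, to make sense along the whole degeneration and at the non-Morse limit $p_a$; this is precisely one of the foundational gaps the paper flags for the A-side. Second, one has to pin the $\epsilon\to0$ limit of $\rho_{a,\epsilon}$ to be $S$ on the nose, rather than $S$ post-composed with some a priori nontrivial fibered symplectomorphism of $(\C^n,p_a)$ covering the identity of the base; this is why the connection must be chosen compatibly with the torus symmetry, and why one still needs the behavior near infinity to trivialize the residual fibered ambiguity. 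I expect no trouble in Step 1 or in the reduction.
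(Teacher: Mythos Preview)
Your approach is essentially the same as the paper's: degenerate $\epsilon\to 0$ first, then identify $\rho_{a,0}$ with a diagonal element of $\Gamma_a$ covering the base rotation by $-2\pi/\mu(a)$, and finally compose with $\psi_a$ to recover $\phi_a$. The paper simply reverses the order of your two steps, doing the degeneration first and the group-theoretic factorization last, and it carries out the factorization explicitly rather than abstractly: it writes down the weighted-rotation lift $(z_k)\mapsto(e^{i\theta w_k}z_k)$ with $w_k=\mu(a_{k+1},\ldots,a_n)/d(a_k,\ldots,a_n)$ and then verifies the numerical identity
\[
\frac{1}{\mu(a)}-\frac{\mu(a_2,\ldots,a_n)}{d(a)\,\mu(a)}=\frac{1}{d(a)},
\]
which is precisely the statement that your $S=\psi_a^{-1}\phi_a$ coincides with this weighted rotation at $\theta=-2\pi/\mu(a)$.

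One point of confusion in your write-up: the phrase ``choosing the symplectic connection of the $\epsilon$-family equivariantly'' is not meaningful---the symplectic connection is determined by the symplectic form (horizontal $=$ symplectic orthogonal to vertical), and the generator of the weighted $S^1$-action is in fact \emph{not} horizontal for the standard form on $\C^n$. So the parallel-transport lift of the rotation is not literally $S$. What is true, and what the paper uses implicitly, is that the diagonal flow $(z_k)\mapsto(e^{it w_k}z_k)$ and the parallel-transport flow both start at the identity and both cover the same base rotation; hence their time-$(-2\pi/\mu(a))$ endpoints are symplectically isotopic. This is the correct replacement for your equivariance argument, and once you make it you no longer need to worry about the ``residual fibered ambiguity'' you flag. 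Your concerns about behaviour at infinity are also more than this soft proposition requires.
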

\begin{proof}
It is clear that  $\rho_{a,\epsilon}\circ \psi_a$ is isotopic to  $\rho_{a,0}\circ \psi_a$ through symplectomorphisms by considering a path in the complex plane from $\epsilon$ to $0$.

Now we note that the rotation of the base of $p_a$ by $\theta$ lifts to the symplectomorphism \begin{align*}(z_1,\ldots ,z_n)\mapsto (e^{i\theta w_1}z_1,\ldots ,e^{i\theta w_n}z_n),\end{align*} where 
$w_k=\frac{\mu(a_{k+1},\ldots,a_n)}{a_k\ldots a_n}$. 
Hence, we see that $\rho_{a,0}$ is isotopic through symplectomorphisms to 
\begin{align*}(z_1,\ldots ,z_n)\mapsto (e^{-\frac{2\pi i w_1}{\mu(a)}}z_1,\ldots ,e^{-\frac{2\pi i w_n}{\mu(a)}}z_n).\end{align*}
Recalling the definitions of $\psi_a$ and $\phi_a$, we see that the assertion follows from
 $$\frac{1}{\mu(a)}-\frac{\mu(a_2,\ldots,a_k)}{d(a)\mu(a)}=\frac{1}{d(a)}.$$
\end{proof}

\subsection{More details on the A-side}\label{ss-Aside-intro}
%
%

\begin{figure}
\includegraphics[width=0.5\textwidth]{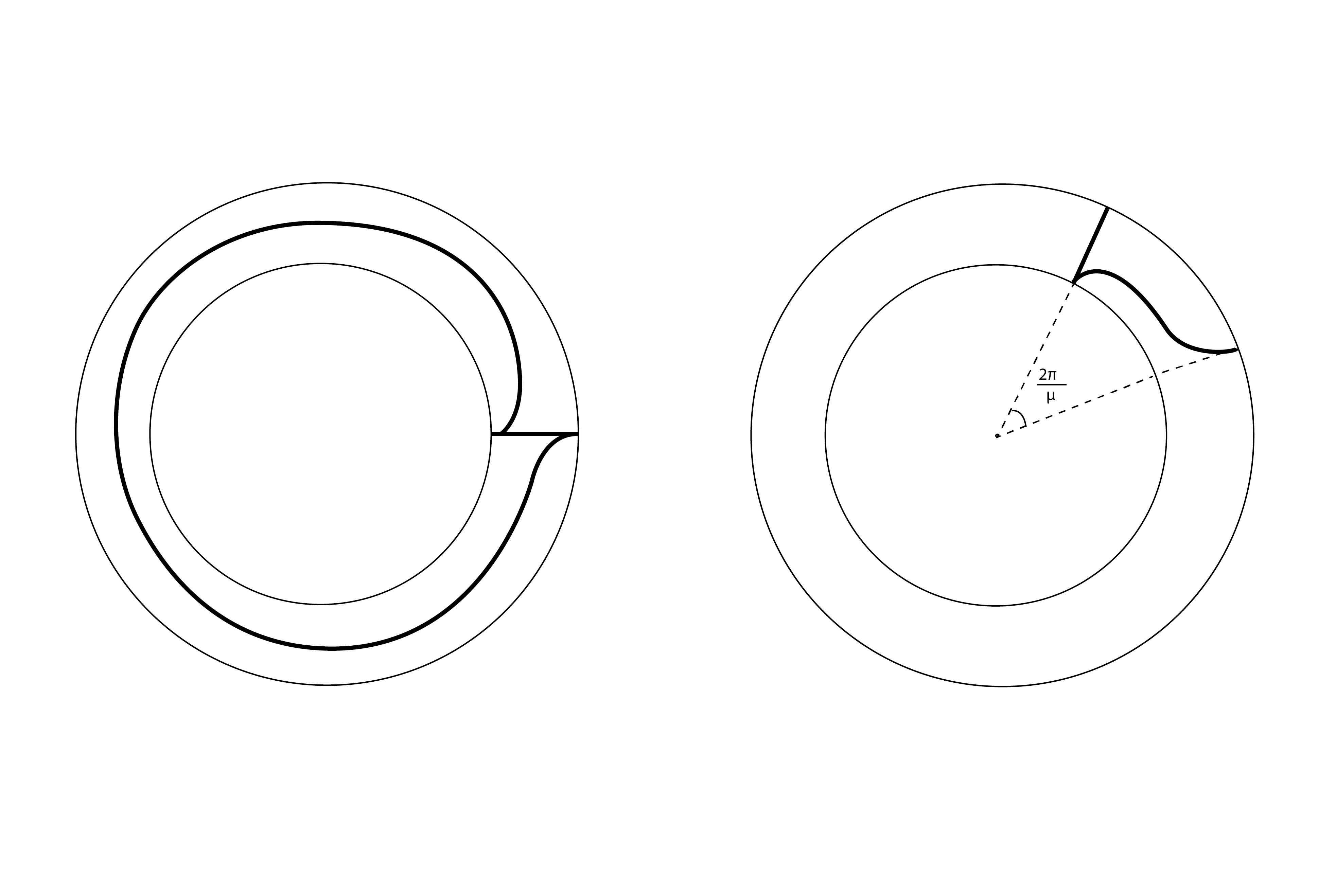}
\caption{On the left is the diffeomorphism of the base that gives the monodromy and on the right the one that gives $\rho_{a,\epsilon}$}
\label{monodromy}
\end{figure}

The following choice of perturbation and distinguished basis of vanishing paths for $p_a$ was introduced and analyzed at the Grothendieck group level in \cite{Seifert}. We consider the Morsification $$x_1+p_a(x_1,\ldots ,x_n).$$
We consider the diffeomorphism $\varphi:=\varphi_{1}\circ \mathrm{rot}_{2\pi/\mu(a)}$, where $\varphi_1$
was defined in the previous section and $\mathrm{rot}_{2\pi/\mu(a)}$ is the rotation by $\frac{2\pi}{\mu(a)}$
counterclockwise. Note that $\varphi$ preserves the set of critical values and
has a symplectomorphism lift $\Phi:=\rho_{a,1}\circ \psi_a$. Note also that $\varphi$ acts as identity outside
the outer boundary of the annulus on Figure \ref{monodromy}.

We choose a critical value, a positive real regular value that is outside of the support of $\varphi$ and a vanishing path $\gamma$ between them which lies outside of the circle containing the critical values. We choose $$\gamma,\varphi(\gamma),\ldots ,\varphi^{\mu(a)-1}(\gamma)$$ as our distinguished basis of vanishing paths.  We also grade the corresponding Lefschetz thimbles in a way that is compatible with a fixed graded lift of $\Phi$ (see Propositions \ref{prop-graded-lift} and \ref{prop-pert-symplecto}.)

\begin{rmk}\label{rem-A-grade}
We mentioned this convenient grading choice but we will not actually be using it. This is possible only because in this paper, on the $A$-side we are attempting to prove the Main Claim up to shifts. The grading convention that we just spelled out will without doubt play a role if one tries to upgrade the argument to a proof of the Main Claim taking into account the shifts. 
\end{rmk}

 This gives rise to a directed Fukaya-Seidel $A_\infty$-category $\mathcal{A}_a$ (which is a directed $A_\infty$-category) in the sense of \cite{Seidelmutation}.
We had temporarily called this category $F(p_a)$ above, but we will not do that anymore.

\begin{rmk}
We will make a definite choice of $\gamma$ in Section \ref{ss-A-outline} but note that because of the symmetry by the graded lift of $\Phi$ different choices give rise to equivalent directed $A_{\infty}$-categories.
\end{rmk}

For $a=\varnothing$ the empty tuple, we set $\mathcal{A}_{\varnothing}:=\CC_{\varnothing}$. 
This corresponds to the Fukaya-Seidel category obtained from the linear map $p_{\varnothing}:\mathbb{C}^0\to \mathbb{C}$ and a vanishing path. 

For $a=(a_1,\ldots,a_n)$ let us set
$$-a:=(a_2,\ldots,a_n), \ \ a-:=(a_1,\ldots,a_{n-1}).$$ 

\begin{conj}\label{conj-A-side-main}
$\mathcal{A}_a$ can be obtained from $\mathcal{A}_{-a}$ by the recursion $\mathcal{R}$. 
\end{conj}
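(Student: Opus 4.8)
The plan is to exploit the decomposition
$$p_a(x_1,\ldots,x_n)=x_1^{a_1}x_2+p_{-a}(x_2,\ldots,x_n),$$
which over the locus $\{x_2\neq 0\}$ exhibits the Milnor fibre of $p_a$ as an $a_1$-fold cyclic cover of $\C^*_{x_2}\times\C^{n-2}$ branched along a copy of the Milnor fibre of $p_{-a}$, the part over $x_2=0$ contributing only a homotopically small piece. The vanishing cycles defining $\mathcal{A}_{-a}$, namely those of the Morsification $x_2+p_{-a}$, lie inside this branch locus; the plan is to build the vanishing cycles of the Morsification $w:=x_1+p_a$ out of them via the branched cover, and then to recognise the resulting directed $A_\infty$-category as the output of the recursion $\RR$ applied to $\mathcal{A}_{-a}$. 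The perturbation $x_1+p_a$ is used because of Proposition~\ref{prop-pert-symplecto}: it carries an explicit symmetry $\Phi=\rho_{a,1}\circ\psi_a$ isotopic to $\phi_a$, with $\psi_a$ of order $\mu(a)$ permuting the critical values of $w$ cyclically, so that the distinguished basis $\gamma,\varphi(\gamma),\ldots,\varphi^{\mu(a)-1}(\gamma)$ is controlled by a single symmetry; the deck transformation of the cover lies in $\Gamma_a^0$, which lets this symmetry be tracked through the branched-cover picture.

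First I would produce, out of a distinguished basis of vanishing cycles of $x_2+p_{-a}$, a distinguished collection of $\mu(a)$ objects of $Tw(\mathcal{A}_{-a})$: each vanishing cycle of $p_{-a}$, lifted across the $a_1$ sheets and carried around the puncture $x_2=0$, reappears several times as one travels once around the branch locus, the number of appearances and the associated gradings being governed by the deck transformation composed with the monodromy of $x_2+p_{-a}$. By the expected relation between that monodromy and the Serre functor of $\mathcal{A}_{-a}$, this periodic pattern is precisely the helix of $\mathcal{A}_{-a}$ inside $Tw(\mathcal{A}_{-a})$, and the length-$\mu(a)$ window one reads off, started so as to end with the thimble $E_1$, is the collection $f$ of the recursion; the identity $\mu(a)=a_1 d(-a)-\mu(-a)$ is the numerical incarnation of this count, and the detailed combinatorics runs parallel to the B-side argument. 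The Floer theory of $\mathcal{A}_{-a}$ together with the combinatorics of the cover then computes the directed $A_\infty$-subcategory on $f$, which is the category $\mathcal{C}'$ of $\RR$.

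Next I would show that $\mathcal{A}_a\simeq\mathcal{C}^+$ by exhibiting the vanishing cycles of $w$ as the right dual exceptional collection $e^+$ of $f$ in $Tw(\mathcal{C}')$. A vanishing cycle of $w$ is a Lagrangian sphere in $w^{-1}(\lambda)$ branching over a vanishing cycle of $p_{-a}$; translating the branched-cover data into vanishing paths in the Landau-Ginzburg target and comparing with the paths defining $f$ produces exactly the sequence of Hurwitz moves, equivalently mutations, that realises the right dual. At the level of morphism spaces the point to check is that $\homs(f_j,\wt\Delta_i)$ is one-dimensional precisely when $i+j=\mu(a)+1$ and vanishes otherwise, where $\wt\Delta_1,\ldots,\wt\Delta_{\mu(a)}$ are the vanishing cycles of $w$. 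Combined with the identification $\mathcal{A}_\varnothing=\CC_\varnothing$ for the base case $n=0$, this gives the claim up to shifts, in accordance with Remark~\ref{rem-A-grade}; the shifts themselves would be recovered by carrying the graded lift $\wt\phi_a$ of Propositions~\ref{prop-graded-lift}--\ref{prop-pert-symplecto} through the argument, which I do not attempt here.

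The main obstacle, and the reason the statement is only a conjecture, is exactly the package of foundational gaps announced in the introduction. The argument needs a generation statement: that the vanishing cycles $\wt\Delta_i$, equivalently the objects produced from $\mathcal{A}_{-a}$ by the branched cover, split-generate the Fukaya-Seidel category of the tame Landau-Ginzburg model $x_1+p_a$ (see Remark~\ref{rmk-generation}); without it one controls only a full subcategory of the desired category. It also needs the $C^0$-estimates that legitimise the Floer theory in this branched-cover situation, in particular the matching of holomorphic curves upstairs with those downstairs (see Remark~\ref{rmk-C0-bounds}), together with the more technical homotopy-method point of Remark~\ref{rmk-homotopy-method}. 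Granting these, I expect the plan to go through, and in fact to be refinable to the shift-precise form of the Main Claim.
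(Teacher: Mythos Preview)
Your approach is genuinely different from the paper's, and while the branched-cover heuristic is appealing, it leaves more uncharted than the paper's route does.

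The paper does not use the $a_1$-fold cover over $(x_2,\ldots,x_n)$-space at all. Instead it works entirely inside the fiber $(g_a)^{-1}(0)$ of the Morsification and projects it to $\C$ via the $z_1$ coordinate, obtaining a Lefschetz bifibration $h_a$. The vanishing cycles of the left-dual basis (your $e^+$, read backward) are then computed as \emph{matching cycles} for explicit matching paths in the base of $h_a$. A second perturbation by $s z_2$ is needed to split a very degenerate fiber of $h_a$ (this is the content of Remark~\ref{rmk-extra-perturb} and the reason the bare branched-cover picture is insufficient). The critical values of $h_a^{t,s}$ are controlled by an explicit polynomial family (Section~\ref{ss-roots}); the matching paths are shown to coil between an inner and an outer dart region; and finally a degeneration $t\to 0$ sends the outer critical values to infinity and leaves precisely the HAI vanishing paths realising the helix segment of length $\mu(a)$ for $g_{-a}$. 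Thus the paper goes through the recursion \emph{backwards}: it identifies $\mathcal{D}_a$ (the left dual of the defining collection of $\mathcal{A}_a$) with $\mathcal{H}_{-a}(\mu(a))$ directly (Theorem~\ref{Da-H-thm}), rather than building the vanishing cycles of $w$ out of lifts.

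Your plan has extra gaps beyond the shared foundational ones. First, the branched-cover description is for the Milnor fibre $\{p_a=c\}$, not for a fibre of $w=x_1+p_a$; you need to explain how the specific vanishing cycles of the distinguished basis $\gamma,\varphi(\gamma),\ldots$ sit in the cover, and the paper's experience (the necessity of the $sz_2$ perturbation) suggests this is not automatic. Second, your identification of the length-$\mu(a)$ window with the helix relies on the full ``monodromy $=$ Serre functor'' statement, whereas the paper gets by with the much weaker ``Geometric helix equals algebraic helix'' (which follows formally from generation). Third, the duality step (``Hurwitz moves realising the right dual'') is asserted, not argued; in the paper this is exactly where the hard geometry lives (Propositions~\ref{vanishing-Lagrangian-sphere-matching-prop} and \ref{prop-match-final}), and it required the detailed root analysis of Section~\ref{ss-roots}. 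Your outline could perhaps be made to work, but as written it is a programme with substantially less geometric content than the paper's sketch.
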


As mentioned above, we give a detailed sketch of a computation strongly suggesting that this statement is true. We fall short of a full proof mainly because of some missing foundations in the theory of Fukaya-Seidel categories of tame Landau-Ginzburg models.

\begin{rmk}
Note that even though this statement is purely in terms of directed Fukaya-Seidel categories in their earliest incarnation from \cite{Seidelmutation}, our suggested proof crucially relies on the existence of a category which admits all thimbles as objects as is the case in the modern reincarnations. The main property whose proof is missing is the generation statement.
\end{rmk}

It is instructive to give a proof of this conjecture for tuples of length $1$. In this case our Morsification is 
$$x+x^{a_1}:\mathbb{C}\to \mathbb{C}.$$ 
Let us denote by $\mathcal{A}$ the directed $A_\infty$-category obtained from
the exceptional collection in the category of representations of the graded quiver $Q_{a_1-1}$ \[
\begin{tikzcd}
1 \arrow[r,"c"] & 2 \arrow[r,"c"] & \cdots \arrow[r,"c"] & a_1-1,
\end{tikzcd}
\]where $|c|=1$ and $c^2=0$,
given by the simple modules.
It is straightforward to show directly that $\mathcal{A}_{a_1}:= \mathcal{A}_{(a_1)}:$ is isomorphic to $\mathcal{A}$
but we will derive this from our general strategy.

First, we will show that $\mathcal{A}$ arises from $\CC_{\varnothing}$ via the recursion $\RR$ and then we will see how this is realized geometrically on the A-side.

The helix inside $Tw(\mathcal{C}_{\varnothing})$ is simply the only object $E$ of $\CC_{\varnothing}$
repeated over and over. Therefore, the directed $A_\infty$-category we obtain by keeping $a_1-1$ adjacent members of this list is the directed category with objects $P_1,\ldots,P_{a_1-1}$, where for every $i\leq j$, we have 
$Hom(P_i,P_j)=\mathbb{Z}[0]$ and all compositions are induced by multiplication in $\mathbb{Z}$. 
This can be identified with the exceptional collection given by the projective modules over the quiver $Q_{a_1-1}$.
Passing to right dual dual collection we obtain the collection given by the simple objects, so we get $\mathcal{A}$
as the result of the recursion.

\begin{figure}
\includegraphics[width=0.8\textwidth]{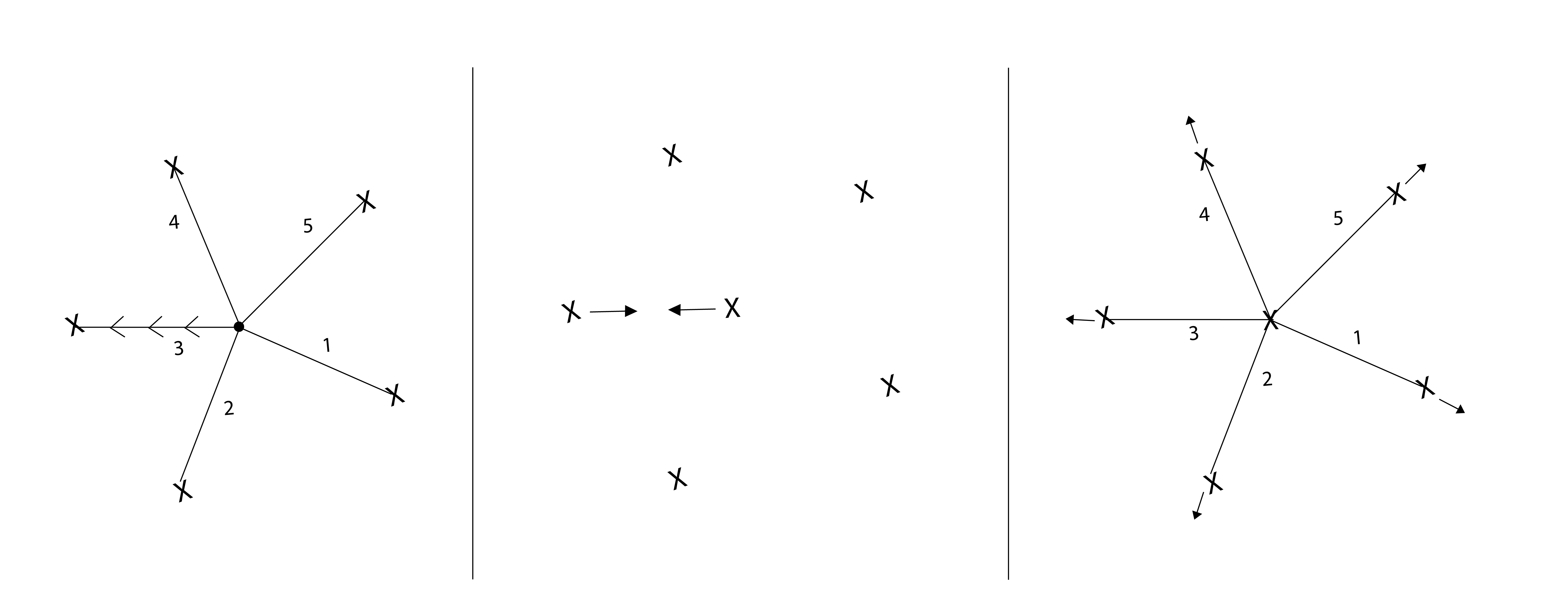}
\caption{On the left we see the critical values of $x+x^{a_1}$ along with the choice of vanishing paths that we use in the computation - they are obtained by dragging the reference fiber to the origin appropriately. The middle picture shows the matching path corresponding to the movement of the fiber shown on the left. The right picture shows all the matching paths and the behavior of outer critical values as $r\to 0$.}
\label{n=0}
\end{figure}

Geometrically, we are looking at the Lefschetz fibration $x+x^{a_1}:\mathbb{C}\to \mathbb{C}$ and a distinguished collection as described above. Corresponding to ``taking right dual exceptional collection" step, we compute the directed Fukaya-Seidel category associated to the left dual basis of vanishing paths. This can be computed inside the fiber over $0$ by appropriately moving (see Figure \ref{fig-move-A} for the same move with slightly different conventions) the base point - with the radial vanishing paths as shown in Figure \ref{n=0}. We consider the map $\{x+x^{a_1}=0\}\to \mathbb{C}$ given by projecting to the $x$ coordinate (i.e., the natural embedding). The vanishing cycles of the radial paths computed using the Lefschetz bifibration method are given by the radial matching paths, see Figure \ref{n=0}.
All the intersections (and structure maps) for the matching cycles are localized in the central fiber. In fact considering the family $x+rx^{a_1}$, where $r$ goes from $1$ to $0$ the directed categories with the continuously deformed matching paths do not change. When $r=0$ what we see is precisely the map $p_{\varnothing}:\mathbb{C}^0\to \mathbb{C}$ and radial paths going to infinity. The directed intersection numbers give the directed subcategory associated to the repetition of $E$ inside $Tw(\mathcal{C}_{\varnothing})$\footnote{Here we are omitting an explanation of how $\CC_{\varnothing}$ can be considered as a directed Fukaya-Seidel $A_{\infty}$ category of $p_{\varnothing}:\mathbb{C}^0\to \mathbb{C}$. This can be done using \cite{SeidelI}'s approach but it is confusing and not needed. To be able to interpret the distinct radial paths as objects of a geometric category the most natural option is to use a formalism similar to the one presented in Section \ref{ss-fukaya-seidel}.} that arose from the truncation of the helix in the previous paragraph. 

\begin{rmk}\label{rmk-extra-perturb} The strategy for $n>1$ is very similar but it involves an extra step. We would like to refer the reader to Remark 3.8 (and also Remark 3.7 for some related notation) of \cite{Seifert} for the immediate difficulty that arises when one applies the same strategy for $n>1$. 
What is achieved in the present paper relies on an additional perturbation (adding a small multiple of $x_2$) to the Morsification $x_1+p_a$ before we project the fiber above the origin to $\C$ using the $x_1$ coordinate (this projection is called $g_a$ in Remark 3.8 of \cite{Seifert} ). Note that we continue to use the fiber above $0$ and the radial paths as vanishing paths after the second perturbation. We also still analyze the fiber above $0$ by projecting it to the $x_1$ coordinate. The second perturbation breaks the $\mathbb{Z}/\mu(a)\mathbb{Z}$ symmetry and splits the fat singularity of the $x_1$ projection into $\mu(-a)$ non-degenerate singularities but it allows us to capture the information that was hidden in the very degenerate fiber above $0$ of the $x_1$-projection.
\end{rmk}

\subsection{More details on the B-side}

We consider the dg-category $\MF_{\Ga_a}(p_a)$ of $\Ga_a$-equivariant (or equivalently, $L_a$-graded) matrix factorizations of $p_a$.
For each $L_a$-homogeneous ideal $\II$ of the polynomial algebra $S$, 
there is a well-defined object of this category, which we denote by $\stab(\II)$: it is the
stabilization of the module $S/\II$, coming from
the relation between matrix factorizations and the singularity category (see e.g., \cite{Orlov-sing}).

Following Aramaki-Takahashi \cite{Aramaki} we consider the following graded matrix factorizations of $p_{a}$:
$$E:=\begin{cases} \stab(x_2,x_4,\ldots,x_n), & n \text{ even}\\
\stab(x_1,x_3,\ldots,x_n), & n \text{ odd}\end{cases}$$
The collection 
$$e_a:=(E,\tau(E),\ldots,\tau^{\mu^\vee(a)-1}(E))$$ is
a full exceptional collection in $\MF_{\Ga_{a}}(p_{a})$, where $\mu^\vee(a):=\mu(a^{\vee})=\mu(a_n,\ldots,a_1)$.
We refer to it as the {\it AT-collection}
and denote the corresponding directed $A_\infty$-category by $AT(a)$.

\begin{thm} (Theorem \ref{B-recursion-thm})
$AT(a)$ can be obtained from $AT(a-)$ by the recursion $\mathcal{R}$ up to shifts. 
\end{thm}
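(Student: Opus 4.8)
The plan is to establish the recursion by an explicit homological computation of both sides and a matching of the resulting directed $A_\infty$-categories. Recall the AT-collection for $AT(a)$ is built from the single matrix factorization $E = \stab(x_2,x_4,\ldots)$ (or $\stab(x_1,x_3,\ldots)$) and its twists $\tau^i(E)$ inside $\MF_{\Ga_a}(p_a)$. The recursion $\RR$ with $N = \mu(a)$ starting from $AT(a-)$ demands that I: (i) extend the AT-collection of $p_{a-}$ in the variables $x_1,\ldots,x_{n-1}$ to a helix in $Tw(\MF_{\Ga_{a-}}(p_{a-}))$ and take the segment $f$ of length $\mu(a)$ ending at the last object; (ii) form the directed $A_\infty$-subcategory $\mathcal{C}'$ on $f$; (iii) pass to the right dual exceptional collection inside $Tw(\mathcal{C}')$ and check the outcome is $AT(a)$ up to shifts. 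So the first task is to identify, on the nose, the helix of the AT-collection. Here I would use that the Serre functor / the autoequivalence $\tau$ governs the helix structure: by Proposition 3.1 of \cite{Aramaki} (cited in the remark after Conjecture \ref{conj-hms-eq}) we have $\tau^{\mu(a)} = T^{N(a)} S^{-1}$, so the helix is generated by applying $\tau$ repeatedly, and the segment $f$ of length $\mu(a)$ is literally $(E,\tau E,\ldots,\tau^{\mu(a)-1}E)$ read in the $p_{a-}$ category — but with $a$, not $a-$. The key algebraic input is therefore a comparison between the twists $\tau^i_{a-}$ on $\MF_{\Ga_{a-}}(p_{a-})$ and the restriction of $\tau^i_a$ to the subcategory generated by $\stab(\II)$ for ideals $\II$ not involving $x_n$.

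The main computational step is to understand the functor relating the two B-side categories. The chain polynomial $p_a = x_1^{a_1}x_2 + \cdots + x_{n-1}^{a_{n-1}}x_n + x_n^{a_n}$ degenerates to $p_{a-}$ plus the "tail" $x_{n-1}^{a_{n-1}}x_n + x_n^{a_n}$; more precisely there is a Knörrer-type or "adding a variable" relationship. I would set up the functor $\MF_{\Ga_{a-}}(p_{a-}) \to \MF_{\Ga_a}(p_a)$ that tensors a matrix factorization with the Koszul-type factorization of the tail in $x_n$ (using that $x_n$ is, up to the lower terms, determined by $x_{n-1}$ via $x_n^{a_n} = -x_{n-1}^{a_{n-1}}x_n - \cdots$). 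Under this functor the generator $P_i := \stab$ of an ideal over $S_{a-} = \C[x_1,\ldots,x_{n-1}]$ goes to $\stab$ of the corresponding ideal over $S_a$, and I need to check: (a) the functor is fully faithful on the truncated helix segment $f$ — i.e. $\Hom$-spaces in $Tw(\mathcal{C}')$ agree with the morphism spaces in $\MF_{\Ga_a}(p_a)$ between the images, in the relevant degree range and respecting the $L_a$-grading; (b) the right dual collection of $f$ inside $Tw(\mathcal{C}')$ is carried to the AT-collection $e_a = (E,\tau E,\ldots,\tau^{\mu^\vee(a)-1}E)$ for $p_a$, up to shifts. Step (b) is where the numerics $\mu(a) = d(a) - \mu(a-)\cdot(\text{something})$... more precisely the identity $\mu(a_1,\ldots,a_n) = a_1\cdots a_n - \mu(a_2,\ldots,a_n)$ enters: the length of the segment $f$, which is $\mu(a)$, must match the number of objects of $AT(a)$, which is $\mu^\vee(a) = \mu(a^\vee)$, and since $\mu(a^\vee) = \mu(a)$ this is consistent; but one also needs the helix period of $AT(a-)$, which is $\mu^\vee(a-) = \mu(a_{n-1},\ldots,a_1)$, to interact correctly.

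The shifts should be computed by tracking the internal $L_a$-degrees through the right-dual operation: passing from an exceptional collection to its right dual shifts objects by the lengths of the Ext-complexes connecting them, and these are controlled by the relations $a_i\bar{x}_i + \bar{x}_{i+1} = \bar{p}$ defining $L_a$ together with the $\tau$–$T$ relation \eqref{tau-T-relation}. Concretely, I would use that the right dual of a length-$m$ directed collection shifts the $k$-th object by (roughly) $k-1$ cohomological degrees plus internal twists, and compare with the known $\tau$-action on $e_a$; the discrepancy is the explicit shift vector $\sigma$. The main obstacle I anticipate is step (a): controlling the full morphism complex (not just its cohomology) between the twisted objects $\tau^i E$ and $\tau^j E$ over the chain polynomial and proving it collapses, after truncation, to exactly the directed subcategory of the helix segment — i.e. that no higher $A_\infty$-products or "wrong-direction" morphisms survive. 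This requires an explicit presentation of $\End$ of the AT-collection as an $L_a$-graded $A_\infty$-algebra (which Aramaki–Takahashi provide at the level of the underlying algebra, but the $A_\infty$-structure and its behaviour under the degeneration $p_a \rightsquigarrow p_{a-}$ need to be pinned down). Once that presentation is in hand, the matching with $Tw(\mathcal{C}')$ and its right dual is a finite, if intricate, verification, and the passage "up to shifts" removes the need to track grading normalizations precisely.
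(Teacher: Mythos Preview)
Your proposal has the right overall shape—embed $\MF_{\Ga_{a-}}(p_{a-})$ into $\MF_{\Ga_a}(p_a)$ and match collections—but there are two genuine gaps and one numerical error.

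First, the error: you assert $\mu(a^\vee)=\mu(a)$, but this is false (e.g.\ $\mu(2,3)=4$ while $\mu(3,2)=5$). The number of objects in $AT(a)$ is $\mu^\vee(a)=\mu(a^\vee)$, and the recursion $\RR$ is applied with $N=\mu^\vee(a)$, not $\mu(a)$. This matters because the length of the helix segment must equal the number of objects in the target, and your bookkeeping of $\mu$ versus $\mu^\vee$ collapses precisely at this point.

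Second, and more seriously, you have not identified the algebraic mechanism that makes the comparison work. The paper's argument does not proceed by ``checking that the right dual of the helix segment is carried to $e_a$'' under an embedding. Instead it runs backwards: one shows that the \emph{left adjoint} $\la$ to the inclusion of the initial-segment subcategory $\CC=\langle E,\ldots,E(\mu^\vee(a-)-1)\rangle\subset\MF_{\Ga_a}(p_a)$ sends the left dual of the full AT-collection $e_a$ to a segment of the helix in $\CC$, \emph{and preserves all directed $\Hom$-spaces}. The reason this works is a specific structural feature of the AT $\Ext$-algebra that you do not mention: $\Hom^*(E,\tau^iE)=0$ for $i>\mu^\vee(a-)$, and the subcollection $(E,\tau E,\ldots,\tau^{\mu^\vee(a-)}E)$ has a Frobenius (perfect-pairing) structure, i.e.\ $\Hom^*(E,E(\mu^\vee(a-)))\simeq\k$ and the composition pairings $\Hom^*(E(i),E(\mu^\vee(a-)))\otimes\Hom^*(E,E(i))\to\k$ are nondegenerate. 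This perfect-pairing property is equivalent to $\rho(E(\mu^\vee(a-)))\simeq\SS_\CC(E)$ up to shift, and it is what allows one to identify $\la$ applied to the mutated blocks $L_{\CC_0}\cdots L_{\CC_{i-1}}\CC_i$ with powers of the Serre functor on $\CC$—i.e.\ with the helix. Without isolating this property you have no way to control the $\Hom$-spaces under $\la$, and your ``main obstacle'' (a) is not merely an obstacle but the entire content of the proof.

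Finally, the embedding functor itself: the paper uses the VGIT window equivalences of Ballard--Favero--Katzarkov (via \cite{FKK}) rather than a Kn\"orrer-type construction, and its role is more modest than you suggest. It is used only to identify the initial segment of $e_a$ with (the right dual of) $e_{a-}$; once that identification is made the rest is a formal argument using the perfect-pairing property and a sequence of mutation lemmas, and uses almost nothing further about the functor.
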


The first ingredient in the proof is a construction of 
a fully faithful functor 
\begin{equation}\label{B-side-fully-faithful-functor-eq}
\MF_{\Gamma_{a-}}(p_{a-})\to \MF_{\Gamma_{a}}(p_{a}).
\end{equation}
As was observed in \cite{FKK},
there is a natural such functor arising from the VGIT machinery of Ballard-Favero-Katzarkov \cite{BFK}.

The next step, based on explicit computations with matrix factorizations, is the identification of the image under the above functor
of the exceptional collection $e_{a-}$ with the left dual of the initial segment of the exceptional collection $e_a$.
This is done by a standard computation of morphisms between Koszul matrix factorizations.

The last step is the identification of
the directed $A_\infty$-algebra of $e_a$ with that of the part of the helix in the subcategory generated by the initial
segment, which we identified with $AT(a-)$. This is proved using some special features of the AT-collection. Namely, the key property is that for this collection we have
$\Hom^*(E,\tau^iE)=0$ for $i>\mu^\vee(a-)$ while the morphisms for the subcollection $(E,\tau E,\ldots, \tau^{\mu^\vee(a-)}E)$ form a Frobenius algebra (note that the length of this subcollection is one more
than the initial segment that corresponds to $AT(a-)$). Using this, plus a little bit more, we compute the image of  the left dual collection to the AT-collection under the
left adjoint functor to the inclusion \eqref{B-side-fully-faithful-functor-eq} and show that the corresponding directed $\Hom$-spaces are preserved. Strangely, our argument for this uses very little information about the functor
\eqref{B-side-fully-faithful-functor-eq}, but depends crucially on the properties of the $\Ext$-algebra of the 
Aramaki-Takahashi exceptional collection.


\subsection*{Structure of the paper}
Section \ref{s-A-side} is entirely about the A-side and contains our detailed strategy for the proof of Conjecture \ref{conj-A-side-main}. In Section \ref{ss-fukaya-seidel}, we give an overview of a Fukaya-Seidel category of thimbles. This section is rather conjectural and brief. In Section \ref{ss-A-outline}, we give an outline of our strategy and reduce the Main Claim to a concrete statement in Theorem \ref{Da-H-thm}. Section \ref{ss-roots} is an elementary section containing results about roots of a certain family of polynomials. These results then used to compute certain vanishing cycles as matching cycles in Section \ref{van-sph-sec}, which is the heart of the argument in the A-side.

Section \ref{s-B-side} is entirely about the B-side and contains our proof of Theorem \ref{B-recursion-thm}.
After recalling some basic tools from the theory of exceptional collections, we recall in Section \ref{AT-exc-coll-sec}
the definition and some properties of the Aramaki-Takahashi exceptional collection in the category of graded
matrix factorizations of chain polynomials. In Section \ref{VGIT} we outline the construction of the functor
\eqref{B-side-fully-faithful-functor-eq} and give a characterization of the image of the AT-collection under it.
In Section \ref{dual-exc-coll-sec}, we find a mutation functor that takes the image of the collection $AT(a-)$ under
\eqref{B-side-fully-faithful-functor-eq} to the dual collection to the initial segment of $AT(a)$.
Finally, in Section \ref{B-side-recursion-sec}, we combine the previous ingredients with some additional 
purely formal manipulations to prove Theorem \ref{B-recursion-thm}.

In Appendix A, we provide the simple Mathematica code used in discovering the statements of Section \ref{ss-roots} and Proposition \ref{prop-match-final}.


\subsection*{Acknowledgments}
U.V. thanks Paul Seidel for useful discussions and encouragement.
A.P. is grateful to Atsushi Takahashi for the helpful correspondence.
A.~P. is partially supported by the NSF grant DMS-2001224, 
and within the framework of the HSE University Basic Research Program and by the Russian Academic Excellence Project `5-100'.

\section{Computation on the A-side}\label{s-A-side}

Let us use the standard Fubini-Study Kahler structure on $\mathbb{C}^n$ along with the holomorphic volume form $\Omega=dz_1\wedge\ldots\wedge dz_n$ in what follows.
\subsection{A Fukaya-Seidel category of thimbles}\label{ss-fukaya-seidel}

Throughout this section let $f:\mathbb{C}^n\to \mathbb{C}$ be a tame Lefschetz (i.e. Morse) LG model in the sense of \cite{Fan}. Using Proposition 2.5 of \cite{Fan}, we see that for any $a\in\mathbb{Z}_{>1}^n$ and $\alpha_1,\ldots,\alpha_n\in\mathbb{C}$, $$p_a(x_1,\ldots,x_n)+\alpha_1x_1+\ldots+\alpha_nx_n:\mathbb{C}^n\to\mathbb{C}$$ is a tame LG model.

We will assume that the construction of the Fukaya-Seidel category introduced in an unpublished manuscript of Abouzaid-Seidel (see \cite{SeidelVI}: the $A_{\infty}$-category $\mathcal{A}$ as defined in equation (5.58) as a localization of the $A_{\infty}$-category $\mathcal{A}^{ord}$ that is defined in the first line of page 40) can be undertaken for $f$. We will call the resulting $A_{\infty}$-category $\mathcal{F}(f)$. Below we discuss some properties of
this $A_\infty$-category referring to \cite{SeidelVI} for details.

Let us call path $p$ in the base of $f$ a horizontal at infinity (HAI) vanishing path if it can be parametrized by a smooth proper embedding $\gamma:[0,\infty)\to \mathbb{C}$ satisfying the following properties
\begin{itemize}
\item $\gamma(t)$ is a critical value if and only if $t=0$.
\item For some $t_0>0$ and $ord(p)\in \mathbb{R}$, $Re(\gamma(t))>0$ and $Im(\gamma(t))=ord(p)$ for all $t\geq t_0$.
\end{itemize}
Let us call $ord(p)$ the ordinal of $p$.

To each HAI vanishing path, we can associate a (Lefschetz) thimble, which is an embedded non-compact Lagrangian submanifold of $\mathbb{C}^n$. By equipping these thimbles with gradings, we can view them as Lagrangian branes, which we call graded thimbles.

Let $L_1,\ldots , L_N$ be an ordered collection of graded Lagrangians in $\mathbb{C}^n$ each of which is either a closed exact Lagrangian sphere or a graded thimble of a HAI vanishing path. We also make the crucial assumption that the no two of the HAI vanishing paths have the same ordinal. Then, we can define a directed $A_{\infty}$-category $Fuk^{\to}(L_1,\ldots , L_N)$ with the ordered list of objects corresponding to $L_1,\ldots,L_N$ using 
\begin{itemize}
\item consistent choices of compactly supported Hamiltonian perturbations to make Lagrangians transverse (directedness really helps here); 
\item almost complex structures which agree with the standard complex structure of $\mathbb{C}^n$ outside of a compact subset
\end{itemize}
to define the structure maps. $Fuk^{\to}(L_1,\ldots , L_N)$ is well defined up to $A_{\infty}$-quasi-isomorphism respecting the ordering of the objects. This is standard (see \cite{SeidelI} for example) except obtaining the necessary $C^0$-estimates in our particular set-up.

Let us give more details on one of the few possible approaches on obtaining the $C^0$ bounds. A standard application of the open mapping principle shows that 
all of the curves that are solutions of the various perturbed pseudo-holomorphic curve equations that we need to consider in the procedure project into a compact subset $K\subset \mathbb{C}$ of the base of $f$. To deal with escaping to infinity within $f^{-1}(K)$ we can use monotonicity techniques since $L_i\cap f^{-1}(K)$ is compact for all $i=1,\ldots ,N$ and the standard flat metric on $\mathbb{C}^n$ is geometrically bounded.

Let us now recall very briefly what the objects of  $\mathcal{F}(f)$ are in the Abouzaid-Seidel approach. For every homotopy class of HAI vanishing paths let us choose a representative path $p_0$.
Next, for each graded thimble $T(p_0)$ over $p_0$,
we choose an infinite sequence $T(p_1),T(p_2),\ldots$ of graded thimbles,
such that the underlying HAI vanishing paths $p_i$
are homotopic to $p_0$ and the gradings are transported from $T(p_0)$, and such that
the sequence of real numbers $ord(p_i)$ is strictly increasing and tends
to infinity. Objects of $\mathcal{F}(f)$ are all the graded thimbles obtained as a result of this procedure (we assume that
our choices of paths are sufficiently generic).
Note that the objects $T(p_i)$ are all quasi-isomorphic to $T(p_0)$ as objects of $\mathcal{F}(f)$.

\begin{rmk}\label{rmk-C0-bounds} To achieve this last crucial point, Abouzaid-Seidel procedure involves localizing an auxilary $A_{\infty}$-category at certain continuation elements. Obtaining the $C^0$ estimates that are necessary to define these elements and prove that they satisfy the desired properties is non-trivial. The relevant perturbed pseudo-holomorphic curve equations involve moving boundary conditions (thimbles moving at infinity), which makes it difficult to use the open mapping principle. Therefore one needs to rely entirely on monotonicity techniques. Even though we fully believe that this can be done, we do not explain how to do it. This is one of the remaining steps to turn our strategy into a full proof.\end{rmk}


Given HAI vanishing paths $p_1,\ldots ,p_N$ and graded thimbles $T_1,\ldots T_N$ above them, which are assumed to be objects of $\mathcal{F}(f)$, 
one has a concrete way of computing the directed $A_{\infty}$-subcategory of the ordered collection $T_1,\ldots T_N$ in $\mathcal{F}(f)$. Namely, we find graded thimbles $\tilde{T}_1,\ldots ,\tilde{T}_N$ (not necessarily objects of the category) such that HAI vanishing paths $\tilde{p}_1,\ldots ,\tilde{p}_N$ are in the same homotopy class with $p_1,\ldots ,p_N$, respectively, and the brane structure on $\tilde{T}_i$ is transported from $T_i$, with the following property\begin{itemize}
\item the ordinals of $\tilde{p}_1,\ldots ,\tilde{p}_N$ are strictly decreasing.

\end{itemize}
Then the directed $A_{\infty}$-category $Fuk^{\to}(\tilde{T}_1,\ldots ,\tilde{T}_N)$ is quasi-isomorphic to the directed $A_{\infty}$-subcategory we are interested in, where $\tilde{T}_i$ is sent to $T_i.$ We call this the Computability property of $\mathcal{F}$. See Figure \ref{computedirect} for a depiction of the process.

\begin{figure}
\includegraphics[width=0.5\textwidth]{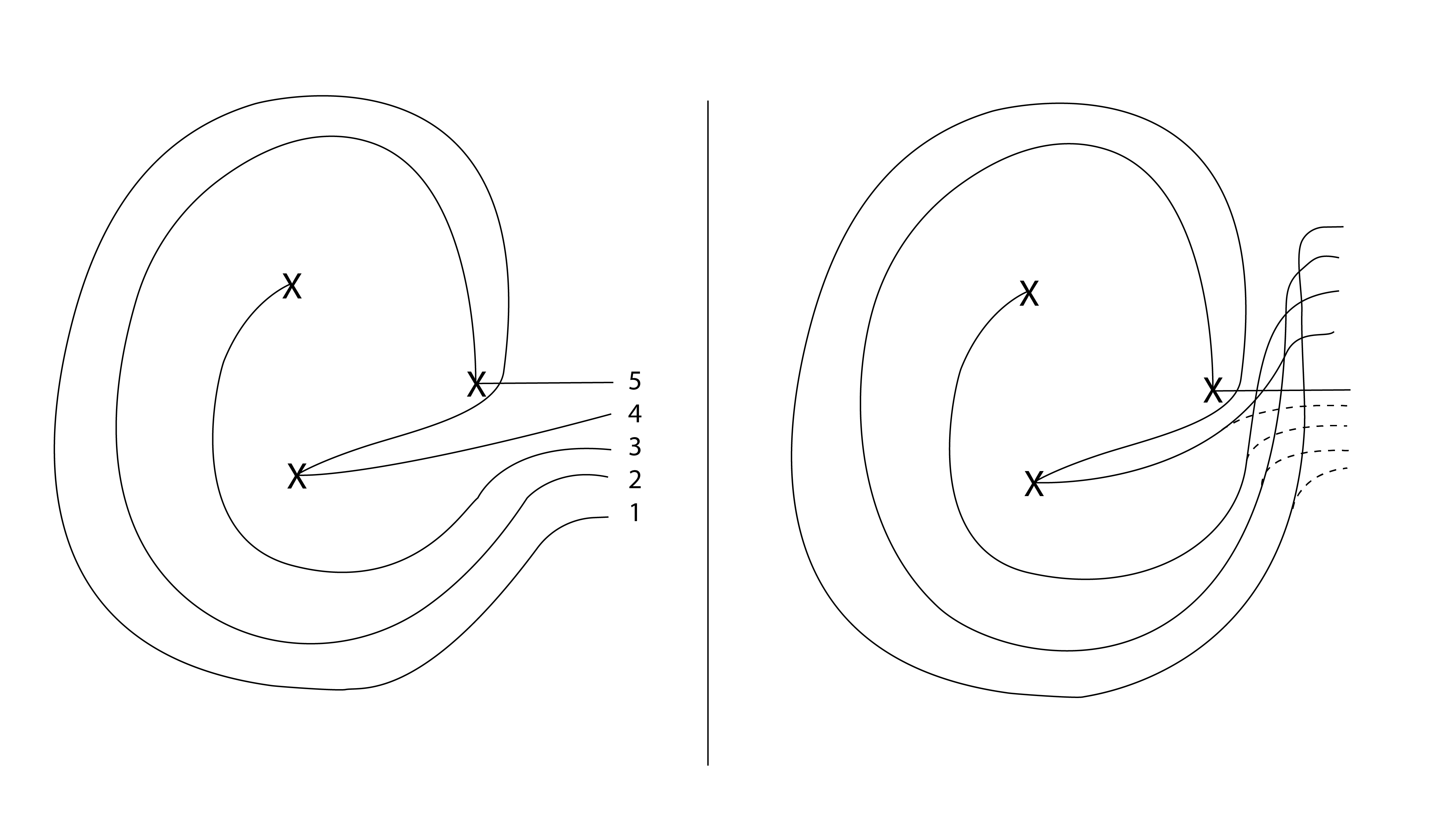}

\caption{On the left we see HAI vanishing paths of objects ordered as written. Their directed subcategory can be computed using the HAI vanishing paths on the right.}\label{computedirect}
\end{figure}

\begin{rmk}
For the $A_{\infty}$-category constructed in Seidel's book (Section 18 of \cite{Seidelbook}) such a computation involves the double covering trick and computing the invariant part of a certain $A_{\infty}$ algebra of closed Lagrangians under a $\mathbb{Z}/2$ action. This makes it hard to use in our argument. 
\end{rmk}

There is a more refined version of the Computability property if $p_1,\ldots ,p_N$ are pairwise disjoint paths with $ord(p_1)<\ldots<ord(p_n)$. We choose a sufficiently large positive integer $A$ and bend the paths to $\tilde{p}_1,\ldots ,\tilde{p}_N$ near $Re=\infty$ such that they all pass through $(A,0)$, but do not intersect otherwise. Then, we obtain an ordered collection of graded Lagrangian spheres (vanishing cycles) $V_1,\ldots ,V_N$ inside $f^{-1}(A)$. Now, we can define a directed Fukaya-Seidel category $FS^{\rightarrow}(V_1,\ldots ,V_N)$  as in \cite{Seidelmutation}. Combining the results of \cite{SeidelI} with the Computability property, we can show that $FS^{\rightarrow}(V_1,\ldots ,V_N)$ is quasi-isomorphic to the subcategory of $T_1,\ldots T_N$ with $V_i$ mapping to $T_i.$ Let us call this the Computability in the fiber property.

The usefulness of $\mathcal{F}(f)$ is entirely due to the following generation property. We first state it and then briefly explain the terms used in it.

\begin{conj} (Generation by distinguished collections)
 Yoneda images of a sequence of objects of $\mathcal{F}(f)$ which correspond to a distinguished collection of graded thimbles generate $Tw(\mathcal{F}(f))$.
\end{conj}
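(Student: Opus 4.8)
The plan is to prove that every object of $\mathcal{F}(f)$ --- that is, every graded thimble of an HAI vanishing path and every closed exact Lagrangian sphere --- lies in the triangulated subcategory $\langle T(\gamma_1),\ldots,T(\gamma_N)\rangle \subset Tw(\mathcal{F}(f))$ split-generated by a chosen distinguished collection of thimbles $T(\gamma_1),\ldots,T(\gamma_N)$, where $N$ is the number of critical points of $f$. Two reductions make this manageable. First, any two distinguished collections of thimbles are related by a finite sequence of Hurwitz moves and a global shift, and a Hurwitz move replaces a thimble by a mutation of it, hence by an object sitting in an exact triangle with two members of the collection; therefore $\langle T(\gamma_1),\ldots,T(\gamma_N)\rangle$ is independent of the distinguished collection used, and one is free to work with whichever one is geometrically convenient. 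Second, by the Computability and Computability-in-the-fiber properties all relevant morphism complexes and exact triangles may be computed after isotoping the vanishing paths into standard position, so the core of the argument reduces to a topological manipulation of vanishing paths combined with the Seidel exact triangle.

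\emph{Thimbles.} Given an arbitrary HAI vanishing path $p$ with graded thimble $T(p)$, I would choose a generic smooth isotopy, through HAI vanishing paths, from $p$ to the path $\gamma_1$; along the way the interior of the path crosses critical values of $f$ finitely many times, transversally and one at a time. Each elementary crossing of a critical value $v$ produces, by Picard--Lefschetz and the Seidel exact triangle, an exact triangle in $Tw(\mathcal{F}(f))$ of the shape
\[
T(p_{\mathrm{after}})\longrightarrow T(p_{\mathrm{before}})\longrightarrow T(\delta_v)[k]\longrightarrow
\]
for a short straight vanishing path $\delta_v$ to $v$ (the orientation and the shift $k$ depending on the side and direction of the crossing). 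Since $T(\delta_v)$ is obtained from the distinguished collection by Hurwitz moves up to shift and hence lies in $\langle T(\gamma_1),\ldots,T(\gamma_N)\rangle$, induction on the number of crossings gives $T(p)\in\langle T(\gamma_1),\ldots,T(\gamma_N)\rangle$. The genuinely analytic ingredient here is the $C^0$-control of the continuation and wall-crossing curves, of exactly the type flagged in Remark \ref{rmk-C0-bounds}; granting that, this step is standard.

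\emph{Closed spheres.} Let $L\subset\mathbb{C}^n$ be a closed exact Lagrangian sphere. After a Hamiltonian isotopy one may arrange $L\subset f^{-1}(\{\Re z < c\})$ with $c$ exceeding the real parts of all critical values; restricting $f$ to this half-plane presents it as a tame Lefschetz fibration over a disk, the distinguished thimbles becoming its Lefschetz thimbles and the $\gamma_i$ a distinguished basis of vanishing cycles in the fiber. Two routes then conclude. The first is Seidel-style: the distinguished basis of vanishing cycles split-generates the Fukaya category of the fiber, and the long exact sequence relating a fiber Lagrangian, its monodromy images and the thimbles writes $[L]$ as an iterated cone on the $T(\gamma_i)$. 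The second, better adapted to the Abouzaid--Seidel model $\mathcal{F}(f)$ used here, identifies the thimbles with the cocores of the handles of the Weinstein structure underlying $(\mathbb{C}^n,f)$ and applies the Ganatra--Pardon--Shende generation criterion to split-generate the (partially wrapped) Fukaya category by its cocores; the directed part of this is $\mathcal{F}(f)$, so the distinguished thimbles split-generate $Tw(\mathcal{F}(f))$. Because the distinguished collection is a full exceptional collection, $Tw(\mathcal{F}(f))$ is already idempotent complete and split-generation upgrades to honest generation, giving the statement.

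\textbf{Main obstacle.} The topological moves and the formal consequences of the exceptional-collection structure are routine; the real difficulty --- and the reason this is stated as a conjecture --- is foundational. One needs the $C^0$ and monotonicity estimates for the moving-boundary-condition pseudo-holomorphic curves that underlie both the definition of $\mathcal{F}(f)$ and the wall-crossing triangles (Remark \ref{rmk-C0-bounds}), and, whichever route one takes in the closed-sphere step, one must verify that the corresponding generation mechanism --- Seidel's fibered generation theorem, or the Ganatra--Pardon--Shende criterion --- is genuinely available in the precise model of $\mathcal{F}(f)$ for tame, non-compact LG models; reconciling these frameworks is precisely the missing foundational input referred to in the introduction.
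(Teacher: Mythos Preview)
The statement you are addressing is stated in the paper as a \emph{conjecture}, not a theorem; the paper gives no proof.  Its entire commentary is Remark~\ref{rmk-generation}, which says the result is widely expected to follow from a translation to the Weinstein sector framework (essentially the Ganatra--Pardon--Shende route you invoke in your second approach to closed spheres) but that this translation has not been carried out.  So there is no paper proof to compare your proposal against; your sketch is already as much of an argument as the paper itself claims to possess, and your diagnosis of the ``main obstacle'' coincides exactly with the paper's.

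A couple of comments on the proposal itself.  First, in the paper's model of $\mathcal{F}(f)$ the objects are \emph{only} graded thimbles over the chosen HAI vanishing paths --- closed exact Lagrangian spheres are not objects of $\mathcal{F}(f)$ --- so your ``closed spheres'' step is superfluous for the conjecture as stated.  Second, your thimble argument via wall-crossing and Hurwitz moves is the natural one, but note that the Seidel-type exact triangle you invoke for a thimble crossing a critical value is itself part of the missing analytic foundations in this non-compact setting (it requires exactly the moving-boundary $C^0$ control of Remark~\ref{rmk-C0-bounds}), not an independently available ingredient.  Also, a minor slip: you should isotope $p$ to the distinguished path $\gamma_k$ ending at the \emph{same} critical value as $p$, not to $\gamma_1$.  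Granting these points, your outline is a fair summary of how the community expects a proof to go, and is consistent with the paper's own expectations.
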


\begin{rmk}\label{rmk-generation}
It is widely expected that this property will follow from a geometric translation to the Weinstein sector framework, but this has not been done in the literature yet. This is the main missing piece from our strategy being a full proof.
\end{rmk}

A collection of pairwise collection HAI vanishing paths, one for each critical value, is called a distinguished collection of HAI vanishing paths. Choosing an arbitrary brane structure on each of the thimbles gives what we called above a distinguished collection of graded thimbles. Note that such a collection $T_1,\ldots,T_n$ can be naturally ordered by requiring that the corresponding paths $p_1,\ldots,p_n$ satisfy $ord(p_1)<\ldots<ord(p_n)$.
With this order $(T_1,\ldots,T_n)$ is an exceptional collection in $Tw(\mathcal{F}(f))$, and the above conjecture
states that this exceptional collection is full.

The following weak version of the old conjecture ``monodromy gives a Serre functor" is crucial in our argument. Its proof is quite simple given the Generation by distinguished collections property.

\begin{prop} (Geometric helix equals algebraic helix)
Consider a collection of homotopy classes of HAI vanishing paths $\{\gamma_i\}_{i\in \mathbb{Z}}$ such that \begin{itemize}
\item $\gamma_1,\ldots \gamma_n$ can be represented by a distinguished collection of HAI vanishing paths
\item For every $i\in \mathbb{Z}$, a representative of $\gamma_{i-n}$ is given by applying the monodromy diffeomorphism (see the left side of Figure \ref{monodromy}) to a representative of $\gamma_i$.
\end{itemize}
Assume that $\{T_i\}_{i\in \mathbb{Z}}$ are some corresponding objects of $\mathcal{F}(f)$. The brane structures can be chosen such that the Yoneda images of this collection forms a helix inside $Tw(\mathcal{F}(f))$.
\end{prop}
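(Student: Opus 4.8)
The plan is to identify the bi-infinite geometric collection $\{T_i\}$ with the algebraic helix generated by the exceptional collection $(T_1,\ldots,T_n)$ inside $Tw(\mathcal{F}(f))$, exploiting that the base monodromy realizes the full twist braid and that this braid is central. First I would invoke Generation by distinguished collections: ordered by ordinal, $(T_1,\ldots,T_n)$ is then a \emph{full} exceptional collection in $Tw(\mathcal{F}(f))$, so $Tw(\mathcal{F}(f))$ is the twisted completion of the finite directed $A_\infty$-category cut out by these objects, and it makes sense to speak of the iterated mutations that define its helix. Next, using the Computability property (and Computability in the fiber), I would transport Seidel's analysis of elementary Hurwitz moves into the thimble model: if two ordered distinguished collections of HAI vanishing paths differ by a single Hurwitz move --- dragging one path past an adjacent one --- then, after a suitable rearrangement of the paths, the corresponding exceptional collections in $Tw(\mathcal{F}(f))$ differ by the corresponding elementary left or right mutation (up to the usual shift ambiguity). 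This step, which is where the non-compactness of the thimbles must be controlled, is the main technical point.

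Now recall the classical fact that applying the monodromy diffeomorphism of the base --- a full twist around a disc containing all the critical values --- to a distinguished basis $(\gamma_1,\ldots,\gamma_n)$ realizes the full twist braid $\Delta^2=(\sigma_1\cdots\sigma_{n-1})^n\in\mathrm{Br}_n$, while on the algebraic side iterating $n$ times the operation ``move the last object of an exceptional collection to the front by $n-1$ left mutations'' is exactly the mutation realizing $\Delta^2$; in particular it sends the window $(T_1,\ldots,T_n)$ to the window $(T_{1-n},\ldots,T_0)$ of the algebraic helix. Combined with the previous paragraph and the hypothesis $\gamma_{i-n}=m(\gamma_i)$, this gives the one-period mutation relations $T_{i-n}\simeq L_{T_{i-n+1}}\cdots L_{T_{i-1}}T_i$ for $i=1,\ldots,n$, and the standard mutation calculus then shows that every window $(T_j,\ldots,T_{j+n-1})$ with $1-n\le j\le 1$ is an exceptional collection. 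Since $\Delta^2$ is central in $\mathrm{Br}_n$, ``apply $m$'' commutes with every Hurwitz move; equivalently, the symplectic parallel transport around a large loop enclosing all the critical values (supported away from infinity in the base, hence preserving HAI paths and acting on them by $p\mapsto m(p)$) induces an autoequivalence $\widetilde\Psi_*$ of $Tw(\mathcal{F}(f))$ with $\widetilde\Psi_*(T_i)\simeq T_{i-n}$ for all $i$, under which mutations are natural. Applying $\widetilde\Psi_*^{\pm 1}$ repeatedly propagates both the mutation relations and the exceptionality of windows from the single period $1-n\le j\le 1$ to all of $\mathbb{Z}$, so $\{T_i\}$ is a helix at the ungraded level.

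Finally, fixing arbitrary brane structures on $T_1,\ldots,T_n$, there is a unique way to grade the remaining $T_i$ so that all the mutation triangles lift to graded triangles --- concretely this amounts to transporting the gradings across periods by a graded lift $\widetilde\Psi$ of the monodromy symplectomorphism --- and with this choice the Yoneda images of $\{T_i\}$ form a helix in $Tw(\mathcal{F}(f))$ in the graded sense. The only serious obstacle is the one isolated above: making Seidel's Hurwitz-move/mutation dictionary rigorous in the thimble model $\mathcal{F}(f)$, which rests on the Computability property and hence, ultimately, on the same foundational input --- the $C^0$-estimates of Remark \ref{rmk-C0-bounds} and the generation statement of Remark \ref{rmk-generation} --- that is missing elsewhere in the paper.
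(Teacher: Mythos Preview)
Your proposal is correct, but it takes a considerably more elaborate route than the paper's own argument. You invoke the full Hurwitz-move/mutation dictionary and the braid-group interpretation of the monodromy as the central element $\Delta^2$, then propagate the mutation relations period by period. The paper instead bypasses this machinery entirely: it simply reads off from the geometric arrangement of the HAI paths (Figure~3) that $\Hom^*(T_i,T_0)=0$ for $1\le i\le n-1$ while $\Hom^*(T_n,T_0)$ is one-dimensional, and dually $\Hom^*(T_{n+1},T_i)=0$ for $2\le i\le n$ with $\Hom^*(T_{n+1},T_1)$ one-dimensional. Together with generation, these orthogonality statements force $T_0\simeq L_{\langle T_1,\ldots,T_{n-1}\rangle}T_n$ and $T_{n+1}\simeq R_{\langle T_2,\ldots,T_n\rangle}T_1$ up to shift, which are exactly the defining mutation relations for consecutive windows of the helix; iterating gives the result.

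The trade-off: the paper's argument is shorter and needs only the Computability property to justify a handful of specific $\Hom$ vanishings, whereas your route requires the full elementary-Hurwitz-move $\leftrightarrow$ elementary-mutation correspondence in the thimble model --- a stronger input, though one that is conceptually illuminating and connects the statement to the familiar ``monodromy $=$ Serre functor'' picture. Both rest on the same missing foundations (Remarks~\ref{rmk-C0-bounds} and~\ref{rmk-generation}), so neither is more rigorous than the other at present.
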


\begin{proof}[Proof sketch] 
From Figure 3 (which gives an example with $n=3$) we see that $\Hom(T_i,T_0)=0$ for $i=1,\ldots,n-1$, and $\Hom(T_n,T_0)$ is $1$-dimensional.
This implies that $T_0$ with an appropriate brane structure is the left mutation of $T_n$ through $\lan T_1,\ldots,T_{n-1}\ran$.
Similarly, $\Hom(T_{n+1},T_i)=0$ for $i=2,\ldots,n$, and $\Hom(T_{n+1},T_1)$ is $1$-dimensional. Hence, $T_{n+1}$ with an appropriate brane structure is the right mutation of
$T_1$ through $\lan T_2,\ldots,T_n\ran$. Since the helix is obtained by iterating these two kinds of mutations, our assertion follows.
\end{proof}

We will also use the following geometric realization of dual exceptional collections 
(see Sec.\ \ref{exc-coll-basics-sec} for the definitions concerning exceptional collections). The proof is again straightforward assuming generation by distinguished collections.

Given a homotopy class of a distinguished collection of HAI vanishing paths $[\{\gamma_i\}_{i=0}^n]$, we can talk about the left and right dual homotopy class of a distinguished collection of HAI vanishing paths. The left (resp. right) dual admits a representative distinguished collection $\{^\vee\gamma_i\}_{i=-n}^0$ (resp. $\{\gamma_i^\vee\}_{i=n}^{2n}$)  all of whose ordinals are smaller (resp. larger) than the ordinals of $\gamma_i$, $i=0,\ldots ,n$ and $\gamma_i$ and $^\vee\gamma_j$ (resp.$\gamma_k^\vee$) can only intersect at a critical value for all $i=0,\ldots ,n$ and $j=-n,\ldots ,0$ (resp. $k=n,\ldots ,2n$).

\begin{prop} (Geometric dual equals algebraic dual)
Consider a homotopy class of a distinguished collection of HAI vanishing paths $[\{\gamma_i\}_{i=0}^n]$ and let $[\{^\vee\gamma_i\}_{i=-n}^0]$ be the left dual.
Assume that $\{T_i\}_{i=0}^n$ and $\{T_i\}_{i=-n}^0$ are some corresponding objects of $\mathcal{F}(f)$. Up to shifts, the Yoneda images of the $\{T_i\}_{i=-n}^0$ give the left dual exceptional collection to the one of $\{T_i\}_{i=0}^n$ inside $Tw(\mathcal{F}(f))$. The analogous statement holds for the right duals.
\end{prop}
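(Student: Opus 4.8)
The plan is to follow exactly the same strategy that worked for the "Geometric helix equals algebraic helix" proposition: reduce the duality statement to a sequence of mutations, and then realize each mutation geometrically using vanishing numbers of HAI paths. First I would recall the algebraic characterization of the left dual exceptional collection: if $(T_0,\ldots,T_n)$ is a full exceptional collection in $Tw(\mathcal{F}(f))$, then its left dual $(^{\vee}T_{-n},\ldots,{}^{\vee}T_0)$ is uniquely determined (up to shifts) by the orthogonality conditions $\Hom^*({}^{\vee}T_{-i},T_j)=0$ for $i\neq j$ together with $\Hom^*({}^{\vee}T_{-i},T_i)$ being one-dimensional in degree $0$; equivalently it is built from $(T_0,\ldots,T_n)$ by an explicit iterated sequence of left mutations. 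Since by the Generation conjecture $(T_0,\ldots,T_n)$ is a full exceptional collection, the left dual exists and is characterized this way, so it suffices to verify these $\Hom$-conditions for the objects $\{T_i\}_{i=-n}^0$ attached to a representative $\{^{\vee}\gamma_i\}_{i=-n}^0$ of the left dual homotopy class.

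The key geometric input is the Computability property of $\mathcal{F}(f)$: because the $^{\vee}\gamma_i$ all have ordinals smaller than those of the $\gamma_j$, the morphism spaces $\Hom^*(T_{-i},T_j)$ for $-i\in\{-n,\ldots,0\}$, $j\in\{0,\ldots,n\}$ can be computed in a directed $Fuk^{\to}$ of representatives whose ordinals are arranged in the correct decreasing order, and — using the refined Computability in the fiber property — by passing to a fiber $f^{-1}(A)$ where these morphisms are read off from intersections of the corresponding vanishing cycles. The hypothesis that $\gamma_j$ and $^{\vee}\gamma_i$ meet only at critical values, combined with the fact that one can choose representatives of a distinguished collection and its left dual that are "non-crossing" in the standard picture (the dual paths emanate on the far side, wrapping around so that $^{\vee}\gamma_i$ is disjoint from $\gamma_j$ except when $i=j$ where they share exactly one critical value), forces $\Hom^*(T_{-i},T_j)=0$ for $i\neq j$ and $\Hom^*(T_{-i},T_i)$ to be generated by the single intersection at the shared critical value, hence one-dimensional; grading considerations pin down its degree up to the overall shift, which is why the statement is only claimed up to shifts (cf. Remark \ref{rem-A-grade}). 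The identical argument with ordinals increasing and the paths $\gamma_k^{\vee}$ on the other side handles the right dual.

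The main obstacle is the same one flagged throughout this section: the whole reduction rests on the Generation by distinguished collections conjecture, since without it one does not know that $(T_0,\ldots,T_n)$ is \emph{full} in $Tw(\mathcal{F}(f))$, and the very notion of "the" left dual exceptional collection — and its uniqueness up to shifts — presupposes fullness. A secondary technical point is the verification that one can genuinely choose the representatives $\{\gamma_j\}$ and $\{^{\vee}\gamma_i\}$ simultaneously so that the Computability-in-the-fiber hypotheses hold (pairwise disjoint, strictly ordered ordinals, meeting only at critical values), and that the resulting count of intersection points in the fiber matches the algebraic duality pattern exactly; this is a standard picture-drawing argument with distinguished bases of vanishing paths, but it does require care that the bending near $\Re=\infty$ does not create spurious intersections. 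Given these inputs the proof is short, which is why we present it as a sketch.
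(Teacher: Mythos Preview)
Your overall strategy matches what the paper has in mind; the paper itself gives no detailed argument for this proposition, merely remarking that ``the proof is again straightforward assuming generation by distinguished collections.'' Your plan---verify the $\Hom$-orthogonality conditions characterizing the left dual by reading off intersections of the paths $\gamma_i$ and ${}^\vee\gamma_j$, invoking Computability and Generation---is exactly the intended route.

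There is, however, a genuine error in the direction of your $\Hom$-conditions. The paper's definition of the left dual (Section~\ref{exc-coll-basics-sec}) requires
\[
\Hom^*(E_i,F_{-j})=0 \text{ for } j\neq i, \qquad \Hom^*(E_i,F_{-i})=\k[0],
\]
i.e.\ morphisms \emph{from the original collection to the dual}. You wrote the opposite direction, $\Hom^*({}^{\vee}T_{-i},T_j)=0$ for $i\neq j$. That condition is simply false for the left dual: since $F_0=E_0$, one has $\Hom^*(F_0,E_j)=\Hom^*(E_0,E_j)$, which is typically nonzero for $j>0$. So your stated characterization does not determine the dual.

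This also affects your description of what the geometry computes. Since the ${}^\vee\gamma_i$ have \emph{smaller} ordinals than the $\gamma_j$, the Computability property (decreasing ordinals in $Fuk^{\to}$) places the $T_j$ before the ${}^\vee T_{-i}$ in the directed list, and so what one reads off from disjointness of paths is $\Hom^*(T_j,{}^\vee T_{-i})$---precisely the paper's convention---not $\Hom^*({}^\vee T_{-i},T_j)$ as you wrote. Once you swap the direction of the $\Hom$-spaces throughout, your sketch is correct and complete (modulo the same foundational caveats you already flag).
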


\subsection{Outline of the recursion on the A-side}\label{ss-A-outline}

For an $n$-tuple of positive integers $a=(a_1,..,a_n)\in \mathbb{Z}^n_{>1}$, $n\geq 1$, we define the polynomial:
\begin{align}p_a(z_1,\ldots,z_n):= -z_1^{a_1}z_{2}+z_2^{a_2}z_{3}-\ldots +(-1)^{n}z_n^{a_n}.
\end{align}

Note that we have changed the signs of some of terms from the original definition of $p_a$ given in the introduction . This choice makes the critical point computations much cleaner.  It is straightforward to relate our statements here to the statements in the introduction by simple diagonal changes of variables.

Let us also define $g_a:\mathbb{C}^n\to \mathbb{C}$ as the Lefschetz fibration given by $$z\mapsto z_1+p_a(z).$$ 
Recall that we defined $$\mu(a)=\mu(a_1,\ldots,a_n)=a_1\ldots a_n-a_2\ldots a_n+a_3\ldots a_n-\ldots$$ in the introduction. It is well known that $\mu(a)$ is the  Milnor number of the singularity of $p_a$. For a discussion of the convenient numerics of $\mu(a)$ see Section \ref{AT-basic-sec}. The map $g_a$ has $\mu({{a}})$ critical points, and the corresponding critical values are distinct and placed equiangularly on a circle centered at the origin. One of the critical values is on the positive real axis. For proofs of these statements see Appendix A in \cite{Seifert}.
Furthermore, the fact that the number of critical points of $\epsilon z_1+p_a(z)$ for all $\epsilon\in\mathbb{C}^*$ is equal to the Milnor number $\mu(a)$ implies that the
critical points of $g_a$ are nondegenerate.

Let us fix a large positive real number $A$ and introduce some vanishing paths in the base of $g_a$ whose one end is at $A$ and none of which intersect the positive real axis to the right of $A$. Figure \ref{fspiral} should help the reader follow along. We will call some of our vanishing paths standard and others dual. We will not be careful about distinguishing between vanishing paths and their homotopy classes.


We first describe the standard vanishing paths $\{\gamma_i\}_{i=-\infty}^{\infty}$. These are indexed by integers and the one corresponding to $0$, i.e.
$\gamma_0$, is the straight path from the positive real critical value to $A$.  Recall that we defined the diffeomorphism $\varphi:\mathbb{C}\to \mathbb{C}$ as the composition $\varphi:=\varphi_{1}\circ \mathrm{rot}_{2\pi/\mu(a)}$ in Section \ref{ss-Aside-intro}. Note that $\varphi$ preserves the set of critical values and
has a symplectomorphism lift $\Phi:=\rho_{a,1}\circ \psi_a$. For all $i\in \mathbb{Z}$, we define $$\gamma_i:=\varphi^{i}(\gamma_0).$$

Second, we introduce the dual vanishing paths $\{^\vee\gamma_i\}_{i=-\mu(a)+1}^{0}$ as the left dual distinguished collection of vanishing paths to the distinguished collection $\{\gamma_i\}_{i=0}^{\mu(a)-1}$. These are the dashed paths from Figure \ref{fspiral}.

\begin{figure}
\includegraphics[width=0.7\textwidth]{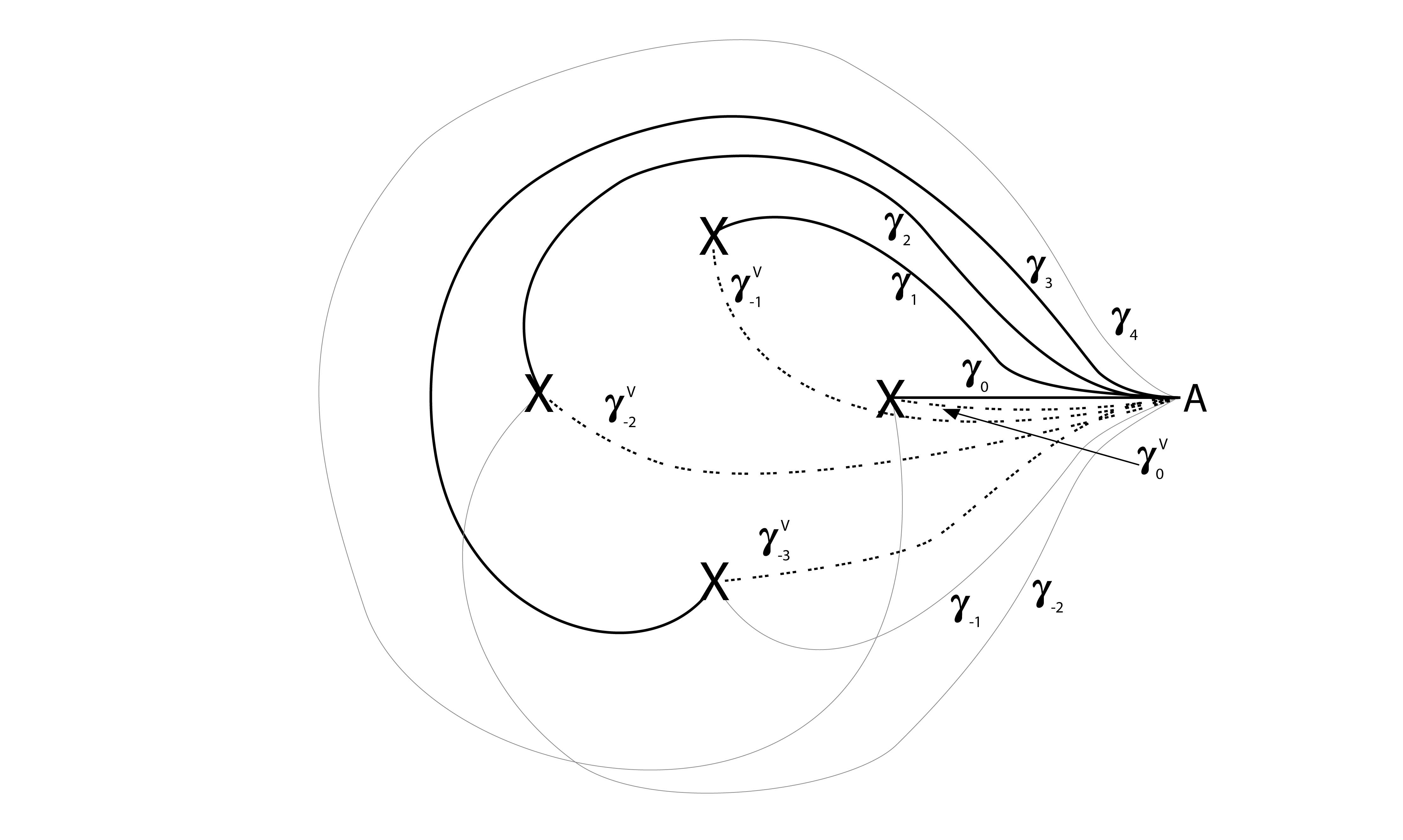}
\caption{Vanishing paths in the base of $g_a$}
\label{fspiral}
\end{figure}

In what follows we will not be keeping track of the gradings of Lagrangian branes, and only talk about the underlying Lagrangian submanifolds, see Remark \ref{rem-A-grade}. This is of course an abuse, but we believe it will not cause confusion. Since, we will not be able to keep track of the gradings in our arguments, adding grading data would only result in cluttering up the notation.

Let $(\mathcal{A}_a; E_0,\ldots,E_{\mu(a)-1})$ be the directed Fukaya-Seidel $A_{\infty}$ category with the exceptional collection defined using the vanishing Lagrangian spheres of $\gamma_0,\ldots,\gamma_{\mu(a)-1}$ as in \cite{Seidelmutation}.
\begin{rmk}Note that because of the symmetry by $\Phi$ 
the directed $A_{\infty}$-categories defined using $\gamma_k,\ldots,\gamma_{k+\mu(a)-1}$ are quasi-isomorphic for all $k\in\mathbb{Z}$ where the ordering of the objects is preserved. \end{rmk}

Let us call $\mathcal{D}_a$ the directed Fukaya-Seidel $A_{\infty}$ category of the Lagrangian vanishing spheres of $^\vee\gamma_{-\mu(a)+1},\ldots,^\vee\gamma_{0}.$ The following proposition can be proven using the results in \cite{Seidelbook}. 
Note that it can also be deduced formally
from the properties of $\mathcal{F}(g_a)$ discussed in Section \ref{ss-fukaya-seidel} (Generation by a distinguished collection, Computability in the fiber and Geometric dual equals algebraic dual).

\begin{prop}\label{Da-Aa-prop}
$\mathcal{D}_a$ is quasi-isomorphic to the $A_{\infty}$ subcategory of $Tw(\mathcal{A}_a)$ corresponding to the exceptional collection left dual to the Yoneda image of the defining exceptional collection of $\mathcal{A}_a.$
\end{prop}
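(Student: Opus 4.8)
The plan is to realize both $\mathcal{A}_a$ and $\mathcal{D}_a$ as directed subcategories inside the thimble category $\mathcal{F}(g_a)$ of Section \ref{ss-fukaya-seidel}, and then to transport the resulting identification of $\mathcal{D}_a$ with a dual collection back into $Tw(\mathcal{A}_a)$. Throughout I disregard gradings, i.e.\ work up to shifts, as in Remark \ref{rem-A-grade}.

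First I would lift the geometric data: choose objects $T_0,\dots,T_{\mu(a)-1}$ of $\mathcal{F}(g_a)$ lying over the standard vanishing paths $\gamma_0,\dots,\gamma_{\mu(a)-1}$ and objects $S_{-\mu(a)+1},\dots,S_0$ lying over the dual vanishing paths ${}^\vee\gamma_{-\mu(a)+1},\dots,{}^\vee\gamma_0$, with arbitrary brane structures. Both $\{\gamma_i\}_{i=0}^{\mu(a)-1}$ and $\{{}^\vee\gamma_i\}_{i=-\mu(a)+1}^{0}$ are distinguished collections of HAI vanishing paths in their natural (ordinal) order. Applying the \emph{Computability in the fiber} property to each of them---bending the paths near $\Re=\infty$ so that they all pass through $(A,0)$---produces exactly the ordered tuples of vanishing spheres that define $\mathcal{A}_a$ and $\mathcal{D}_a$, and shows that the directed $A_\infty$-subcategory of $(T_0,\dots,T_{\mu(a)-1})$ in $\mathcal{F}(g_a)$ is quasi-isomorphic to $\mathcal{A}_a$ (with $T_i\mapsto E_i$) and that the directed $A_\infty$-subcategory of $(S_{-\mu(a)+1},\dots,S_0)$ in $\mathcal{F}(g_a)$ is quasi-isomorphic to $\mathcal{D}_a$. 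Since each of these two collections is exceptional in $Tw(\mathcal{F}(g_a))$, its directed $A_\infty$-subcategory coincides with the full $A_\infty$-subcategory spanned by the same thimbles---the ``backward'' morphism complexes being acyclic---and hence with the full $A_\infty$-subcategory of $Tw(\mathcal{F}(g_a))$ on the corresponding Yoneda images.

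Next I would use \emph{Generation by distinguished collections}: the Yoneda images of $T_0,\dots,T_{\mu(a)-1}$ form a full exceptional collection in $Tw(\mathcal{F}(g_a))$, so the inclusion of the full subcategory on these objects induces an equivalence $Tw(\mathcal{A}_a)\simeq Tw(\mathcal{F}(g_a))$ carrying $E_i$ to $T_i$. By \emph{Geometric dual equals algebraic dual}, up to shifts the Yoneda images of $S_{-\mu(a)+1},\dots,S_0$ form the left dual exceptional collection to the Yoneda images of $T_0,\dots,T_{\mu(a)-1}$ in $Tw(\mathcal{F}(g_a))$. Transporting this along the equivalence above---which, being an equivalence, sends the left dual of $(T_i)$ to the left dual of $(E_i)$---identifies the full $A_\infty$-subcategory of $Tw(\mathcal{A}_a)$ on the left dual of the defining exceptional collection $(E_0,\dots,E_{\mu(a)-1})$ with the full $A_\infty$-subcategory of $Tw(\mathcal{F}(g_a))$ on the images of $S_{-\mu(a)+1},\dots,S_0$, which by the previous step is $\mathcal{D}_a$. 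This is the assertion.

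The one genuine obstacle is that, like everything on the A-side here, this argument is conditional on the foundational inputs for $\mathcal{F}(g_a)$ that are still missing: the $C^0$-estimates needed even to construct $\mathcal{F}(g_a)$ and its continuation elements (Remark \ref{rmk-C0-bounds}) and, above all, \emph{Generation by distinguished collections} (Remark \ref{rmk-generation}). Granted those, the proof is formal bookkeeping with exceptional collections and their duals. A more classical alternative would be to extract the statement directly from \cite{Seidelbook}, where dual distinguished bases of vanishing paths are known to give dual exceptional collections; but there the computation of directed subcategories passes through the double-cover trick, which is awkward to interface with the rest of our computation, so I would use the route through $\mathcal{F}(g_a)$.
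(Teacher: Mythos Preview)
Your argument is correct and follows exactly the route the paper itself indicates: the paper does not give a detailed proof but states that the proposition ``can be proven using the results in \cite{Seidelbook}'' and ``can also be deduced formally from the properties of $\mathcal{F}(g_a)$ discussed in Section \ref{ss-fukaya-seidel} (Generation by a distinguished collection, Computability in the fiber and Geometric dual equals algebraic dual)''---which is precisely the chain of reductions you wrote out. One small correction of emphasis: the paper treats the \cite{Seidelbook} route as the primary, \emph{unconditional} proof of this particular proposition, so your closing caveat that ``this argument is conditional on the foundational inputs for $\mathcal{F}(g_a)$'' applies only to the route you chose, not to the proposition itself; the statement is established independently of the missing generation result.
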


Finally, let $N$ be a positive integer and let us consider an ordered collection of HAI vanishing paths in the base of $g_a$ defined as follows. Let $\tilde{\gamma}$ be the HAI vanishing path starting at the positive real critical value and going along the real axis. Consider the collection 
$$\varphi^{-N+1}(\tilde{\gamma}),\ldots ,\varphi^{-1}(\tilde{\gamma}),\tilde{\gamma}$$ and isotope them slightly (keeping them HAI vanishing paths) to 
\begin{equation}\label{tw-gamma-collection}
\tilde{\gamma}_{-N+1},\ldots ,\tilde{\gamma}_{-1},\tilde{\gamma}_0
\end{equation}
so that the ordinals of $\tilde{\gamma}_{-N+1},\ldots ,\tilde{\gamma}_{-1},\tilde{\gamma}_0$ are strictly decreasing. As we discussed in Section \ref{ss-fukaya-seidel}, we can define a directed $A_{\infty}$-category of the graded thimbles of $\tilde{\gamma}_{-N+1},\ldots ,\tilde{\gamma}_1,\tilde{\gamma}_0$. Let us call this category $\mathcal{H}_a(N).$

The following Proposition follows from the Computability,  Computability in the fiber, Generation by a distinguished collection and Geometric helix equals algebraic helix properties of $\mathcal{F}(g_a)$ as discussed in Section \ref{ss-fukaya-seidel}. We are not aware of a proof that only relies on results in existing literature. 

\begin{prop}\label{Ha-prop}
$\mathcal{H}_a(N)$ is quasi-isomorphic to the directed $A_{\infty}$ subcategory of $Tw(\mathcal{A}_a)$ corresponding to the length $N$ truncation of the helix generated by  
(Yoneda image of) the exceptional collection $E_0,\ldots,E_{\mu(a)-1}$ with the last element of the truncated helix being $E_0$.
\end{prop}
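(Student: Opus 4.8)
The plan is to deduce Proposition \ref{Ha-prop} from the four structural properties of $\mathcal{F}(g_a)$ isolated in Section \ref{ss-fukaya-seidel}, by matching the two sides object-by-object through the helix. First I would set up the geometric helix: let $\{\gamma_i\}_{i\in\mathbb{Z}}$ be the standard vanishing paths, so that $\gamma_0,\ldots,\gamma_{\mu(a)-1}$ represent a distinguished collection and $\gamma_{i-\mu(a)}$ is obtained from $\gamma_i$ by the monodromy diffeomorphism (this is precisely the setup of ``Geometric helix equals algebraic helix''). By that proposition, once we fix objects $T_i\in\mathcal{F}(g_a)$ over $\gamma_i$ with suitably chosen brane structures, the Yoneda images $\{\mathcal{Y}(T_i)\}_{i\in\mathbb{Z}}$ form a helix in $Tw(\mathcal{F}(g_a))$ whose segment $T_0,\ldots,T_{\mu(a)-1}$ is (up to shifts) the Yoneda image of the defining exceptional collection $E_0,\ldots,E_{\mu(a)-1}$ of $\mathcal{A}_a$. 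Hence the length-$N$ truncation of the algebraic helix ending at $E_0$ is quasi-isomorphic, as a directed $A_\infty$-subcategory of $Tw(\mathcal{A}_a)$, to the directed subcategory of $\mathcal{F}(g_a)$ on the objects $T_{-N+1},\ldots,T_{-1},T_0$ (here I use Generation by a distinguished collection to know that $Tw(\mathcal{A}_a)\simeq Tw(\mathcal{F}(g_a))$, so passing to subcategories of the latter computes subcategories of the former).

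The second half is to identify this subcategory on $T_{-N+1},\ldots,T_0$ with $\mathcal{H}_a(N)$. Here I would observe that the objects $T_i$ can be taken to lie over the homotopy classes of the paths $\varphi^{i}(\tilde\gamma)$ — indeed $\tilde\gamma$ is homotopic to $\gamma_0$ as an HAI vanishing path (both go from the positive real critical value out along the real axis to infinity), and $\gamma_i=\varphi^i(\gamma_0)$, so $\varphi^i(\tilde\gamma)$ is homotopic to $\gamma_i$, and the brane structures match by transport. Now $\mathcal{H}_a(N)$ was defined as $Fuk^\to$ of the thimbles over the paths $\tilde\gamma_{-N+1},\ldots,\tilde\gamma_0$, which are small HAI-isotopies of $\varphi^{-N+1}(\tilde\gamma),\ldots,\tilde\gamma$ arranged so that their ordinals are strictly decreasing. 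This is exactly the input required by the Computability property: to compute the directed $A_\infty$-subcategory of $T_{-N+1},\ldots,T_0$ inside $\mathcal{F}(g_a)$ one replaces the $T_i$ by thimbles over homotopic HAI vanishing paths whose ordinals are strictly decreasing, and then $Fuk^\to$ of the replacements computes that subcategory. The strictly-decreasing-ordinals condition is met by construction of the $\tilde\gamma_i$, so Computability gives $\mathcal{H}_a(N)\simeq$ (directed subcategory of $\mathcal{F}(g_a)$ on $T_{-N+1},\ldots,T_0$). One should also invoke Computability in the fiber at the point where ``directed subcategory of $\mathcal{F}(g_a)$'' is reconciled with the Seidel-style directed Fukaya-Seidel category in a fiber $g_a^{-1}(A)$, matching the formalism in which $\mathcal{A}_a$ was originally defined; this is the step that lets the two ``up to quasi-isomorphism respecting the order'' statements be chained.

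Combining the two halves: the directed subcategory of $\mathcal{F}(g_a)$ on $T_{-N+1},\ldots,T_0$ is, on one side, quasi-isomorphic to $\mathcal{H}_a(N)$ (by Computability, plus Computability in the fiber), and on the other side quasi-isomorphic to the length-$N$ truncation of the helix in $Tw(\mathcal{A}_a)$ ending at (the Yoneda image of) $E_0$ (by Geometric helix equals algebraic helix, plus Generation). Transitivity of quasi-isomorphism of directed $A_\infty$-categories, which preserves the ordering throughout, then yields the claim. I do not track shifts, consistently with Remark \ref{rem-A-grade}, so the identifications are only up to shifts, which is exactly what the statement of the proposition (read in the shift-free conventions of this section) asserts.

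\emph{Main obstacle.} The genuine content — and the main place the argument is not yet a full proof — is that all four properties being invoked (Computability, Computability in the fiber, Generation by a distinguished collection, Geometric helix equals algebraic helix) rest on the foundations of $\mathcal{F}(g_a)$ for tame Lefschetz LG models, i.e.\ on the $C^0$-estimates of Remark \ref{rmk-C0-bounds} and the generation statement of Remark \ref{rmk-generation}, neither of which is available in the literature. Granting those, the remaining work here is bookkeeping: checking that $\tilde\gamma$ really is HAI-homotopic to $\gamma_0$, that $\varphi$ acts on homotopy classes of HAI vanishing paths compatibly with the ordinal requirement, and that the small isotopies producing the $\tilde\gamma_i$ can be chosen with strictly decreasing ordinals while staying within the prescribed homotopy classes — all of which are routine given the explicit picture of the base of $g_a$ (Figure \ref{fspiral}) established in \cite{Seifert}.
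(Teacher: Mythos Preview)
Your proposal is correct and follows essentially the same approach as the paper's proof: both arguments pass through $Tw(\mathcal{F}(g_a))$ via Generation and Computability in the fiber, invoke Geometric helix equals algebraic helix for the objects over $[\varphi^i(\tilde\gamma)]$ (noting that $\varphi^{-\mu(a)}$ is the monodromy), and then apply Computability to the strictly-decreasing-ordinal representatives $\tilde\gamma_{-N+1},\ldots,\tilde\gamma_0$ to identify the resulting directed subcategory with $\mathcal{H}_a(N)$. The only cosmetic difference is that the paper fixes the $T_i$ directly over the HAI classes $[\varphi^i(\tilde\gamma)]$ from the outset, whereas you route through the finite paths $\gamma_i$ first and then argue they are homotopic to $\varphi^i(\tilde\gamma)$; this extra step is harmless but unnecessary, since the $\gamma_i$ are not themselves HAI paths and the properties of $\mathcal{F}(g_a)$ are stated only for HAI paths.
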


\begin{proof}
Let us fix arbitrary objects $\{T_i\}_{i\in \mathbb{Z}}$ of $F(g_a)$ corresponding to the collection $\{[\varphi^i(\tilde{\gamma})]\}_{i\in \mathbb{Z}}$ of homotopy classes of HAI vanishing paths. 
By the generation and computability in the fiber properties, we have a quasi-equivalence of triangulated $A_{\infty}$-categories $$Tw(\mathcal{A}_a)\to Tw(F(g_a))$$
sending $E_0,\ldots,E_{\mu(a)-1}$ to $T_0,\ldots,T_{\mu(a)-1}$. 

It suffices to prove that $\mathcal{H}_a(N)$ is quasi-isomorphic to the directed $A_{\infty}$ subcategory of $Tw(F(g_a))$ corresponding to the truncated helix of length $N$ of the Yoneda images of $T_0,T_1,\ldots, T_{\mu(a)-1}$ with the last element of the truncated helix being $T_0$.

We now use the Geometric helix equals algebraic helix property for the collection $\{[\varphi^i(\tilde{\gamma})]\}_{i\in \mathbb{Z}}$ of homotopy classes of HAI vanishing paths and objects $\{T_i\}_{i\in \mathbb{Z}}$. Note that $\varphi^{-\mu(a)}$ is a monodromy diffeomorphism. 
As a result, the (Yoneda images of) $(T_i)_{i\in\Z}$ is a helix generated by $T_0,\ldots,T_{\mu(a)-1}$. 

We should take the truncation $(T_{-N+1},\ldots,T_0)$ of the helix $(T_i)$ and match it with the category $\HH_a(N$).
For this we observe that the Computability property gives a quasi-isomorphism of the directed category with the objects $T_{-N+1},\ldots ,T_1,T_0$ and $\HH_a(N)$ (since
by construction the ordinals of $\tilde{\gamma}_{-N+1},\ldots ,\tilde{\gamma}_1,\tilde{\gamma}_0$ are strictly decreasing).
\end{proof}

We apply Proposition \ref{Ha-prop} with $-a$ instead of $a$ and with $N=\mu(a)$. This will give a geometric realization of the first part of the recursion, namely of
the category generated by the truncated helix of length $\mu(a)$ in $Tw(\mathcal{A}_{-a})$. Since the left dual to the natural collection in $Tw(\mathcal{A}_a)$
is realized geometrically in Proposition \ref{Da-Aa-prop},
we will know that the category $\mathcal{A}_a$ is obtained from $\mathcal{A}_{-a}$ by recursion $\mathcal{R}$, once we prove the following statement.

\begin{thm}\label{Da-H-thm}
There is an equivalence up to shifts
$$\mathcal{D}_a\to\mathcal{H}_{-a}(\mu(a))),$$
of directed $A_{\infty}$-categories.
\end{thm}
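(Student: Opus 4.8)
The plan is to identify both sides with explicit directed Fukaya-Seidel categories built from matching paths in a two-dimensional fiber, and then exhibit an isotopy (up to reindexing and re-grading) carrying one configuration of matching paths to the other. The key geometric input is that both $\mathcal{D}_a$ and $\mathcal{H}_{-a}(\mu(a))$ can be computed in the fiber $g_a^{-1}(A)$ (respectively a suitable fiber of $g_{-a}$) by the Computability in the fiber property of Section \ref{ss-fukaya-seidel}. First I would pass to the second perturbation described in Remark \ref{rmk-extra-perturb}: replace $g_a = z_1 + p_a$ by $z_1 + \epsilon z_2 + p_a$ for small $\epsilon$, so that the projection of the fiber above $0$ to the $z_1$-coordinate becomes a genuine Lefschetz bifibration with $\mu(-a)$ nondegenerate critical points rather than one fat singularity. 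This is exactly the point where the recursion gets its inductive structure: the $z_1$-projection of the zero fiber of $z_1 + \epsilon z_2 + p_a$ is, after rescaling, governed by the polynomial $p_{-a}$ in the remaining variables.

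Next I would carry out the root analysis of Section \ref{ss-roots} to locate the critical values of this $z_1$-bifibration and to describe the vanishing cycles of the dual paths $^\vee\gamma_{-\mu(a)+1},\ldots,{}^\vee\gamma_0$ as matching paths in the base $\C_{z_1}$, following the Lefschetz-bifibration method indicated in the $n=0$ discussion around Figure \ref{n=0}. The content of Theorem \ref{Da-H-thm} is then the assertion that, after the homotopy $r \to 0$ contracting the family $z_1 + r(\text{higher order})$, the resulting configuration of matching paths — together with all directed intersection numbers and $A_\infty$ structure maps, which by the open-mapping argument are localized in the central fiber — is precisely the one computing the truncated helix of length $\mu(a)$ in $Tw(\mathcal{A}_{-a})$ ending at $E_0$, i.e. the category $\mathcal{H}_{-a}(\mu(a))$. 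Concretely: the left-dual matching paths of $\gamma_0,\ldots,\gamma_{\mu(a)-1}$ for $g_a$, viewed in the $z_1$-fiber, become the HAI vanishing paths $\tilde\gamma_{-\mu(a)+1},\ldots,\tilde\gamma_0$ for $g_{-a}$, so that $\mathcal{D}_a \simeq FS^{\to}(\text{these cycles}) \simeq \mathcal{H}_{-a}(\mu(a))$. I would establish the combinatorial matching via Proposition \ref{prop-match-final} (the explicit description of the matching paths obtained in Section \ref{van-sph-sec}), and then invoke Computability, Computability in the fiber, and Geometric dual equals algebraic dual to upgrade the matching-path identification to an $A_\infty$ quasi-isomorphism preserving the order of objects; the grading data is discarded throughout per Remark \ref{rem-A-grade}, which is why only an equivalence \emph{up to shifts} is claimed.

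The main obstacle I expect is the explicit computation of the left-dual vanishing cycles of $\gamma_0,\ldots,\gamma_{\mu(a)-1}$ as matching paths — that is, identifying the dashed paths of Figure \ref{fspiral} in the fiber over $0$ after the auxiliary perturbation and tracking how the outer critical values behave as the perturbation parameter tends to $0$. This is the heart of Section \ref{van-sph-sec}: one must understand both the left-dual operation at the level of vanishing paths in $\C_A$ and its translation, via the bifibration, into matching paths in $\C_{z_1}$, and then check that the $\mu(a)$ matching cycles so obtained are exactly (isotopic to, with the correct ordering) the ones defining the helix truncation on the $\mathcal{A}_{-a}$ side. Everything else is either formal (the reductions in this subsection reducing the Main Claim to Theorem \ref{Da-H-thm}, and the generation/computability formalism of Section \ref{ss-fukaya-seidel}) or elementary (the polynomial root estimates of Section \ref{ss-roots}, whose statements were discovered with the Mathematica code of Appendix A). A secondary subtlety is making the homotopy argument $r \to 0$ rigorous — ensuring the directed categories genuinely do not change along the deformation — which is the point flagged in Remark \ref{rmk-homotopy-method} as a place where the present account falls slightly short of a complete proof.
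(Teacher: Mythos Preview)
Your proposal is essentially correct and follows the same route as the paper: pass to the fiber over $0$ with radial vanishing paths, introduce the auxiliary perturbation by $z_2$ (the paper uses the two-parameter family $g_a^{t,s}$ with both $t$ and $s$ small rather than keeping $t=1$), compute the vanishing spheres as matching cycles for the $z_1$-projection via the bifibration argument and the root analysis of Section~\ref{ss-roots} (Proposition~\ref{prop-match-final}), and then let the parameter governing the $z_1^{a_1}z_2$ term degenerate to $0$ so that the outer critical values run to infinity and the coiling matching paths become the HAI helix paths for $g_{-a}$. The only point where your sketch slightly blurs the paper's logic is in separating the roles of the two parameters: the $s$-perturbation splits the fat singularity into $\mu(-a)$ small critical values, while the independent $t\to 0$ deformation is what actually sends the large critical values to infinity and identifies $h_a^{0,s_0}$ with $g_{-a}$; otherwise your outline matches the paper's argument closely, including its acknowledged caveats.
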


We will prove Theorem \ref{Da-H-thm} in Section \ref{van-sph-sec}.



\subsection{Roots of a family of polynomials}\label{ss-roots}

The results of this section will be used in computing certain matching paths in the next section. 

Let $t,s$ be complex numbers and $c$ a positive real number. We consider the following equation in $\mathbb{P}^1$: 
\begin{equation}\label{proj-crit-pol-eq}
y^{\mu}x^{\mu_-}=c(sy^a+tx^a)^{d_{-}},
\end{equation}
where $(y:x)$ are the homogeneous coordinates, and
$\mu,\mu_-,a,d_-$ are positive integers satisfying $$\mu+\mu_-=d:=ad_-.$$
We will be interested in how the roots of this equation vary when we vary $c,t,s$ in a certain region.

Fix $c$. Note that for $s=t=0$, we have one root with multiplicity $\mu_-$ at the point $x=0$ (called $0$) and another one with multiplicity $\mu$ at $y=0$ (called $\infty$). Once we make $s$ non-zero, the root at $0$ splits into $\mu_-$ simple roots.
We are going to keep $|s|$ sufficiently small (with some bound depending on $c,\mu,\mu_-,a,d_-$) and positive but arbitrary otherwise. Then, we will show that turning on the $t$ parameter does not change the locations of the $\mu_-$ simple roots 
near $0$
``too much" unless $|t|$ becomes larger than a number depending only on $c$, most importantly independently of $s$. In particular, it is possible for $|t|$ to be much larger than $|s|$ in this statement. We will specify what ``too much" means below - indeed we have something specific in mind. As a first approximation to why something like this might true let us note that if we keep $s=0$, then no matter how large $|t|$ is, the multiplicity $\mu_-$ root at $0$ never moves. If the reader has access to Mathematica, we provided a simple code in the Appendix to experiment with the roots of this family of polynomials.

Let $\mathbb{A}_1:=\mathbb{A}_{\frac{x}{y}}$ and $\mathbb{A}_2:=\mathbb{A}_{\frac{y}{x}}$ be the standard affine charts in $\mathbb{P}^1$. Let us equip them with the standard Kahler structure for their chosen affine coordinate.

Let us set $z=\frac{x}{y}$. The equation in $\mathbb{A}_1$ becomes 
\begin{align}\label{critpol} z^{\mu_-}=c(s+tz^a)^{d_-}.\end{align} Below we will analyze the roots of this equation but all results hold equally well in the other chart
(with the roles of $t$ and $s$ swapped). We also assume that $c=1$, noting that the general case can be recovered by rewriting $t$ and $s$ as $c^{1/d_-}t$ and $c^{1/d_-}s$. 

For $\gamma\in [0,2\pi)$, let $R_{\gamma}$ denote the ray in the complex plane starting from the origin that makes a positive angle of $\gamma$ with the positive real axis. For any $\psi\in (0,2\pi)$, let $N_{\psi}(R_{\gamma})$ be the conical region in the plane consisting of points (seen as vectors starting at the origin) that make less than $\frac{\psi}{2}$ angle with $R_{\gamma}$ (in positive or negative directions). 

For every $\epsilon>0,$ $n$ positive integer, $\phi >0$ such that $2n\phi<2\pi$, and $\gamma\in [0,2\pi)$ we define $$Dart(\epsilon,n,\phi,\gamma):=\{(z\in\mathbb{C}\mid |z|<\epsilon \text{ and }z^n\in N_{2n\phi}(R_{\gamma})\}.$$

\begin{prop}\label{small-roots-prop} 
Let us divide the solutions of the Equation  \eqref{critpol} with $c=1$ into two groups: the ones that lie inside the closed disk of radius $\frac{1}{2}$ in $\mathbb{A}_1$ (small roots) and the others (large roots). There exists a
positive constant $C=C(a,\mu_-,d_-)$ depending only on $a,\mu_-,d_-$ with the following properties.
\begin{enumerate}
\item For all $|t|\leq 1$ and $0<|s|<C$, there are $\mu_-$ many small roots.
\item For $|t|\leq 1$ and $0<|s|<C$, there exist $0<\epsilon(s)=\epsilon(|s|)<\frac{1}{2}$, $0<\phi(s)<\frac{\pi}{\mu_-}$, and $\gamma(s)\in [0,2\pi)$ with the following properties:
\begin{itemize}
\item There is exactly one small root inside each connected component of $$Dart(\epsilon(s),\mu_-,\phi(s),\gamma(s))\subset \mathbb{A}_1.$$ 
\item As $s\to 0$, $\epsilon(s)$ and $\phi(s)$ converge to $0$.
\item $\gamma(s)$ is the argument of $s^{d_-}$ valued in $[0,2\pi).$ 
\item All small roots are simple. 
\end{itemize}
\end{enumerate}
\end{prop}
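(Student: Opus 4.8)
Having already reduced to $c=1$, write $F(z):=z^{\mu_-}-(s+tz^a)^{d_-}$ and set $\rho:=|s|^{d_-/\mu_-}$, so that the $\mu_-$ roots of the ``unperturbed'' polynomial $g(z):=z^{\mu_-}-s^{d_-}$ all lie on the circle $|z|=\rho$. The single numerical fact driving everything is the identity $\frac{ad_-}{\mu_-}=\frac{d}{\mu_-}=1+\frac{\mu}{\mu_-}>1$, coming from $\mu\geq 1$. It says that at a point of modulus $\rho$ one has $|tz^a|\leq|z|^a\approx\rho^a=|s|\cdot|s|^{\mu/\mu_-}\ll|s|$, i.e. the term $tz^a$ is a relatively small perturbation of $s$ near the roots, however large $|t|\leq 1$ is. All conditions I impose on $C$ below will be of the form $|s|<(\text{constant depending only on }a,\mu_-,d_-)$, so I take $C$ to be the minimum of the finitely many thresholds that appear.

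For part (1), apply Rouch\'e's theorem on the circle $|z|=\tfrac12$, comparing $F$ with $z^{\mu_-}$. There $|F(z)-z^{\mu_-}|=|s+tz^a|^{d_-}\leq(|s|+2^{-a})^{d_-}$, and since $d=ad_->\mu_-$ we have $2^{-d}<2^{-\mu_-}=|z^{\mu_-}|$ on that circle, so choosing $C$ small enough that $(C+2^{-a})^{d_-}<2^{-\mu_-}$ gives $|F-z^{\mu_-}|<|z^{\mu_-}|$ there. Hence $F$ has exactly $\mu_-$ zeros, with multiplicity, in $|z|<\tfrac12$, which proves (1) once the remaining assertions locate the small roots inside that disk.

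For part (2) I would first localize the small roots by a second application of Rouch\'e, now on the circle $|z|=2\rho$, comparing $F$ with $g(z)=z^{\mu_-}-s^{d_-}$. On that circle $|g|\geq(2^{\mu_-}-1)\rho^{\mu_-}=(2^{\mu_-}-1)|s|^{d_-}$, while the telescoping estimate gives $|F-g|=|s^{d_-}-(s+tz^a)^{d_-}|\leq d_-\,(2|s|)^{d_--1}(2\rho)^a$ for $|s|<C$; the exponent comparison $(d_--1)+\tfrac{d}{\mu_-}>d_-$ (again just $\mu>0$) shows $|F-g|<|g|$ on $|z|=2\rho$ once $|s|$ is below a constant depending only on $a,\mu_-,d_-$. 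Combined with (1) this forces all $\mu_-$ small roots to lie in $|z|<2\rho=:\epsilon(s)$, and $\epsilon(s)\to 0$ as $s\to 0$. Next, at any small root $z$ one has $s+tz^a=s(1+\eta)$ with $|\eta|\leq 2^a|s|^{\mu/\mu_-}=:\eta_0(s)\to 0$, and since $z^{\mu_-}=s^{d_-}(1+\eta)^{d_-}$ the argument of $z^{\mu_-}$ differs from $\gamma(s):=\arg(s^{d_-})\in[0,2\pi)$ by at most $2d_-\eta_0(s)$. Taking $\phi(s):=4d_-\eta_0(s)/\mu_-$ (which is $<\pi/\mu_-$ once $|s|<C$, and $\to 0$) we conclude that every small root lies in $\mathrm{Dart}(\epsilon(s),\mu_-,\phi(s),\gamma(s))$, a region with exactly $\mu_-$ connected components, one about each of the $\mu_-$ directions $(\gamma(s)+2\pi k)/\mu_-$.

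It remains to see that each component contains exactly one small root. Suppose two small roots $z_1\neq z_2$ lay in the same component. From $z_1^{\mu_-}-z_2^{\mu_-}=(s+tz_1^a)^{d_-}-(s+tz_2^a)^{d_-}$, dividing both sides by $z_1-z_2$: the left-hand factor $\sum_{j=0}^{\mu_--1}z_1^j z_2^{\mu_--1-j}$ has all summands of nearly equal argument (the arguments of $z_1,z_2$ are within $\phi(s)$ of a common direction) and moduli $\geq c_3\rho$ — the lower bound coming from $|z|^{\mu_-}=|s+tz^a|^{d_-}\geq(|s|/2)^{d_-}$ at a small root — hence that factor has modulus $\gtrsim\rho^{\mu_--1}$, whereas the right-hand side divided by $z_1-z_2$ has modulus $\lesssim|t|\,\rho^{a-1}(2|s|)^{d_--1}\lesssim\rho^{a-1}|s|^{d_--1}$. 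The resulting inequality $\rho^{\mu_--a}\lesssim|s|^{d_--1}$ becomes, via $\rho=|s|^{d_-/\mu_-}$, $|s|^{-\mu/\mu_-}\lesssim 1$, impossible for $|s|$ small. Finally, simplicity of each small root $z_0$ follows by evaluating $F'(z_0)=z_0^{\mu_--1}\big(\mu_- - ad_-\tfrac{\eta}{1+\eta}\big)$, using $(s+tz_0^a)^{d_-}=z_0^{\mu_-}$ and $z_0\neq 0$: since $|\eta|\leq\eta_0(s)\to 0$, the bracket is nonzero for $|s|<C$. I expect the only non-routine point to be the bookkeeping of the scale hierarchy $\rho^a\ll|s|\ll 1$ and checking that each exponent inequality that arises is equivalent to the standing hypothesis $\mu>0$; there is no genuine analytic obstacle.
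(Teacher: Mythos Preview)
Your argument is correct and, like the paper's, rests entirely on Rouch\'e's theorem and the single numerical fact $d/\mu_-=ad_-/\mu_->1$. The difference is in how part (2) is organized.

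The paper rewrites $F(z)=(z^{\mu_-}-s^{d_-})-\text{(binomial tail)}$ and applies Rouch\'e on the region $\{|z^{\mu_-}-s^{d_-}|\le\epsilon\}$ with $\epsilon=|s|^{d_-+\delta}$ for a suitable $\delta>0$. Since this region already has $\mu_-$ simply connected components, one around each root of $z^{\mu_-}=s^{d_-}$, Rouch\'e immediately yields one simple root per component; the conversion of these components into the $Dart$ shapes is then outsourced to Lemma~2.7 of Melman's paper on Pellet-type root bounds.

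You instead (i) localize to $|z|<2\rho$ by Rouch\'e on a circle, (ii) bound $\arg(z^{\mu_-})$ directly to land in the dart, (iii) separate the components by a direct estimate on $(z_1^{\mu_-}-z_2^{\mu_-})/(z_1-z_2)$, and (iv) check simplicity by computing $F'(z_0)$. Each step is routine once the scale hierarchy $\rho^a\ll|s|$ is recorded, as you note.

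Your route is more hands-on and entirely self-contained, avoiding the citation to Melman; the paper's route is shorter because the choice of Rouch\'e contour (the lemniscate rather than a circle) gives simplicity and one-per-component for free, but it pays for this by invoking an outside lemma to obtain the dart description. Both are valid executions of the same idea.
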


\begin{proof}We follow the strategy of the proof of Theorem 4.1 in Melman's beautiful paper \cite{Melman}. In particular, his Lemma 2.7 will play a very crucial role.

We rewrite Equation \eqref{critpol} with $c=1$ as
 \begin{align}\label{recritpoly}
 (z^{\mu_-}-s^{d_-})-(d_-s^{d_--1}tz^a+\ldots+t^{d_-}z^d)=0.
 \end{align}

Let us prove (1). We will use Rouche's theorem (e.g. Theorem 2.1 in \cite{Melman}). For $|z|= 1/2$, $|t|\leq 1$ and 
$|s|\leq 1$, we have the following two inequalities:
\begin{align*}
|z^{\mu_-}-s^{d_-}|\geq (1/2)^{\mu_-}-|s|^{d_-}
\end{align*} 
\begin{align*}
|d_-s^{d_--1}tz^a+\ldots+t^{d_-}z^d|\leq |d_-s^{d_--1}tz^a|+\ldots+|t^{d_-}z^d| < |s|C+ (1/2)^{d},
\end{align*}where $C$ is a constant depending on $a$ and $d_-$. Hence, using $\mu_-<d=ad_-$, for sufficiently small $|s|$, we have 
\begin{align}\label{rouche}
|z^{\mu_-}-s^{d_-}|> |d_-s^{d_--1}tz^a+\ldots+t^{d_-}z^d|. 
\end{align} 
Therefore, the number of solutions of the Equation \eqref{recritpoly} inside the disk of radius $1/2$ centered at the origin is the same as the number of solutions of $z^{\mu_-}=s^{d_-} $ in the same region, as desired.

Now let us proceed to prove (2). This is again an application of Rouche's theorem. Let $|t|\leq 1$, and $|s|<1$ be sufficiently small as required by the previous step. Moreover, $|s|$ should also satisfy a possibly stronger bound that we will explain now. Using again that $\mu_-<d$, we can choose $\delta>0$ such that $\de<d/\mu_- -1$.
Now we require that $|s|$ satisfies the inequality
$$|s|^{d_-+\delta}> d_-|s|^{d_--1}(|2s|^{\frac{d_-}{\mu_-}})^a+\ldots+d_-|s|(|2s|^{\frac{d_-}{\mu_-}})^{d-a}+(|2s|^{\frac{d_-}{\mu_-}})^d.$$The right hand side of this inequality is obtained by inputting $1$ for each $t$, $|s|$ for $s$ and $|2s|^{\frac{d_-}{\mu_-}}$ for $z$ in the expression $d_-s^{d_--1}tz^a+\ldots+t^{d_-}z^d$ as in Equation \eqref{recritpoly}. To see that for sufficiently small $|s|$ this inequality is satisfied note that the power of $|s|$ in each term of the RHS is strictly bigger than $d_-+\delta$:
$$d_--k+\frac{d_-}{\mu_-}ka\ge d_- +\frac{d_-a}{\mu_-}-1>d_-+\delta,$$ 
for $k=1,\ldots , d_-$. 

Let us define $$\epsilon:=|s|^{d_-+\delta},$$and note that $\epsilon <|s|^{d_-}$.
Note that if $|z^{\mu_-}-s^{d_-}|\le\epsilon$, then $|z|^{\mu_-}<2|s^{d_-}|$, and therefore $$|z|<|2s|^{\frac{d_-}{\mu_-}}.$$

This time we will apply Rouche's theorem in the connected components of the domain in $z$ described by the inequality \begin{align*}
|z^{\mu_-}-s^{d_-}|\leq \epsilon.
\end{align*} For $s\neq 0$ this domain has $\mu_-$ simply connected components all of which are contained in the disk of radius $1/2$ centered at the origin (assuming $s$ is small).

We now again consider Equation \eqref{recritpoly}. We want to prove that the Inequality \eqref{rouche} holds on the set $|z^{\mu_-}-s^{d_-}|=\epsilon$. This follows immediately since $$\epsilon>d_-|s|^{d_--1}(|2s|^{\frac{d_-}{\mu_-}})^a+\ldots+d_-|s|(|2s|^{\frac{d_-}{\mu_-}})^{d-a}+(|2s|^{\frac{d_-}{\mu_-}})^d>|d_-s^{d_--1}tz^a+\ldots+t^{d_-}z^d|.$$

Hence we obtain that each connected component of $\{|z^{\mu_-}-s^{d_-}|\leq \epsilon\}$ contains exactly one solution. These are all the small roots. To relate these regions to the dart-like regions in the statement we use Lemma 2.7 of \cite{Melman}. All four bullet points follow.
\end{proof}

To state the following corollary which is what we will directly use in later chapters, we make a new definition. For every $r>0,$ $n$ positive integer, $\phi >0$ such that $2n\phi<2\pi$,
$$Dart^{\infty}(r,n,\phi):=\{(z\in\mathbb{C}\mid |z|>r\text{ and }z^n\in N_{2n\phi}(R_{0})\}.$$

\begin{cor}\label{cor-dart} Let $t$ be a real number and $s$ a complex one. Let us call the solutions of the Equation  \eqref{critpol} that lie inside the closed disk of radius $\frac{1}{2}$ the small roots and the ones that lie outside the closed disk of radius $2$ the large roots. 

Then, there exists a
positive constant $C=C(a,\mu_-,d_-,c)$ depending only on $a,\mu_-,d_-,c$ such that for all $0<t<C$ and $0<|s|<C$.
\begin{enumerate}
\item There are $\mu_-$ many small roots and $\mu$ large roots. In particular, all roots are either large or small.
\item There exist $0<\epsilon(s)=\epsilon(|s|)<\frac{1}{2}$, $0<\phi(s)<\frac{\pi}{\mu_-}$, and $\gamma(s)\in [0,2\pi)$ with the following properties:
\begin{itemize}
\item There is exactly one small root inside each connected component of $$Dart(\epsilon(s),\mu_-,\phi(s),\gamma(s))\subset \mathbb{A}_1.$$
\item As $s\to 0$, $\epsilon(s)$ and $\phi(s)$ converge to $0$.
\item $\gamma(s)$ is the argument of $s^{d_-}$ valued in $[0,2\pi).$ 
\item All small roots are simple. 
\end{itemize}

\item There exist $r(t)>0$ and $0<\phi'(t)<\frac{\pi}{\mu}$,  with the following properties:
\begin{itemize}
\item There is exactly one large root inside each connected component of $$Dart^{\infty}(r(t),\mu,\phi'(t))\subset \mathbb{A}_1.$$
\item As $t\to 0$, $r(t)\to\infty$ and $\phi(s)\to 0.$
\item All large roots are simple. 
\end{itemize}
\end{enumerate}
\end{cor}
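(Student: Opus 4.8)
The plan is to deduce Corollary \ref{cor-dart} from Proposition \ref{small-roots-prop} together with the symmetry of Equation \eqref{proj-crit-pol-eq} under interchanging the two affine charts. First I would reduce to the case $c=1$ exactly as in the proof of Proposition \ref{small-roots-prop}, by absorbing $c$ into $t,s$ via $t\mapsto c^{1/d_-}t$, $s\mapsto c^{1/d_-}s$; this is why the constant $C$ in the corollary is allowed to depend on $c$ in addition to $a,\mu_-,d_-$. Having done that, parts (1) and (2) are essentially a restatement of Proposition \ref{small-roots-prop}: the only differences are that we now restrict $t$ to be a positive real number less than $C$ (a special case of $|t|\le 1$, after shrinking $C$ so that $C\le 1$), and that we quarantine the large roots outside the disk of radius $2$ rather than simply declaring everything not small to be large. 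So for part (1) I would invoke Proposition \ref{small-roots-prop}(1) to get exactly $\mu_-$ roots in the disk of radius $\tfrac12$, and then argue that the remaining $\mu = d - \mu_-$ roots lie outside the disk of radius $2$; this last point comes from part (3), which I turn to next. Part (2) is then literally Proposition \ref{small-roots-prop}(2).

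The substantive new content is part (3), the dart description of the large roots near infinity, and the plan is to obtain it by passing to the other affine chart $\mathbb{A}_2 = \mathbb{A}_{y/x}$. Setting $w = y/x = 1/z$, Equation \eqref{critpol} with $c=1$ becomes, after clearing denominators, the equation
\begin{equation}\label{critpol-w}
w^{\mu} = (tw^a + s w^{d})^{d_-} \cdot(\text{something})
\end{equation}
— more precisely one rewrites \eqref{proj-crit-pol-eq} in the chart $\mathbb{A}_2$ and finds an equation of exactly the same shape as \eqref{critpol} but with the roles of $t$ and $s$ swapped and with $\mu$ in place of $\mu_-$ (this is the symmetry remarked on right after \eqref{critpol}: ``all results hold equally well in the other chart, with the roles of $t$ and $s$ swapped''). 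Concretely, I would write out the chart-$\mathbb{A}_2$ form of \eqref{proj-crit-pol-eq} carefully once, check that it has the form $w^{\mu}(\cdots) = c(t + s w^a)^{d_-}$ up to a change of variable, and then apply Proposition \ref{small-roots-prop} to it with $(\mu,\mu_-,t,s)$ replaced by $(\mu_-,\mu,s,t)$. Because $t$ is now the ``splitting'' parameter, Proposition \ref{small-roots-prop}(1) gives that for $0<t<C$ (with $C$ possibly further shrunk, still depending only on $a,\mu_-,d_-$) there are exactly $\mu$ roots $w$ inside the disk of radius $\tfrac12$ in $\mathbb{A}_2$, i.e. exactly $\mu$ roots $z = 1/w$ with $|z|>2$; translating the dart regions $Dart(\epsilon(t),\mu,\phi(t),\gamma(t))$ in the $w$-plane back through $z=1/w$ gives regions of the form $Dart^{\infty}(r(t),\mu,\phi'(t))$ in the $z$-plane, with $r(t)=1/\epsilon(t)\to\infty$ and $\phi'(t)\to 0$ as $t\to 0$. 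The fact that the argument $\gamma(t)$ of $t^{d_-}$ is $0$ (since $t$ is a positive real) is exactly what makes the conical axis of the large-root darts the positive real ray $R_0$, which is why $Dart^{\infty}$ is defined around $R_0$ with no angular parameter.

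The main obstacle I anticipate is purely bookkeeping rather than conceptual: one has to be scrupulous about the chart change $z \leftrightarrow 1/w$ and about how the dart regions transform under inversion, since inversion does not send a sector-and-disk to a sector-and-disk on the nose but rather to a sector-and-complement-of-disk, and one must check that the monotonicity ``$\epsilon(t),\phi(t)\to 0$'' translates correctly to ``$r(t)\to\infty$, $\phi'(t)\to 0$''. One also has to make sure the single constant $C$ can be chosen to work simultaneously for parts (1)--(3): this is fine because each application of Proposition \ref{small-roots-prop} produces its own constant depending only on $a,\mu_-,d_-$ (and, after the $c$-rescaling, on $c$), and we simply take $C$ to be the minimum of these, together with $C\le 1$. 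A minor point to verify is the disjointness claim in part (1) — that the $\mu_-$ small roots in $|z|\le \tfrac12$ and the $\mu$ large roots in $|z|\ge 2$ together account for all $d=\mu+\mu_-$ roots, leaving the annulus $\tfrac12<|z|<2$ root-free; this follows by counting, since $\mu_- + \mu = d$ and \eqref{critpol} has exactly $d$ roots in $\mathbb{P}^1$ (generically simple, as both Rouché arguments show).
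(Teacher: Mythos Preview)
Your proposal is correct and follows essentially the same approach as the paper: parts (1) and (2) come directly from Proposition \ref{small-roots-prop}, and part (3) comes from passing to the other chart $u=1/z$ and reapplying Proposition \ref{small-roots-prop} with the roles of $t$ and $s$ swapped (using that $t>0$ real forces $\gamma(t)=0$, hence the darts at infinity are centered on $R_0$). The paper's proof is much terser and simply writes the chart-$\mathbb{A}_2$ equation cleanly as $u^{\mu}=c(su^a+t)^{d_-}$; your displayed equation \eqref{critpol-w} is garbled as written, but you immediately correct yourself in prose, so there is no real gap.
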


\begin{proof} The statement about small roots is an immediate consequence of Proposition \ref{small-roots-prop}.
To deduce the statement about large roots we rewrite the Equation \eqref{critpol} in terms of the variable $u=1/z$ (equivalently, we consider solutions of Equation \eqref{proj-crit-pol-eq} in
the affine chart $\mathbb{A}_2$):
$$u^{\mu}=c(su^a+t)^{d_-}.$$
Now we observe that the small roots of this equation correspond to large roots of the equation in the affine chart $\mathbb{A}_1$,
and the assertion follows again from Proposition \ref{small-roots-prop}.
\end{proof}

\subsection{The vanishing spheres}\label{van-sph-sec}

In this section we will prove Theorem \ref{Da-H-thm}. Assume that $n>1$ (the case $n=1$ was discussed at the end of Section \ref{ss-Aside-intro}).

It will be convenient to analyze $\mathcal{D}_a$ inside $g_a^{-1}(0)$ instead of $g_a^{-1}(A)$ by dragging the regular point from $A$ to $0$ along a path that goes slightly below $\gamma_0$. Let us define the radial vanishing paths $r_1,\ldots r_{\mu(a)}$ in the base of $g_a$ as the straight radial paths from the critical values to the origin. 
They are ordered in the clock-wise direction and the last one in the ordering is the vanishing path of the positive real critical value. See Figure \ref{fig-move-A}. 
The directed Fukaya-Seidel $A_{\infty}$-category $\mathcal{E}_a$ of the Lagrangian vanishing spheres of $r_1,\ldots r_{\mu(a)}$ is quasi-isomorphic to $\mathcal{D}_a.$
 
\begin{figure}
\includegraphics[width=0.8\textwidth]{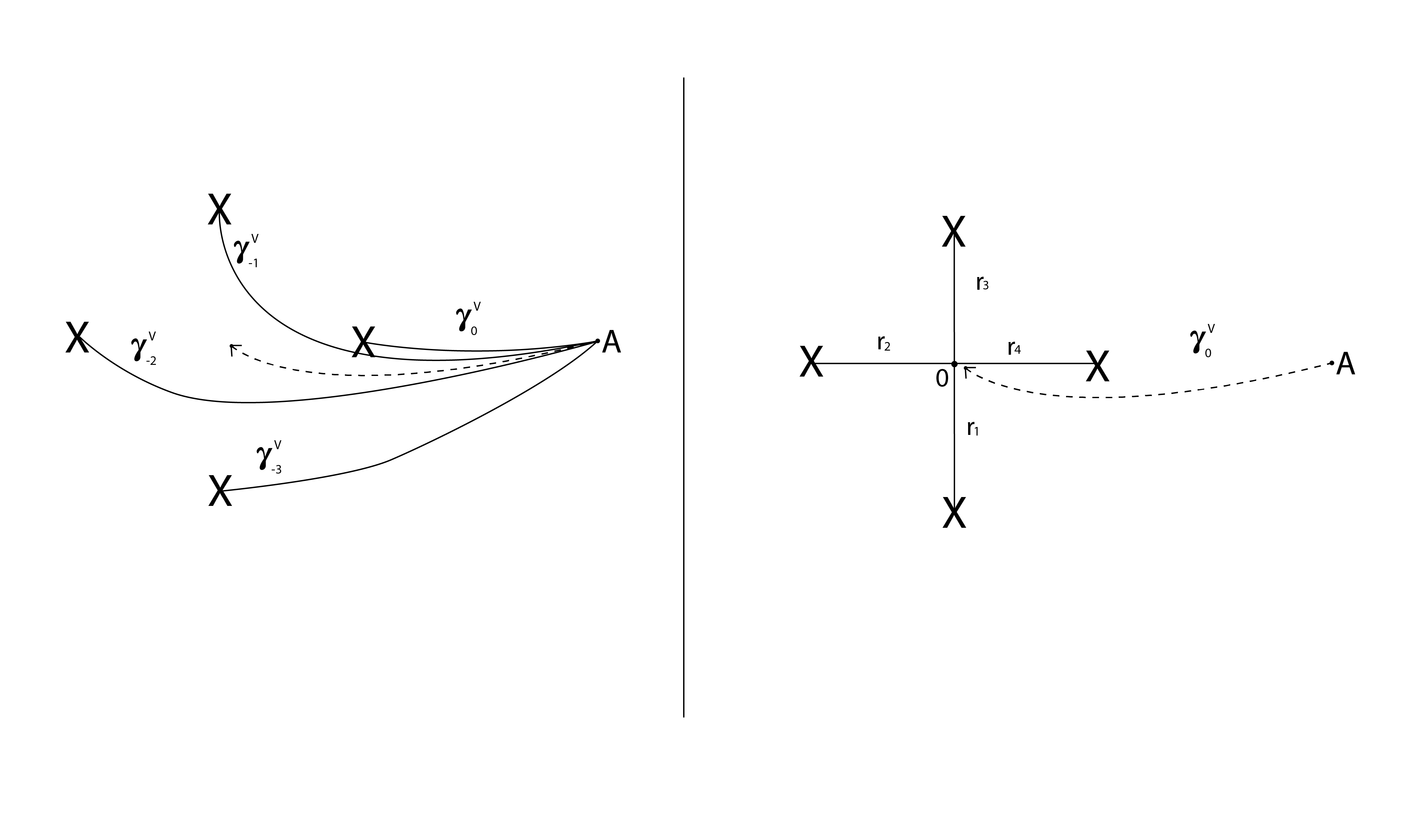}
\caption{Moving the dual vanishing paths to radial ones}
\label{fig-move-A}
\end{figure}

Let us define the map $g_a^{t,s}:\mathbb{C}^n\to\mathbb{C}$ by $$(z_1,\ldots ,z_n)\mapsto z_1-sz_2-tz_1^{a_1}z_2 -p_{-a}(z_2,\ldots z_n),$$ for complex numbers $t,s$. Note that $g_a^{1,0}=g_a$. Let us also note that for $t\neq 0$, \begin{equation}\label{eq-t-pert} g^{t,0}_a(z_1,\ldots,z_n)=\xi^{-d(a)}g_a(\xi^{\mu(a)+q_1}z_1,\xi^{q_2}z_2,\ldots, \xi^{q_n}z_n),\end{equation} where $\xi$ is a $(a_1\mu(a))^{th}$ root of $t$ and $$q_i=\mu(a_{i+1},\ldots,a_n)d(a_1,\ldots,a_{i-1})$$ are as in the Equation (3.2) of \cite{Seifert}. See Remark \ref{rmk-extra-perturb} for what lead us to consider the extra perturbation by $s$. 

\begin{lem}\label{lem-g-lef-small}
For every positive real number $t$, there exists a $\delta(t)>0$ such that  
\begin{itemize}
\item for every complex number $s$ with $|s|<\delta(t)$, $g_a^{t,s}$ is a Lefschetz fibration with $\mu(a)$ critical points; 
\item there exist $\mu(a)$ analytic maps $p_1,\ldots,p_{\mu(a)}:\C\to \C^n$ defined for $|s|<\delta(t)$ such that
$p_1(s),\ldots,p_{\mu(a)}(s)$ are exactly the critical points of $g_a^{t,s}$;
\item  if $d$ is the distance between the two closest critical values of $g_a^{t,0}$ 
then for $|s|<\delta(t)$,
each critical value of $g_a^{t,s}$ is contained in a $d/10$ neighborhood of a critical value  of $g_a^{t,0}$.
\end{itemize} 
\end{lem}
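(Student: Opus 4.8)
The plan is to prove the three bullet points of Lemma~\ref{lem-g-lef-small} as a standard perturbation/continuity argument, using the known structure of $g_a^{t,0}$ established via Equation~\eqref{eq-t-pert} and the results of \cite{Seifert}. First I would fix a positive real $t$ and record what is known about $g_a^{t,0}$: by \eqref{eq-t-pert} this map is a rescaled pullback of $g_a=g_a^{1,0}$ by a diagonal automorphism of $\mathbb{C}^n$, so $g_a^{t,0}$ is a Lefschetz fibration with exactly $\mu(a)$ nondegenerate critical points, whose critical values are the rescaled images of the equiangular critical values of $g_a$ and hence are distinct. Let $d=d(t)$ be the minimal distance between two of these critical values. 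The critical point equation for $g_a^{t,s}$ is the vanishing of the gradient of $(z_1,\ldots,z_n)\mapsto z_1-sz_2-tz_1^{a_1}z_2-p_{-a}(z_2,\ldots,z_n)$, which is a system of $n$ polynomial equations in $z_1,\ldots,z_n$ depending polynomially (in fact affinely) on the parameter $s$.

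The core step is an application of the implicit function / analytic continuation theorem to this system at $s=0$. Because each of the $\mu(a)$ critical points of $g_a^{t,0}$ is nondegenerate, the Hessian of $g_a^{t,0}$ at each such point is invertible, which is exactly the Jacobian of the critical-point equations in the $z$-variables; hence the implicit function theorem produces, for $|s|$ small, analytic maps $p_1(s),\ldots,p_{\mu(a)}(s):\mathbb{C}\to\mathbb{C}^n$ (defined on a disc $|s|<\delta(t)$) with $p_j(0)$ the critical points of $g_a^{t,0}$ and with the Hessian of $g_a^{t,s}$ at $p_j(s)$ still invertible. To know these are \emph{all} the critical points, I would invoke the fact — already used in Section~\ref{ss-A-outline} and proved in Appendix~A of \cite{Seifert} — that $\epsilon z_1 + (\text{the relevant chain polynomial})$ always has exactly $\mu(a)$ critical points counted with multiplicity; more precisely, one observes that $g_a^{t,s}$ for $s\neq 0$ is again (after a diagonal rescaling analogous to \eqref{eq-t-pert}, now absorbing the $sz_2$ and $tz_1^{a_1}z_2$ terms appropriately, or directly by a degree/Bézout count) a map whose number of critical points is the Milnor number $\mu(a)$, so the $\mu(a)$ continued points exhaust the critical locus and each is nondegenerate. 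This gives the first two bullets: $g_a^{t,s}$ is a Lefschetz fibration with $\mu(a)$ critical points, realized by the analytic maps $p_j(s)$. The third bullet is then immediate from continuity: the critical value $g_a^{t,s}(p_j(s))$ depends continuously (indeed analytically) on $s$ and equals the $j$-th critical value of $g_a^{t,0}$ at $s=0$, so after shrinking $\delta(t)$ we may force every critical value of $g_a^{t,s}$ to lie within $d/10$ of a critical value of $g_a^{t,0}$.

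The main obstacle I anticipate is the global count: ensuring that for $0<|s|<\delta(t)$ there are \emph{no other} critical points beyond the $\mu(a)$ ones produced by continuation, i.e.\ that no critical points escape from infinity or merge in as $s$ becomes nonzero. The clean way to handle this is to show $g_a^{t,s}$ is tame (via Proposition~2.5 of \cite{Fan}, as invoked at the start of Section~\ref{ss-fukaya-seidel}, noting $g_a^{t,s}$ is $p_{-a}$ plus a linear form plus the lower-order term $-tz_1^{a_1}z_2$, and checking the tameness hypothesis is stable under this perturbation) together with the constancy of the total Milnor number in a family of tame polynomials with isolated singularities; alternatively, a direct elimination-theoretic argument bounding the number of solutions of the gradient system by $\mu(a)$ for all small $s\neq 0$ works. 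Either route is routine in principle but is the one genuinely non-formal point, so I would carry it out carefully; everything else is the implicit function theorem plus continuity of roots.
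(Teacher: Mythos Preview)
Your proposal is correct and follows essentially the same route as the paper: use \eqref{eq-t-pert} to see $g_a^{t,0}$ is Lefschetz, invoke tameness (via \cite{Fan}) to confine all critical points of $g_a^{t,s}$ to a fixed compact set, then use nondegeneracy at $s=0$ to get the analytic continuations and finish by continuity. The only difference is packaging: the paper bundles your implicit-function step and your global-count step into the single observation that the critical scheme $X\to\C_s$ is proper with smooth fiber $X_0$, hence \'etale over a neighborhood of $0$ --- this simultaneously yields the analytic sections, their nondegeneracy, and that they exhaust the critical locus, and avoids the need for your separate Milnor-number/B\'ezout count (incidentally, your suggestion of absorbing the $sz_2$ term by a diagonal rescaling analogous to \eqref{eq-t-pert} does not quite work, but you correctly identify the tameness route as the clean one).
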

\begin{proof}
We already know that $g_a$ is a Lefschetz fibration with critical values regularly placed on a circle centered at the origin. Using Equation \eqref{eq-t-pert}, we see that the same statement is true for $g_a^{t,0}$ for $t\neq 0$, in particular for $t$ a positive real number.

From Equation \eqref{eq-t-pert} and the first paragraph of Section \ref{ss-fukaya-seidel} it follows that $g_a^{t,s}$ is tame for all complex numbers $t,s$. This implies that for fixed $t,s$ the critical points of $g_a^{t,s}$ are contained in a compact subset of $\mathbb{C}^n$ in the complex analytic topology. In fact the argument in Proposition 2.5 of \cite{Fan} shows that if we fix $t$ then there exists a compact subset $K\subset \mathbb{C}^n$ such that the critical points of $g_a^{t,s}$ are contained in $K$ if $|s|<1$.

Moreover, note that by non-degeneracy the natural scheme structure on the critical points of $g_a^{t,0}$ is smooth. 
Let us denote by $X$ the scheme of critical points of $g_a^{t,s}$ for fixed $t$ and varying $s$, so that we have
a projection $X\to \C_s$ and the fiber $X_s$ is the scheme of critical points of $g_a^{t,s}$.
Since the projection from $X$ to $\C_s$ is proper and $X_0$ is smooth, we deduce that the map $X\to \C_s$ is
\'etale over a small neighborhood of $0$. This implies the non-degeneracy of critical points of $g_a^{t,s}$ for small $s$. 
Also, it follows that there exist $\mu(a)$ analytic sections $p_1,\ldots,p_{\mu(a)}$ of the projection $X\to \C_s$ defined in
the neighborhood of $0$. This implies the second assertion. The last assertion follows from the fact that the critical
values $g_a^{t,s}(p_i(s))$, for $i=1,\ldots,\mu(a)$, depend continuously on $s$.
\end{proof}

Let us also define the maps $$h_a^{t,s}:(g_a^{t,s})^{-1}(0)\to \mathbb{C},$$ given by projecting to the $z_1$ coordinate.

We are going to compute all critical values of $h_a^{t,s}$. 
More generally, we will compute the critical values of $z_1$ on $(g_a^{t,s})^{-1}(y)$, for any regular value $y$ of $g_a^{t,s}$.

Let us set for brevity $g=g_a^{t,s}$.
Consider the family of maps 
$$w_y:g^{-1}(y)\to \mathbb{C},$$ 
for $y\in \mathbb{C}$, given by projecting to the $z_1$ coordinate (so $w_0=h_a^{t,s}$).

Let us define the Zariski closed subset $\CC\sub \C^n$ as the zero locus of $\partial_{z_2}g,\ldots,\partial_{z_n}g$.
Note that the tangent space to $g^{-1}(y)$ at a smooth point $z$ is given by the kernel $dg$, and 
that $z\in \mathcal{C}$ is a critical point of $w_y=z_1$ on $g^{-1}(y)$, where $y=g(z)$, if and only if
\begin{equation}\label{dz1-dg-eq}
dz_1|_z=\la\cdot dg|_z \ \text{ in } T^*_z\C^n,
\end{equation}
for some (necessarily nonzero) $\la\in\C$. In other words, for any $y\in \C$
we have
\begin{equation}\label{CC-crit-wy-eq}
\CC\cap g^{-1}(y)\setminus crit(g)=crit(w_y)\setminus crit(g).
\end{equation}

\begin{prop}\label{propcritvalueh} We fix $t,s\in\mathbb{C}$ and use the notation introduced above.

\noindent
(i) The map 
$$\mathcal{C}\to \C^2: z=(z_1,\ldots ,z_n)\mapsto (g(z), z_1)$$ 
induces a bijective morphism
$$\iota:\CC\to \CC',$$
where $\CC'\sub \C^2_{y,z_1}$ is the plane curve 
\begin{equation}\label{crit-val-wr-eq}
c_a(s+tz_1^{a_1})^{d(-a)}-(z_1-y)^{\mu(-a)}=0,
\end{equation}
where $c_a$ is some easily computable positive rational number. 
Furthermore, $\iota$ restricts to an isomorphism of algebraic varieties 
$\CC\setminus\iota^{-1}(S)\to \CC'\setminus S$, where
$$S=\{(y,z_1) \ |\ y=z_1, s+tz_1^{a_1}=0\}.$$ 
In other words, we have a well defined inverse morphism $\iota^{-1}:\CC'\setminus S\to \CC\setminus \iota^{-1}(S)$.

\noindent
(ii) For fixed $y$, which is not a critical value of $g$, the set of critical values of $w_y$
is exactly the set of roots $z_1$ of the equation \eqref{crit-val-wr-eq}. Furthermore,
the critical values of distinct critical points of $w_y$ are distinct. 
\end{prop}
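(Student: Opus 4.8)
The plan is to describe both $\CC$ and $\CC'$ completely explicitly --- via monomial parametrizations read off from the critical-point equations --- prove (i) by matching these descriptions, and then deduce (ii) formally from (i) and the already-established relation \eqref{CC-crit-wy-eq}. Write $P:=s+tz_1^{a_1}$, so that $g=z_1-Pz_2+q$ with $q:=-p_{-a}(z_2,\ldots,z_n)$; up to signs of its monomials $q$ is the chain polynomial in $z_2,\ldots,z_n$, hence quasi-homogeneous of degree $D=d(-a)$ with positive integer weights $v_2,\ldots,v_n$, and $v_2=\mu(a_3,\ldots,a_n)=d(-a)-\mu(-a)$. Explicitly, $\CC$ is cut out by $a_2z_2^{a_2-1}z_3=P$, by $a_jz_j^{a_j-1}z_{j+1}=z_{j-1}^{a_{j-1}}$ for $3\le j\le n-1$, and by $a_nz_n^{a_n-1}=z_{n-1}^{a_{n-1}}$.

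First I would extract a partial Euler identity: since $P$ depends on $z_1$ only, on $\CC$ we have $0=\sum_{j=2}^{n}v_jz_j\,\partial_{z_j}g=\sum_{j}v_jz_j\,\partial_{z_j}q-v_2z_2P=Dq-v_2z_2P$, so $q=\frac{v_2}{D}Pz_2$, and feeding this into $y=g(z)=z_1-Pz_2+q$ gives the clean relation $z_1-y=\frac{\mu(-a)}{d(-a)}Pz_2$ on $\CC$ --- in particular $z_2$ is determined by $(y,z_1)$ wherever $P\ne0$. Next I would collapse the chain equations: over $\{P\ne0\}$ one checks from the defining equations that every $z_j$ is nonzero on $\CC$, so the first $n-2$ equations solve recursively for $z_3,\ldots,z_n$ as explicit Laurent monomials $z_j=c_jz_2^{p_j}P^{q_j}$ with $c_j\in\Q_{>0}$, and substituting the expressions for $z_{n-1},z_n$ into $a_nz_n^{a_n-1}=z_{n-1}^{a_{n-1}}$ collapses it to a single relation $z_2^{\mu(-a)}=c_a'P^{\,d(-a)-\mu(-a)}$ with $c_a'\in\Q_{>0}$. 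Combining with the Euler identity gives $(z_1-y)^{\mu(-a)}=c_aP^{d(-a)}$ on $\CC\cap\{P\ne0\}$ with $c_a=(\mu(-a)/d(-a))^{\mu(-a)}c_a'$; on $\CC\cap\{P=0\}$ the chain equations force $z_2=\cdots=z_n=0$ ($q$ has an isolated singularity at the origin), so there $\iota(z)=(z_1,z_1)\in\CC'$ too. Hence $\iota(\CC)\subseteq\CC'$ and $\iota^{-1}(S)=\CC\cap\{P=0\}=\{(z_1,0,\ldots,0):s+tz_1^{a_1}=0\}$.

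To finish (i) I would exhibit the inverse. For $(y,z_1)\in\CC'$ with $P\ne0$ one has $z_1-y\ne0$; set $z_2:=\frac{d(-a)}{\mu(-a)}\cdot\frac{z_1-y}{P}$ and define $z_3,\ldots,z_n$ by the monomial recursion above. The defining relation of $\CC'$ forces $z_2^{\mu(-a)}=c_a'P^{d(-a)-\mu(-a)}$, so all the $\partial_{z_\bullet}g$ vanish (thus $z\in\CC$), and the Euler identity forces $g(z)=y$; conversely on $\CC\cap\{P\ne0\}$ the value $z_2$ is forced by the Euler identity and then $z_3,\ldots,z_n$ by the chain equations, while over $\{P=0\}$ the finite set $\iota^{-1}(S)$ maps bijectively to $S=\CC'\cap\{P=0\}$. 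So $\iota:\CC\to\CC'$ is a bijective morphism whose inverse, over $\{P\ne0\}$, is given by the regular functions just written --- this is the asserted isomorphism $\CC\setminus\iota^{-1}(S)\xrightarrow{\sim}\CC'\setminus S$. Part (ii) is then immediate: for $y$ a regular value of $g$, \eqref{CC-crit-wy-eq} gives $crit(w_y)=\CC\cap g^{-1}(y)$, so the critical values of $w_y$ are exactly $\{z_1:(y,z_1)\in\iota(\CC)=\CC'\}$, i.e.\ the roots $z_1$ of \eqref{crit-val-wr-eq}; and two distinct critical points of $w_y$ sharing a $z_1$-value would have the same image under $\iota$, contradicting injectivity.

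I expect the main obstacle to be the exponent bookkeeping inside the collapsing step: one must check that the last chain equation reduces to exactly the power $z_2^{\mu(-a)}$ on one side and $P^{d(-a)-\mu(-a)}$ on the other, with a positive rational constant. Running the recursion gives $p_j=(-1)^j\mu(a_2,\ldots,a_{j-1})$ and $q_j=(-1)^{j+1}\mu(a_3,\ldots,a_{j-1})$ (a short induction, using $\mu(b_1,\ldots,b_m)=b_m\mu(b_1,\ldots,b_{m-1})+(-1)^m$ and $d(-a)-\mu(-a)=\mu(a_3,\ldots,a_n)$), so the collapsed equation is $z_2^{(-1)^n\mu(-a)}=c''P^{(-1)^n(d(-a)-\mu(-a))}$; the two factors $(-1)^n$ cancel on raising to the power $(-1)^n$, and making sure they do is the one point that genuinely needs care.
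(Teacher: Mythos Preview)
Your proof is correct and follows the same overall architecture as the paper: reduce the chain equations $\partial_{z_j}g=0$ to a single relation between $z_2$ and $P=s+tz_1^{a_1}$, and combine this with a linear relation $z_1-y=\text{const}\cdot Pz_2$ to land on \eqref{crit-val-wr-eq}. Part (ii) is deduced in both cases directly from \eqref{CC-crit-wy-eq} and the injectivity of $\iota$.

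The one genuine difference is how you obtain the linear relation $z_1-y=\frac{\mu(-a)}{d(-a)}Pz_2$. The paper does this by first deriving the auxiliary identities $d(a_2,\ldots,a_k)z_k^{a_k}z_{k+1}=Pz_2$ on $\CC$, substituting them term-by-term into $p_{-a}$, and summing the resulting telescoping series. You instead invoke the Euler identity for the quasi-homogeneous polynomial $q=-p_{-a}$: on $\CC$ one has $\partial_{z_2}q=P$ and $\partial_{z_j}q=0$ for $j\ge 3$, so $Dq=v_2z_2P$ immediately. This is cleaner and explains conceptually why the constant comes out as $\mu(-a)/d(-a)=(D-v_2)/D$; the paper's explicit summation arrives at the same number but hides this structure. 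Both routes then feed into the same collapsing step (your $z_2^{\mu(-a)}=c_a'P^{\,d(-a)-\mu(-a)}$ is the paper's equation \eqref{critvcom} rewritten).

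Your final paragraph correctly flags the only delicate point, the exponent accounting in the recursion for $p_j,q_j$; your stated values and the cancellation of the $(-1)^n$ factors are right.
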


\begin{proof}
(i) Let us write the equations defining $\CC\sub\C^n$:
\begin{align*}
s+tz_1^{a_1}=&a_2z_2^{a_2-1}z_3\\
z_2^{a_2}=&a_3z_3^{a_3-1}z_4\\
\ldots\\
z_{n-1}^{a_{n-1}}=&a_nz_n^{a_n-1}.
\end{align*}
Also, setting $y=g(z)$, we have
\begin{equation}\label{z1-y-g-eq}
z_1-y-sz_2-tz_1^{a_1}z_2=p_{-a}(z_2,\ldots z_n).
\end{equation}
Assuming that $(y,z_1,\ldots,z_n)$ satisfy these equations we have to show that $(y,z_1)$ satisfies \eqref{crit-val-wr-eq}
and that $(z_i)_{i\ge 2}$ are determined by $(y,z_1)$, and that for $(y,z_1)\notin S$, they are given by regular
functions $z_i(y,z_1)$.

If $s+tz_1^{a_1}=0$ then the equations of $\CC$ imply that $z_2=\ldots=z_n=0$, and the equation \eqref{z1-y-g-eq}
gives $z_1=y$, so that $(y,z_1)\in S$.


Now assume that $s+tz_1^{a_1}\neq 0$. Then we also have $z_i\neq 0$ for $i\ge 2$.
The last $n-1$ equations for $\CC$ lead to 
$$z_2^{a_2\mu(a_4,\ldots ,a_n)}=cz_3^{\mu(a_3,\ldots ,a_n)},$$ 
for a positive rational number $c$ that is straightforward to compute. 
Using the first equation for $\CC$ we get: 
\begin{align}\label{critvcom}(s+tz_1^{a_1})^{\mu(a_3,\ldots ,a_n)}=c'z_2^{\mu(a_2,\ldots ,a_n)}.\end{align}

Next, using equations for $\CC$, we can also obtain recursively for $k=2,\ldots,n-1$,
$$d(a_2,\ldots ,a_k)z_k^{a_k}z_{k+1}=sz_2+tz_1^{a_1}z_2.$$
Plugging this into the definition of $p_{-a}(z_2,\ldots,z_n)$ and then into \eqref{z1-y-g-eq}, we get 
$$z_1=c''(sz_2+tz_1^{a_1}z_2),$$ 
which leads to 
\begin{equation}\label{z1-z2-eq}
z_2=\frac{z_1}{c''(s+tz_1^{a_1})}.
\end{equation}
Plugging this into \eqref{critvcom}, we deduce the equation \eqref{crit-val-wr-eq} for $(y,z_1)$.

The desired formulas for $z_2,\ldots,z_n$ as rational functions of $(y,z_1)$ defined away from $S$,
are now easily obtained from \eqref{z1-z2-eq} and from the equations for $\CC$.

\noindent
(ii) In light of \eqref{CC-crit-wy-eq}, this follows from part (i).
\end{proof}

\begin{lem}\label{Lefschetz-lem}
Assume that $s\neq 0$ and $t$ is such that $0$ is not a critical value of $g=g_a^{t,s}$. Then all critical points of
$h_a^{t,s}$ on $g^{-1}(0)$ are nondegenerate.
\end{lem}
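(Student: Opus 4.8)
The plan is to reduce the nondegeneracy of critical points of $h_a^{t,s}$ on $g^{-1}(0)$ to a transversality statement about the plane curve $\CC'$ of Proposition \ref{propcritvalueh}. First I would recall from \eqref{CC-crit-wy-eq} that the critical points of $h_a^{t,s}=w_0$ lying off $\crit(g)$ are exactly the points of $\CC\cap g^{-1}(0)\setminus\crit(g)$; since $0$ is assumed to be a regular value of $g$, there are no critical points of $g$ in the fiber at all, so every critical point of $h_a^{t,s}$ is of this type, and by Proposition \ref{propcritvalueh}(i) it corresponds under the bijection $\iota$ to a point $(0,z_1)\in\CC'$. Moreover, since $s\neq 0$, the exceptional set $S=\{(y,z_1)\mid y=z_1,\ s+tz_1^{a_1}=0\}$ meets the line $\{y=0\}$ only if $z_1=0$ and $s=0$, which is excluded; hence $\iota$ is a genuine isomorphism of algebraic varieties near all these points, and the regular functions $z_2(y,z_1),\ldots,z_n(y,z_1)$ are defined there.

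Next I would use the standard fact that a critical point $z$ of the restriction $w_0=z_1$ to the smooth hypersurface $g^{-1}(0)$ is nondegenerate (as a critical point of a Morse function, i.e. has nondegenerate Hessian on the fiber) if and only if, in the Lagrange-multiplier picture \eqref{dz1-dg-eq}, the bordered Hessian of $z_1 - \lambda g$ is nondegenerate — equivalently, the scheme-theoretic fiber of $w_0$ over its critical value is reduced at $z$, equivalently the map $\iota$ exhibits $\CC'$ as smooth at $(0,z_1)$ with the projection $\CC'\to \A_y$, $(y,z_1)\mapsto y$, being unramified at that point. Concretely: the critical values of $w_0$ are the roots $z_1$ of $F(0,z_1):=c_a(s+tz_1^{a_1})^{d(-a)}-(z_1)^{\mu(-a)}=0$ by Proposition \ref{propcritvalueh}(ii), and the nondegeneracy of the corresponding critical point is equivalent to this root being simple, i.e. to $\partial_{z_1}F(0,z_1)\neq 0$ there. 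So the whole lemma reduces to: for $s\neq 0$ and $0$ not a critical value of $g$, the polynomial
\begin{equation*}
F_0(z_1):=c_a\,(s+tz_1^{a_1})^{d(-a)}-z_1^{\mu(-a)}
\end{equation*}
has only simple roots.

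The main obstacle is therefore proving this last discriminant-type statement, and here I would invoke the root analysis of Section \ref{ss-roots}. For $t$ in the small real range and $|s|$ small, Corollary \ref{cor-dart} (applied in the form of Equation \eqref{crit-val-wr-eq} with $y=0$, which is precisely \eqref{critpol} after the substitution matching $\mu_-=\mu(-a)$, $d_-=d(-a)$) says that all roots split into $\mu(-a)$ simple small roots and $\mu$ simple large roots; in particular $F_0$ has only simple roots, which is exactly what is needed. For general $s\neq 0$ and general $t$ with $0\notin\crit(g)$ one argues by the following continuity/openness principle: the locus of $(t,s)$ for which $F_0$ has a multiple root is the zero set of the discriminant $\disc_{z_1}(F_0)$, a polynomial in $(t,s)$; if it vanished on a nonempty open set it would vanish identically, contradicting the Section \ref{ss-roots} computation on the small region. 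Hence $\disc_{z_1}(F_0)\not\equiv 0$, so $F_0$ has a multiple root only on a proper algebraic subset $Z\subset\C^2_{t,s}$. It remains to check that $Z$ is contained in the bad locus we have already excluded, namely $\{s=0\}\cup\{0\in\crit(g_a^{t,s})\}$; this is where a small additional argument is needed — one shows that a multiple root $z_1^\ast$ of $F_0$ with $s+t(z_1^\ast)^{a_1}\neq 0$ forces, via the explicit rational functions $z_i(0,z_1)$ from Proposition \ref{propcritvalueh}(i), a point of $\CC\cap g^{-1}(0)$ at which $\iota$ fails to be an immersion, and then tracks through \eqref{z1-z2-eq} and \eqref{critvcom} that this is only possible when the corresponding point actually lies in $\crit(g)$, i.e. when $0$ is a critical value of $g$. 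Together with the already-handled case $s+t(z_1^\ast)^{a_1}=0$ (which lands in $S$, hence forces $s=0$), this shows $Z\subset\{s=0\}\cup\{0\in\crit(g)\}$ and completes the proof.
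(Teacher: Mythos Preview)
Your route is genuinely different from the paper's. The paper never touches the plane curve $\CC'$ here: it fixes a critical point $z^0$, uses $z_2,\ldots,z_n$ as local coordinates on $g^{-1}(0)$ (since $\partial_1 g(z^0)\neq 0$), computes $\partial_i\partial_j h|_{z^0}=(\partial_i\partial_j p_{-a})/(1-a_1tz_1^{a_1-1}z_2)|_{z^0}$, checks that $s\neq 0$ forces all $z^0_i\neq 0$, and then evaluates $\det(\partial_i\partial_j p_{-a})$ by an explicit induction, obtaining a nonzero monomial in the $z^0_i$. That is a fully local, self-contained computation.

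Your reduction ``nondegenerate critical point of $h$ $\Leftrightarrow$ simple root of $F_0$'' is correct in principle (it amounts to identifying the Milnor algebra of $h$ at $z^0$ with the local ring of $\CC'\cap\{y=0\}$ at $(0,z_1^0)$, using that $\iota$ is a scheme-theoretic isomorphism off $S$), and would be a nice way to package the lemma. But your argument that $F_0$ has only simple roots has a real gap. The detour through Corollary~\ref{cor-dart} and the discriminant buys you almost nothing: Corollary~\ref{cor-dart} only covers a small real/positive range of $(t,s)$, and the discriminant argument only tells you the bad locus $Z$ is a proper subvariety of $\C^2_{t,s}$. The entire content of the lemma is your step~4, the inclusion $Z\subset\{s=0\}\cup\{0\in\crit\ g\}$, and that is exactly what you do not prove. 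Your sketch there is also garbled: a multiple root of $F_0$ at $z_1^\ast$ does not mean ``$\iota$ fails to be an immersion'' (away from $S$, $\iota$ is an isomorphism, hence always an immersion); it means the projection $pr_1|_{\CC'}:(y,z_1)\mapsto y$ is ramified at $(0,z_1^\ast)$. Showing that such ramification forces $\iota^{-1}(0,z_1^\ast)\in\crit(g)$ is precisely the \'etale claim that the paper establishes later inside the proof of Proposition~\ref{vanishing-Lagrangian-sphere-matching-prop}; it requires a genuine tangent-space computation (using \eqref{dz1-dg-eq} to see that $dg|_z$ alone spans $T^*_z\CC$ when $z\notin\crit(g)$), not just ``tracking through \eqref{z1-z2-eq} and \eqref{critvcom}.'' If you supply that argument (which is independent of Lemma~\ref{Lefschetz-lem}), your approach works and in fact dovetails with the later proposition; the Section~\ref{ss-roots} material can then be dropped entirely from this proof.
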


\begin{proof}
Let $z^0=(z^0_1,\ldots,z^0_n)$ be a critical point of $h_a^{t,s}$ on $g^{-1}(0)$.
Then $z^0$ belongs to $\CC$ and due to the relation \eqref{dz1-dg-eq}, we have 
$$\partial_1 g|_{z^0}=(1-a_1tz_1^{a_1-1}z_2)|_{z^0}\neq 0,$$
where we set $\partial_i=\partial_{z_i}$.
Thus, we can view $z_2,\ldots,z_n$ as local coordinates on $g^{-1}(0)$ near $z^0$
and compute the derivatives of $h=h_a^{t,s}=z_1$ with respect to $z_2,\ldots,z_n$ using the equation
$$h-th^{a_1}z_2=sz_2+p,$$ 
where $p=p_{-a}(z_2,\ldots,z_n)$.
This gives 
$$\partial_2h=\frac{s+th^{a_1}+\partial_2p}{1-a_1tz_1^{a_1-1}z_2},$$
$$\partial_ih=\frac{\partial_ip}{1-a_1tz_1^{a_1-1}z_2} \ \text{ for } i>2.$$
In particular, we have
$$(s+th^{a_1}+\partial_2 p)|_{z^0}=0, \ \ \partial_i p|_{z^0}=0 \ \text{ for } i>2.$$
Taking this into account we derive that for all $i,j\ge 2$,
$$\partial_i\partial_jh|_{z^0}=\frac{\partial_i\partial_jp}{1-a_1tz_1^{a_1-1}z_2}|_{z^0}.$$
Thus, it remains to show that the matrix $(\partial_i\partial_jp|_{z^0})_{i,j\ge 2}$ is invertible. 

First, we observe that $z_i^0\neq 0$ for $i=1,\ldots,n$. Indeed, as we have seen in the proof of Proposition
\ref{propcritvalueh}, the only other possibility is that all $z_i^0=0$, which is possible only when $s=0$
(due to equation \eqref{crit-val-wr-eq}).

Now our assertion follows from the following identity (applied to $p=p_{-a}$). For $a=(a_1,\ldots,a_n)$,
$$\Delta(a):=\det(\partial_i\partial_j p_a)_{1\le i,j\le n}.$$
Then at any point $z$ where $\partial_i p_a=0$ for $i>1$, one has
$$\Delta(a)=(-1)^{{n+1\choose 2}}\cdot r\cdot z_1^{a_1-2}z_2^{a_2-1}\ldots z_n^{a_n-1}$$
with $r>0$.
Indeed, this can be checked easily by induction since
\begin{align*}
&\Delta(a)=-a_1(a_1-1)z_1^{a_1-2}z_2\cdot (-1)^{n-1}\Delta(-a)-a_1^2z_1^{2a_1-2}\cdot \Delta(--a)=\\
&a_1(a_1-1)z_1^{a_1-2}z_2\cdot (-1)^n\Delta(-a)-a_1^2a_2z_1^{a_1-2}z_2^{a_2-1}\cdot \Delta(--a),
\end{align*}
where $-a=(a_2,\ldots,a_n)$, $--a=(a_3,\ldots,a_n)$ (we used the equation $z_1^{a_1}=a_2z_2^{a_2-1}z_3$).
\end{proof}

Recall Corollary \ref{cor-dart} and Lemma \ref{lem-g-lef-small}. Let us fix $t_0$ and $s_0$, positive real numbers with $$t_0<C(a_1,\mu(-a),d(-a),c_a)$$ and $$s_0<\min{\{C(a_1,\mu(-a),d(-a),c_a),\delta(t_0)\}}.$$ 

In the base of $g_a^{t_0,s_0}$, we consider the radial vanishing paths $$\tilde{r}_1,\ldots \tilde{r}_{\mu(a)}$$ from each of the critical values to the origin. These are again ordered clockwise and so that $\tilde{r}_{\mu(a)}$ aligns with the positive real axis.

\begin{rmk} Note that $g_a^{t_0,s_0}$ indeed has a unique positive real critical value. This follows because we know that the only critical value of $g_a^{t,0}$ whose $d/10$ neighborhood intersects the positive real axis is the positive real one and that the set of critical values of $g_a^{t_0,s_0}$ is closed under complex conjugation of $\mathbb{C}.$ Therefore, $\tilde{r}_{\mu(a)}$ still aligns with the positive real line.
\end{rmk}

The directed Fukaya-Seidel $A_{\infty}$-category $\tilde{\mathcal{E}}_a$ of the Lagrangian vanishing spheres of $\tilde{r}_1,\ldots \tilde{r}_{\mu(a)}$ is equivalent (as a directed $A_{\infty}$-category) to
$\mathcal{E}_a$, and therefore, to $\mathcal{D}_a.$ 

Our goal is to compute the Lagrangian vanishing spheres of $\tilde{r}_1,\ldots \tilde{r}_{\mu(a)}$ as Lagrangian matching spheres inside $(g_a^{t_0,s_0})^{-1}(0)$ corresponding to matching paths in the base of $h_a^{t_0,s_0}.$

By Proposition \ref{propcritvalueh} (ii), the critical values of $h_a^{t_0,s_0}$ 
are solutions of the equation 
$$z_1^{\mu(-a)}=c_a(s_0+t_0z_1^{a_1})^{d(-a)},$$ 
Moreover, 
the critical values of distinct critical points of $h_a^{t_0,s_0}$ are not equal to each other.
These critical values are divided into two groups:

\begin{itemize}
\item small ones: one in each connected component of an \emph{inner dart} $Dart(\epsilon(s_0),\mu(-a),\phi(s_0),0)$
\item large ones: one in each connected component of an \emph{outer dart} $Dart^{\infty}(r(t_0),\mu(a),\phi'(t_0))$. 
\end{itemize} Note that $\epsilon(s_0)<1/2$ and $r(t_0)>2$.\\

Recall that we have defined in the introduction the symplectomorphism $\psi_a$ of $\mathbb{C}^n$ which gives the action of the element of 
$\Gamma_a$ with $\lambda=\lambda_1=e^{\frac{2\pi i}{\mu(a)}}$.

\begin{lem}\label{theta-rescale-lem} 
We have the following commutative diagram \begin{align}
\xymatrix{ 
\mathbb{C}^n\ar[r]^{\psi_a}\ar[d]_{g_a^{t,s}}& \mathbb{C}^n \ar[d]^{g_a^{t,e^{i\theta}\cdot s}}\\ \mathbb{C}\ar[r]_{\mathrm{rot}_{2\pi/\mu(a)}}&\mathbb{C},}
\end{align} with $$\theta=\frac{2\pi a_1}{\mu(a)}.$$
\end{lem}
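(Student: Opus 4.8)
The plan is to verify the commutativity of the diagram by direct substitution, tracking how the quasi-homogeneous scaling $\psi_a$ interacts with each monomial of $g_a^{t,s}$. Recall that $\psi_a$ is the symplectomorphism of $\mathbb{C}^n$ given by the action of the element of $\Gamma_a$ with $\lambda=\lambda_1=e^{2\pi i/\mu(a)}$; concretely, since $\Gamma_a$ consists of tuples $(\lambda_1,\ldots,\lambda_n,\lambda)$ with $\lambda_1^{a_1}\lambda_2=\cdots=\lambda_n^{a_n}=\lambda$, the constraint $\lambda_1=\lambda=e^{2\pi i/\mu(a)}$ determines $\lambda_2,\ldots,\lambda_n$ uniquely. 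So $\psi_a(z_1,\ldots,z_n)=(\lambda_1 z_1,\ldots,\lambda_n z_n)$ with these specific roots of unity.

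First I would record the key identities satisfied by the $\lambda_i$. Because $p_{-a}(z_2,\ldots,z_n)= z_2^{a_2}z_3-z_3^{a_3}z_4+\cdots$ (with our sign conventions) is built from the monomials $z_k^{a_k}z_{k+1}$ and $z_n^{a_n}$, the relations $\lambda_k^{a_k}\lambda_{k+1}=\lambda$ for $k=2,\ldots,n-1$ and $\lambda_n^{a_n}=\lambda$ say precisely that $\psi_a$ scales every monomial of $p_{-a}$ by $\lambda=e^{2\pi i/\mu(a)}$, hence $p_{-a}(\lambda_2 z_2,\ldots,\lambda_n z_n)=\lambda\, p_{-a}(z_2,\ldots,z_n)$. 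Likewise $\lambda_1^{a_1}\lambda_2=\lambda$ shows $z_1^{a_1}z_2$ scales by $\lambda$, and of course $z_1$ scales by $\lambda_1=\lambda$. The only term that does not automatically scale by $\lambda$ is $-sz_2$: applying $\psi_a$ turns it into $-s\lambda_2 z_2$, which equals $\lambda\cdot(-(s\lambda_2/\lambda) z_2)$.

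Now I would assemble the computation. Writing $w=\psi_a(z)$, we get
\begin{align*}
g_a^{t,s}(\psi_a(z)) &= \lambda_1 z_1 - s\lambda_2 z_2 - t(\lambda_1 z_1)^{a_1}(\lambda_2 z_2) - p_{-a}(\lambda_2 z_2,\ldots,\lambda_n z_n)\\
&= \lambda\Big( z_1 - (s\lambda_2/\lambda) z_2 - t z_1^{a_1}z_2 - p_{-a}(z_2,\ldots,z_n)\Big)\\
&= \mathrm{rot}_{2\pi/\mu(a)}\big(g_a^{t,s'}(z)\big),
\end{align*}
where $s' = s\lambda_2/\lambda$ is the coefficient of $z_2$ needed so that the $z_2$-term rescales correctly; equivalently, $g_a^{t,s}\circ\psi_a = \mathrm{rot}_{2\pi/\mu(a)}\circ g_a^{t,s/(\lambda_2/\lambda)}$. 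To match the statement I must identify $\lambda/\lambda_2$ with $e^{i\theta}$ for $\theta = 2\pi a_1/\mu(a)$: from $\lambda_1^{a_1}\lambda_2=\lambda$ and $\lambda_1=\lambda=e^{2\pi i/\mu(a)}$ we get $\lambda_2 = \lambda^{1-a_1} = e^{2\pi i(1-a_1)/\mu(a)}$, so $\lambda/\lambda_2 = \lambda^{a_1} = e^{2\pi i a_1/\mu(a)} = e^{i\theta}$. Hence the source coefficient that makes the diagram commute is $s$ on the left and $e^{i\theta}s$ on the right, exactly as claimed.

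The main obstacle here is bookkeeping rather than conceptual: one must be careful that the sign conventions in the definition $p_a(z)= -z_1^{a_1}z_2 + z_2^{a_2}z_3 - \cdots + (-1)^n z_n^{a_n}$ (and correspondingly for $p_{-a}$) do not interfere — but since $\psi_a$ acts by scalars, it commutes with the (signed) sum of monomials and the signs play no role. A secondary point to state carefully is that $\psi_a$ is a genuine symplectomorphism for the Fubini–Study form (it is the restriction of a unitary rescaling when the $\lambda_i$ lie on the unit circle, which they do, being roots of unity), so the diagram is a diagram of symplectomorphisms as asserted; this is immediate and requires no computation.
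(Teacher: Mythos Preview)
Your proof is correct and is exactly the ``straightforward computation'' the paper alludes to: verify monomial by monomial that $\psi_a$ scales every term of $g_a^{t,s}$ by $\lambda=e^{2\pi i/\mu(a)}$ except $-sz_2$, and then read off the required change in $s$. There is a small slip in one line: from $s'=s\lambda_2/\lambda$ you wrote ``equivalently $g_a^{t,s}\circ\psi_a=\mathrm{rot}_{2\pi/\mu(a)}\circ g_a^{t,\,s/(\lambda_2/\lambda)}$,'' but $s/(\lambda_2/\lambda)=s\lambda/\lambda_2$, not $s\lambda_2/\lambda$; fortunately your final identification $e^{i\theta}=\lambda/\lambda_2=\lambda^{a_1}$ is obtained correctly from the diagram and does not depend on that intermediate expression.
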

\begin{proof}
This is a straightforward computation.
\end{proof}

Recall that for $\gamma\in [0,2\pi)$, we denote by $R_{\gamma}$
the ray in the complex plane starting from the origin that makes a positive angle of $\gamma$ with the positive real axis. 

\begin{prop}\label{vanishing-Lagrangian-sphere-matching-prop}
Let $\varphi:= \frac{2\pi k}{\mu(a)}$ for some $k=0,\ldots , \mu(a)-1$.
Consider $$s=s_0e^{\frac{2\pi ika_1}{\mu(a)}}.$$ 
We have:

\begin{itemize}
\item $g_a^{t_0,s}$ and $h_a^{t_0,s}$ are Lefschetz fibrations.
\item $g_a^{t_0,s}$ has a unique critical value $b$ on $R_{\varphi}$.
\item The map $h_a^{t_0,s}$ has precisely two critical values $b_1$, $b_2$ on $R_{\varphi}$.
\item The vanishing Lagrangian sphere of the straight vanishing path from $0$ to $b$ is Hamiltonian isotopic to the matching Lagrangian sphere of the matching path between $b_1$ and $b_2$
along $R_{\varphi}$. In particular, this straight path is a matching path.
\end{itemize}
\end{prop}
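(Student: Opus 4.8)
The plan is to analyze the restriction $z_1 : (g_a^{t_0,s})^{-1}(y) \to \C$ as $y$ moves along the reference path from the regular value $0$ toward the critical value $b$, and to identify the Lefschetz thimble in the total space with the Lagrangian matching sphere in the fiber $(g_a^{t_0,s})^{-1}(0)$ associated to the critical points of $z_1$ that coalesce. The first step is to verify the bulleted preliminary facts. That $g_a^{t_0,s}$ is a Lefschetz fibration with $\mu(a)$ critical points follows from Lemma \ref{lem-g-lef-small} together with Lemma \ref{theta-rescale-lem}: the choice $s = s_0 e^{2\pi i k a_1/\mu(a)}$ is exactly $e^{i\theta}$-times $s_0$ with $\theta = 2\pi k a_1/\mu(a)$ (iterating the diagram $k$ times), so $g_a^{t_0,s}$ is obtained from $g_a^{t_0,s_0}$ by precomposing with the symplectomorphism $\psi_a^k$ and postcomposing with the rotation $\mathrm{rot}_{2\pi k/\mu(a)}$; hence $|s| = s_0 < \delta(t_0)$ and all conclusions of Lemma \ref{lem-g-lef-small} persist, with the critical values rotated by $2\pi k/\mu(a)$. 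In particular there is a unique critical value $b$ of $g_a^{t_0,s}$ on the ray $R_{\varphi}$, namely the image under $\mathrm{rot}_{2\pi k/\mu(a)}$ of the positive real critical value of $g_a^{t_0,s_0}$. That $h_a^{t_0,s}$ is a Lefschetz fibration follows from Lemma \ref{Lefschetz-lem} (using $s \neq 0$ and that $0$ is a regular value of $g_a^{t_0,s}$, which we just arranged).

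The second step is to locate the critical values of $h_a^{t_0,s} = z_1|_{(g_a^{t_0,s})^{-1}(0)}$ on the ray $R_{\varphi}$ using Proposition \ref{propcritvalueh}(ii): these are the roots $z_1$ of the equation $z_1^{\mu(-a)} = c_a(s + t_0 z_1^{a_1})^{d(-a)}$, i.e. Equation \eqref{critpol} with $\mu = \mu(-a)$, $\mu_- $ playing the role in Corollary \ref{cor-dart}, $a = a_1$, $d_- = d(-a)$, $c = c_a$. By Corollary \ref{cor-dart} applied with the rotated parameter (or equivalently using that the whole root configuration for parameter $s$ is obtained from that for $s_0$ by multiplying $z_1$ by an appropriate root of unity — this is where $\gamma(s)$ being the argument of $s^{d_-}$ enters), the small roots lie one per connected component of an inner dart $Dart(\epsilon(s_0),\mu(-a),\phi(s_0),\gamma)$ and the large roots one per component of an outer dart $Dart^{\infty}(r(t_0),\mu(a),\phi'(t_0))$ suitably rotated. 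The key numerical point is that the dart directions are aligned with $R_{\varphi}$: because $\gamma(s)$ equals the argument of $s^{d(-a)} = s_0^{d(-a)} e^{2\pi i k a_1 d(-a)/\mu(a)}$, and because $a_1 d(-a) \cdot \mu_- \equiv$ (using $\mu(a) = a_1\mu(-a) + (\text{lower})$ and $d(a) = a_1 d(-a)$ relations from Section \ref{AT-basic-sec}) is arranged so that one small-root dart component and one large-root dart component point precisely along $R_{\varphi}$. Concretely I would pin this down by symmetry: $\psi_a$ covers $\mathrm{rot}_{2\pi/\mu(a)}$ and restricts fiberwise (over $y=0$, after the rotation of the base) to a symplectomorphism covering $z_1 \mapsto e^{2\pi i \cdot a_1 \mu(-a)^{-1} \bmod \ldots} z_1$ — more cleanly, one applies the diagram of Lemma \ref{theta-rescale-lem} $k$ times to reduce the entire statement for general $k$ to the case $k=0$, where $s = s_0$ is positive real and the inner/outer darts point along the positive real axis $R_0$ by the explicit bullet points of Corollary \ref{cor-dart}. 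So exactly two critical values $b_1$ (a small root) and $b_2$ (a large root) of $h_a^{t_0,s}$ lie on $R_{\varphi}$, with $|b_1| < 1/2 < 2 < |b_2|$; distinctness of critical values of distinct critical points is Proposition \ref{propcritvalueh}(ii).

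The third and main step is the matching-cycle identification: the vanishing cycle of the straight path from $0$ to $b$ for the fibration $g_a^{t_0,s}$ equals the matching sphere of the path from $b_1$ to $b_2$ along $R_{\varphi}$ for the auxiliary fibration $h_a^{t_0,s}$ on $(g_a^{t_0,s})^{-1}(0)$. This is the Lefschetz bifibration / matching cycle technique (Seidel, \cite{Seidelbook}, and as used in the $n=1$ case at the end of Section \ref{ss-Aside-intro}): one considers the bifibration $(g_a^{t_0,s}, z_1) : \C^n \to \C^2$, or rather the one-parameter family $w_y = z_1|_{(g_a^{t_0,s})^{-1}(y)}$ for $y$ on the reference path from $0$ to $b$. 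By Proposition \ref{propcritvalueh}(i)–(ii), the critical points of $w_y$ are cut out by the plane curve $\CC'$: $c_a(s + t z_1^{a_1})^{d(-a)} = (z_1 - y)^{\mu(-a)}$, and as $y \to b$ exactly one pair of these critical values — the one small root and the one large root sitting on $R_{\varphi}$, I must check these are precisely the two that collide — comes together (this is what it means for $b$ to be a critical value of $g_a^{t_0,s}$, i.e. a value where $w_y$ degenerates). The vanishing cycle of $w_y$ between that colliding pair, parallel-transported back to $y = 0$, is by definition the matching sphere along $R_{\varphi}$; and a standard argument (the thimble of $g_a^{t_0,s}$ over the straight path fibers over that matching path with the $w_y$-vanishing cycles as fibers) identifies it, up to Hamiltonian isotopy, with the $g_a^{t_0,s}$-vanishing sphere of the straight path. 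I expect the main obstacle to be exactly the bookkeeping that precisely \emph{one} small and \emph{one} large critical value of $h$ lie on $R_{\varphi}$ and that these are the two that coalesce at $b$: this requires combining the root-counting from Corollary \ref{cor-dart} with the degeneration structure of the curve $\CC'$ near $y = b$, controlling that as $y$ sweeps from $0$ to $b$ no other pair of critical values of $w_y$ collides and that the colliding pair stays on (a deformation of) $R_{\varphi}$ — and the cleanest route is to use Lemma \ref{theta-rescale-lem} to reduce everything to $\varphi = 0$, where $R_0$ is the real axis, the curve $\CC'$ has real coefficients, and the inner and outer dart components along $R_0$ are genuinely real intervals, so that the colliding pair can be tracked by a purely real one-variable analysis along the lines of Section \ref{ss-roots}.
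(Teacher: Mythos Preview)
Your outline coincides with the paper's: reduce to $k=0$ via Lemma~\ref{theta-rescale-lem}, verify the Lefschetz and critical-value claims from Lemma~\ref{lem-g-lef-small}, Lemma~\ref{Lefschetz-lem}, and the dart structure of Corollary~\ref{cor-dart} (with the positive-real critical values pinned down by complex-conjugation symmetry), and then run the bifibration/matching-cycle argument. You also correctly flag the main obstacle as tracking which pair of critical values of $w_y$ collides as $y$ runs from $0$ to $b$.

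Two points where the paper does real work that your sketch leaves open. First, for the collision tracking: the paper shows that $pr_1:\CC'\to\C_y$ is \'etale away from points $(y,z_1)$ with $\iota^{-1}(y,z_1)\in\mathrm{crit}(g)$, so a multiple root of \eqref{crit-val-wr-eq} can occur only at $y=b$; then a short real-variable argument (two simple positive real roots at $y=0$; for large $y$ no positive real root exceeds $y$; and $z_1=y$ is never a root) forces the two positive real roots to stay positive real and meet for the first time exactly at $b$. Second --- and this you pass over as ``standard'' --- the paper does \emph{not} verify the full bifibration axioms, but instead checks Seidel's local model (Lemma~15.9 of \cite{Seidelbook}) directly: one must show that freezing $z_1=p_1$ at the collision point $p$ gives a map $(z_2,\ldots,z_n)\mapsto -(s_0+t_0p_1^{a_1})z_2-p_{-a}(z_2,\ldots,z_n)+\text{const}$ with a nondegenerate critical point at $p$, which holds because $s_0+t_0p_1^{a_1}\neq 0$. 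Without this verification, Lemma~16.15 of \cite{Seidelbook} cannot be invoked and the identification of the vanishing sphere with the matching sphere is not justified.
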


\begin{proof} 
By Lemma \ref{theta-rescale-lem}, it suffices to proves this for $k=0$. 

By the choice of $s_0$, $g_a^{t_0,s_0}$ is a Lefschetz fibration. Also, by
Lemma \ref{Lefschetz-lem},  $h_a^{t_0,s_0}$ is Lefschetz fibration. 

That $g_a^{t_0,s_0}$ has a unique critical value on the positive real axis was already remarked above. The proof that $h_a^{t_0,s_0}$ has precisely two critical values on the positive real axis follows exactly the same strategy. We know that the unique connected component of both $Dart(\epsilon(s_0),\mu(-a),\phi(s_0),0)$ and $Dart^{\infty}(r(t_0),\mu(a),\phi'(t_0))$ that intersect the positive real axis contain exactly one critical value and that they are preserved under complex conjugation.

We come to the last bullet point. This is a simple application of the Lefschetz bifibration technique. 
Let us denote the unique positive real critical value of $g=g_a^{t_0,s_0}$ by $b$. 

We claim that for $(y,z_1)\in \CC'$ the map 
$pr_1|_{\mathcal{C}'}:\mathcal{C}'\to \mathbb{C}$ is \'etale at $(y,z_1)$ (i.e., induces an isomorphism of tangent
spaces, so in particular, $\mathcal{C}'$ is smooth at this point) unless $(y,z_1)\in S$ and $\iota^{-1}(y,z_1)$ is
a critical point of $g$. Indeed, first, one can immediately check that
$pr_1$ is unramified at the points of $S\subset \CC'$.
Thus, it is enough to check that the map $g=pr_1\circ\iota:\CC\to \C$ is unramified at all $z\not\in (crit(g)\cup \iota^{-1}(S))$.
Indeed, let $T^*_z\CC$ denote the Zariski cotangent space to $\CC$ at any such point $z$. Since
$(g,z_1)$ gives an embedding of $\mathcal{C}\setminus \iota^{-1}(S)$ into $\C^2$, $T^*_z\CC$ is generated by
the images of $dz_1|_z$ and $dg|_z$. 
Now from \eqref{dz1-dg-eq} we see that in fact $T^*_z\CC$ is generated by $dg|_z$ alone. This implies that 
$\dim T^*_z\CC=1$, so $\CC$ is smooth, and the tangent map to $g$ is an isomorphism at $z$ as claimed.



As a consequence, if $r\in [0,\infty)$ is so that the equation \eqref{crit-val-wr-eq} for $y=r$
has a positive real root $z_1$ with multiplicity more than one, then 
$\iota^{-1}(y,z_1)$ is a critical point of $g_a^{t_0,s_0}$ $(\star)$. In particular, this can only happen for $r=b$.

Consider the Equation \eqref{crit-val-wr-eq} for $y=r\in [0,\infty)$. We already know that for $r=0$ there are two simple positive real roots. It is also easy to see that for $r$ sufficiently large, there are no positive real roots that are larger than $r$. Also note that $z_1=r$ is never a root. Combining these with the previous paragraph, we conclude that the two positive real roots at $r=0$ come together on the positive real axis for the first time at $r=b.$\footnote{It also follows that for larger values of $r$ there is never a real root larger than $r$. Note that we are not claiming that are no other positive roots, we only consider the positive roots that are larger than $r$ in this argument.}

Moreover, using $\star$ from two paragraphs ago, it follows that the critical points above the two positive real critical values of $w_r$ (as elements of $\mathbb{C}^n$) come together at the unique singular point $p=(p_1,\ldots, p_n)$ of $(g_a^{t_0,s_0})^{-1}(b)$ as $r$ goes from $0$ to $b$.

Instead of proving that \begin{align}
\xymatrix{ 
\mathbb{C}^n\ar[rr]^{(g_a^{t_0,s_0}, z_1)}&& \mathbb{C}^2 \ar[r]^{pr_1}&\mathbb{C}}
\end{align} is a Lefschetz bifibration, we will prove that the there are coordinates near  $p\in \mathbb{C}^n$, $(b,p_1)\in \mathbb{C}^2$ and $b\in \mathbb{C}$ as in Lemma 15.9 of \cite{Seidelbook}.

We first find coordinates as in equation in the last line of pg 219 in \cite{Seidelbook} using the argument given there. On $\mathbb{C}^2$ and $\mathbb{C}$ we use the given coordinates on this step. All we need to prove is that the map $\mathbb{C}^{n-1}\to \mathbb{C}$ obtained by substituting $z_1=p_1$ in  $g_a^{t_0,s_0}$ has a non-degenerate singularity at $p$. This map is given by $$p_1-s_0z_2-t_0p_1^{a_1}z_2 -p_{-a}(z_2,\ldots z_n).$$

Note that since $s_0,t_0$ and $p_1$ are all positive real numbers $$s_0+t_0p_1^{a_1}\neq 0.$$ Therefore, we know that $-(s_0+t_0p_1^{a_1})z_2 -p_{-a}(z_2,\ldots z_n)$ has only non-degenerate critical points, proving our claim.

To finish finding the desired local coordinates, we can repeat the part of the proof of Lemma 15.9 of \cite{Seidelbook} on pg 220 verbatim since we know that $p$ is a non-degenerate critical point of $g_a^{t_0,s_0}.$

Hence, using Lemma 16.15 of \cite{Seidelbook}, we conclude that the path between the  two positive real critical values of $w_0=h_a^{t_0,s_0}$ is a matching path and the matching Lagrangian sphere above is Hamiltonian isotopic to the vanishing Lagrangian sphere of the straight path from the origin to $b$ in the base of $g_a^{t_0,s_0}$.
\end{proof}

\begin{rmk}
Note that we never proved  that our $\mathbb{C}^n\to \mathbb{C}^2\to\mathbb{C}$ is a Lefschetz bifibration, which would require checking a number of non-degeneracy requirements as explained in page 218 of \cite{Seidelbook}.
\end{rmk}

Let $\mathbb{A}:=\{x\in \mathbb{C} | 1/2\leq |x|\leq 2\}.$ We define a diffeomorphism  $coil_a:\mathbb{A}\to\mathbb{A}$, which is in  polar coordinates $$(\rho,\theta)\mapsto (\rho,\theta+f(\rho)),$$ where $f(\rho)$ is non-decreasing in $\rho$, equal to $-\frac{2\pi}{\mu(-a)}$ near $\rho=1/2$, and equal to $\frac{2\pi}{\mu(a)}$ near $\rho= 2.$

Recall that we have the matching path in the base of $h_a^{t_0,s_0}$ which is 
the straight line segment $[b_1,b_2]$ connecting the two positive critical values $b_1<b_2$. 
Now for $t$, $0<t\le t_0$, and $k=0,\ldots,\mu(a)-1$, we will define a path $\sigma(k,t)$ connecting two critical values of $h_a^{t,s_0}$ in $\mathbb{C}$. 
Note that $b_1<1/2$ and $b_2>2$.

We apply $coil_a^{\circ k}$ to $[b_1,b_2]\cap \mathbb{A}=[1/2,2]$ and obtain a path from $p_1=coil_a^{\circ k}(1/2)$ to $p_2=coil_a^{\circ k}(2)$ in $\mathbb{A}$. 
Then we connect $p_1$ to a point $p'_1\in Dart(\epsilon(s_0),\mu(-a),\phi(s_0),0)$ and $p_2$ to a point $p'_2\in Dart^{\infty}(r(t),\mu(a),\phi'(t))$ by radial paths.
Finally, we connect $p'_1$ (resp., $p'_2$) by a smooth path depending smoothly on $t$
to a critical value without leaving $Dart(\epsilon(s_0),\mu(-a),\phi(s_0),0)$ (resp., $Dart^{\infty}(r(t),\mu(a),\phi'(t))$). See Figure \ref{figdart}.
These paths together form the path we call $\sigma(k,t)$.
\begin{figure}
\includegraphics[width=0.5\textwidth]{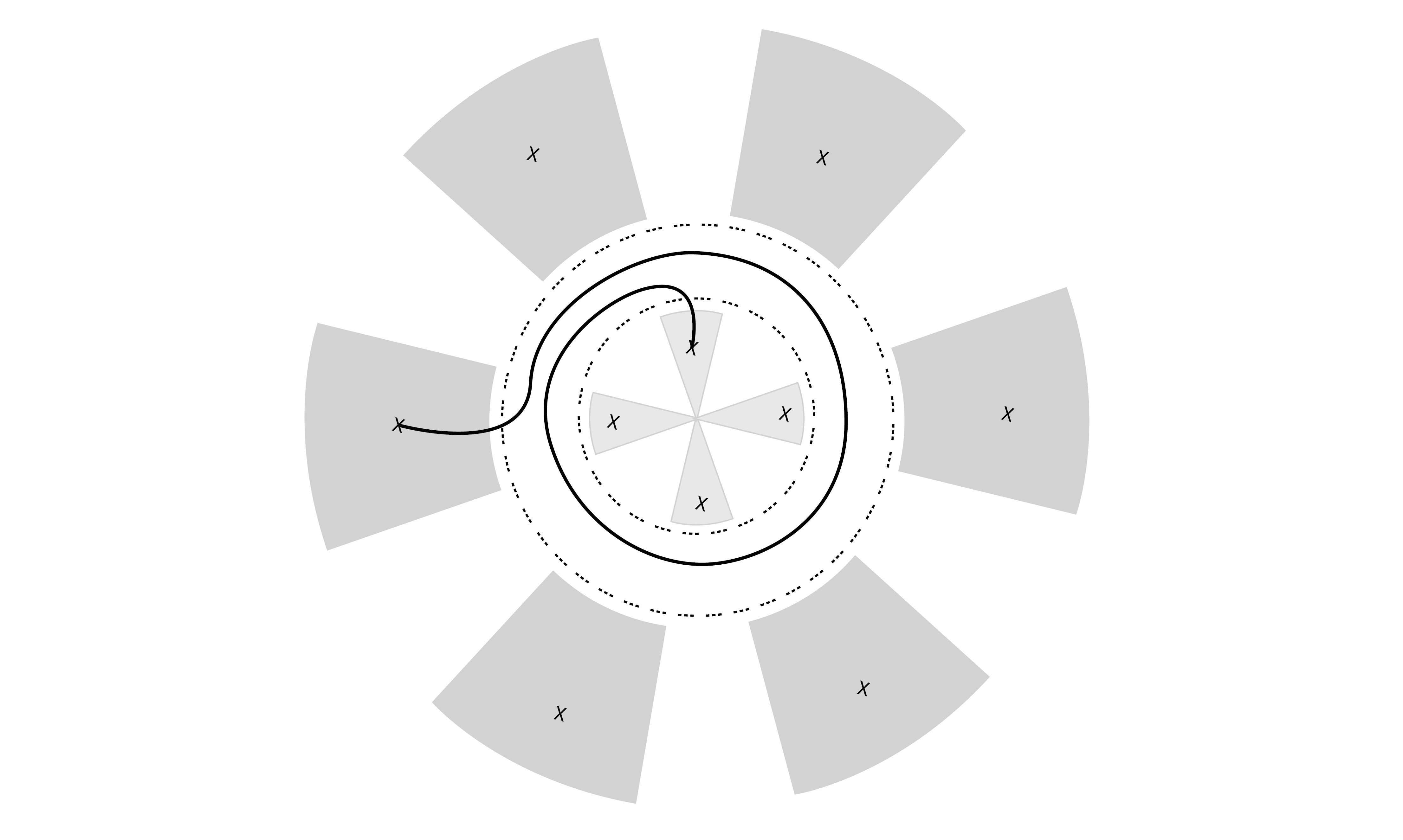}
\caption{The coiling matching paths}
\label{figdart}
\end{figure}

Let us denote by $C_0(\varphi)$ (resp., $C_\infty(\varphi)$) the component of
$Dart(\epsilon(s_0),\mu(-a),\phi(s_0),0)$ (resp., $Dart^{\infty}(r(t),\mu(a),\phi'(t))$) centered around the ray with argument $\varphi$.
Note that $\sigma(k,t)$ connects a critical value of $h^{t,s_0}$ in $C_0(-2\pi \frac{k}{\mu(-a)})$ with a critical value of $h^{t,s_0}$ in $C_\infty(2\pi \frac{k}{\mu(a)})$.

We set $$\sigma(k):=\sigma(k,t_0).$$ 

\begin{prop}\label{prop-match-final}
The vanishing spheres of $\tilde{r}_1,\ldots \tilde{r}_{\mu(a)}$ are Hamiltonian isotopic to the Lagrangian matching spheres of the paths $$\sigma(\mu(a)-1),\ldots ,\sigma(1), \sigma(0)$$ 
in the base of $h_a^{t_0,s_0}$.

\end{prop}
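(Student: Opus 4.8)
The plan is to reduce the statement to Proposition~\ref{vanishing-Lagrangian-sphere-matching-prop}, which computes one vanishing sphere at a time, by letting the parameter $s$ rotate around the circle $|s|=s_0$ and transporting everything along the resulting isotopy of Lefschetz fibrations. First I would fix the bookkeeping. By \eqref{eq-t-pert} the critical values of $g_a^{t_0,0}$ are equiangularly placed on a circle with one on the positive real axis, and by Lemma~\ref{lem-g-lef-small} those of $g_a^{t_0,s_0}$ lie in the disjoint small neighbourhoods of these; so we may label them so that $\tilde r_{\mu(a)-k}$ ends at the critical value $v_k$ lying near the ray $R_{2\pi k/\mu(a)}$, for $k=0,\dots,\mu(a)-1$ (with $v_0$ the positive real one, reached by $\tilde r_{\mu(a)}$). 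Set $s_k:=s_0e^{2\pi i k a_1/\mu(a)}$; since $|s_k|=s_0<\min\{C,\delta(t_0)\}$, Lemmas~\ref{lem-g-lef-small} and \ref{Lefschetz-lem} and Corollary~\ref{cor-dart} apply to every $g_a^{t_0,s}$ and $h_a^{t_0,s}$ with $|s|=s_0$.

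Now fix $k$. By Lemma~\ref{theta-rescale-lem} the $k$-th power of $\psi_a$ carries $g_a^{t_0,s_0}$ to $g_a^{t_0,s_k}$, intertwining $\mathrm{rot}_{2\pi k/\mu(a)}$ on the base, and carries $h_a^{t_0,s_0}$ to $h_a^{t_0,s_k}$ via $z_1\mapsto e^{2\pi i k/\mu(a)}z_1$; in particular the two positive real critical values $b_1<b_2$ of $h_a^{t_0,s_0}$ become the two critical values of $h_a^{t_0,s_k}$ on $R_{2\pi k/\mu(a)}$, namely $e^{2\pi i k/\mu(a)}b_1$ and $e^{2\pi i k/\mu(a)}b_2$. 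Proposition~\ref{vanishing-Lagrangian-sphere-matching-prop} then says that in $g_a^{t_0,s_k}$ the vanishing sphere of the radial path to the critical value on $R_{2\pi k/\mu(a)}$ is Hamiltonian isotopic to the matching sphere, in $h_a^{t_0,s_k}$, of the straight segment along $R_{2\pi k/\mu(a)}$ between those two critical values. I would then deform $s$ from $s_k$ back to $s_0$ along the circle $|s|=s_0$; by Lemmas~\ref{lem-g-lef-small} and \ref{Lefschetz-lem} both $g_a^{t_0,s}$ and $h_a^{t_0,s}$ stay Lefschetz, the critical value on $R_{2\pi k/\mu(a)}$ of $g_a^{t_0,s}$ stays near that ray and ends at $v_k$, so parallel transport carries the vanishing sphere of the displayed radial path to that of $\tilde r_{\mu(a)-k}$, and simultaneously carries the straight segment to a matching path $M_k$ of $h_a^{t_0,s_0}$ with the same Hamiltonian isotopy class of matching sphere. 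It therefore suffices to prove that $M_k$ is isotopic, rel the critical values of $h_a^{t_0,s_0}$, to $\sigma(k)$.

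For this I would track the endpoints of $M_k$ via Corollary~\ref{cor-dart}. Along the deformation $\arg s$ decreases by $2\pi k a_1/\mu(a)$; by part~(2) the inner dart $Dart(\epsilon(s),\mu(-a),\phi(s),\gamma(s))$, with $\gamma(s)=\arg(s^{d(-a)})$, rotates so that each small critical value turns by $\frac{d(-a)}{\mu(-a)}$ times the change of $\arg s$, that is by $-2\pi k d(a)/(\mu(a)\mu(-a))=-2\pi k/\mu(-a)-2\pi k/\mu(a)$ (using $\mu(a)+\mu(-a)=d(a)$); hence the inner endpoint of $M_k$, starting on $R_{2\pi k/\mu(a)}$, ends in the component $C_0(-2\pi k/\mu(-a))$. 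By part~(3) the outer dart $Dart^\infty(r(t_0),\mu(a),\phi'(t_0))$ does not depend on $s$, so each large critical value stays in its own component and the outer endpoint of $M_k$ remains in $C_\infty(2\pi k/\mu(a))$. Thus, up to isotopy, $M_k$ is an embedded arc joining a critical value in $C_0(-2\pi k/\mu(-a))$ to one in $C_\infty(2\pi k/\mu(a))$, crossing the critical-value-free annulus $\mathbb{A}=\{1/2\le|z|\le2\}$ once; its total change of argument inside $\mathbb{A}$ is the difference between the net rotations of the outer region ($0$) and the inner region ($-2\pi k d(a)/(\mu(a)\mu(-a))$), i.e.\ $2\pi k/\mu(a)+2\pi k/\mu(-a)=kf(2)-kf(1/2)$, which is exactly the change of argument of $coil_a^{\circ k}([1/2,2])$. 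Since $\mathbb{A}$ contains no critical values, this pins down $M_k\cap\mathbb{A}$ up to isotopy rel endpoints inside $\mathbb{A}$ to $coil_a^{\circ k}([1/2,2])$; extending over the two dart components (each a disk containing a single critical value that both paths enter from within the dart) then gives $M_k\simeq\sigma(k)$. Running this for $k=0,\dots,\mu(a)-1$ yields the proposition; the $k=0$ case is Proposition~\ref{vanishing-Lagrangian-sphere-matching-prop} directly, with $\sigma(0)=[b_1,b_2]$.

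The step I expect to be the main obstacle is the winding count in the previous paragraph: one must be sure that parallel transport along the $s$-isotopy introduces no spurious Dehn twist in the interior of $\mathbb{A}$, so that the induced mapping class of the base really is the minimal one $coil_a^{\circ k}$ rather than $coil_a^{\circ k}$ post-composed with twists about the core circle of $\mathbb{A}$. This is the one genuinely non-formal point, and it is precisely the feature that $coil_a$ was built to encode; I would extract it from the explicit description of the roots in Corollary~\ref{cor-dart} (the inner roots rotate with proportionality constant $d(-a)/\mu(-a)$ as $\arg s$ varies while the outer roots remain pinned in their fixed sectors) together with the fact that the family $\{h_a^{t_0,s}\}$ is algebraic, so that the monodromy of the base as $s$ moves is localized along the locus of critical values. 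Granting this, the first two steps above are routine given Proposition~\ref{vanishing-Lagrangian-sphere-matching-prop} and the standard Lefschetz bifibration machinery.
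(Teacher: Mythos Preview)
Your approach is essentially the same as the paper's: start from Proposition~\ref{vanishing-Lagrangian-sphere-matching-prop} at $s=s_k$, deform $s$ back to $s_0$ along $|s|=s_0$, and track how the matching path deforms using the explicit description of the small and large critical values from Corollary~\ref{cor-dart}. Your endpoint computations (inner root rotates by $-2\pi k d(a)/(\mu(a)\mu(-a))$, outer root stays put) and the resulting identification of the target components $C_0(-2\pi k/\mu(-a))$ and $C_\infty(2\pi k/\mu(a))$ are exactly the paper's.

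The one point where your write-up is weaker than the paper is the step you yourself flag. You frame the deformation as ``parallel transport carries the straight segment to a matching path $M_k$'' and then worry about spurious Dehn twists in the annulus. The paper avoids this entirely by not trying to \emph{determine} $M_k$ from an abstract transport; instead it \emph{constructs by hand} a smooth family of pre-matching paths $\beta_\tau$ in the bases of $h_a^{t_0,s_\tau}$, using an explicit annular isotopy $\eta_\tau(\rho,\theta)=(\rho,\theta+\tau f(\rho))$ with $f\equiv -\theta_k$ near $\rho=1/2$ and $f\equiv 0$ near $\rho=2$, glued to radial pieces and short boundary arcs. One then simply checks that $\beta_0$ is isotopic (through pre-matching paths) to the straight segment $\beta$ and $\beta_1$ is isotopic to $\sigma(k)$. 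Once any such family exists, the matching spheres of $\beta_\tau$ and the vanishing spheres of the radial paths both vary continuously and agree at $\tau=0$, hence agree at $\tau=1$; no appeal to algebraicity or to ``localized monodromy'' is needed, and the Dehn-twist ambiguity never arises because the family is specified rather than inferred.
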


\begin{proof}
By Proposition \ref{vanishing-Lagrangian-sphere-matching-prop},
for every $k=0,\ldots , \mu(a)-1$, we can compute the vanishing Lagrangian sphere of the critical value of $g^{t_0,s}_a$ with argument $\varphi:= \frac{2\pi k}{\mu(a)}$ as the matching Lagrangian sphere of an explicit straight matching path $\beta$ connecting two critical values of $h^{t_0,s}_a$ on the ray $R_\varphi$ for $$s=s_0e^{\frac{2\pi ika_1}{\mu(a)}}.$$


Let us call an embedded path in the base of Lefschetz fibration with endpoints on critical values and interiors disjoint from critical values a pre-matching path. Note that a matching path is in particular a pre-matching path.

Now all we need to do is to prove that there exists a smoothly varying family of pre-matching paths $\beta_\tau$, where $\tau$ varies in $[0,1]$, in the bases of $h^{t_0,s_\tau}_a$ for 
$$s_\tau=s_0e^{\frac{2\pi ika_1(1-\tau)}{\mu(a)}},$$ such that\begin{itemize}
\item $\beta_0$ is isotopic to $\beta$ through pre-matching paths;
\item $\beta_1$ is isotopic to $\sigma(k)$ through pre-matching paths.
\end{itemize}

By Corollary \ref{cor-dart}, we know that for every $\tau\in [0,1]$, the critical values of $h^{t_0,s_\tau}_a$ are divided into small ones and big ones and into sectors as follows:
\begin{itemize}
\item there is one critical value $b_m(\tau)$ in each connected component $$\rot_{(1-\tau)\th_k} C(2\pi\frac{m}{\mu(-a)}),$$ $m=0,1,\ldots, \mu(-a)-1$, of $Dart(\epsilon(s_\tau),\mu(-a),\phi(s_\tau),\mu(-a)(1-\tau)\th_k)$,
where $$\th_k=\frac{2\pi ka_1d(-a)}{\mu(-a)\mu(a)};$$
\item there is one critical value $B_p(\tau)$ in each connected component $C_\infty(2\pi\frac{p}{\mu(a)})$, $p= 0,1,\ldots, \mu(a)-1$, of $Dart^{\infty}(r(t_0),\mu(a),\phi'(t_0))$. 
\end{itemize}
In particular that there is never any critical value in the annulus $\mathbb{A}$. To summarize in words these two bullet points, as $\tau$ changes from $0$ to $1$, the component containing $b_m(\tau)$ (for a fixed $m$) rotates clockwise with the angular velocity $\theta_k$, while the components containing $B_p(\tau)$ do not move (although the critical points can move inside them).

Note that we have the identity 
$$2\pi\frac{k}{\mu(a)}=-2\pi\frac{k}{\mu(-a)}+\th_k,$$ 
which shows that the straight path $\beta$ connects the small critical value $b_{-k}(0)$ with the big critical value $B_k(0)$.


Here is how we define $\beta_\tau$. First, we connect the critical value $b_{-k}(\tau)$ by a radial path with a point $q_1(\tau)$ lying on the circle of radius $1/2$.
Similarly, we connect the critical value $B_k(\tau)$ by a radial path with a point $q_2(\tau)$ lying on the circle of radius $2$.

To continue let us introduce the isotopy $\eta_\tau:\mathbb{A}\to\mathbb{A}$, which is in  polar coordinates $$(\rho,\theta)\mapsto (\rho,\theta+\tau f(\rho)),$$ where $f(\rho)$ is non-decreasing in $\rho$, equal to 
$-\th_k$ near $\rho=1/2$, and equal to $0$ near $\rho= 2.$

We finally connect $q_1(\tau)$ with one end of $\eta_\tau(\beta\cap \mathbb{A})$ using the short arc on the circle of radius $1/2$ and $q_2(\tau)$ with the other end of $\eta_\tau(\beta\cap \mathbb{A})$ using the short arc on the circle of radius $2$. This completes the construction of pre-matching paths $\beta_\tau$ with the desired properties. 

\end{proof}

\begin{rmk}
Note that the braid monodromy of the bases of $h^{t_0,s}_a$ as $s=e^{i\theta}s_0$ makes a full counter clock-wise rotation is non-trivial and can be easily computed using the proof above. Noting that the Hamiltonian fiber bundle over $S^1$ of the total spaces of these fibrations is actually trivial (it extends to a fiber bundle over the disk that bounds the $S^1$), we can generate lots of matching paths in the base of $h^{s_0,t_0}_a$ with Hamiltonian isotopic Lagrangian matching spheres.
\end{rmk}

\begin{figure}
\includegraphics[width=0.7\textwidth]{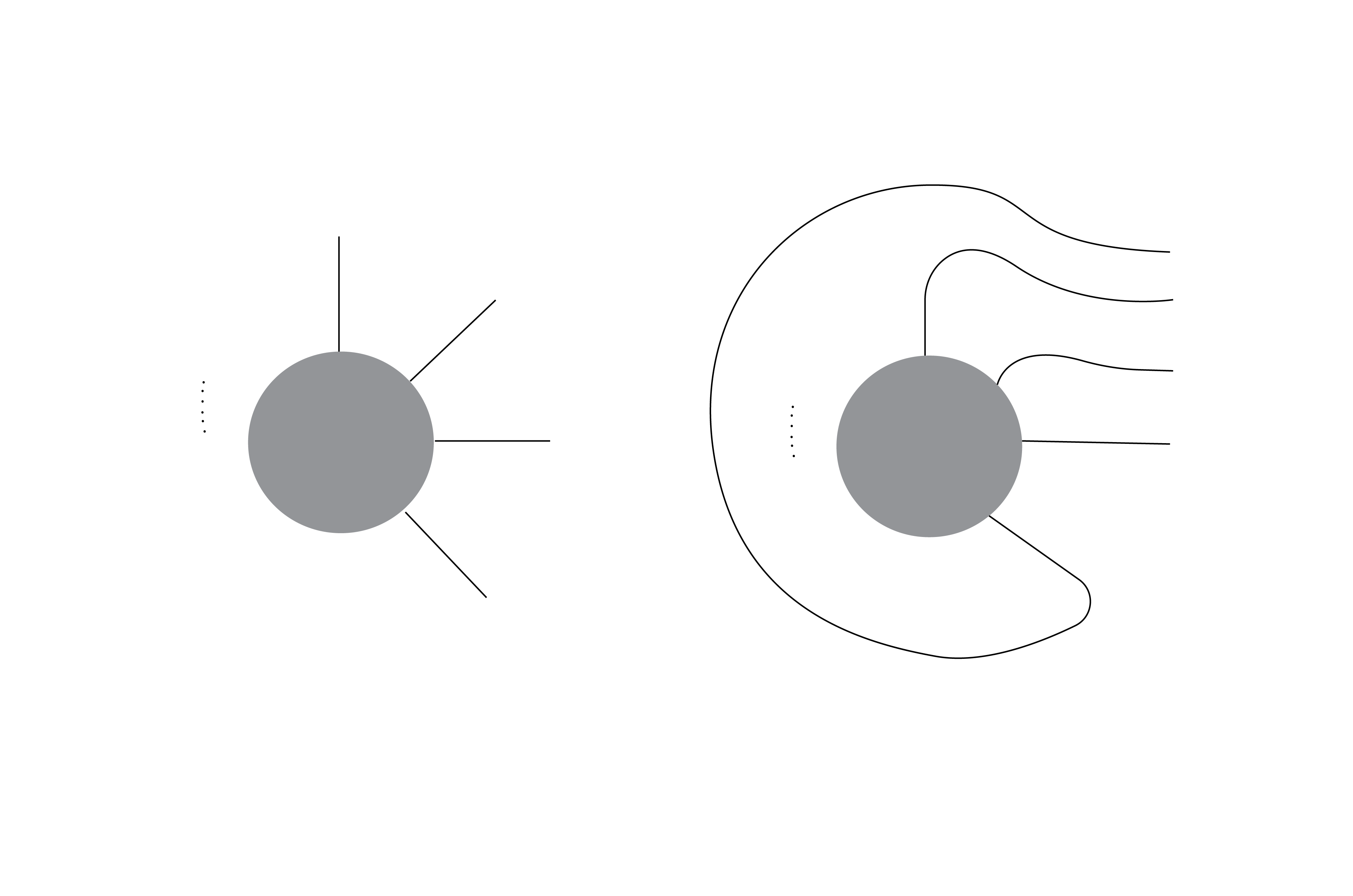}
\caption{Turning radial at infinity paths into horizontal at infinity paths}
\label{fradhor}
\end{figure}

We come to the final geometric argument of our proof. For $t\in[0,t_0]$, consider the family of Lefschetz fibrations $h_a^{t,s_0}$. For all values of $t$ in this interval the critical values stay in the same darts from $h_a^{t_0,s_0}$, but as $t\to 0$, the large roots go to infinity in a very controlled way.

For every $t\in[0,t_0]$, we have a directed $A_{\infty}$-category called $C_t$, which is the directed $A_{\infty}$-category of the matching paths of 
$$\si(\mu(a)-1,t),\ldots ,\si(1,t), \si(0,t)$$ for $t>0$. For $t=t_0$, by Proposition \ref{prop-match-final}, we have an equivalence of directed $A_\infty$-categories
$C_{t_0}\simeq \wt{\EE}_a$.
On the other hand, for $t=0$ we get a directed collection of vanishing paths, radial at infinity.
These can be homotoped to HAI vanishing paths with strictly decreasing ordinals,
$$\wt{\si}(\mu(a)-1),\ldots,\wt{\si}(0),$$
in the way that is explained in Figure \ref{fradhor} without changing them inside the disk of radius $2$.


Note that we can choose perturbations so that for all $t$, all the solutions of the perturbed pseudo-holomorphic curve equations that contribute to the structure maps of $C_t$ lie inside $(h_a^{t,s_0})^{-1}(\mathbb{D})$, where $\mathbb{D}$ is the disk of radius $2$, by the open mapping principle.

Using the homotopy method (Section (10e) of \cite{Seidelbook}), we obtain the following statement

\begin{prop}
There is an $A_{\infty}$ quasi-isomorphism $C_{t_0}\rTo{\sim} C_0$ preserving the ordering of the objects.
\end{prop}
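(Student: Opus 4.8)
The plan is to set up a family $C_t$, $t \in [0,t_0]$, of directed $A_\infty$-categories and to apply the homotopy method of Section (10e) of \cite{Seidelbook} to conclude that $C_{t_0}$ and $C_0$ are $A_\infty$ quasi-isomorphic in a way that preserves the ordering of the objects. First I would make precise the structure of the family. For each $t \in (0,t_0]$ the paths $\sigma(\mu(a)-1,t),\ldots,\sigma(0,t)$ are honest matching paths in the base of $h_a^{t,s_0}$, and for $t=0$ they degenerate (after the homotopy of Figure \ref{fradhor}, carried out away from the disk $\mathbb{D}$ of radius $2$) to the HAI vanishing paths $\wt{\sigma}(\mu(a)-1),\ldots,\wt{\sigma}(0)$ with strictly decreasing ordinals. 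The key point allowing one to glue all these categories into a single $A_\infty$ picture is that, by the open mapping principle applied fiberwise to $h_a^{t,s_0}$, one may choose the auxiliary data (Hamiltonian perturbations and almost complex structures, agreeing with the standard structure outside a compact set) so that every pseudo-holomorphic polygon contributing to a structure map of $C_t$ has image in $(h_a^{t,s_0})^{-1}(\mathbb{D})$, uniformly in $t$. Since the matching/vanishing data and the paths $\sigma(k,t)$ (hence the isotopy class of the Lagrangian matching spheres) vary smoothly in the compact region over $\mathbb{D}$, the whole family of Fukaya-type $A_\infty$-structures is the fiberwise restriction of a single $A_\infty$-structure over the parameter interval.

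Next I would invoke the homotopy method itself. The point of Section (10e) of \cite{Seidelbook} is that a smooth one-parameter family of $A_\infty$-structures on a fixed graded vector space (here the morphism complexes, which up to the chosen isotopies can be identified across the family since all the relevant intersection data lives in the fixed region over $\mathbb{D}$) is connected by an $A_\infty$ quasi-isomorphism, obtained by integrating the flow of a suitable family of Hochschild cochains. Applied to our family over $[0,t_0]$ this produces an $A_\infty$ quasi-isomorphism $C_{t_0} \rTo{\sim} C_0$. Directedness is preserved automatically: all the categories in the family are directed with respect to the fixed ordering of the objects (the ordinals of the paths $\sigma(k,t)$, and in the limit $\wt{\sigma}(k)$, are arranged consistently), so the connecting quasi-isomorphism respects that ordering, which is exactly the assertion.

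The main obstacle is the uniformity of the $C^0$ bounds as $t \to 0$: in the limit the $\mu(a)$ large critical values of $h_a^{t,s_0}$ escape to infinity, and one must be sure that no pseudo-holomorphic polygon contributing to $C_t$ follows them out there. This is handled by the controlled behavior established in Corollary \ref{cor-dart} (the large roots sit in the shrinking-angle outer darts $Dart^{\infty}(r(t),\mu(a),\phi'(t))$ with $r(t)\to\infty$) together with the fiberwise open mapping principle and the monotonicity/geometric-boundedness estimates already used in Section \ref{ss-fukaya-seidel}; this is precisely the delicate point flagged in Remark \ref{rmk-homotopy-method} as the place where our A-side argument falls short of a fully rigorous proof. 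Granting the requisite uniform estimates, the remaining ingredients --- smooth dependence of the structure maps on $t$ over the compact region $\mathbb{D}$ and the standard homotopy-method machinery --- are routine, and the conclusion follows.
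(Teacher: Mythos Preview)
Your approach is essentially the paper's: confine all contributing curves to $(h_a^{t,s_0})^{-1}(\mathbb{D})$ via the open mapping principle, then apply the homotopy method of \cite[Sec.\ (10e)]{Seidelbook} to the resulting smooth family of directed $A_\infty$-structures. That is exactly what the paper does, and your elaboration of the $C^0$ confinement is accurate.

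However, you misidentify the content of Remark~\ref{rmk-homotopy-method}. The paper treats the $C^0$ bounds as unproblematic (handled by the open mapping principle, just as you say); the caveat in that remark is a \emph{different} issue: the matching spheres in the family are not known to remain pairwise transverse for all $t$, so Seidel's homotopy method as written does not literally apply---one would have to allow for birth--death bifurcations of intersection points. Your proposal does not address this transversality issue at all, and in fact assumes it away when you say the morphism complexes ``can be identified across the family.'' So the actual gap flagged by the paper is orthogonal to the one you discuss, and your argument has the same gap the paper acknowledges.
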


\begin{rmk}\label{rmk-homotopy-method}
Our setup is slightly different than Seidel. Our Lagrangians are not known to be pair-wise transverse at all times, so one does have to consider possible birth-death bifurcations. It might be possible to use more geometry to prove that the Lagrangians we are considering are already transverse or that one can find smoothly varying Hamiltonian perturbations that achieve this property, but we did not check this. Regardless, we do not expect a problem with the birth-death analysis because we are considering directed $A_{\infty}$-categories in the exact case. The skeptic reader can assume this proposition not proven.\end{rmk}

The following proposition finishes the proof of Theorem \ref{Da-H-thm}.

\begin{prop}
\begin{itemize}
\item $\mathcal{D}_a$ is equivalent (up to shifts) to $C_{t_0}$ as directed $A_{\infty}$-categories.
\item $\mathcal{H}_{-a}$ is equivalent (up to shifts) to $C_{0}$ as directed $A_{\infty}$-categories.
\end{itemize}
\end{prop}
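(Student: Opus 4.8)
The plan is to derive both statements by assembling identifications that are for the most part already in place, the one genuinely new point being a winding-number bookkeeping for the paths $\tilde\sigma(k)$ at $t=0$; here $\mathcal{H}_{-a}$ abbreviates $\mathcal{H}_{-a}(\mu(a))$, in accordance with Theorem~\ref{Da-H-thm}. For the first bullet I would simply chain the equivalences recorded earlier in this section: $\mathcal{D}_a\simeq\mathcal{E}_a$ (drag the base point from $A$ to $0$ slightly below $\gamma_0$); $\mathcal{E}_a\simeq\widetilde{\mathcal{E}}_a$ (deform $(t,s)$ from $(1,0)$ to $(t_0,s_0)$ through Lefschetz fibrations, using \eqref{eq-t-pert}, Lemma~\ref{lem-g-lef-small} and the choice of $t_0,s_0$, carrying the radial vanishing paths $r_i$ to the $\tilde r_i$); and $\widetilde{\mathcal{E}}_a\simeq C_{t_0}$ up to shifts by Proposition~\ref{prop-match-final}, which identifies, in order, the vanishing spheres of $\tilde r_1,\dots,\tilde r_{\mu(a)}$ with the matching spheres of $\sigma(\mu(a)-1),\dots,\sigma(0)$ up to Hamiltonian isotopy inside $(g_a^{t_0,s_0})^{-1}(0)$. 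Since the directed $A_\infty$-category of an ordered tuple of exact Lagrangians depends, up to order-preserving equivalence, only on the Hamiltonian isotopy classes of its members (and, as we are not tracking gradings, only up to shifts), this yields $\mathcal{D}_a\simeq C_{t_0}$ up to shifts.

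For the second bullet the first task is to recognize $h_a^{0,s_0}$ as $g_{-a}$. As $g_a^{0,s_0}(z)=z_1-s_0z_2-p_{-a}(z_2,\dots,z_n)$, its zero fiber is the graph $z_1=s_0z_2+p_{-a}(z_2,\dots,z_n)$ over $\mathbb{C}^{n-1}_{z_2,\dots,z_n}$, and in this parametrization $h_a^{0,s_0}$ becomes the Morsification $(z_2,\dots,z_n)\mapsto s_0z_2+p_{-a}(z_2,\dots,z_n)$, which differs from $g_{-a}(w_1,\dots,w_{n-1})=w_1+p_{-a}(w_1,\dots,w_{n-1})$ only in the coefficient of the linear term. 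A diagonal rescaling of the variables, exactly as in \eqref{eq-t-pert} but applied in $n-1$ variables to absorb $s_0$ in place of $t$, identifies $h_a^{0,s_0}$ with $g_{-a}$ (the rescaling is linear, hence conformally symplectic, which suffices); since $s_0>0$ the base is rescaled by a positive real number, so the identification carries the positive real critical value, the real axis, the standard distinguished collection, and the monodromy-type diffeomorphism $\varphi$ (for the tuple $-a$) of $g_{-a}$ to the corresponding data of $h_a^{0,s_0}$. Under this identification $\mathcal{H}_{-a}(\mu(a))$ is, by definition, the directed $A_\infty$-category of the thimbles of ordinal-decreasing isotopies of $\varphi^{-\mu(a)+1}(\tilde\gamma),\dots,\varphi^{-1}(\tilde\gamma),\tilde\gamma$, while $C_0$ is, by construction, that of the thimbles of $\tilde\sigma(\mu(a)-1),\dots,\tilde\sigma(1),\tilde\sigma(0)$. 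As such categories depend only on the ordered list of homotopy classes of the HAI vanishing paths (the Computability property of Section~\ref{ss-fukaya-seidel}), and on the branes only up to shifts, it remains to prove $\tilde\sigma(k)\sim\varphi^{-k}(\tilde\gamma)$ for $k=0,\dots,\mu(a)-1$.

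This last identification is the main obstacle, and I would carry it out by a direct winding count. By Corollary~\ref{cor-dart} (recall $\epsilon(s_0)<\tfrac12$) all $\mu(-a)$ critical values of $h_a^{0,s_0}$ lie in the disk of radius $\tfrac12$, which we may take to be the disk on which $\varphi$ is the rotation by $2\pi/\mu(-a)$, with $\mathbb{A}=\{\tfrac12\le|x|\le2\}$ the annulus on which $\varphi$ transitions to the identity. The path $\varphi^{-k}(\tilde\gamma)$ starts at the critical value at argument $-2\pi k/\mu(-a)$, coils counterclockwise by $2\pi k/\mu(-a)$ inside $\mathbb{A}$, and then runs straight out along the positive real axis. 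The path $\tilde\sigma(k)$ starts at the same critical value (it lies in the component of the inner dart around argument $-2\pi k/\mu(-a)$), runs radially to radius $\tfrac12$, then along $coil_a^{\circ k}([\tfrac12,2])$ — which coils counterclockwise by $2\pi k/\mu(-a)+2\pi k/\mu(a)$ and exits radius $2$ at argument $2\pi k/\mu(a)$ — then, in the limit $t\to 0$, radially out to infinity along that ray, and is finally bent to a horizontal-at-infinity path as in Figure~\ref{fradhor} without being altered inside radius $2$. The surplus counterclockwise turn by $2\pi k/\mu(a)$ together with the skewed exit direction are cancelled precisely by bending the radial tail clockwise by $2\pi k/\mu(a)<2\pi$, a rotation that takes place outside radius $2$ and hence does not encircle the critical locus; so $\tilde\sigma(k)$ and $\varphi^{-k}(\tilde\gamma)$ are homotopic through HAI vanishing paths. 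Getting the direction of this bend right is the delicate point: a counterclockwise bend would instead produce the monodromy translate $\varphi^{-k-\mu(-a)}(\tilde\gamma)=\tilde\gamma_{-k-\mu(-a)}$, which for large $k$ escapes the truncated helix. With the two bullets in hand, combining them with the preceding proposition ($C_{t_0}\xrightarrow{\sim}C_0$ by the homotopy method) finishes the proof of Theorem~\ref{Da-H-thm}.
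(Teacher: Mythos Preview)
Your proof is correct and follows essentially the same route as the paper's. For the first bullet you reproduce exactly the chain $\mathcal{D}_a\simeq\mathcal{E}_a\simeq\widetilde{\mathcal{E}}_a\simeq C_{t_0}$ that the paper records in one line. For the second bullet, the paper also identifies $h_a^{0,s_0}$ with the Morsification $\widetilde g=s_0z_2+p_{-a}$ via the graph projection and then asserts that the collection $(\widetilde\sigma(\mu(a)-1),\ldots,\widetilde\sigma(0))$ is homotopic to $(\widetilde\gamma_{-\mu(a)+1},\ldots,\widetilde\gamma_0)$, deferring this to Figure~\ref{fradhor}; your explicit winding-number bookkeeping supplies precisely the verification the paper omits, and your observation about the wrong bend giving the monodromy translate $\widetilde\gamma_{-k-\mu(-a)}$ is a nice sanity check.

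One minor slip: the diagonal rescaling absorbing $s_0$ is \emph{not} conformally symplectic in general (the scaling factors on the coordinates differ), so ``linear, hence conformally symplectic'' is incorrect. The paper handles the analogous issue (for the graph projection) by noting that the induced Ehresmann connections agree, which is what actually matters for identifying vanishing cycles up to Hamiltonian isotopy; the same reasoning, or a deformation argument through positive real scaling parameters, covers your rescaling as well.
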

\begin{proof}
We already know the first statement: $\DD_a\simeq \EE_a\simeq \wt{\EE}_a\simeq C_{t_0}$.

For the second one, note that we have 
$$g_a^{0,s_0}(z_1,\ldots,z_n)=z_1-s_0z_2-p_{-a}(z_2,\ldots,z_n)=z_1-\wt{g}(z_2,\ldots,z_n),$$
where  $\wt{g}(z_2,\ldots,z_n)=s_0z_2+p_{-a}(z_2,\ldots,z_n)$ is a perturbation of $p_{-a}$, which is equivalent to the perturbation
$z_2+p_{-a}(z_2,\ldots,z_n)$.
Thus, $h_a^{0,s_0}$ is nothing but the projection from the graph of $\wt{g}$,
$$\{z_1=\wt{g}(z_2,\ldots,z_n)\}\subset \mathbb{C}^n$$ 
to the $z_1$ coordinate, which can be identified with $\wt{g}:\C^{n-1}\to \C$. 
The map on the total spaces is not a symplectomorphism but the induced Ehresmann connections do go to each other, which is enough for our purposes.
It remains to observe that the collection of HAI vanishing paths $(\wt{\si}(\mu(a)-1),\ldots,\wt{\si}(0))$ is homotopic to 
$(\wt{\ga}_{-\mu(a)+1},\ldots,\wt{\ga}_{-1},\wt{\ga}_0)$ (see \eqref{tw-gamma-collection}).
\end{proof}

\section{B-side}\label{s-B-side}

\subsection{Semiorthogonal decompositions, exceptional collections and mutations}
\label{exc-coll-basics-sec}

For the most part, on the B-side we can work at the level of triangulated categories, without using dg-enhancements.
However, we will use existence of dg-liftings of some adjoint functors.
Namely, by the results of \cite[Sec.\ 4]{KuzLunts},
 if $\DD$ is an enhanced triangulated category and $\CC\sub \DD$ is an admissible subcategory, then
with respect to the induced dg-enhancement on $\CC$, the left and right adjoint functors $\la,\rho:\DD\to\CC$
can be lifted to quasi-functors between the corresponding dg-categories. We will tacitly use such liftings below
in the results that use the dg-enhancements.

Given an admissible subcategory $\CC\sub\DD$, we define the functor
of {\it left mutation through} $\CC$, 
$$L_{\CC}:{}^{\perp}\CC\to\CC^\perp$$
by the exact triangle
$$C\to X\to L_{\CC}(X)\to\ldots$$
Note that $L_\CC$ is just the restriction to ${}^{\perp}\CC$ of the left adjoint functor to the inclusion of $\CC^\perp$.

This definition has the following transitivity property. Suppose $\CC_1,\CC_2\sub \DD$ is a pair of admissible subcategories such that
$\Hom(\CC_2,\CC_1)=0$. Then the subcategory $\lan \CC_1,\CC_2\ran\sub \DD$ is also admissible and
$$L_{\lan \CC_1,\CC_2\ran}\simeq L_{\CC_1}\circ L_{\CC_2}|_{{}^{\perp}\lan \CC_1,\CC_2\ran}.$$

Similarly, the functor of {\it right mutation through} $\CC$, 
$$R_{\CC}:\CC^\perp\to{}^{\perp}\CC$$
is defined by the exact triangle
$$R_{\CC}(X)\to X\to C\to\ldots$$
One can immediately see that $R_{\CC}$ and $L_{\CC}$ are mutually inverse equivalences.

\begin{lem}\label{adjoints-mutation-lem}
Let $\CC\sub\DD$ be an admissible subcategory, and let $\la,\rho:\DD\to \CC$ denote the left and right adjoint functors to the inclusion. 
Then for $X\in {}^{\perp}\CC$,
one has a functorial isomorphism
$$\rho(X)\simeq \la(L_{\CC}(X)[-1]).$$
\end{lem}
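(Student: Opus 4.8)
The plan is to unwind the definitions of $L_\CC$, $R_\CC$ and the adjoint functors $\la,\rho$, and to produce the isomorphism from a single defining exact triangle together with the orthogonality hypothesis $X\in{}^\perp\CC$. Recall that $L_\CC(X)$ is defined by the exact triangle
\begin{equation*}
C_X\to X\to L_\CC(X)\to C_X[1],
\end{equation*}
where $C_X\to X$ is the counit of the adjunction between the inclusion $\CC\hookrightarrow\DD$ and its right adjoint $\rho$, so $C_X\simeq \rho(X)$ viewed inside $\DD$ (here I am conflating an object of $\CC$ with its image in $\DD$, as is standard). First I would rotate this triangle to
\begin{equation*}
L_\CC(X)[-1]\to \rho(X)\to X\to L_\CC(X),
\end{equation*}
which I will use to compare $\Hom$-spaces out of objects of $\CC$.

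Next I would apply the left adjoint $\la:\DD\to\CC$ to the object $L_\CC(X)[-1]$ and show $\la(L_\CC(X)[-1])\simeq\rho(X)$. The clean way is via the universal property: for any $Z\in\CC$ we have $\Hom_\CC(\la(L_\CC(X)[-1]),Z)\simeq\Hom_\DD(L_\CC(X)[-1],Z)$, and applying $\Hom_\DD(-,Z)$ to the rotated triangle above gives a long exact sequence
\begin{equation*}
\cdots\to\Hom_\DD(X,Z)\to\Hom_\DD(\rho(X),Z)\to\Hom_\DD(L_\CC(X)[-1],Z)\to\Hom_\DD(X[-1],Z)\to\cdots
\end{equation*}
Since $X\in{}^\perp\CC$ and $Z\in\CC$, the outer terms $\Hom_\DD(X,Z)$ and $\Hom_\DD(X[-1],Z)=\Hom_\DD(X,Z[1])$ both vanish, so the middle map is an isomorphism, i.e. $\Hom_\DD(\rho(X),Z)\simeq\Hom_\DD(L_\CC(X)[-1],Z)$ naturally in $Z$. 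Because $\rho(X)\in\CC$, the left-hand side is also $\Hom_\CC(\rho(X),Z)$. By Yoneda in $\CC$ this yields a natural isomorphism $\la(L_\CC(X)[-1])\simeq\rho(X)$ in $\CC$, which is the claim. Functoriality in $X$ follows from the functoriality of the triangle defining $L_\CC$ and of the adjunction units/counits; at the level of dg-enhancements this is where one invokes the dg-liftings of $\la,\rho$ mentioned in the paragraph preceding the lemma, so that the isomorphism is an isomorphism of quasi-functors rather than merely a pointwise one.

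I do not expect a serious obstacle here: the statement is essentially a formal consequence of the definitions, and the only slightly delicate point is keeping track of which adjoint is which and verifying that $\rho(X)$ genuinely lies in $\CC$ so that the Yoneda step is legitimate, together with the bookkeeping needed to promote the pointwise isomorphism to a functorial (enhanced) one. An alternative, equally short route would be to dualize the computation of $\rho$ as the right adjoint restricted to ${}^\perp\CC\to\CC$ composed with $R_\CC$, using that $R_\CC$ and $L_\CC$ are mutually inverse and the shift by $[-1]$ appearing in the two defining triangles, but the $\Hom$-space argument above is the most transparent and is the one I would write up.
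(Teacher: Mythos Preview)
Your proof is correct and follows the same approach as the paper. The paper's argument is slightly more terse: it simply observes that in the rotated triangle $L_{\CC}(X)[-1]\to C\to X$ with $C\in\CC$, the conditions $L_{\CC}(X)\in\CC^{\perp}$ and $X\in{}^{\perp}\CC$ immediately identify $C$ simultaneously as $\rho(X)$ and as $\la(L_{\CC}(X)[-1])$, without writing out the Yoneda step explicitly; your version unpacks this via $\Hom$-spaces, which amounts to the same thing.
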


\Pf . By definition, there is an exact triangle
$$L_{\CC}(X)[-1]\to C\to X\to L_{\CC}(X) $$
with $C\in \CC$, and we have $X\in {}^\perp\CC$, $L_{\CC}(X)\in\CC^\perp$.
This immediately implies that 
$$C\simeq \rho(X)\simeq \la(L_{\CC}(X)[-1]).$$
\ed

For an exceptional object $E$ we set $L_E:=L_{\lan E\ran}$, where $\lan E\ran$ is the admissible
subcategory generated by $E$.

\begin{definition}
Let $E_0,\ldots,E_n$ be an exceptional collection.
The {\it left dual} exceptional collection to $E_0,\ldots,E_n$ is the unique full exceptional collection $F_{-n},\ldots,F_0$ in $\lan E_0,\ldots,E_n\ran$
with $\Hom^*(E_i,F_{-j})=0$ for $j\neq i$ and $\Hom^*(E_i,F_{-i})=\k[0]$.
In fact, one has $F_0=E_0$ and for $i>0$,
$$F_{-i}=L_{E_0}\ldots L_{E_{i-1}}E_i.$$
In this situation we also say that $E_0,\ldots,E_n$ is the {\it right dual} exceptional collection to $F_{-n},\ldots,F_0$.
\end{definition}

\begin{lem}\label{block-dual-coll-lem}
Let $\DD=\lan \CC,\CC'\ran$ be a semiorthogonal decomposition. Let $E_0,\ldots,E_n$ (resp., $E'_0,\ldots,E'_m$) be an exceptional collection generating $\CC$
(resp., $\CC'$), and let $F_{-n},\ldots,F_0$ (resp., $F'_{-m},\ldots,F'_0$) be the left dual exceptional collection.
Then the exceptional collection
$$L_{\CC}(F'_{-m}),\ldots,L_{\CC}(F'_0),F_{-n},\ldots,F_0$$
is left dual to $E_0,\ldots,E_n,E'_0,\ldots,E'_m$.
\end{lem}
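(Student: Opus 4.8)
The plan is to reduce the statement to the explicit formula $F_{-i} = L_{E_0}\cdots L_{E_{i-1}}E_i$ for the terms of a left dual collection, together with the transitivity property of mutations recorded just above. First I would observe that the concatenation $E_0,\ldots,E_n,E'_0,\ldots,E'_m$ is a full exceptional collection in $\DD$: fullness is clear from $\DD = \langle\CC,\CC'\rangle$, and the only genuinely new semiorthogonality relations, $\Hom^*(E'_j,E_i) = 0$, are exactly $\Hom(\CC',\CC) = 0$. Relabel this collection $\wt E_0,\ldots,\wt E_{n+1+m}$ and write $\wt F_{-(n+1+m)},\ldots,\wt F_0$ for its left dual.

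Next I would compute these $\wt F$-terms from the formula. The last $n+1$ of them are $\wt F_{-i} = L_{E_0}\cdots L_{E_{i-1}}E_i = F_{-i}$ for $0\le i\le n$, i.e.\ the left dual of $E_0,\ldots,E_n$ computed inside $\CC$. For the remaining terms, fix $0\le j\le m$; then $\wt F_{-(n+1+j)} = L_{E_0}\cdots L_{E_n}\bigl(L_{E'_0}\cdots L_{E'_{j-1}}E'_j\bigr)$. The inner iterated mutation takes place inside the admissible subcategory $\CC'$ — the relevant mapping cones remain in $\CC'$ — so it equals $F'_{-j}$. To identify the outer composite applied to $F'_{-j}$ with $L_\CC(F'_{-j})$, I would use that $\DD = \langle\CC,\CC'\rangle$ forces $\CC' = {}^\perp\CC$, so that $F'_{-j}$ lies in the domain of $L_\CC$, and then iterate the transitivity isomorphism $L_{\langle\CC_1,\CC_2\rangle}\simeq L_{\CC_1}\circ L_{\CC_2}$ to see that the restriction of $L_{E_0}\cdots L_{E_n}$ to ${}^\perp\CC$ is $L_\CC$. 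This gives $\wt F_{-(n+1+j)} = L_\CC(F'_{-j})$, hence the claimed left dual collection $L_\CC(F'_{-m}),\ldots,L_\CC(F'_0),F_{-n},\ldots,F_0$.

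As a cross-check, and to bypass the domain bookkeeping altogether, I would alternatively verify the defining properties of a left dual collection directly and then invoke its uniqueness. Fullness holds because $F_{-n},\ldots,F_0$ generate $\CC$, because $L_\CC$ restricts to an equivalence from ${}^\perp\CC = \CC'$ onto $\CC^\perp$ sending $F'_{-m},\ldots,F'_0$ to a generating collection of $\CC^\perp$, and because $\CC$ and $\CC^\perp$ generate $\DD$. For the pairings $\Hom^*(\wt E_i,\wt F_{-j})$ there are four blocks: the $\CC$-with-$\CC$ block is the dual relation for $E_0,\ldots,E_n$ inside $\CC$; $\Hom^*(E_i,L_\CC(F'_{-j})) = 0$ since $L_\CC(F'_{-j})\in\CC^\perp$; $\Hom^*(E'_i,F_{-j}) = 0$ since $F_{-j}\in\CC$ and $\Hom(\CC',\CC) = 0$; and the last block follows by applying $\Hom^*(E'_i,-)$ to the defining triangle $C\to F'_{-j}\to L_\CC(F'_{-j})\to\ldots$ (with $C\in\CC$), which, since $\Hom^*(E'_i,C) = 0$, gives $\Hom^*(E'_i,L_\CC(F'_{-j}))\cong\Hom^*(E'_i,F'_{-j}) = \delta_{ij}\k[0]$. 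The step most likely to need care — though I do not regard it as a genuine obstacle — is exactly this domain bookkeeping in the first route: that $F'_{-j}\in{}^\perp\CC$ and that the iterated single-object mutations reproduce $L_\CC$ there, both of which are purely formal consequences of $\DD = \langle\CC,\CC'\rangle$ and the transitivity property.
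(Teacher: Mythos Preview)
Your proof is correct; both routes you outline work, and the second one (direct verification of the defining $\Hom$-conditions via the triangle $C\to F'_{-j}\to L_{\CC}(F'_{-j})\to\ldots$) is particularly clean. The paper itself states this lemma without proof, treating it as a standard fact about mutations and dual collections, so there is nothing to compare against beyond noting that your argument supplies exactly the kind of routine verification the authors presumably had in mind.
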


\subsection{Serre functor and helices}

We have the following well known connection between the Serre functor and mutations.

\begin{lem}\label{Serre-mutation-lem} 
Let $E$ be an exceptional object in $\DD$ and let $\CC=\lan E\ran^{\perp}$, so that we have a semiorthogonal decomposition
$$\DD=\lan \CC, \lan E\ran\ran.$$
Then there is an isomorphism
$$\SS_\DD(E)\simeq L_{\CC}(E).$$
\end{lem}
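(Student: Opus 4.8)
The plan is to unwind both sides as representing functors and match them. Recall that the Serre functor $\SS_\DD$ is characterized by functorial isomorphisms $\Hom(X,\SS_\DD(E))\simeq \Hom(E,X)^*$ for all $X\in\DD$. So the statement $\SS_\DD(E)\simeq L_\CC(E)$ will follow once I check that $L_\CC(E)$ corepresents $X\mapsto \Hom(E,X)^*$ in the appropriate sense, i.e.\ that $\Hom(X,L_\CC(E))\simeq \Hom(E,X)^*$ functorially in $X$. Since $\DD=\lan\CC,\lan E\ran\ran$ is a semiorthogonal decomposition and $L_\CC(E)\in\CC^\perp=\lan E\ran$ (as the left mutation lands in $\CC^\perp$, which here is the one-object admissible subcategory $\lan E\ran$ because $E\in{}^\perp\CC$ by assumption), it is enough to verify the isomorphism on the two pieces $X\in\CC$ and $X\in\lan E\ran$, and then patch using the decomposition.

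First I would treat $X\in\CC$. Here $\Hom(E,X)=0$ by semiorthogonality (as $X\in\CC=\lan E\ran^\perp$ means $\Hom(E,X)=0$), so the right-hand side vanishes; and $\Hom(X,L_\CC(E))=0$ as well, since $L_\CC(E)\in\CC^\perp$. So both sides are zero and the identification is trivial. Second, for $X\in\lan E\ran$, i.e.\ $X$ a direct sum of shifts of $E$, it suffices by additivity and shift-equivariance to take $X=E$. Then $\Hom(E,E)^*=\k[0]$. For the left-hand side, I use the defining triangle of the left mutation, $C\to E\to L_\CC(E)\to C[1]$ with $C\in\CC$, namely $C=\rho(E)$ where $\rho:\DD\to\CC$ is the right adjoint to the inclusion. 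Applying $\Hom(E,-)$ to this triangle and using $\Hom(E,C)=\Hom(E,\rho(E))=0$ (because $\rho(E)\in\CC$ and $E\in\lan E\ran$ with $\Hom(\lan E\ran,\CC)=0$... wait, the correct vanishing is $\Hom(E,C)=0$ since $C\in\CC$ and $\Hom(E,\CC)=0$), together with the vanishing of $\Hom(E,C[1])$ for the same reason, the long exact sequence gives $\Hom(E,L_\CC(E))\simeq\Hom(E,E)=\k[0]$. That matches the dimension; to pin down that this is genuinely the Serre-duality pairing rather than an accidental coincidence I would invoke Lemma \ref{adjoints-mutation-lem} or directly the universal property, observing that the counit/unit maps assemble the required functorial isomorphism.

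The main obstacle — really the only subtle point — is establishing \emph{functoriality} of the isomorphism $\Hom(X,L_\CC(E))\simeq\Hom(E,X)^*$ in $X$, rather than just a non-canonical isomorphism for each $X$ separately. The cleanest route is to note that both sides are cohomological (exact) functors of $X$ on $\DD$ that agree on the two components of the semiorthogonal decomposition, and that any natural transformation between such functors which is an isomorphism on $\CC$ and on $\lan E\ran$ is an isomorphism on all of $\DD$ (by the gluing triangle $\rho(X)\to X\to\text{(component in }\lan E\ran)$ and the five lemma). So the real work is to produce a single natural transformation: I would take the composite $\Hom(X,L_\CC(E))\to\Hom(X,C[1])^{\vee\vee}$-type map, or more honestly, use that $L_\CC(E)$ is by construction $\rho(X)$-related to $E$ via Lemma \ref{adjoints-mutation-lem} (with the roles of $E$ and $X$ adjusted) and that the right adjoint $\rho$ together with the Serre functor of $\CC$ and of $\DD$ are compatible. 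Since all of this is standard, I would keep the proof short: state the semiorthogonal decomposition, invoke the defining triangle of $L_\CC$, apply $\Hom(E,-)$ and $\Hom(X,-)$, and conclude by the characterizing property of the Serre functor together with uniqueness of the object representing it.
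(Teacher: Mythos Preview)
The paper states this lemma as ``well known'' and gives no proof, so your proposal is being judged on its own merits.

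Your overall strategy is sound, but there is one genuine error and one unnecessary detour.

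\textbf{The error.} You write that $\CC^\perp=\lan E\ran$. This is false in general: from the semiorthogonal decomposition $\DD=\lan\CC,\lan E\ran\ran$ one gets $\lan E\ran={}^\perp\CC$, not $\CC^\perp$. The correct statement is that $L_\CC:{}^\perp\CC\to\CC^\perp$ is an equivalence (as the paper records just before Lemma~\ref{adjoints-mutation-lem}), so $\CC^\perp=L_\CC(\lan E\ran)=\lan L_\CC(E)\ran$, again a subcategory generated by a single exceptional object, but not by $E$ itself.

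\textbf{The detour.} Once you fix the above, your worry about functoriality disappears, and the argument becomes short. By Serre duality, $\Hom(C,\SS_\DD(E))\simeq\Hom(E,C)^*=0$ for every $C\in\CC=\lan E\ran^\perp$, so $\SS_\DD(E)\in\CC^\perp=\lan L_\CC(E)\ran$. Hence $\SS_\DD(E)\simeq L_\CC(E)[m]$ for some integer $m$. Now your own computation gives $\Hom^*(E,L_\CC(E))\simeq\Hom^*(E,E)=\k[0]$ from the defining triangle, while Serre duality gives $\Hom^*(E,\SS_\DD(E))\simeq\Hom^*(E,E)^*=\k[0]$. Therefore $m=0$. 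There is no need to build a natural transformation and invoke the five-lemma: both objects already live in a subcategory equivalent to $D^b(\k)$, so matching one nonzero $\Hom$-degree suffices.
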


\begin{definition} Let $E_1,\ldots,E_n$ be  an exceptional collection generating the category $\CC$.
The {\it helix} generated by this exceptional collection is the sequence of exceptional objects
$(E_i)_{i\in \Z}$, extending $(E_1,\ldots,E_n)$, such that $\SS_{\CC}E_i=E_{i-n}$, where $\SS_{\CC}$ is the Serre functor of $\CC$.
\end{definition}

By Lemma \ref{Serre-mutation-lem}, we see that in a helix we have
$$E_i\simeq L_{E_{i+1}}\ldots L_{E_{i+n-1}}E_{i+n}.$$

\subsection{Aramaki-Takahashi exceptional collection}\label{AT-exc-coll-sec}

\subsubsection{Basic definitions}\label{AT-basic-sec}

Recall that for $a=(a_1,\ldots,a_n)\in \Z_{>1}^n$ we consider the chain polynomial
$$p_a=x_1^{a_1}x_2+x_2^{a_2}x_3+\ldots+x_{n-1}^{a_{n-1}}x_n+x_n^{a_n}.$$

Recall that we set $d(a)=a_1a_2\ldots a_n$ (with $d(\emptyset)=1$)
and we have the recursion for the Milnor numbers
$$\mu(a)=d(a)-\mu(-a)=a_1\ldots a_n-a_2\ldots a_n+a_3\ldots a_n-\ldots$$
(with $\mu(\emptyset)=1$). 
Let us set
$$\mu^{\vee}(a):=\mu(a^\vee),$$
so that
$$\mu^{\vee}(a)=d(a)-\mu^{\vee}(a-).$$ 

We denote by $L=L_{a}$ the maximal grading group for which $p=p_a$ is homogeneous, i.e., the abelian group with generators
$\ov{x}_i$, $\ov{p}$ and defining relations
$$a_1\ov{x}_1+\ov{x_2}=a_2\ov{x}_2+\ov{x_3}=\ldots=a_n\ov{x_n}=\ov{p}.$$
Note that the quotient $L/(\ov{p})$ is a cyclic group of order $d(a)$, 
generated by the image of $\ov{x}_1$, so that we have an exact sequence
$$0\to \Z\rTo{\ov{p}} L\to \Z/d(a)\to 0.$$
It will be convenient for us to set
$$\tau=(-1)^n\ov{x}_1.$$ 
 
By a graded matrix factorization of $p_a$ we always mean $L$-graded matrix factorizations, or equivalently $\Ga$-equivariant matrix factorizations,
where $\Ga=\Ga_a$ is the subgroup of $\G_m^n$ that has $L$ as the character group. 
 
It will also be useful to consider the slightly bigger group $\wt{L}$: it has an extra generator $T$ and the relation
$$2T=\ov{p}.$$
It fits into an exact sequence
$$0\to \Z\rTo{T} \wt{L}\to \Z/d(a)\to 0.$$
It is easy to see that $\wt{L}$ is generated by $T$ and $\tau$ with the defining relation
\begin{equation}\label{dn-tau-eq}
d(a)\tau=(-1)^n2(d(a)-\mu(a))T.
\end{equation}
 
Note that for every $\ell\in \wt{L}$ we have a natural  
grading shift operation for a graded matrix factorization of $p_a$:
$$M\mapsto M(\ell),$$
where $M(T):=M[1]$. 
In addition, we denote
$$M(i):=M(i\tau).$$

Since $\wt{L}$ is generated by $\tau$ and $T$, for every $\ell$, we have $M(\ell)=M(i)[j]$ for some $i,j$.
We also have a functorial isomorphism
$$M(d(a))\simeq M[(-1)^n2(d(a)-\mu(a))].$$

For an $L$-homogeneous ideal $\II\sub \C[x_1,\ldots,x_n]$ such that $p_a\in \II$ we denote by $\stab(\II)$ the graded matrix factorization of $p_a$
corresponding to the module $\OO/\II$.

In particular, we consider the following graded matrix factorization of $p_a$:
$$E:=\begin{cases} \stab(x_2,x_4,\ldots,x_n), & n \text{ even},\\
\stab(x_1,x_3,\ldots,x_n), & n \text{ odd},\end{cases}.$$
Note that by definition the grading of $x_i$ is $\ov{x}_i$.


We denote by $\MF_{\Ga}(p_a)$ the dg-category of graded matrix factorizations of $p_a$. We denote by
$\Hom^*$ (or $\Ext^*$) the cohomology of the morphism complexes in this category.
For most of our considerations it will be enough to do computations on the level of cohomology (however,
we will use existence of various natural functors as quasi-functors at the dg-level).

By the main result of \cite{Aramaki}, for any $i\in\Z$, the collection 
$$(E(i),E(i+1),\ldots,E(i+\mu^\vee(a)-1))$$
is a full exceptional collection in $\MF_{\Ga}(p_a)$. We refer to it (for $i=0$) as the {\it AT exceptional collection}.
We should point out that in the original proof of \cite{Aramaki} there are gaps in the proofs of Lemmas 4.7 and 4.10.
These can be filled using the results of Hirano-Ouchi in \cite[Sec.\ 4.2]{HO} (especially 
\cite[Lem.\ 4.4, Lem.\ 4.5]{HO}), where the fully faithful embedding
needed for the induction is constructed using VGIT technique (note that these are different VGIT embeddings than
the ones used in Sec.\ \ref{VGIT} below).

We denote by $AT(a)$ the directed $A_\infty$-category corresponding to the AT exceptional collection in $\MF_{\Ga}(p_a)$.

\subsubsection{$\Ext$-algebra}\label{Ext-sec}

Let us consider the associative algebra
$$\BB_a:=\bigoplus_{\ell\in\wt{L}}\Hom^0(E,E(\ell)),$$
where $a=(a_1,\ldots,a_n)$.
Then by \cite[Lem.\ 4.1, Lem.\ 4.2]{Aramaki} (extended to the case $a_1=2$), one has an isomorphism of $\wt{L}$-graded algebras
\begin{equation}\label{AT-Ext-algebra-eq}
\BB_a\simeq \begin{cases}\k[x_1,x_3,\ldots,x_{n-1}]/(x_1^{a_1},\ldots,x_{n-1}^{a_{n-1}}), & n \text{ even},\\
\k[x_0,x_2,\ldots,x_{n-1}]/(x_0^2-\varepsilon x_2,x_2^{a_2},\ldots,x_{n-1}^{a_{n-1}}), & n \text{ odd},\end{cases}
\end{equation}
where 
$$\varepsilon=\begin{cases} 0, & a_1>2,\\ 1, & a_1=2.\end{cases}$$
The $\wt{L}$-gradings of $x_i$ are given as follows:
$$\deg(x_0)=\tau+T$$
and for $i>0$,
\begin{equation}\label{deg-xi-eq}
\deg(x_i)=\ov{x}_i=(-1)^{i-1}d(a_1,\ldots,a_{i-1})\ov{x}_1+(-1)^i2(d(a_1,\ldots,a_{i-1})-\mu(a_1,\ldots,a_{i-1}))T.
\end{equation}

Note that $\BB_a$ has a natural monomial basis, and the elements of this basis have distinct degrees in
$\wt{L}/\Z\cdot T\simeq \Z/d(a)$. This implies that whenever $0\le j-i<d(a)$, the space 
$\Ext^*(E(i),E(j))$ is at most $1$-dimensional, and can be identified with the graded component of degree $(j-i)\tau$ in $\BB_a$,
with appropriate shift.

Also, we see that the algebra $\BB_a$ is Gorenstein with the $1$-dimensional socle in degree
$\mu^\vee(a-)\mod d(a)$. This implies that for $\mu^\vee(a-)<j-i<d(a)$ one has $\Ext^*(E(i),E(j))=0$, while
$\Ext^*(E,E(\mu^\vee(a-))$ is $1$-dimensional and
the compositions
$$\Ext^*(E(i),E(\mu^\vee(a-))\ot \Ext^*(E,E(i))\to \Ext^*(E,E(\mu^\vee(a-))$$
are perfect pairing. The latter property will play a crucial role below.






\subsubsection{Serre functor on the category of matrix factorizations}

By \cite[Prop.\ 2.9]{Aramaki}, the Serre functor on $\MF_{\Ga}(p_a)$ is given by $M\mapsto M(\ell_{\SS})$, where
$$\ell_{\SS}=nT-\ov{x}_1-\ldots-\ov{x}_n.$$
Combining this with \eqref{deg-xi-eq} and taking into account \eqref{dn-tau-eq}, we get the following formula.

\begin{lem}\label{Serre-Bside-lem}
The Serre functor on $\MF_{\Ga}(p_a)$ is given by $M\mapsto M(\ell_{\SS})$, with
$$\ell_{\SS}=-\mu^\vee(a)\tau+(n+2m(a))T=\mu^\vee(a-)\tau+(n+2m(a-))T,$$
where
\begin{equation}\label{Serre-shift-ma-eq}
m(a)=(-1)^n\mu^\vee(a)-1+\mu(a_1)-\mu(a_1,a_2)+\ldots+(-1)^{n-1}\mu(a_1,\ldots,a_n).
\end{equation}
\end{lem}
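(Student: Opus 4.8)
The starting point is the formula for the Serre functor from \cite[Prop.\ 2.9]{Aramaki}, which says that it twists by $\ell_{\SS}=nT-\ov{x}_1-\ldots-\ov{x}_n$; the content of the lemma is to rewrite this element of $\wt{L}$ in terms of the two generators $\tau$ and $T$. First I would substitute \eqref{deg-xi-eq}, together with $\ov{x}_1=(-1)^n\tau$, into $\ov{x}_1+\ldots+\ov{x}_n$ and collect terms: the coefficient of $\tau$ that appears is $(-1)^n\sum_{j=0}^{n-1}(-1)^j d(a_1,\ldots,a_j)$ and the coefficient of $T$ is $2\sum_{i=1}^{n}(-1)^i\bigl(d(a_1,\ldots,a_{i-1})-\mu(a_1,\ldots,a_{i-1})\bigr)$, so that $\ell_{\SS}$ takes the shape $P\tau+QT$ with $P,Q$ explicit alternating sums.

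The second step is purely combinatorial: I would check that $P=-\mu^\vee(a-)$ and $-Q/2=m(a-)$, with $m(a-)$ given by \eqref{Serre-shift-ma-eq}, which already produces the second displayed form $\ell_{\SS}=\mu^\vee(a-)\tau+(n+2m(a-))T$. For $P=-\mu^\vee(a-)$ the point is that $\mu^\vee(a-)=\mu(a_{n-1},\ldots,a_1)$, so expanding the definition of the Milnor number of the reversed tuple gives $\mu^\vee(a-)=\sum_{j=0}^{n-1}(-1)^{n-1-j}d(a_1,\ldots,a_j)$, and multiplying by $(-1)^{n-1}$ identifies this with $\sum_{j=0}^{n-1}(-1)^j d(a_1,\ldots,a_j)$. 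For $-Q/2=m(a-)$ one substitutes \eqref{Serre-shift-ma-eq}, uses the same identity once more together with the Milnor-number recursion $\mu(a_1,\ldots,a_j)=d(a_1,\ldots,a_j)-\mu(a_2,\ldots,a_j)$, and the two alternating sums collapse onto one another.

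The final step deduces the first displayed form from the second inside $\wt{L}$, using only the recursion $\mu^\vee(a)+\mu^\vee(a-)=d(a)$, the defining relation \eqref{dn-tau-eq}, and the identity $m(a)-m(a-)=(-1)^n(d(a)-\mu(a))$ (which is immediate from \eqref{Serre-shift-ma-eq} and the recursion). Indeed, by \eqref{dn-tau-eq} we have $d(a)\tau=(-1)^n2(d(a)-\mu(a))T=2(m(a)-m(a-))T$, hence $\mu^\vee(a-)\tau=-\mu^\vee(a)\tau+d(a)\tau=-\mu^\vee(a)\tau+2(m(a)-m(a-))T$; adding $(n+2m(a-))T$ to both sides turns $\mu^\vee(a-)\tau+(n+2m(a-))T$ into $-\mu^\vee(a)\tau+(n+2m(a))T$.

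There is no conceptual obstacle here. The only point requiring care is that all of these equalities live in $\wt{L}$, which is the quotient of $\Z\tau\oplus\Z T$ by \eqref{dn-tau-eq}, so one may not compare $\tau$- and $T$-coefficients naively in a free abelian group; the term $d(a)\tau$ must be traded for its $T$-multiple at exactly the right moment. The remaining effort is bookkeeping the nested alternating sums of the $d(a_1,\ldots,a_j)$ and $\mu(a_1,\ldots,a_j)$ without sign errors, which is cleanest if one first records the single identity $\mu(b_1,\ldots,b_m)=\sum_{k=0}^{m}(-1)^k d(b_{k+1},\ldots,b_m)$ and applies it to reversed tuples.
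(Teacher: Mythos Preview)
Your approach is exactly the one the paper intends: start from $\ell_{\SS}=nT-\ov{x}_1-\ldots-\ov{x}_n$, substitute \eqref{deg-xi-eq}, and use \eqref{dn-tau-eq} to pass between the two forms. The paper's own proof is a one-line pointer to this computation, so you are filling in precisely what it omits.

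There are two small bookkeeping slips to watch. First, with $P$ the $\tau$-coefficient of $\ell_{\SS}$ (not of $\sum\ov{x}_i$), one has $P=\mu^\vee(a-)$, not $-\mu^\vee(a-)$: the coefficient of $\tau$ in $\sum_i\ov{x}_i$ is $(-1)^n\sum_{j=0}^{n-1}(-1)^j d(a_1,\ldots,a_j)=-\mu^\vee(a-)$, and the minus sign in $\ell_{\SS}=nT-\sum\ov{x}_i$ flips it. Second, the $T$-coefficient of $\ell_{\SS}$ is $Q=n-2\sum_{i=1}^n(-1)^i(d(a_1,\ldots,a_{i-1})-\mu(a_1,\ldots,a_{i-1}))$, so the identity to check is $(Q-n)/2=m(a-)$ rather than $-Q/2=m(a-)$. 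Your third step (trading $d(a)\tau$ for $2(m(a)-m(a-))T$ via \eqref{dn-tau-eq} and the identity $m(a)-m(a-)=(-1)^n(d(a)-\mu(a))$) is correct as written.
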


It follows that up to shifts, the helix generated by the AT exceptional collection is simply $(E(i))_{i\in\Z}$.

\subsection{VGIT embedding}\label{VGIT}


Here we record a specialization of the construction in \cite{FKK}, which itself  is a particular case of the general
VGIT construction in \cite{BFK}.

Let us consider the polynomials
$$W=x_1^{a_1}x_2+x_2^{a_2}x_3+\ldots+x_n^{a_n}x_{n+1}^{a_n},$$
$$w_+=x_1^{a_1}x_2+x_2^{a_2}x_3+\ldots+ x_n^{a_n},$$
$$w_-=x_1^{a_1}x_2+x_2^{a_2}x_3+\ldots+x_{n-1}^{a_{n-1}}+x_{n+1}^{a_n}.$$

Note that $W$ is invariant with respect to the $\G_m$-action on $\A^{n+1}$ with the following weights:
\begin{itemize}
\item $c_1=1, \ c_2=-a_1, \ c_3=a_1a_2, \ldots, \ c_n=-a_1a_2\ldots a_{n-1}, \ c_{n+1}=a_1a_2\ldots a_{n-1}$, for $n$ even;
\item $c_1=-1, \ c_2=a_1, \ c_3=-a_1a_2, \ldots, \ c_n=-a_1a_2\ldots a_{n-1}, \ c_{n+1}=a_1a_2\ldots a_{n-1}$, for $n$ odd.
\end{itemize}
The main idea of \cite{FKK} is to apply the VGIT construction to this $\G_m$-action.

Let us set
$$\a_n:=a_1a_2\ldots a_n+a_1a_2\ldots a_{n-2}+\ldots,$$
where the last term is $1$ if $n$ is even, and $a_1$ if $n$ is odd.
Note that $\mu^\vee(a)=\a_n-\a_{n-1}$.

We define the intervals of weights as follows:
$$I^-=[0,\a_{n-1}-1]\sub I^+=[0,a_1a_2\ldots a_{n-1}+\a_{n-2}-1].$$
We consider the corresponding windows
$$\WW_{I^-}\sub \WW_{I^+}\sub \MF_{\Ga}(W).$$
Here we use the embedding of $\G_m$ into $\Ga$,
$$\la: t\mapsto (t^{-c_1},t^{-c_2},\ldots,t^{-c_{n+1}}),$$
and consider weights of the restriction of a matrix factorization to the origin, so for example,
the weight of $\k(x_i)$ is $\mu(x_i,\la)=-c_i$. 

We have natural restruction functors
$$r_+:\MF_{\Ga}(W)\to \MF_{\Ga_+}(w_+): E\mapsto E|_{x_{n+1}=1},$$
$$r_-:\MF_{\Ga}(W)\to \MF_{\Ga_-}(w_-): E\mapsto E|_{x_n=1},$$
Here $\Ga$ is the group of diagonal transformations preserving $W$ up to rescaling;
$\Ga_\pm$ are similar groups for $w_\pm$.

\begin{thm} (\cite{FKK})
The functors
$$r_\pm|_{\WW_{I^\pm}}:\WW_{I^\pm}\to \MF_{\Ga_\pm}(w_\pm)$$
are equivalences.
Hence, there exists a fully faithful functor $\Phi$ making the following diagram  
commutative:
\begin{diagram}
\WW_{I^-}&\rTo{}&\WW_{I^+}\\
\dTo_{\sim}^{r_-}&&\dTo_{\sim}^{r_+}\\
\MF_{\Ga_-}(w_-)&\rTo{\Phi}&\MF_{\Ga_+}(w_+)
\end{diagram}
\end{thm}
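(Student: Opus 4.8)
The plan is to realize both assertions as an instance of the variation-of-GIT ``window'' (grade-restriction) formalism of \cite{BFK}, specialized as in \cite{FKK} to the linear $\G_m$-action on $\A^{n+1}$ with the weights $(c_1,\ldots,c_{n+1})$ listed above, and then to read off $\Phi$ formally from the nesting $\WW_{I^-}\subseteq\WW_{I^+}$. Since all $c_i\neq 0$, the $\G_m$-fixed locus of $\A^{n+1}$ is the origin and there are exactly two unstable strata, $Z^+=V(x_i:c_i>0)$ and $Z^-=V(x_i:c_i<0)$, destabilized by $\la$ and $\la^{-1}$ respectively; the two VGIT chambers produce the open substacks $\XX^\pm:=[(\A^{n+1}\setminus Z^\pm)/\Ga]$, on which the ($\Ga$-semi-invariant) potential $W$ descends.

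By the Landau--Ginzburg version of \cite{BFK}, for any interval $I\subset\Z$ of length $\eta^+:=\sum_{c_i>0}c_i$ the subcategory $\WW_I\subseteq\MF_{\Ga}(W)$ cut out by requiring the $\la$-weights of the restriction of a matrix factorization to the origin to lie in $I$ is carried by $r_+$ equivalently onto $\MF_{\Ga}(\XX^+,W)$; symmetrically, for any interval of length $\eta^-:=\sum_{c_i<0}|c_i|$ one gets an equivalence onto $\MF_{\Ga}(\XX^-,W)$ via $r_-$. The first genuine computation is to match these two widths with the prescribed intervals: sorting the $c_i$ by sign (which depends on the parity of $n$) and using the definition of $\a_k$ one checks $\eta^+=a_1\cdots a_{n-1}+\a_{n-2}=|I^+|$ and $\eta^-=\a_{n-1}=|I^-|$. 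Since $\a_{n-1}\le a_1\cdots a_{n-1}+\a_{n-2}$, one takes the windows nested as in the statement, $I^-=[0,\a_{n-1}-1]\subseteq[0,a_1\cdots a_{n-1}+\a_{n-2}-1]=I^+$, so $\WW_{I^-}\subseteq\WW_{I^+}$ and the two window equivalences are compatible with this inclusion.

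The second ingredient is the identification $\MF_{\Ga}(\XX^\pm,W)\simeq\MF_{\Ga_\pm}(w_\pm)$. Differentiating $W$ shows $\mathrm{Crit}(W)$ is the union of the $x_{n+1}$-axis and the $x_n$-axis. Since $c_{n+1}>0$ while $c_n<0$, the $x_n$-axis lies inside $Z^+$ and the $x_{n+1}$-axis meets $Z^+$ only at the origin; thus $\mathrm{Crit}(W)\setminus Z^+\subseteq\{x_{n+1}\neq 0\}$, and because matrix factorization categories are supported on the critical locus, restriction from $\XX^+$ to the open substack $[\{x_{n+1}\neq 0\}/\Ga]$ is an equivalence. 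Finally, $\Ga$ is set up precisely so that the character ``$x_{n+1}$'' splits off a $\G_m$ acting freely on $\{x_{n+1}\neq 0\}$, giving $[\{x_{n+1}\neq 0\}/\Ga]\cong[\A^n/\Ga_+]$ and $W\mapsto W|_{x_{n+1}=1}=w_+$; this composite is exactly $r_+$. The same argument with $x_n$ in place of $x_{n+1}$ (now using $c_n<0$, so the $x_{n+1}$-axis lies in $Z^-$) gives $\MF_{\Ga}(\XX^-,W)\simeq\MF_{\Ga_-}(w_-)$ via $r_-$. Combining, $r_\pm|_{\WW_{I^\pm}}$ are equivalences, and the inclusion $\WW_{I^-}\subseteq\WW_{I^+}$ transported through them yields a fully faithful functor $\Phi:\MF_{\Ga_-}(w_-)\to\MF_{\Ga_+}(w_+)$ making the square commute.

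The step I expect to be the main obstacle is bookkeeping rather than conceptual input: keeping the signs of the $c_i$ (hence which unstable stratum pairs with which coordinate slice) straight in both parities of $n$, matching the window widths $\eta^\pm$ to the explicitly given intervals $I^\pm$, and invoking a sufficiently precise form of the statement that a matrix factorization category is unchanged by restriction to an open substack containing the critical locus. Beyond this — and the construction of the equivariant VGIT functor already contained in \cite{BFK} and \cite{FKK} — nothing new is needed.
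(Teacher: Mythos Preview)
Your proof is correct and follows essentially the same route as the paper: both invoke the BFK window theorem for the given $\G_m$-action, compute the window widths $\eta^\pm$ to match $|I^\pm|$ exactly, and then identify $\MF_\Ga$ of the semistable loci with $\MF_{\Ga_\pm}(w_\pm)$ by restricting to the slices $x_{n+1}=1$ and $x_n=1$. The only minor difference is in justifying the intermediate restriction from $[(\A^{n+1}\setminus Z^\pm)/\Ga]$ to $[\{x_{n+1}\neq 0\}/\Ga]$ (resp.\ $\{x_n\neq 0\}$): the paper simply cites \cite[Lem.~3.7]{FKK}, whereas you argue it directly by computing $\mathrm{Crit}(W)$ as the union of the $x_n$- and $x_{n+1}$-axes and invoking support of matrix factorizations on the critical locus --- this is exactly what that lemma does, so the two arguments coincide.
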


\begin{proof}
Since our result is a bit more precise than that of \cite{FKK}, we will give the proof. Consider the ideals
$$\II_+:=(x_j \ |\ c_j>0), \ \ \II_-=(x_j \ |\ c_j<0)$$
in $\k[x_1,\ldots,x_{n+1}]$, and let us set
$$Y_\pm=\A^{n+1}\setminus Z(\II_\pm), \ \ U_+=\A^{n+1}\setminus Z(x_{n+1}), \ \
U_-=\A^{n+1}\setminus Z(x_n).$$
Then \cite[Lem.\ 3.7]{FKK} states that the natural restriction functors
$$\MF_{\Ga}(Y_\pm,W)\to \MF_{\Ga}(U_\pm,W)$$
are equivalences.

On the other hand, it is easy to see that the restriction functors
$$\MF_{\Ga}(U_+,W)\to \MF_{\Ga_+}(w_+): E\mapsto E|_{x_{n+1}=1},$$
$$\MF_{\Ga}(U_-,W)\to \MF_{\Ga_-}(w_-): E\mapsto E|_{x_n=1}$$
are equivalences (see \cite[Lem.\ 2.3]{FKK}).

Finally, we claim that \cite[Cor.\ 3.2.2+Prop.\ 3.3.2]{BFK} imply that the compositions
$$\WW_{I^\pm}\hra \MF_{\Ga}(W)\to \MF_{\Ga}(Y_{\pm},W)$$
are equivalences. Indeed, we observe that
$$Z(\II_\pm)=\{x\in \A^{n+1} \ | \lim_{t\to 0} \la^\pm(t)x=0\}.$$
The lengths of the intervals $d^\pm$ giving the windows are given by
$$d^\pm=-\sum_{i: x_i\in \II_\pm}\mu(x_i,\la^\pm)-1=\pm\sum_{i:x_i\in \II_\pm} c_i -1.$$
(see \cite[Sec.\ 3.1]{BFK}). Thus, we get
$$d^+=[c_{n+1}+c_{n-1}+\ldots]-1=a_1a_2\ldots a_{n-1}+\a_{n-2}-1,$$
$$d^-=-[c_n+c_{n-2}+\ldots]-1=\a_{n-1}-1.$$
\end{proof}

Note that $w_+=p_a$, whereas 
$$w_-=p_{a-}+x_{n+1}^{a_n}.$$
where we set $a-:=(a_1,\ldots,a_{n-1})$.
We combine the above functor with the embedding
$$\iota:\MF_{\Ga_{a-}}(p_{a-})\to \MF_{\Ga_-}(w_-): F\mapsto F\boxtimes \stab(x_{n+1}).$$
This allows us to define the fully faithful functor
$$\Phi_0:=\Phi\circ\iota: \MF_{\Ga_{a-}}(p_{a-})\to \MF_{\Ga_a}(p_a).$$


\begin{lem}\label{image-functor-lem}
(i) Assume $n$ is even. Then
$$\stab(x_1,x_3,\ldots,x_{n-1},x_nx_{n+1})(-i\ov{x}_1)\in \WW_{I^-} \ \text{ for } 0\le i\le \mu^\vee(a-)-1,$$
$$r_-(\stab(x_1,x_3,\ldots,x_{n-1},x_nx_{n+1})(-i\ov{x}_1))\simeq \stab(x_1,x_3,\ldots,x_{n-1},x_{n+1})(-i\ov{x}_1),$$
$$r_+(\stab(x_1,x_3,\ldots,x_{n-1},x_nx_{n+1})(-i\ov{x}_1))\simeq \stab(x_1,x_3,\ldots,x_{n-1},x_n)(-i\ov{x}_1),$$
$$\Phi_0(E(i))=\stab(x_1,x_3,\ldots,x_{n-1},x_n)(-i\ov{x}_1) \ \text{ for } 0\le i\le \mu^\vee(a-)-1.$$

\noindent
(ii) Assume $n$ is odd. Then
$$\stab(x_2,x_4,\ldots,x_{n-1},x_nx_{n+1})(i\ov{x}_1)\in \WW_{I^-} \ \text{ for } 0\le i\le \mu^\vee(a-)-1,$$
$$r_-(\stab(x_2,x_4,\ldots,x_{n-1},x_nx_{n+1})(i\ov{x}_1))\simeq \stab(x_2,x_4,\ldots,x_{n-1},x_{n+1})(i\ov{x}_1),$$
$$r_+(\stab(x_2,x_4,\ldots,x_{n-1},x_nx_{n+1})(i\ov{x}_1))\simeq \stab(x_2,x_4,\ldots,x_{n-1},x_n)(i\ov{x}_1),$$
$$\Phi_0(E(i))=\stab(x_2,x_4,\ldots,x_{n-1},x_n)(i\ov{x}_1) \ \text{ for } 0\le i\le \mu^\vee(a-)-1.$$
\end{lem}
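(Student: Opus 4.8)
The plan is to produce, for each $i$ with $0\le i\le\mu^\vee(a-)-1$, a single explicit object $\wt{E}_i\in\MF_\Ga(W)$, to verify that it lies in the window $\WW_{I^-}$, and then to read off its two restrictions $r_-(\wt{E}_i)$ and $r_+(\wt{E}_i)$ directly from its defining matrices; since the theorem above realizes $\Phi$ as $r_+|_{\WW_{I^-}}\circ(r_-|_{\WW_{I^-}})^{-1}$, these two computations will together determine $\Phi_0(E(i))=\Phi(\iota(E(i)))$. For $n$ even I would take $\wt{E}_i:=\stab(x_1,x_3,\ldots,x_{n-1},x_nx_{n+1})(-i\ov{x}_1)$, and for $n$ odd $\wt{E}_i:=\stab(x_2,x_4,\ldots,x_{n-1},x_nx_{n+1})(i\ov{x}_1)$. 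In both cases the listed elements form a regular sequence in $\k[x_1,\ldots,x_{n+1}]$, and $W$ lies in the ideal they generate: the first $n-1$ summands of $W$ already lie in the ideal of the odd- (resp.\ even-) indexed variables, while the last summand of $W$ is divisible by $x_nx_{n+1}$. Hence the associated Koszul-type matrix factorization is a well-defined object of $\MF_\Ga(W)$, with its $\Ga$-equivariant ($L$-homogeneous) structure inherited from the monomial generators.

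Granting $\wt{E}_i\in\WW_{I^-}$, the remaining two assertions in each case are immediate from the explicit Koszul shape. The functor $r_-$ is substitution $x_n=1$, which replaces the generator $x_nx_{n+1}$ by $x_{n+1}$ and fixes the rest, so $r_-(\wt{E}_i)\simeq\stab(x_1,x_3,\ldots,x_{n-1},x_{n+1})(-i\ov{x}_1)$ for $n$ even (resp.\ the even variables and $+i\ov{x}_1$); this object is precisely $\iota(E(i))$, because for the $(n-1)$-variable chain polynomial $p_{a-}$ one has $E(i)=E\big((-1)^{n-1}i\ov{x}_1\big)$, $\iota$ appends the Kn\"orrer factor $\stab(x_{n+1})$ in the fresh variable, and the external tensor product of Koszul matrix factorizations is the Koszul matrix factorization of the concatenated sequence. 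Likewise $r_+$ is substitution $x_{n+1}=1$, turning $x_nx_{n+1}$ into $x_n$, so $r_+(\wt{E}_i)\simeq\stab(x_1,x_3,\ldots,x_{n-1},x_n)(-i\ov{x}_1)$ for $n$ even (resp.\ even variables and $+i\ov{x}_1$); here one uses that the grading shift by $\ov{x}_1$ is transported compatibly through all three restriction equivalences, which holds since $x_1$ is a common variable of the three ambient rings. Combining, $\Phi_0(E(i))=r_+\big((r_-|_{\WW_{I^-}})^{-1}(\iota(E(i)))\big)=r_+(\wt{E}_i)=\stab(x_1,x_3,\ldots,x_{n-1},x_n)(-i\ov{x}_1)$, which is the final line of case (i); case (ii) is identical with the indicated sign changes.

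The one step carrying genuine content is the verification $\wt{E}_i\in\WW_{I^-}$. Since every VGIT weight $c_j$ is nonzero, the $\G_m$-fixed locus on $\A^{n+1}$ is just the origin, so membership in $\WW_{I^-}$ is the statement that all $\la$-weights of the fiber of $\wt{E}_i$ at $0$ lie in $I^-=[0,\a_{n-1}-1]$. For a Koszul matrix factorization these fiber weights are a reference weight plus the subset sums of the $\la$-weights $\langle\ov{g}_l,\la\rangle$ of the chosen generators; here $\langle\ov{x}_j,\la\rangle=-c_j$, and crucially $\langle\ov{x}_n+\ov{x}_{n+1},\la\rangle=-c_n-c_{n+1}=0$, so the generator $x_nx_{n+1}$ contributes nothing, and the fiber weights of $\stab(x_1,x_3,\ldots,x_{n-1},x_nx_{n+1})$ sweep out an interval of width $c_1+c_3+\cdots+c_{n-1}$; the $\ov{x}_1$-twist shifts the whole set by $\pm i$. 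The computation then comes down to the elementary telescoping identities $c_1+c_3+\cdots+c_{n-1}=\a_{n-2}$ (resp.\ $c_2+c_4+\cdots+c_{n-1}=\a_{n-2}$ for $n$ odd) and $\a_{n-1}=\a_{n-2}+\mu^\vee(a-)$, which show that the untwisted weights fill an interval of integer-width $\a_{n-2}$ inside the width-$\a_{n-1}$ window with exactly $\mu^\vee(a-)$ positions of "slack", so that the window is entered precisely for $0\le i\le\mu^\vee(a-)-1$. I expect the main obstacle to be exactly this last bit of bookkeeping: fixing once and for all the grading normalization of $\stab$ and the sign of $\la$ so that the reference weight (and hence the endpoint of the allowed range) comes out as claimed. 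Everything else is formal, given the properties of the VGIT functors recorded in the theorem above and direct inspection of the explicit Koszul matrix factorizations.
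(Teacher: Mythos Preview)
Your proposal is correct and follows essentially the same approach as the paper: you construct the same explicit Koszul-type objects $\wt{E}_i$, verify window membership via the same subset-sum weight computation (using that $x_nx_{n+1}$ has $\la$-weight zero and that the remaining generators sum to $\a_{n-2}$, leaving $\mu^\vee(a-)=\a_{n-1}-\a_{n-2}$ units of slack), and then read off the two restrictions directly. The paper's own proof only writes out the window-membership step explicitly and leaves the computations of $r_\pm$ and the identification $r_-(\wt{E}_i)\simeq\iota(E(i))$ implicit; your explanation of these via substitution in the Koszul data and the tensor product of Koszul matrix factorizations is a welcome elaboration rather than a deviation.
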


\begin{proof}
(i) We have 
\begin{align*}
&-\mu(x_1,\la)=c_1=1, -\mu(x_3,\la)=c_3=a_1a_2, \ldots, -\mu(x_{n-1},\la)=c_{n-1}=a_1a_2\ldots a_{n-2},\\ 
&\mu(x_nx_{n+1},\la)=0.
\end{align*}
Hence, the $\la$-weights of $\stab(x_1,x_3,\ldots,x_{n-1},x_nx_{n+1})|_0$ are given by the weights of the elements
of the exterior algebra with generators of weights $c_1,\ldots,c_{n-1},0$, so they lie in
the interval 
$$[0,c_1+c_3+\ldots+c_{n-1}]=[0,\a_{n-2}].$$
Thus, for $0\le i\le \mu^\vee(a-)-1$,
the weights of $\stab(x_1,x_3,\ldots,x_{n-1},x_nx_{n+1})(-i\ov{x}_1)|_0$ will lie in the segment
from $0$ to
$$\a_{n-2}+\mu^\vee(a-)-1=\a_{n-1}-1.$$

\noindent
(ii) The proof is completely analogous to (i), using the weights of $x_2,x_4,\ldots,x_{n-1},x_nx_{n+1}$.
\end{proof}

\subsection{Dual exceptional collections}\label{dual-exc-coll-sec}

Recall that
$$E=\begin{cases} \stab(x_2,x_4,\ldots,x_n), & n \text{ even},\\
\stab(x_1,x_3,\ldots,x_n), & n \text{ odd},\end{cases}$$
Let us consider another graded matrix factorization of $p_a$:
$$F:=\begin{cases} \stab(x_1,x_3,\ldots,x_{n-1},x_n), & n \text{ even},\\
\stab(x_2,x_4,\ldots,x_{n-1},x_n), & n \text{ odd}.\end{cases}$$

\begin{lem}\label{EF-Hom-lem}
(i) One has an ungraded isomorphism
$$\Hom^*(E,F(i))=\begin{cases} \k, & i\equiv \a_{n-3},\a_{n-1} \mod(a_1a_2\ldots a_n),\\
0, & \text{otherwise}.\end{cases}.$$
The degrees are determined as follows: we have
$$\Hom^{\frac{n}{2}-1}(E,F(-\ov{x}_2-\ov{x}_4-\ldots-\ov{x}_{n-2}))=
\Hom^{\frac{n}{2}}(E,F(-\ov{x}_2-\ov{x}_4-\ldots-\ov{x}_n))=\k \ \text{ if } n \ \text{ is even},$$
$$\Hom^{\frac{n-1}{2}}(E,F(-\ov{x}_1-\ov{x}_3-\ldots-\ov{x}_{n-2}))=
\Hom^{\frac{n+1}{2}}(E,F(-\ov{x}_1-\ov{x}_3-\ldots-\ov{x}_n))=\k \ \text{ if } n \ \text{ is odd}.$$

\noindent
(ii) One has an ungraded isomorphism
$$\Hom^*(F,E(i))=\begin{cases} \k, & i\equiv -\a_{n-2},a_1a_2\ldots a_{n-1}-\a_{n-2} \mod(a_1a_2\ldots a_n),\\
0, & \text{otherwise}.\end{cases}$$
The degrees are determined as follows: we have
$$\Hom^{\frac{n}{2}}(F,E(-\ov{x}_1-\ov{x}_3-\ldots-\ov{x}_{n-1}))=
\Hom^{\frac{n}{2}+1}(F,E(-\ov{x}_1-\ov{x}_3-\ldots-\ov{x}_{n-1}-\ov{x}_n))=\k \ \text{ if } n \ \text{ is even},$$
$$\Hom^{\frac{n-1}{2}}(F,E(-\ov{x}_2-\ov{x}_4-\ldots-\ov{x}_{n-1}))=
\Hom^{\frac{n+1}{2}}(F,E(-\ov{x}_2-\ov{x}_4-\ldots-\ov{x}_{n-1}-\ov{x}_n))=\k \ \text{ if } n \ \text{ is odd}.$$
\end{lem}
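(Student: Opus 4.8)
The plan is to realize $E$ and $F$ as Koszul (tensor--product) matrix factorizations of $p_a$ and then run the standard computation of morphisms between such objects. For $n$ even, grouping the monomials of $p_a$ as $x_1^{a_1}x_2+x_2^{a_2}x_3=x_2(x_1^{a_1}+x_2^{a_2-1}x_3)$, $x_3^{a_3}x_4+x_4^{a_4}x_5=x_4(x_3^{a_3}+x_4^{a_4-1}x_5)$, and so on, writes $p_a=\sum_{k=1}^{n/2}x_{2k}u_k$ with explicit cofactors $u_k$ (each a sum of at most two monomials), exhibiting $E=\stab(x_2,x_4,\ldots,x_n)$ as the Koszul matrix factorization $\{x_{2k},u_k\}$; a parallel grouping exhibits $F=\stab(x_1,x_3,\ldots,x_{n-1},x_n)$ as a Koszul matrix factorization on the regular sequence $(x_1,x_3,\ldots,x_{n-1},x_n)$, and symmetrically for $n$ odd. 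Write $\II_E$, $\II_F$ for the two ideals, so $E=\stab(\II_E)$, $F=\stab(\II_F)$. Since $\MF_{\Ga}(p_a)$ is $2$-periodic it suffices to determine the full $\wt L$-graded space $\bigoplus_{\ell\in\wt L}\Hom^*_{\MF}(\stab(\II_E),\stab(\II_F)(\ell))$.

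The computation collapses because the two ideals overlap maximally: $\II_E+\II_F=\mathfrak{m}$ is the maximal ideal, while $x_n\in\II_E\cap\II_F$. Using the graded Koszul resolution of $S/\II_E$ over $S=\C[x_1,\ldots,x_n]$ and its self--duality gives $\Ext^j_S(S/\II_E,S/\II_F)\simeq\Tor^S_{k-j}(S/\II_E,S/\II_F)$ (up to an internal twist by $\sum$ of the degrees of the generators of $\II_E$), where $k$ is the length of the regular sequence defining $\II_E$; and the latter $\Tor$ is computed by the Koszul complex of $(x_2,x_4,\ldots,x_n)$ (resp. $(x_1,x_3,\ldots,x_n)$ for $n$ odd) acting on $S/\II_F$. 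In $S/\II_F$ all of these variables except one are polynomial generators and the last one ($x_n$) is zero, so this complex is a regular--sequence Koszul complex tensored with $[R\xrightarrow{0}R]$; hence $\Tor^S_*(S/\II_E,S/\II_F)=\C$ in homological degrees $0$ and $1$ and vanishes otherwise. Therefore $\Ext^*_S(S/\II_E,S/\II_F)$ is one--dimensional in the two consecutive cohomological degrees $\tfrac{n}{2}$ and $\tfrac{n}{2}-1$ (resp. $\tfrac{n+1}{2}$ and $\tfrac{n-1}{2}$), and tracking the $L$-grading through the Koszul resolution records the internal degree of each of the two classes.

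To convert $S$-module $\Ext$ into matrix--factorization $\Hom$ I would invoke the hypersurface change--of--rings (Gulliksen / Eisenbud operator) exact sequence comparing $\Ext^*_{S/p_a}$ with $\Ext^*_S$: the two nonzero classes of $\Ext^*_S$, sitting in consecutive cohomological degrees, force the eventually $2$-periodic stable $\Ext$ over $S/p_a$, which computes $\Hom^*_{\MF}$, to be one--dimensional in \emph{every} cohomological degree. Rewriting each $\ov{x}_k$ modulo $\ov p$ in terms of $\ov{x}_1=(-1)^n\tau$ by means of the relations $a_i\ov{x}_i+\ov{x}_{i+1}=\ov p$ turns the two internal degrees found above into $\a_{n-3}\tau$ and $\a_{n-1}\tau$ (these differ by $\ov{x}_n\equiv a_1\cdots a_{n-1}\ov{x}_1$, i.e. by $d(a)/a_n$, consistently with the relation $d(a)\tau=(-1)^n2(d(a)-\mu(a))T$), giving the asserted residues $i\equiv\a_{n-3},\a_{n-1}\pmod{a_1\cdots a_n}$ together with the stated cohomological degrees and twists. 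Alternatively, and closer to the phrasing we use elsewhere, one can bypass periodicity and compute directly the cohomology of the explicit morphism complex $\stab(\II_E)^\vee\ot_S\stab(\II_F)$, a tensor product of rank $(1,1)$ matrix factorizations that degenerates for the same reason $\II_E+\II_F=\mathfrak{m}$.

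Part (ii) I would not recompute from scratch but deduce from (i) by Serre duality on $\MF_{\Ga}(p_a)$. By Lemma \ref{Serre-Bside-lem} the Serre functor is $M\mapsto M(\ell_{\SS})$ with $\ell_{\SS}=-\mu^\vee(a)\tau+(n+2m(a))T$, so, ungraded, $\Hom^*(F,E(i))\simeq\Hom^*(E,F(-\mu^\vee(a)-i))^\vee$; since $\mu^\vee(a)=\a_n-\a_{n-1}$, $\a_n\equiv\a_{n-2}\pmod{d(a)}$, and $\a_{n-1}-\a_{n-3}=a_1\cdots a_{n-1}$, this sends the residues $\a_{n-3},\a_{n-1}$ of (i) to $a_1\cdots a_{n-1}-\a_{n-2}$ and $-\a_{n-2}$, as claimed; the precise cohomological degrees then follow either from this duality or from the parallel direct $\Ext$ computation with the roles of $\II_E$ and $\II_F$ reversed (where $S/\II_F$ is resolved by a Koszul complex of length $\tfrac{n}{2}+1$, producing the degrees $\tfrac{n}{2}$ and $\tfrac{n}{2}+1$). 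The main obstacle is not the shape of the answer, which is essentially forced by the collapse of the Koszul computation, but the sign and $\wt L$-grading bookkeeping needed to certify the exact cohomological degrees and residues; assembling the morphism complex of two Koszul matrix factorizations of (in the even case) different lengths, and keeping the two graded Koszul resolutions mutually consistent, is where the care is required.
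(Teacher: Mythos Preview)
Your approach is correct, and in fact the ``alternative'' you mention at the end --- computing the cohomology of $\stab(\II_E)^\vee\ot_S\stab(\II_F)$ directly --- is exactly what the paper does. The paper cites Dyckerhoff's quasi-isomorphism
\[
\Hom_{\MF}\bigl(E,\stab(a_1,\ldots,a_k)\bigr)\;\simeq\; E^\vee\big|_{a_1=\cdots=a_k=0}
\]
for a regular sequence $(a_1,\ldots,a_k)$, and then reads off the answer from the restricted Koszul-type complex. This lands immediately on the same collapse you found (all cofactors $u_k$ vanish on $Z(\II_F)$, the variables $x_{2k}$ for $2k<n$ remain a regular sequence, and the leftover $x_n$ contributes a $[\k\xrightarrow{0}\k]$), so the two-dimensional output and its precise $\wt L$-degrees drop out without any change-of-rings step.

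Your primary route through $\Ext^*_S$ followed by the Gulliksen/Eisenbud conversion to stable $\Ext$ over $S/p_a$ gets to the same place, but the conversion step you describe (``two consecutive nonzero $\Ext_S$ classes force one-dimensional stable $\Ext$ in every degree'') is the part that would need a genuine argument --- it is true here, but not for purely numerical reasons, and making it precise costs more than the one-line restriction formula. Since you already identify the $\wt L$-bookkeeping as the real work, the direct approach is strictly easier.

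Your Serre-duality deduction of (ii) from (i) is a legitimate shortcut that the paper does not take: the paper simply reruns the same restriction computation with the roles of $\II_E$ and $\II_F$ swapped (so the Koszul resolution now has length $\lfloor n/2\rfloor+1$, producing the shift by one in the cohomological degrees). Your residue check via $\mu^\vee(a)=\a_n-\a_{n-1}$ and $\a_n\equiv\a_{n-2}\pmod{d(a)}$ is correct.
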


\begin{proof}
This is a standard computation based on the quasiisomoprhism 
$$\Hom(E,\stab(a_1,\ldots,a_k))\simeq E^\vee|_{a_1=\ldots=a_k}$$
for a regular sequence $a_1,\ldots,a_k$ (see e.g. \cite[Lem.\ 4.2]{Dyckerhoff}).
\end{proof}

\begin{cor}\label{EF-Hom-cor} 
Let us define the integer $N(n)$ by the following relation in $L$:
$$-\ov{x}_2-\ov{x}_4-\ldots-\ov{x}_{n-2}=\a_{n-3}\tau+N(n)\cdot \ov{p}, \ \text{ if } n \ \text{ is even},$$
$$-\ov{x}_1-\ov{x}_3-\ldots-\ov{x}_{n-2}=\a_{n-3}\tau+N(n)\cdot \ov{p}, \ \text{ if } n \ \text{ is odd}.$$
Then
$$\Hom^{\lfloor \frac{n-1}{2} \rfloor +2N(n)}(E,F(\a_{n-3}))=\k.$$
\end{cor}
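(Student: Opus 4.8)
The plan is to deduce the corollary directly from Lemma~\ref{EF-Hom-lem}(i); there is essentially no new content beyond translating the $\wt L$-grading shift appearing in that lemma into an ordinary homological shift. The only facts needed for this translation are $\ov p=2T$ and $M(T)=M[1]$, so that shifting a matrix factorization by $\ov p$ amounts to applying $[2]$.

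First I would check that $N(n)$ is well defined, i.e. that the element $-\ov x_2-\ov x_4-\cdots-\ov x_{n-2}$ (for $n$ even, and $-\ov x_1-\ov x_3-\cdots-\ov x_{n-2}$ for $n$ odd) really equals $\a_{n-3}\tau$ plus an integer multiple of $\ov p$ in $L$. For this one expands each $\ov x_i$ by the explicit formula~\eqref{deg-xi-eq} and uses $\tau=(-1)^n\ov x_1$. The $T$-component in~\eqref{deg-xi-eq} is manifestly divisible by $2$, so each $\ov x_i$ lies in $\Z\cdot\ov x_1+\Z\cdot\ov p$ inside $L$, and hence so does the sum; reducing modulo $\ov p$ (where $L/(\ov p)\cong\Z/d(a)$ is generated by $\ov x_1$) the sum of the relevant $\tau$-coefficients becomes precisely $a_1+a_1a_2a_3+\cdots+a_1\cdots a_{n-3}$ in the even case and $1+a_1a_2+\cdots+a_1\cdots a_{n-3}$ in the odd case, which by definition is $\a_{n-3}$. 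This gives $-\ov x_2-\cdots-\ov x_{n-2}=\a_{n-3}\tau+N(n)\ov p$ for a uniquely determined integer $N(n)$, and it is consistent with the ``ungraded'' part of Lemma~\ref{EF-Hom-lem}(i), which produces a one-dimensional $\Hom$-space exactly in the classes $\equiv\a_{n-3},\a_{n-1}\pmod{a_1\cdots a_n}$.

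Granting this, I would rewrite
$$F(-\ov x_2-\ov x_4-\cdots-\ov x_{n-2})=F(\a_{n-3}\tau)(N(n)\ov p)=F(\a_{n-3})(2N(n)T)=F(\a_{n-3})[2N(n)],$$
and likewise in the odd case, and then substitute into the \emph{degree} statement of Lemma~\ref{EF-Hom-lem}(i): $\Hom^{n/2-1}\big(E,F(-\ov x_2-\cdots-\ov x_{n-2})\big)=\k$ for $n$ even, and $\Hom^{(n-1)/2}\big(E,F(-\ov x_1-\cdots-\ov x_{n-2})\big)=\k$ for $n$ odd. Using $\Hom^k(E,M[m])=\Hom^{k+m}(E,M)$ this becomes $\Hom^{n/2-1+2N(n)}(E,F(\a_{n-3}))=\k$ and $\Hom^{(n-1)/2+2N(n)}(E,F(\a_{n-3}))=\k$ respectively; since $\lfloor(n-1)/2\rfloor$ equals $n/2-1$ for $n$ even and $(n-1)/2$ for $n$ odd, both cases give the asserted formula. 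I do not expect any genuine obstacle here: this is a bookkeeping corollary of Lemma~\ref{EF-Hom-lem}(i), and the only delicate points are the sign conventions (especially $\tau=(-1)^n\ov x_1$ when reading off $\tau$-components from~\eqref{deg-xi-eq}) and the elementary verification that the partial-product sum above is literally $\a_{n-3}$ and lies in $[0,d(a))$, so that it is the reduced representative matching the congruence in Lemma~\ref{EF-Hom-lem}(i).
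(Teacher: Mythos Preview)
Your proposal is correct and is exactly the intended derivation: the paper states the corollary without proof, as an immediate consequence of Lemma~\ref{EF-Hom-lem}(i), and your translation of the $\wt{L}$-shift $-\ov{x}_2-\cdots-\ov{x}_{n-2}$ (resp.\ $-\ov{x}_1-\cdots-\ov{x}_{n-2}$) into $\a_{n-3}\tau+N(n)\ov{p}$ together with $M(\ov{p})=M[2]$ is precisely the bookkeeping step needed. The verification that the $\tau$-coefficient is $\a_{n-3}$ (via \eqref{deg-xi-eq} and the parity-dependent definition of $\a_m$) and that $N(n)$ is well defined (since $\ov{p}$ generates a free $\Z$-summand of $L$) is correct and appropriately careful.
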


\begin{prop}\label{MF-dual-prop}
Let us consider the subcategory
\begin{equation}\label{B-subcat-defi}
\BB=\langle E(-\a_{n-2}), E(1-\a_{n-2}),\ldots, E(-\a_{n-3}-1)\rangle.
\end{equation}
Let 
$$L_\BB:{}^{\perp}\BB\to \BB^{\perp}$$ 
denote the left mutation functor (which is an equivalence).
Then the exceptional collection
\begin{equation}\label{F-col-eq}
(F(\mu^\vee(a-)-1),\ldots,F(1),F)[\lfloor \frac{n-1}{2} \rfloor +2N(n)],
\end{equation}
where $N(n)$ is defined in Corollary \ref{EF-Hom-cor},
is left dual to the exceptional collection
\begin{equation}\label{L-B-E-col-eq}
L_\BB(E(-\a_{n-3})),L_\BB(E(-\a_{n-3}+1)),\ldots,L_\BB(E(\mu^\vee(a-)-\a_{n-3}-1)).
\end{equation}
\end{prop}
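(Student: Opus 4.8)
The plan is to check directly that the twisted collection \eqref{F-col-eq}, with $s:=\lfloor\frac{n-1}{2}\rfloor+2N(n)$, lies in the subcategory generated by \eqref{L-B-E-col-eq} and satisfies the orthogonality relations characterizing its left dual; uniqueness of the left dual collection then identifies the two (its exceptionality and fullness being automatic once the defining orthogonality and the membership in the subcategory are known). Throughout write $G_i:=L_\BB(E(-\a_{n-3}+i))$ for $0\le i\le\mu^\vee(a-)-1$, so \eqref{L-B-E-col-eq} is $(G_0,\ldots,G_{\mu^\vee(a-)-1})$, and set
$$\DD_0:=\langle E(-\a_{n-2}),E(-\a_{n-2}+1),\ldots,E(\mu^\vee(a-)-\a_{n-3}-1)\rangle.$$
Since $-\a_{n-2}+a_1a_2\cdots a_{n-1}=\mu^\vee(a-)-\a_{n-3}$ and $a_1\cdots a_{n-1}<\mu^\vee(a)$, the category $\DD_0$ is generated by an initial segment of an AT exceptional collection; hence it is admissible, contains $\BB$, and inside $\DD_0$ one has the semiorthogonal decomposition $\DD_0=\langle\BB,\langle G_0,\ldots,G_{\mu^\vee(a-)-1}\rangle\rangle$ with $\langle G_0,\ldots,G_{\mu^\vee(a-)-1}\rangle=\BB^{\perp}$. (That $E(-\a_{n-3}+i)\in{}^{\perp}\BB$, so the $G_i$ make sense, follows from the vanishing $\Ext^*(E,E(j))=0$ for $\mu^\vee(a-)<j<d(a)$ together with the identity $\a_{n-1}=a_1\cdots a_{n-1}+\a_{n-3}$.)

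First I would show $F(j)[s]\in\langle G_0,\ldots,G_{\mu^\vee(a-)-1}\rangle=\BB^{\perp}\cap\DD_0$ for all $0\le j\le\mu^\vee(a-)-1$. Membership in $\BB^{\perp}$ (taken in $\MF_{\Ga_a}(p_a)$) amounts to $\Hom^*(E(k),F(j))=0$ for $-\a_{n-2}\le k\le-\a_{n-3}-1$, i.e.\ to $\Hom^*(E,F(l))=0$ for $l=j-k$; as $j$ and $k$ vary, $l$ runs through the interval $[\a_{n-3}+1,\a_{n-1}-1]$, and by Lemma \ref{EF-Hom-lem}(i) the only twists with $\Hom^*(E,F(\cdot))\ne 0$ are those with $l\equiv\a_{n-3}$ or $l\equiv\a_{n-1}\pmod{d(a)}$, neither of which meets this interval (using $\a_{n-1}-\a_{n-3}=a_1\cdots a_{n-1}<d(a)$, so there is no wraparound). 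Membership in $\DD_0$ is equivalent, since $\DD_0={}^{\perp}\langle E(\mu^\vee(a-)-\a_{n-3}),\ldots,E(\mu^\vee(a)-\a_{n-2}-1)\rangle$, to $\Hom^*(F(j),E(k))=0$ for $\mu^\vee(a-)-\a_{n-3}\le k\le\mu^\vee(a)-\a_{n-2}-1$; this is an entirely analogous check using Lemma \ref{EF-Hom-lem}(ii), whose two relevant residues $-\a_{n-2}$ and $a_1\cdots a_{n-1}-\a_{n-2}$ likewise miss the range of $k-j$.

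Second, the $\Hom$-computation. Because $F(j)[s]\in\BB^{\perp}$ and $L_\BB$ restricted to ${}^{\perp}\BB$ is the left adjoint of the inclusion $\BB^{\perp}\hookrightarrow\MF_{\Ga_a}(p_a)$, adjunction gives
$$\Hom^*(G_i,F(j)[s])=\Hom^*(E(-\a_{n-3}+i),F(j)[s])=\Hom^{*+s}(E,F(j-i+\a_{n-3})).$$
For $0\le i,j\le\mu^\vee(a-)-1$, Lemma \ref{EF-Hom-lem}(i) makes this vanish unless $j-i\equiv 0$ or $j-i\equiv a_1\cdots a_{n-1}\pmod{d(a)}$; since $|j-i|<\mu^\vee(a-)<a_1\cdots a_{n-1}\le d(a)/2$, only $j=i$ survives. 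For $j=i$ we get $\Hom^{*+s}(E,F(\a_{n-3}))$, which by Lemma \ref{EF-Hom-lem}(i) is one-dimensional and, by Corollary \ref{EF-Hom-cor}, concentrated in cohomological degree $\lfloor\frac{n-1}{2}\rfloor+2N(n)=s$; hence $\Hom^*(G_i,F(i)[s])=\k[0]$. Thus \eqref{F-col-eq} sits in $\langle G_0,\ldots,G_{\mu^\vee(a-)-1}\rangle$ and satisfies precisely the orthogonality defining the left dual of \eqref{L-B-E-col-eq}, which is what we wanted. The whole argument is uniform in the parity of $n$, the even/odd bookkeeping being absorbed into Lemma \ref{EF-Hom-lem} and Corollary \ref{EF-Hom-cor}.

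The main obstacle is not conceptual but numerical: it is the bundle of checks that no twist-index difference occurring in the proof hits the residues $\a_{n-3},\a_{n-1}$ (resp.\ $-\a_{n-2},\,a_1\cdots a_{n-1}-\a_{n-2}$) modulo $d(a)$ within the precise windows at hand. The structural reason this works --- and, I believe, the reason for the chosen lengths $\a_{n-2}-\a_{n-3}$ of $\BB$ and $\mu^\vee(a-)$ of \eqref{L-B-E-col-eq} --- is that after cutting down to $\BB^{\perp}\cap\DD_0$ the ``second'' nonzero $\Hom$ predicted by Lemma \ref{EF-Hom-lem} always lands outside the relevant window, so that only the diagonal $\Hom$'s remain; the inequalities needed for this are exactly the recursions among the $\a_k$, $\mu^\vee$ and $d$ recalled in Section \ref{AT-basic-sec} (one should keep an eye on the extreme case $a_n=2$, where the needed slack comes only from the lower-order terms of the $\a_k$).
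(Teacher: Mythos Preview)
Your overall strategy is exactly the paper's: verify that the $F(j)[s]$ lie in $\BB^\perp$, that they lie in the subcategory generated by the $G_i$, and that $\Hom^*(G_i,F(j)[s])=\k[0]\cdot\delta_{ij}$ via the adjunction $\Hom(L_\BB X,Y)\simeq\Hom(X,Y)$ for $Y\in\BB^\perp$. The $\BB^\perp$-check and the diagonal $\Hom$-check are correct.

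There is, however, a genuine gap in your ``Membership in $\DD_0$'' step. In the paper's convention $\langle\CC_1,\CC_2\rangle$ means $\Hom(\CC_2,\CC_1)=0$, so for the full exceptional collection $E(-\a_{n-2}),\ldots,E(\mu^\vee(a)-\a_{n-2}-1)$ one has
\[
\DD_0=\bigl\langle E(\mu^\vee(a-)-\a_{n-3}),\ldots,E(\mu^\vee(a)-\a_{n-2}-1)\bigr\rangle^{\!\perp},
\]
\emph{not} the left orthogonal. Consequently the condition to check is $\Hom^*(E(k),F(j))=0$ (Lemma~\ref{EF-Hom-lem}(i)), not $\Hom^*(F(j),E(k))=0$. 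Your attempted check via Lemma~\ref{EF-Hom-lem}(ii) actually \emph{fails}: the residue $a_1\cdots a_{n-1}-\a_{n-2}=\mu^\vee(a-)-\a_{n-3}$ lies in the range of $k-j$ (take $k=\mu^\vee(a-)-\a_{n-3}$, $j=0$). Concretely, for $a=(2,2,2)$ one has $\Hom^2(F,E(2))=\k$ by Lemma~\ref{EF-Hom-lem}(ii), so $F\notin{}^{\perp}\langle E(2)\rangle$; yet $F$ does lie in $\DD_0=\langle E(2)\rangle^\perp$. (The same convention slip appears when you write $\DD_0=\langle\BB,\langle G_i\rangle\rangle$; it should be $\langle\langle G_i\rangle,\BB\rangle$.)

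The fix is immediate: replace the (ii)-check by the (i)-check. For $j\in[0,\mu^\vee(a-)-1]$ and $k\in[\mu^\vee(a-)-\a_{n-3},\mu^\vee(a)-\a_{n-2}-1]$ the difference $j-k$ ranges over $[\a_{n-2}+1-\mu^\vee(a),\,\a_{n-3}-1]$, and one verifies that neither $\a_{n-3}$ nor $\a_{n-1}$ (nor any $d(a)$-translate) falls in this interval. This is precisely what the paper does: it shows $\CC'\subset\langle\BB,\DD\rangle^\perp=L_\BB(\CC)$ in the semiorthogonal decomposition $\MF_{\Ga_a}(p_a)=\langle L_\BB(\CC),\BB,\DD\rangle$ (the extra $E(-\a_{n-2})$ in the paper's displayed decomposition is redundant, since $E(-\a_{n-2})\in\BB$).
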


\Pf . To begin with, by Lemma \ref{EF-Hom-lem}, the only nonzero morphisms from objects of the collection
$$E(-\a_{n-3}),E(-\a_{n-3}+1),\ldots,E(\mu^\vee(a-)-\a_{n-3}-1)$$
to objects of the collection \eqref{F-col-eq} are of the form
$$\Hom^*(E(-\a_{n-3}+i),F(i))=\k, \text{ for } i=0,\ldots,\mu^\vee(a-)-1.$$
Also, by Lemma \ref{EF-Hom-lem}, the collection \eqref{F-col-eq} belongs to $\BB^{\perp}$.
It follows that the only nonzero morphisms from objects of the collection \eqref{L-B-E-col-eq}
to those of \eqref{F-col-eq} are
$$\Hom^*(L_\BB(E(-\a_{n-3}+i)),F(i))=\k, \text{ for } i=0,\ldots,\mu^\vee(a-)-1.$$

Set
\begin{equation}\label{C-subcoll-eq}
\CC=\langle E(-\a_{n-3}),E(-\a_{n-3}+1),\ldots,E(\mu^\vee(a-)-\a_{n-3}-1)\rangle,
\end{equation}
and let $\CC'$ denote the subcategory generated by the collection \eqref{F-col-eq}.
It remains to prove that $\CC'$ is contained in the subcategory generated by the collection \eqref{L-B-E-col-eq},
i.e., $\CC'\sub L_\BB(\CC)$.
To this end, we first observe that we have a semiorthogonal decomposition
$$\MF_{\Ga_a}(p_a)=\langle E(-\a_{n-2}),L_\BB(\CC),\BB,\DD\rangle,$$
where 
$$\DD=\langle E(\mu^\vee(a-)-\a_{n-3}),E(-\mu^\vee(a-)-\a_{n-3}+1),\ldots,E(\mu^\vee(a)-\a_{n-2}-1)\rangle.$$

By Lemma \ref{EF-Hom-lem}, we have 
$$\CC'\sub \langle \BB,\DD\rangle^\perp,$$
so we get an inclusion
$$\CC'\sub\langle E(-\a_{n-2}),L_\BB(\CC).$$
On the other hand, again by Lemma \ref{EF-Hom-lem}, we have
$$\CC'\sub \langle E(-\a_{n-2})\rangle^\perp,$$
so we deduce that $\CC'\sub L_\BB(\CC)$.
\ed

\begin{cor}\label{common-obj-cor}
One has $L_\BB(E(-\a_{n-3}))\simeq F[\lfloor \frac{n-1}{2} \rfloor +2N(n)]$.
\end{cor}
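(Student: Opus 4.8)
The plan is to deduce this immediately from Proposition \ref{MF-dual-prop} together with the elementary fact, recorded in the definition of dual exceptional collections, that the extreme objects of a left-dual pair coincide: if $(F_{-m},\ldots,F_0)$ is left dual to $(E_0,\ldots,E_m)$, then $F_0=E_0$. Indeed, Proposition \ref{MF-dual-prop} asserts precisely that the collection \eqref{F-col-eq}, whose terms read from left to right are $F(\mu^\vee(a-)-1)[\ast],\ldots,F(1)[\ast],F[\ast]$ with $\ast=\lfloor\frac{n-1}{2}\rfloor+2N(n)$, is left dual to the collection \eqref{L-B-E-col-eq}, whose terms read from left to right are $L_\BB(E(-\a_{n-3})),\ldots,L_\BB(E(\mu^\vee(a-)-\a_{n-3}-1))$. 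So \eqref{L-B-E-col-eq} plays the role of $(E_0,\ldots,E_m)$ and \eqref{F-col-eq} plays the role of $(F_{-m},\ldots,F_0)$, with $E_0=L_\BB(E(-\a_{n-3}))$ and $F_0=F[\lfloor\frac{n-1}{2}\rfloor+2N(n)]$. The identity $F_0=E_0$ then reads exactly $L_\BB(E(-\a_{n-3}))\simeq F[\lfloor\frac{n-1}{2}\rfloor+2N(n)]$.

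The only point requiring a moment of care is the bookkeeping: one must check that the indexing is set up so that the first term of \eqref{L-B-E-col-eq} is dual to the \emph{last} term of \eqref{F-col-eq}. This is exactly how the proof of Proposition \ref{MF-dual-prop} is arranged: there the nonzero pairings are identified as $\Hom^*(L_\BB(E(-\a_{n-3}+i)),F(i))=\k$ for $i=0,\ldots,\mu^\vee(a-)-1$, so the object $L_\BB(E(-\a_{n-3}))$ (the $i=0$ term) is the dual partner of $F[\lfloor\frac{n-1}{2}\rfloor+2N(n)]$ (the $i=0$, i.e.\ rightmost, term of \eqref{F-col-eq}), and hence in particular is isomorphic to it.

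As an independent verification one can also argue directly, avoiding the general statement about dual collections. Unwinding the definition of the left mutation functor through $\BB$ gives an exact triangle $B\to E(-\a_{n-3})\to L_\BB(E(-\a_{n-3}))\to B[1]$ with $B\in\BB$. Since $F[\lfloor\frac{n-1}{2}\rfloor+2N(n)]$ lies in $\BB^\perp$ by Lemma \ref{EF-Hom-lem}, applying $\Hom^*(-,F[\lfloor\frac{n-1}{2}\rfloor+2N(n)])$ to this triangle yields an isomorphism $\Hom^*(L_\BB(E(-\a_{n-3})),F[\lfloor\frac{n-1}{2}\rfloor+2N(n)])\simeq \Hom^*(E(-\a_{n-3}),F[\lfloor\frac{n-1}{2}\rfloor+2N(n)])$, which is $\k$ concentrated in degree $0$ by Corollary \ref{EF-Hom-cor}. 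Combining this with the fact, established in the proof of Proposition \ref{MF-dual-prop}, that $L_\BB(E(-\a_{n-3}))$ lies in the subcategory generated by the collection \eqref{F-col-eq} and is right-orthogonal to the objects $F(1)[\ast],\ldots,F(\mu^\vee(a-)-1)[\ast]$, one forces $L_\BB(E(-\a_{n-3}))\simeq F[\lfloor\frac{n-1}{2}\rfloor+2N(n)]$. In either form there is essentially no obstacle: the statement is a formal corollary, and the mild subtlety is just tracking the shift $\lfloor\frac{n-1}{2}\rfloor+2N(n)$ and the index, both of which are already pinned down in Proposition \ref{MF-dual-prop} and Corollary \ref{EF-Hom-cor}.
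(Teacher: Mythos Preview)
Your proposal is correct and takes essentially the same approach as the paper: the corollary is stated without proof because it follows immediately from Proposition~\ref{MF-dual-prop} via the general fact $F_0=E_0$ for a left-dual pair, exactly as you spell out. Your care in matching the indexing (the $i=0$ term on each side) is the only thing to check, and you do so correctly.
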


Putting together the above computations we derive the following result.
Let us consider the functor
$$\Psi:\MF_{\Ga_{a-}}(p_{a-})\to \MF_{\Ga_a}(p_a):X\mapsto R_\BB\bigl((\Phi_0X)(\mu^\vee(a-)-1)\bigr)(\a_{n-3})[\lfloor \frac{n-1}{2} \rfloor +2N(n)],$$
where $\BB\sub \MF_{\Ga_a}(p_a)$ is given by \eqref{B-subcat-defi}. 

\begin{thm}\label{emb-dual-thm}
The functor $\Psi$ is fully faithful and 
$$\Psi(E),\Psi(E(1)),\ldots,\Psi(E(\mu^\vee(a-)-1))$$
is the left dual collection to the exceptional collection
$$E, E(1),\ldots, E(\mu^\vee(a-)-1)$$
\end{thm}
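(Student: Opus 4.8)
The plan is to deduce Theorem \ref{emb-dual-thm} from Lemma \ref{image-functor-lem} (the image of the AT-collection of $p_{a-}$ under $\Phi_0$) and Proposition \ref{MF-dual-prop} (the duality for the mutated collection), the remaining work being bookkeeping of grading shifts. First I would record the full faithfulness of $\Psi$. By Lemma \ref{image-functor-lem}, unwinding $\tau=(-1)^n\ov{x}_1$ in both parities, one has $\Phi_0(E(i))=F(-i)$ in $\MF_{\Ga_a}(p_a)$ for $0\le i\le\mu^\vee(a-)-1$; hence the essential image of $(\Phi_0(-))(\mu^\vee(a-)-1)$ is the triangulated subcategory generated by $F,F(1),\ldots,F(\mu^\vee(a-)-1)$, which lies in $\BB^{\perp}$ by Lemma \ref{EF-Hom-lem}, so $R_\BB$ may be applied to it. Since $\Phi_0$ is fully faithful, $R_\BB\colon\BB^{\perp}\to{}^{\perp}\BB$ is an equivalence, and the shifts $(-)(\mu^\vee(a-)-1)$, $(-)(\a_{n-3})$ and $[\lfloor\frac{n-1}{2}\rfloor+2N(n)]$ are autoequivalences, $\Psi$ is fully faithful. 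The same computation gives, for $0\le i\le\mu^\vee(a-)-1$,
\[
\Psi(E(i))=R_\BB\bigl(F(\mu^\vee(a-)-1-i)\bigr)(\a_{n-3})\bigl[\lfloor\tfrac{n-1}{2}\rfloor+2N(n)\bigr].
\]

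Writing $(G_{-\mu^\vee(a-)+1},\ldots,G_0)$ for the left dual collection of the initial segment $E,E(1),\ldots,E(\mu^\vee(a-)-1)$ of the AT-collection of $p_a$, it then suffices to check $G_{-k}=R_\BB(F(k))(\a_{n-3})[\lfloor\frac{n-1}{2}\rfloor+2N(n)]$ for all $k$: the displayed formula gives $\Psi(E(i))=G_{-(\mu^\vee(a-)-1-i)}$, so the ordered list $\Psi(E),\ldots,\Psi(E(\mu^\vee(a-)-1))$ is precisely $G_{-\mu^\vee(a-)+1},\ldots,G_0$. For this identification I would proceed in two steps. First, the grading shift $(-)(\a_{n-3})$ is an autoequivalence that carries the defining collection $E(-\a_{n-3}),E(-\a_{n-3}+1),\ldots,E(\mu^\vee(a-)-\a_{n-3}-1)$ of the subcategory $\CC$ of \eqref{C-subcoll-eq} termwise onto the initial segment $E,E(1),\ldots,E(\mu^\vee(a-)-1)$; since taking left duals commutes with autoequivalences, $(G_{-k})$ is the $(\a_{n-3})$-twist of the left dual of the defining collection of $\CC$ inside $\CC$. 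Second, that left dual comes from Proposition \ref{MF-dual-prop}: $L_\BB$ restricts to an equivalence $\CC\to L_\BB(\CC)$ (using $\Hom^*(\CC,\BB)=0$, which follows from the $\Ext$-vanishing reviewed in Section \ref{Ext-sec}), Proposition \ref{MF-dual-prop} identifies the left dual of the $L_\BB$-image of the defining collection of $\CC$ with the collection \eqref{F-col-eq}, and applying the inverse equivalence $R_\BB$ termwise transports this back, so the left dual of the defining collection of $\CC$ inside $\CC$ is $R_\BB(F(\mu^\vee(a-)-1)),\ldots,R_\BB(F(1)),R_\BB(F)$, homologically shifted as in \eqref{F-col-eq}. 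Twisting by $(-)(\a_{n-3})$ then gives the claimed description of $(G_{-k})$.

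The technical core has already been absorbed into Proposition \ref{MF-dual-prop} and Lemma \ref{image-functor-lem}, so the hard part here is simply keeping the three sorts of shift straight --- the $\tau$-twists, the $\ov{p}$-twist hidden in $N(n)$, and the homological shifts --- and in particular verifying that $(-)(\a_{n-3})$ genuinely intertwines the defining collection of $\CC$ with the \emph{initial segment} of the AT-collection of $p_a$ and not with some further homologically shifted variant. A minor point that needs a sentence of justification is that an exceptional collection inside an admissible subcategory satisfying the biorthogonality relations with a given full exceptional collection is automatically the full dual collection (uniqueness in the definition of dual collections); this is what lets us conclude that the collection \eqref{F-col-eq} generates $L_\BB(\CC)$, so that $R_\BB$ carries it back into $\CC$.
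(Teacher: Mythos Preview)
Your proposal is correct and follows essentially the same approach as the paper: both compute $\Phi_0(E(i))\simeq F(-i)$ from Lemma \ref{image-functor-lem}, deduce full faithfulness of $\Psi$ from the fact that the $F$-collection lies in $\BB^{\perp}$ so that the equivalence $R_\BB$ applies, and then read off the duality from Proposition \ref{MF-dual-prop}. Your version spells out the bookkeeping with the twist by $(\a_{n-3})$ and the $R_\BB/L_\BB$ interplay more explicitly than the paper does (the paper simply says ``the duality follows from \eqref{Phi0-image-eq} and Proposition \ref{MF-dual-prop}''), and in fact your statement that the image lies in $\BB^{\perp}$ corrects what appears to be a typo in the paper's proof; one minor quibble is that the vanishing $\Hom^*(\CC,\BB)=0$ follows directly from $\BB$ preceding $\CC$ in the AT exceptional collection rather than from Section \ref{Ext-sec}.
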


\Pf . The computation of Lemma \ref{image-functor-lem} gives
\begin{equation}\label{Phi0-image-eq}
\Phi_0(E(i))\simeq F(-i) \ \text{ for } 0\le i\le \mu^\vee(a-)-1.
\end{equation}
Hence, from Proposition \ref{MF-dual-prop} we get that the image of $X\mapsto (\Phi_0 X)(\mu^\vee(a-)-1)$ is contained in
${}^{\perp}\BB$. Since $R_\BB:{}^{\perp}\BB\to \BB^{\perp}$ is an equivalence,
we derive that $\Psi$ is fully faithful. The duality of the needed collections follows from
\eqref{Phi0-image-eq} and from Proposition \ref{MF-dual-prop}.
\ed

\subsection{Recovering the collection from the initial segment}\label{B-side-recursion-sec}

\subsubsection{Perfect pairing property}

Theorem \ref{emb-dual-thm} implies that the directed $A_\infty$-category corresponding to the subcollection 
$$(E,\ldots,E(\mu^\vee(a-)-1))$$
 of the AT-collection in $\MF_{\Ga}(p_a)$ is equivalent to the directed $A_\infty$-category corresponding to
the right dual of the AT-collection in $\MF(p_{a-})$.
Now we need to identify the relation of the next object $E(\mu^\vee(a-))$ to this subcollection.

For this we use the following general observations about exceptional collections.
Let $E_1,\ldots,E_{m+1}$ be an exceptional collection in a triangulated $A_\infty$-category $\DD$, and consider the subcategory
$$\CC:=\langle E_1,\ldots,E_m\rangle$$ 
Let $\la,\rho:\DD\to \CC$ denote the left and right adjoint functors to the inclusion, and let $\SS_{\CC}$ denote the Serre functor on the subcategory $\CC$.

\begin{lem}\label{perfect-pairing-lem}
The following conditions are equivalent.

\noindent
(i) $\Hom^*(E_1,E_{m+1})=\Hom^d(E_1,E_{m+1})=\k$
and for each $i$, $1<i<m+1$, the compositions
$$\Hom^j(E_i,E_{m+1})\ot\Hom^{d-j}(E_1,E_i)\to \Hom^d(E_1,E_{m+1})=\k,$$
for all $j$ are perfect pairings.

\noindent
(i') $\Hom^*(E_1,E_{m+1})=\Hom^d(E_1,E_{m+1})=\k$
and for each $C\in \CC$, the compositions
$$\Hom^d(C,E_{m+1})\ot\Hom^0(E_1,C)\to \Hom^d(E_1,E_{m+1})=\k,$$
are perfect pairings.

\noindent
(i'') $\Hom^*(E_1,E_{m+1})=\Hom^d(E_1,E_{m+1})=\k$ and for each $i$, $1<i<m+1$, one has
$$\Hom^*(L_{E_1}E_i,E_{m+1})=0.$$

\noindent
(ii) One has an isomorphism 
$$\rho(E_{m+1})[d]\simeq \SS_{\CC}(E_1).$$

\noindent
(ii') One has an isomorphism 
$$\la(L_{\CC}(E_{m+1}))[d-1]\simeq \SS_{\CC}(E_1).$$ 

\noindent
(iii) For any exceptional collection $(E'_1,\ldots,E'_m)$ generating $\CC$, there is an equivalence of directed $A_{\infty}$-categories
$$\eend_{\scriptscriptstyle\rightarrow}(\SS_{\CC}(E_1),E'_1,\ldots,E'_m)\simeq 
\eend_{\scriptscriptstyle\rightarrow} (L_{\CC}(E_m)[d-1],E'_1,\ldots,E'_m),$$
identical on $\eend_{\scriptscriptstyle\rightarrow}(E'_1,\ldots,E'_m)=\eend(E'_1,\ldots,E'_m)$.
\end{lem}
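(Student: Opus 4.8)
The plan is to prove the chain of equivalences $(i) \Leftrightarrow (i') \Leftrightarrow (i'') \Leftrightarrow (ii) \Leftrightarrow (ii') \Leftrightarrow (iii)$ by organizing the implications around the adjoint functors $\la,\rho$ and the Serre functor $\SS_{\CC}$, rather than verifying all $\binom{6}{2}$ pairs directly. First I would establish $(i)\Leftrightarrow(i')$: since $(E_1,\ldots,E_m)$ generates $\CC$ and the pairing in $(i')$ is bilinear in $C$, it suffices to check it on the objects $E_i$, and devissage along the exceptional filtration reduces the pairing on a general $C$ to the pairings on the $E_i$; the extreme cases $C=E_1$ and (vacuously) $i=1,m+1$ are handled by the normalization $\Hom^d(E_1,E_{m+1})=\k$. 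For $(i')\Leftrightarrow(ii)$, I would use the defining adjunction $\Hom^\bullet(C,\rho(E_{m+1}))\simeq \Hom^\bullet(C,E_{m+1})$ for $C\in\CC$ together with the defining property of the Serre functor $\Hom^\bullet(E_1,C)\simeq \Hom^{d-\bullet}(C,\SS_{\CC}E_1)^\vee$; the perfect pairing in $(i')$ says precisely that the functor $C\mapsto \Hom^d(C,E_{m+1})$ on $\CC$ is represented by $\SS_{\CC}(E_1)[-d]$, which by Yoneda is the isomorphism $\rho(E_{m+1})[d]\simeq \SS_{\CC}(E_1)$.

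Next, $(ii)\Leftrightarrow(ii')$ is an immediate application of Lemma \ref{adjoints-mutation-lem} with $X=E_{m+1}\in {}^{\perp}\CC$, which gives $\rho(E_{m+1})\simeq \la(L_{\CC}(E_{m+1})[-1])$, and then shifting by $[d]$. For $(i)\Leftrightarrow(i'')$ I would argue inductively: mutating $E_i$ past $E_1$ and using the long exact sequence coming from the triangle defining $L_{E_1}E_i$, namely $\Hom^\bullet(E_1,E_i)\ot E_1\to E_i\to L_{E_1}E_i$, one sees that $\Hom^\bullet(L_{E_1}E_i,E_{m+1})=0$ is equivalent to the map $\Hom^\bullet(E_i,E_{m+1})\to \Hom^\bullet(E_1,E_i)^\vee\ot\Hom^\bullet(E_1,E_{m+1})$ being an isomorphism, which is the perfect pairing condition for the index $i$; this should also be phrased more cleanly by noting $L_{E_1}$ identifies $\lan E_1,\ldots,E_m\ran$ with $\lan E_1\ran^\perp\cap\CC$ and iterating, but the one-step version suffices since the conditions in $(i)$ are indexed separately by $i$.

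Finally, for the equivalence with $(iii)$, I would note that $(iii)$ specialized to the exceptional collection $(E'_1,\ldots,E'_m)=(E_1,\ldots,E_m)$ already forces the directed $\Hom$-spaces $\Hom^\bullet(\SS_{\CC}(E_1),E_j)$ and $\Hom^\bullet(L_{\CC}(E_m)[d-1],E_j)$ to agree, which via adjunction unwinds to $(ii)$ versus $(ii')$; conversely, given the isomorphism of objects $\SS_{\CC}(E_1)\simeq L_{\CC}(E_m)[d-1]$ in $\CC$ (which follows from $(ii)\wedge(ii')$ together with $\rho$ being an equivalence onto $\CC$, using $R_\CC$ as in Section \ref{exc-coll-basics-sec}), the directed $A_\infty$-category built from it and any generating exceptional collection of $\CC$ is manifestly well-defined up to the claimed equivalence, identical on the common subcategory. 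The main obstacle I anticipate is the careful bookkeeping in $(iii)$: one must check that the equivalence is genuinely $A_\infty$ (not just an isomorphism on cohomology-level $\Hom$'s) and identical on $\eend_{\scriptscriptstyle\rightarrow}(E'_1,\ldots,E'_m)$, which requires using the dg-enhancement and the liftability of $\rho$ and $L_\CC$ to quasi-functors as set up at the start of Section \ref{exc-coll-basics-sec}, and being slightly careful that ``$\SS_{\CC}(E_1)$'' and ``$L_\CC(E_m)[d-1]$'' are literally isomorphic objects so that the two directed $A_\infty$-categories are built from the same data after this identification.
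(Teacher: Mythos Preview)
Your chain of equivalences for (i), (i'), (i''), (ii), (ii') matches the paper's proof and is correct.

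The treatment of (iii) has a gap. First, the statement as printed carries a typo: $L_{\CC}(E_m)$ should be $L_{\CC}(E_{m+1})$ (since $E_m\in\CC$, the mutation $L_\CC(E_m)$ vanishes, and the paper's own proof writes $E_{m+1}$). More substantively, even with this correction, $L_{\CC}(E_{m+1})$ lies in $\CC^\perp$, not in $\CC$, so your claimed isomorphism ``$\SS_{\CC}(E_1)\simeq L_{\CC}(E_{m+1})[d-1]$ in $\CC$'' is false as stated; what (ii') actually gives is $\SS_{\CC}(E_1)\simeq \la\bigl(L_{\CC}(E_{m+1})\bigr)[d-1]$. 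The step you are missing in the direction (ii')$\Rightarrow$(iii) is the $A_\infty$-level adjunction identity
\[
\eend_{\scriptscriptstyle\rightarrow}\bigl(L_{\CC}(E_{m+1})[d-1],E'_1,\ldots,E'_m\bigr)\ \simeq\ \eend_{\scriptscriptstyle\rightarrow}\bigl(\la(L_{\CC}(E_{m+1}))[d-1],E'_1,\ldots,E'_m\bigr),
\]
which holds because every $E'_j$ lies in $\CC$ and $\la$ lifts to a quasi-functor. The paper makes this step explicit and then observes that (iii) is precisely the statement that $\SS_{\CC}(E_1)$ and $\la(L_{\CC}(E_{m+1}))[d-1]$ define the same left $A_\infty$-module over $\eend(E'_1\oplus\cdots\oplus E'_m)$, which by the Yoneda equivalence $\CC^{op}\simeq\mathrm{mod}\text{-}\eend(E'_1\oplus\cdots\oplus E'_m)$ is exactly (ii'). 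Your parenthetical about ``$\rho$ being an equivalence onto $\CC$'' is not correct ($\rho$ is only a retraction of $\DD$ onto $\CC$), and $R_\CC$ goes from $\CC^\perp$ to ${}^\perp\CC$, so neither produces an object of $\CC$ from $L_\CC(E_{m+1})$.
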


\Pf . (i)$\iff$(i'). The pairing in (i') corresponds to a morphism of cohomological functors
$$\Hom(C,E_{m+1}[d])\to \Hom(E_1,C)^\vee.$$
Condition (i) states that this morphism is an isomorphism for the generators $(E_i[n])_{1\le i\le m}$ of $\CC$.
Hence, the assertion follows from the five-lemma.

\noindent
(i)$\iff$(i'').
For every $E_i$ with $1<i<m+1$, we have an exact triangle
$$L_{E_1}(E_i)[-1]\to R\Hom(E_1,E_i)\ot E_1\to E_i\to L_{E_1}(E_i)$$
Taking $\Hom(?,E_{m+1}[d])$ we get an exact sequence
$$\ldots\to \Hom^{d+j}(L_{E_1}(E_i),E_{m+1})\to \Hom^{d+j}(E_i,E_{m+1})\to \Hom^{-j}(E_1,E_i)^\vee\ot\Hom^d(E_1,E_{m+1})\to\ldots$$
Now we see that the perfect pairing property for $E_i$ is equivalent to the vanishing
$$\Hom^*(L_{E_1}(E_i),E_{m+1})=0.$$

\noindent
(i')$\iff$(ii). 
Condition (i') is equivalent to a functorial isomorphism in $C\in\CC$,
$$\Hom(C,E_{m+1}[d])\simeq \Hom(E_1,C)^\vee.$$
But we have functorial identification 
$$\Hom(C,E_{m+1}[d])\simeq \Hom(C,\rho(E_{m+1})[d]),$$
$$\Hom(E_1,C)^\vee\simeq \Hom(C,\SS_{\CC}(E_1)).$$
Hence, (i') is equivalent to a functorial isomorphism in $C\in\CC$,
$$\Hom(C,\rho(E_{m+1})[d])\simeq\Hom(C,\SS_{\CC}(E_1)),$$
i.e., to an isomorphism $\rho(E_{m+1})[d]\simeq \SS_{\CC}(E_1)$.

\noindent (ii)$\iff$(ii'). This follows from Lemma \ref{adjoints-mutation-lem}.

\noindent (ii')$\iff$(iii). By adjunction, we have
$$\eend_{\scriptscriptstyle\rightarrow} (L_{\CC}(E_{m+1})[d-1],E'_1,\ldots,E'_{m})\simeq
\eend_{\scriptscriptstyle\rightarrow} (\la(L_{\CC}(E_{m+1}))[d-1],E'_1,\ldots,E'_{m}),$$
so condition (iii) simply states that the $A_{\infty}$-modules
corresponding to $\SS_{\CC}(E_1)$ and $\la(L_{\CC}(E_{m+1}))[d-1]$ are equivalent.
It remains to use the fact that the functor $C\mapsto \hom(C,E'_1\oplus\ldots\oplus E'_{m})$
gives an equivalence of $\CC^{op}$ with the category of left $A_\infty$-modules over 
$\eend(E'_1\oplus\ldots\oplus E'_{m})$. 
\ed

We will call the property (i) the {\it perfect pairing property for the collection} $E_1,\ldots,E_{m+1}$.
Note that as we observed in Sec.\ \ref{Ext-sec},
this property holds for the initial segment $(E,E(1),\ldots,E(\mu^\vee(a-)))$ of the AT exceptional collection in
$\MF_{\Gamma_a}(p_a)$.


\subsubsection{Adjoints and mutations}

Assume that we have an exceptional collection $E_1,\ldots,E_{m+l}$ in a triangulated $A_\infty$-category $\DD$, with $0\le l\le m$, such that 

\begin{itemize}
\item there exists an autoequivalence $\tau$ of $\DD$ such that $\tau(E_i)=E_{i+1}$;
\item $\Hom^*(E_i,E_j)=0$ for $j-i>m$.
\end{itemize}

Let $\CC=\langle E_1,\ldots,E_m\rangle$, and let $\rho:\DD\to\CC$ be the right adjoint functor to the inclusion.

\begin{lem}\label{perfect-pairing-rho-lem}
(i) Assume that the perfect pairing property holds for $E_1,\ldots,E_{m+1}$, with $\Hom^d(E_1,E_{m+1})=\k$.
Then for $i=1,\ldots,l$, one has
\begin{equation}\label{rho-S-1-isom-eq}
\rho(E_{m+i})[d]\simeq \SS_{\CC}(E_i).
\end{equation}

\noindent
(ii) Assume in addition that for every pair of morphisms $\a:E_1\to E_i[a]$ and $\b:E_i\to E_{m+1}[d-a]$, such that $1<i\le l$, one has
\begin{equation}\label{tau-commutativity}
\tau^{i-1}(\b\circ\a)=\tau^m(\a)\circ \b.
\end{equation}
Then the restriction of $\rho$,
$$\rho:\langle E_{m+1},\ldots,E_{m+l}\rangle\to \CC$$
is fully faithful. 
\end{lem}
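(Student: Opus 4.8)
The plan is to deduce (i) from the perfect pairing property applied not just to the collection $E_1,\ldots,E_{m+1}$ but to each of its $\tau$-translates. Note that the hypotheses are $\tau$-invariant: applying $\tau^{i-1}$ to the collection $E_1,\ldots,E_{m+1}$ gives $E_i,\ldots,E_{m+i}$, and since $\tau$ is an autoequivalence it carries the perfect pairing property for $E_1,\ldots,E_{m+1}$ to the perfect pairing property for $E_i,\ldots,E_{m+i}$; moreover $\tau$ carries $\CC=\langle E_1,\ldots,E_m\rangle$ to $\CC_i:=\langle E_i,\ldots,E_{m+i-1}\rangle$, intertwining $\SS_\CC$ with $\SS_{\CC_i}$ and the right adjoint $\rho:\DD\to\CC$ with the right adjoint $\rho_i:\DD\to\CC_i$. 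By the equivalence (i)$\iff$(ii) of Lemma \ref{perfect-pairing-lem} we therefore get, for $i=1,\ldots,l$, an isomorphism $\rho_i(E_{m+i})[d]\simeq \SS_{\CC_i}(E_i)$. To turn this into the desired statement about $\rho$ itself, I would use the vanishing hypothesis $\Hom^*(E_j,E_k)=0$ for $k-j>m$: for $1\le i\le l$ the object $E_{m+i}$ is right-orthogonal to $E_{m+i-1},\ldots,E_{i+1}$ (wait --- these lie to its left), so actually one checks that the "extra" objects of $\CC_i$ not in $\CC$, namely $E_{m+1},\ldots,E_{m+i-1}$, satisfy $\Hom^*(E_{m+j},E_{m+i})=0$ for $j<i$ precisely when $i-j\le m$, which holds since $i,j\le l\le m$. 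Hence the $\CC_i$-component of $E_{m+i}$ agrees with its $\CC$-component, i.e. $\rho(E_{m+i})\simeq\rho_i(E_{m+i})$; and since $\SS_{\CC_i}(E_i)\in\CC_i$ lands in $\CC$ by the same degree considerations, we get $\rho(E_{m+i})[d]\simeq\SS_{\CC_i}(E_i)\simeq\SS_\CC(E_i)$, which is \eqref{rho-S-1-isom-eq}. (One should double-check which boundary object is genuinely needed here and that $\SS_\CC(E_i)$ really is concentrated in $\CC$ --- this is where the index bookkeeping has to be done carefully, but it is all controlled by the single vanishing hypothesis.)

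For (ii), the goal is to show that $\rho$ restricted to $\langle E_{m+1},\ldots,E_{m+l}\rangle$ is fully faithful, i.e. that for $1\le i\le j\le l$ the map
$$\Hom^*(E_{m+i},E_{m+j})\to \Hom^*(\rho(E_{m+i}),\rho(E_{m+j}))=\Hom^*(\SS_\CC(E_i),\SS_\CC(E_j))[0]$$
(using part (i) to identify the targets, up to the shift $[d]$ which cancels) is an isomorphism. Since $\SS_\CC$ is an equivalence of $\CC$, $\Hom^*(\SS_\CC(E_i),\SS_\CC(E_j))=\Hom^*(E_i,E_j)$. So what must be shown is that the composite
$$\Hom^*(E_{m+i},E_{m+j})\xrightarrow{\ \rho\ }\Hom^*(E_i,E_j)$$
is an isomorphism for all $1\le i\le j\le l$. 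When $i=j$ both sides are $\k$ (exceptionality) and the map is the identity. For $i<j$ I would describe the map concretely via the perfect pairing: by (i')$\iff$(ii) of Lemma \ref{perfect-pairing-lem}, $\Hom^d(C,E_{m+i})\simeq\Hom^0(E_i,C)^\vee$ functorially in $C\in\CC$; feeding in $C=E_j$ (and $C$ shifted) we identify $\Hom^{d-a}(E_j,E_{m+i})\simeq\Hom^a(E_i,E_j)^\vee$, and similarly using the pairing for the translated collection $E_{i},\ldots,E_{m+i}$... hmm, more precisely one uses the pairing $\Hom^{d-a}(E_j,E_{m+i})\otimes\Hom^a(E_i,E_j)\to\Hom^d(E_i,E_{m+i})=\k$ coming from the perfect pairing property of $E_i,\ldots,E_{m+i}$ together with the analogous pairing for $E_j,\ldots,E_{m+j}$, namely $\Hom^{d-a}(E_{m+i},E_{m+j})\otimes\Hom^a(E_j,E_{m+i})\to\Hom^d(E_j,E_{m+j})=\k$. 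The content of the hypothesis \eqref{tau-commutativity} is exactly that these two pairings are compatible under $\tau^{j-i}$, so that the induced map on $\Hom^*(E_{m+i},E_{m+j})$ --- which a priori is obtained by transposing one pairing and composing with the other --- is identified with $\tau^{j-i}$ itself followed by the canonical identifications, hence an isomorphism. In more down-to-earth terms: a class in $\Hom^*(E_{m+i},E_{m+j})$ is determined by how it pairs with $\Hom^*(E_j,E_{m+i})$, a class in $\Hom^*(E_i,E_j)$ is determined by how it pairs with $\Hom^*(E_i,E_{m+i})=\tau^{-i}\Hom^*(E_1,E_{m+1})$, and \eqref{tau-commutativity} says $\rho$ intertwines these two perfect pairings, forcing it to be an isomorphism.

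The main obstacle I expect is part (ii): carefully setting up the two perfect pairings (for $E_i,\ldots,E_{m+i}$ and for $E_j,\ldots,E_{m+j}$), writing the map $\rho:\Hom^*(E_{m+i},E_{m+j})\to\Hom^*(E_i,E_j)$ as a transpose-then-pair operation with respect to them, and then checking that \eqref{tau-commutativity} is precisely the identity needed to make that composite a bijection rather than just a map between two one-dimensional (in each degree) spaces. One subtlety is that "perfect pairing property" was stated for a fixed collection, so I need to first record that it transports along $\tau$, and that the pairings for the various translates are the ones literally induced by composition in $\DD$ --- then \eqref{tau-commutativity} is a statement purely about composition of morphisms and the autoequivalence $\tau$, with no further input. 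A second, more bookkeeping-level subtlety is making sure all the degree shifts ($[d]$, $[d-1]$, $[a]$ vs $[d-a]$) line up; I would suppress these in the plan but they must be tracked in the final write-up. Everything else --- the reduction of (i) to Lemma \ref{perfect-pairing-lem} via $\tau$-equivariance and the vanishing hypothesis --- is routine once the indices are pinned down.
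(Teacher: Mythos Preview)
Your sketch for part (ii) is essentially correct and matches the paper's argument: one extends \eqref{tau-commutativity} to pairs $\alpha:E_i\to E_j[a]$, $\beta:E_j\to E_{m+i}[d-a]$ for $i<j\le l$, and then uses this compatibility to show that the composite
\[
\Hom^*(E_i,E_j)\xrightarrow{\ \tau^m\ }\Hom^*(E_{m+i},E_{m+j})\xrightarrow{\ \rho\ }\Hom^*(\rho(E_{m+i}),\rho(E_{m+j}))
\]
is identified (via the isomorphisms of part (i)) with $\SS_\CC$, hence is an isomorphism.

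Part (i), however, has a real gap. Your key step is the claim $\rho(E_{m+i})\simeq\rho_i(E_{m+i})$, where $\rho_i$ is the right adjoint to the inclusion of $\CC_i=\langle E_i,\ldots,E_{m+i-1}\rangle$. You justify this by asserting $\Hom^*(E_{m+j},E_{m+i})=0$ for $1\le j<i$; but the vanishing hypothesis is $\Hom^*(E_a,E_b)=0$ for $b-a>m$, and here $b-a=i-j\le l-1<m$, so no vanishing is available. (The vanishing that \emph{does} hold, namely $\Hom^*(E_j,E_{m+i})=0$ for $j<i$, only tells you that $\rho(E_{m+i})$ lies in the right orthogonal of $\langle E_1,\ldots,E_{i-1}\rangle$ inside $\CC$, which is a mutated copy of $\langle E_i,\ldots,E_m\rangle$, not $\CC_i$.) So the comparison between $\rho$ and $\rho_i$, and between $\SS_\CC(E_i)$ and $\SS_{\CC_i}(E_i)$, is not established.

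The paper's fix is to stay inside $\CC$ throughout. Starting from the perfect pairing property for $E_i,\ldots,E_{m+i}$ (obtained, as you say, by applying $\tau^{i-1}$), one mutates $E_1,\ldots,E_{i-1}$ to the right past $E_i$ to obtain a new exceptional collection
\[
E_i,\ R_{E_i}E_1,\ldots,R_{E_i}E_{i-1},\ E_{i+1},\ldots,E_m
\]
which still generates $\CC$ and has $E_i$ as first object. One then checks, using condition (i'') of Lemma \ref{perfect-pairing-lem}, that the perfect pairing property holds for this mutated collection together with $E_{m+i}$: indeed $\Hom^*(L_{E_i}R_{E_i}E_j,E_{m+i})=\Hom^*(E_j,E_{m+i})=0$ for $j<i$ by the (correct) vanishing hypothesis. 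Now Lemma \ref{perfect-pairing-lem}(ii) applies directly with the category $\CC$ itself, giving $\rho(E_{m+i})[d]\simeq\SS_\CC(E_i)$ without any comparison of adjoints or Serre functors for different subcategories.
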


\Pf . (i) Note that for each $i=1,\ldots,l$, the collection $E_i,E_{i+1},\ldots,E_{m+i}$ is the image of $E_1,\ldots,E_{m+1}$ under $\tau^{i-1}$,
hence, the perfect pairing property holds for $E_i,E_{i+1},\ldots,E_{m+i}$.
We claim that this property also holds for the collection 
$$E_i,R_{E_i}E_1,R_{E_i}E_2\ldots,R_{E_i}E_{i-1},E_{i+1},\ldots,E_m,E_{m+i}.$$
Indeed, this follows immediately from Lemma \ref{perfect-pairing-lem} since
$$\Hom^*(L_{E_i}R_{E_i}E_j,E_{m+i})=\Hom^*(E_j,E_{m+i})=0$$
for $j\le i-1$. 

Since $\langle E_i,R_{E_i}E_1,\ldots,R_{E_i}E_{i-1},E_{i+1},\ldots,E_m\rangle=\CC$, by Lemma \ref{perfect-pairing-lem},
we deduce an isomorphism
$$\rho(E_{m+i})[d]\simeq \SS_{\CC}(E_i).$$

\noindent
(ii) Equation \eqref{tau-commutativity} implies that a similar property holds for any pair $\a:E_i\to E_j[a]$ and $\b:E_j\to E_{m+i}[d-a]$, where $i<j\le l$.
Let us choose identifications $\Hom^d(E_i,E_{m+i})\simeq \k$ for all $i$, compatibly with $\tau$.
Then the above property implies that for any object $C\in \CC$ and any morphism $\a:E_i\to E_j[a]$, the
following diagram is commutative:
\begin{diagram}
\Hom^d(C,E_{m+i})\ot\Hom^{-a}(E_j,C)&\rTo{\id\ot (?\circ\a)}&\Hom(C,E_{m+i}[d])\ot\Hom(E_i,C)\\
\dTo{(\tau^m(\a)\circ?)\ot \id}&&\dTo{}\\
\Hom^{d+a}(C,E_{m+j})\ot\Hom^{-a}(E_j,C)&\rTo{}& \k
\end{diagram}
Indeed, for $f\in \Hom^d(C,E_{m+i})$ and $g\in \Hom^{-a}(E_j,C)$, we have equality
$$(f\circ g)\circ\a=\tau^m(\a)\circ (f\circ g).$$
Equivalently, the following diagram is commutative for any $C\in \CC$:
\begin{diagram}
\Hom(E_i,C)^\vee &\rTo{}& \Hom(C,E_{m+i}[d])\\
\dTo{(?\circ\a)^\vee}&&\dTo{\tau^m(\a)\circ?}\\
\Hom(E_j,C)^\vee &\rTo{}& \Hom(C,E_{m+j}[d])
\end{diagram}
which leads to the commutative diagram
\begin{diagram}
\SS_{\CC}(E_i)&\rTo{}& \rho(E_{m+i})[d]\\
\dTo{\SS_{\CC}(\a)}&&\dTo{\rho\tau^m[d](\a)}\\
\SS_{\CC}(E_j)[a]&\rTo{}& \rho(E_{m+j})[d+a]
\end{diagram}
for every $\a:E_i\to E_j[a]$.
Since the horizontal arrows are isomorphisms (see Lemma \ref{perfect-pairing-lem}),
It follows that the composed map 
$$\Hom^*(E_i,E_j)\rTo{\tau^m}\Hom^*(E_{m+i},E_{m+j})\to \Hom^*(\rho(E_{m+i}),\rho(E_{m+j}))$$
gets identified with $\a\mapsto \SS_{\CC}(\a)$, so it 
is an isomorphism. Hence, the restriction of $\rho$ to $\lan E_{m+1},\ldots,E_{m+j}\ran$ is fully faithful.
\ed

\begin{lem}\label{fully-faithful-lem}
Let $\DD=\langle \CC_0,\CC_1,\ldots,\CC_n\rangle$
be a semiorthogonal decomposition, and let $\rho_i:\DD\to \CC_i$ denote the right adjoint functor to the inclusion.
Assume that 
\begin{itemize}
\item $\Hom(\CC_i,\CC_j)=0$ for $j>i+1$;
\item for every $i<n$, the restriction
$$\rho_i|_{\CC_{i+1}}:\CC_{i+1}\to \CC_i$$
is fully faithful. 
\end{itemize}
Then we have canonical isomorphisms of functors
\begin{equation}\label{iterated-rho-isomorphism}
\rho_0 L_{\CC_1}\ldots L_{\CC_{i-1}}|_{\CC_i}\simeq \rho_0\rho_1\ldots\rho_{i-1}|_{\CC_i}[i-1],
\end{equation} 
and for every $i\ge j\ge 1$, for $C_i\in \CC_i$, $C_j\in \CC_j$,
the functor $\rho_0$ gives an isomorphism 
$$\Hom(L_{\CC_1}\ldots L_{\CC_{i-1}}C_i,L_{\CC_1}\ldots L_{\CC_{j-1}}\CC_j)\rTo{\sim} \Hom(\rho_0 L_{\CC_1}\ldots L_{\CC_{i-1}}\CC_i,\rho_0 L_{\CC_1}\ldots L_{\CC_{j-1}}\CC_j).$$
In particular, $\rho_0$ is fully faithful on each subcategory $L_{\CC_1}\ldots L_{\CC_{i-1}}\CC_i$.
\end{lem}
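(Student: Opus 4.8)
The plan is to prove both parts by a common induction on the number of blocks, at each step passing to the shorter semiorthogonal decomposition $\DD':=\lan\CC_1,\ldots,\CC_n\ran$, which is an admissible subcategory of $\DD$. First I would record the routine compatibilities: $\DD'$ inherits both hypotheses, where each $\rho_i$ with $i\ge1$ is now read as the right adjoint to $\CC_i\hra\DD'$ (this agrees on $\DD'$ with the original $\rho_i$, both being pinned down by $\Hom(\CC_i,-)$), and for $X\in\DD'$ the functor $L_{\CC_k}$ with $k\ge1$ computed in $\DD$ agrees with the one computed in $\DD'$, since the cone defining $L_{\CC_k}X$ is built from $\rho_kX\in\CC_k\sub\DD'$ and $X$. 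The base case $n\le1$ is either vacuous or is precisely the fully-faithfulness of $\rho_0|_{\CC_1}$, i.e.\ hypothesis (B). I would also record two preliminary facts about a fixed $C_i\in\CC_i$: putting $X_i:=C_i$ and $X_k:=L_{\CC_k}X_{k+1}$ for $1\le k<i$, a downward induction on $k$ using the defining triangles $\rho_kX_{k+1}\to X_{k+1}\to X_k\to\rho_kX_{k+1}[1]$ and semiorthogonality ($\Hom(\CC_i,\CC_j)=0$ for $j<i$) shows $X_k\in{}^{\perp}\lan\CC_0,\ldots,\CC_{k-1}\ran$, so that all the mutations are defined and, by iterating the transitivity of mutations, $L_{\CC_1}\cdots L_{\CC_{i-1}}|_{\CC_i}=L_{\lan\CC_1,\ldots,\CC_{i-1}\ran}|_{\CC_i}$ is an equivalence onto its image; and a second downward induction, now using hypothesis (A) in the form $\Hom(\CC_0,\CC_k)=0$ for $k\ge2$, shows $\rho_0X_k=0$ for all $2\le k\le i$ (assuming $i\ge2$).

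For part (1): the case $i=1$ is tautological, and for $i\ge2$ I would apply $\rho_0$ to the functorial triangle $\rho_1X_2\to X_2\to X_1\to\rho_1X_2[1]$. Since $\rho_0X_2=0$, this gives a canonical isomorphism $\rho_0\,L_{\CC_1}\cdots L_{\CC_{i-1}}(C_i)=\rho_0X_1\simeq\rho_0\rho_1X_2[1]$. As $X_2=L_{\CC_2}\cdots L_{\CC_{i-1}}(C_i)$ is the iterated mutation of $C_i\in\CC_i$ inside $\DD'$, the inductive hypothesis gives $\rho_1X_2\simeq\rho_1\rho_2\cdots\rho_{i-1}(C_i)[i-2]$; substituting produces the isomorphism \eqref{iterated-rho-isomorphism}.

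For part (2): fix $i\ge j\ge1$ and write $X_2^{(i)}:=L_{\CC_2}\cdots L_{\CC_{i-1}}(C_i)$ and likewise $X_2^{(j)}$, noting $X_2^{(i)},X_2^{(j)}\in{}^{\perp}\lan\CC_0,\CC_1\ran$ when $i,j\ge2$. If $i=j=1$ the claim is hypothesis (B). If $j\ge2$, then since $L_{\CC_1}:{}^{\perp}\CC_1\to\CC_1^{\perp}$ is an equivalence it identifies $\Hom(X_2^{(i)},X_2^{(j)})$ with $\Hom(L_{\CC_1}\cdots L_{\CC_{i-1}}C_i,\,L_{\CC_1}\cdots L_{\CC_{j-1}}C_j)$; the former is a $\Hom$ between iterated mutations inside $\DD'$, which by the inductive hypothesis $\rho_1$ carries isomorphically onto $\Hom(\rho_1X_2^{(i)},\rho_1X_2^{(j)})$, a $\Hom$ inside $\CC_1$, which by (B) the functor $\rho_0|_{\CC_1}$ carries isomorphically onto $\Hom(\rho_0\rho_1X_2^{(i)},\rho_0\rho_1X_2^{(j)})\simeq\Hom(\rho_0L_{\CC_1}\cdots L_{\CC_{i-1}}C_i,\,\rho_0L_{\CC_1}\cdots L_{\CC_{j-1}}C_j)$, the final step using the canonical isomorphism from part (1). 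The functoriality of the decomposition triangles then identifies this composite of isomorphisms with the map $f\mapsto\rho_0f$. The remaining case $j=1$, $i\ge2$ is handled the same way: applying $\Hom(-,C_1)$ to $\rho_1X_2^{(i)}\to X_2^{(i)}\to L_{\CC_1}\cdots L_{\CC_{i-1}}(C_i)\to$ and using $\Hom^*(X_2^{(i)},\CC_1)=0$, the connecting map gives $\Hom^*(L_{\CC_1}\cdots L_{\CC_{i-1}}C_i,C_1)\simeq\Hom^*(\rho_1X_2^{(i)},C_1)$ up to a shift, again a $\Hom$ inside $\CC_1$, whence (B) together with the isomorphism $\rho_0L_{\CC_1}\cdots L_{\CC_{i-1}}(C_i)\simeq\rho_0\rho_1X_2^{(i)}[1]$ shows $\rho_0$ computes it. Taking $i=j$ gives the final assertion that $\rho_0$ is fully faithful on each subcategory $L_{\CC_1}\cdots L_{\CC_{i-1}}\CC_i$.

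I expect that the real work lies not in the structure of this argument but in the bookkeeping: carefully verifying the $\DD$-versus-$\DD'$ compatibilities for all the adjoint and mutation functors, tracking the cohomological shifts through each reduction, and, most delicately, invoking functoriality of the semiorthogonal decomposition triangles to be sure that the chains of isomorphisms assembled above are genuinely the maps induced by $\rho_0$, so that the conclusions really do assert that ``$\rho_0$ gives an isomorphism'' rather than merely that the two Hom-spaces are abstractly isomorphic.
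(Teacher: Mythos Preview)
Your proof is correct and follows essentially the same underlying mechanism as the paper's, though the packaging differs. The paper organizes the argument in three steps: Step~1 handles the case $j=1$ directly (and derives \eqref{iterated-rho-isomorphism} as you do, by applying $\rho_0$ to the mutation triangle and iterating); Step~2 reformulates ``$\rho_0$ induces an isomorphism on $\Hom(X,C_1)$'' as the vanishing $\Hom(L_{\CC_0}X,C_1)=0$, using the triangle $\rho_0X\to X\to L_{\CC_0}X$; and Step~3 proves by induction on $j-k$ the more general vanishing $\Hom(L_{\CC_{k-1}}L_{\CC_k}\cdots L_{\CC_{i-1}}\CC_i,\,L_{\CC_k}\cdots L_{\CC_{j-1}}\CC_j)=0$, taking $k=1$ at the end. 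Your induction on the number of blocks, passing to $\DD'=\langle\CC_1,\ldots,\CC_n\rangle$, is in effect the same induction viewed from the other end.

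The one substantive difference is how each argument certifies that the map in question is the one induced by $\rho_0$. The paper's vanishing reformulation makes this automatic: once $\Hom(L_{\CC_0}X,Y)=0$, the triangle $\rho_0X\to X\to L_{\CC_0}X$ shows that precomposition with the counit $\rho_0X\to X$ is an isomorphism on $\Hom(-,Y)$, and adjunction identifies this with $f\mapsto\rho_0f$. You instead assemble a chain of isomorphisms and then invoke functoriality of the decomposition triangles to identify the composite with $f\mapsto\rho_0f$. You flag this correctly as the delicate point; it is genuinely routine (apply $\rho_0$ to the functorial square coming from $g:X_2^{(i)}\to X_2^{(j)}$ and its induced $L_{\CC_1}(g)=f$, then use that the horizontal maps $\rho_0X_1\to\rho_0\rho_1X_2[1]$ are isomorphisms), but the paper's reformulation has the virtue of never needing to spell this out.
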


\Pf .
{\bf Step 1.} We claim that for any $C_i\in\CC_i$, where $i\ge 1$, and any $C_1\in\CC_1$, the map induced by $\rho_0$,
$$\Hom(L_{\CC_1}\ldots L_{\CC_{i-1}}C_i,C_1)\to \Hom(\rho_0 L_{\CC_1}\ldots L_{\CC_{i-1}}C_i,\rho_0 C_1)$$
is an isomorphism. For $i=1$ this true by assumption, so we can assume $i>1$.
Equivalently, we need to check that the canonical morphism
$$\rho_0 L_{\CC_1}\ldots L_{\CC_{i-1}}C_i\to L_{\CC_1}\ldots L_{\CC_{i-1}}C_i$$
induces an isomorphism on $\Hom(?,C_1)$. Let us consider the commutative square induced by the adjunction morphism for $\rho_0$,
\begin{equation}\label{rho0-rho1-diagram}
\begin{diagram}
\rho_0 L_{\CC_1}\ldots L_{\CC_{i-1}}C_i &\rTo{\sim}& \rho_0 \rho_1 L_{\CC_2}\ldots L_{\CC_{i-1}}C_i[1]\\
\dTo{}&&\dTo{}\\
L_{\CC_1}\ldots L_{\CC_{i-1}}C_i &\rTo{}& \rho_1 L_{\CC_2}\ldots L_{\CC_{i-1}}C_i[1]
\end{diagram}
\end{equation}
Note that the cocone of the bottom horizontal arrow is $L_{\CC_2}\ldots L_{\CC_{i-1}}C_i$ which is in $\CC_0^{\perp}=\ker(\rho_0)$,
so the top horizontal arrow is an isomorphism. Let us consider the induced commutative square
\begin{diagram}
\Hom(\rho_0 L_{\CC_1}\ldots L_{\CC_{i-1}}C_i,C_1) &\lTo{\sim}& \Hom(\rho_0 \rho_1 L_{\CC_2}\ldots L_{\CC_{i-1}}C_i[1],C_1)\\
\uTo{}&&\uTo{}\\
\Hom(L_{\CC_1}\ldots L_{\CC_{i-1}}C_i,C_1)&\lTo{}& \Hom(\rho_1 L_{\CC_2}\ldots L_{\CC_{i-1}}C_i[1],C_1)
\end{diagram}
Note that the right vertical arrow is an isomorphism since $\rho_0|_{\CC_1}$ is fully faithful (we apply this to the objects 
$\rho_1 L_{\CC_2}\ldots L_{\CC_{i-1}}C_i[1]$ and $C_1$ of $\CC_1$). Finally the bottom horizontal arrow is an isomorphism
since $\Hom(L_{\CC_2}\ldots L_{\CC_{i-1}}C_i,C_1)=0$. This implies that the left vertical arrow is an isomorphism as claimed.

Also, applying the isomorphism in diagram \eqref{rho0-rho1-diagram} to the categories $(\CC_1,\ldots,\CC_i)$ we get the functorial isomoprhisms
$$\rho_0 L_{\CC_1}\ldots L_{\CC_{i-1}}C_i\simeq \rho_0 \rho_1 L_{\CC_2}\ldots L_{\CC_{i-1}}C_i[1]\simeq \rho_0\rho_1\rho_2 L_{\CC_3}\ldots L_{\CC_{i-1}}C_i[2].$$
Continuing in this way we derive \eqref{iterated-rho-isomorphism}.

\noindent
{\bf Step 2.} Now we restate the result of Step 1 as 
$$\Hom(L_{\CC_0}L_{\CC_1}\ldots L_{\CC_{i-1}}\CC_i,\CC_1)=0$$
for $i\ge 1$. Indeed, this immediately follows from the exact triangle
$$\rho_0 L_{\CC_1}\ldots L_{\CC_{i-1}}C_i\to L_{\CC_1}\ldots L_{\CC_{i-1}}C_i\to L_{\CC_0}L_{\CC_1}\ldots L_{\CC_{i-1}}\CC_i\to\ldots$$
Similarly, for $i\ge j\ge 1$, we have
$$\Hom(L_{\CC_{j-1}}L_{\CC_j}\ldots L_{\CC_{i-1}}\CC_i,\CC_j)=0.$$

\noindent
{\bf Step 3.} We claim that for any $i\ge j\ge k\ge 1$, one has
$$\Hom(L_{\CC_{k-1}}L_{\CC_k}\ldots L_{\CC_{i-1}}\CC_i,L_{\CC_k}\ldots L_{\CC_{j-1}}\CC_j)=0,$$
or equivalently, for $C_i\in \CC_i$ and $C_j\in \CC_j$, the natural map
$$\Hom(L_{\CC_k}\ldots L_{\CC_{i-1}}C_i,L_{\CC_k}\ldots L_{\CC_{j-1}}\CC_j)\to \Hom(\rho_{k-1} L_{\CC_k}\ldots L_{\CC_{i-1}}\CC_i,\rho_{k-1} L_{\CC_k}\ldots L_{\CC_{j-1}}\CC_j)$$
is an isomorphism.

We use induction on $j-k$. The case $j=k$ is exactly Step 2, so let us assume that $j>k$.
Note that by Step 2, we have
$$\Hom(L_{\CC_{k-1}}L_{\CC_k}\ldots L_{\CC_{i-1}}\CC_i,\CC_k)=0,$$
hence, for $C_i\in \CC_i$ and $C_j\in \CC_j$ we have an isomorphism
$$\Hom(L_{\CC_{k-1}}L_{\CC_k}\ldots L_{\CC_{i-1}}C_i,L_{\CC_k}\ldots L_{\CC_{j-1}}C_j)\simeq
\Hom(L_{\CC_{k-1}}L_{\CC_k}\ldots L_{\CC_{i-1}}C_i,L_{\CC_{k+1}}\ldots L_{\CC_{j-1}}C_j).$$
Since $\Hom(\CC_{k-1},L_{\CC_{k+1}}\ldots L_{\CC_{j-1}}C_j)$, we further have an isomorphism
$$\Hom(L_{\CC_{k-1}}L_{\CC_k}\ldots L_{\CC_{i-1}}C_i,L_{\CC_{k+1}}\ldots L_{\CC_{j-1}}C_j)\simeq
\Hom(L_{\CC_k}\ldots L_{\CC_{i-1}}C_i,L_{\CC_{k+1}}\ldots L_{\CC_{j-1}}C_j)$$
which vanishes by the induction assumption.

Finally, taking $k=1$ we obtain the assertion we wanted to prove.
\ed

\begin{rmk}
Note that the restriction of $\rho_0$ to the subcategory
$$\langle \CC_1,\ldots,\CC_n\rangle=\langle L_{\CC_1}\ldots L_{\CC_{n-1}}\CC_n,\ldots, L_{\CC_1}\CC_2,\CC_1\rangle$$
is not fully faithful provided $\CC_2\neq 0$. Indeed, this is clear since $\rho_0(\CC_2)=0$. Lemma \ref{fully-faithful-lem} only
checks that morphisms from left to right with respect to the mutated semiorthogonal decomposition are preserved. However, we have $\Hom(\CC_1,L_{\CC_1}\CC_2)=0$, 
whereas $\Hom(\rho_0(C_1),\rho_0(L_{\CC_1}C_2))$ is not necessarily zero for $C_1\in \CC_1$, $C_2\in \CC_2$.
\end{rmk}

\begin{prop}\label{recursion-Bside-prop}
Assume that we have an exceptional collection $E_0,\ldots,E_N$ in a triangulated $A_{\infty}$-category $\DD$, and for some $m<N$ the following conditions hold
\begin{itemize}
\item there exists an autoequivalence $\tau$ such that $\tau(E_i)=E_{i+1}$;
\item $\Hom(E_i,E_j)=0$ for $j-i>m$;
\item the perfect pairing property holds for $E_0,\ldots,E_m$ with $\Hom^d(E_0,E_m)=\k$;
\item for every pair of morphisms $\a:E_0\to E_i[a]$ and $\b:E_i\to E_{m}[d-a]$, such that $0<m+i\le N$, equation \eqref{tau-commutativity} holds.
\end{itemize}
Let $F_{-N},\ldots,F_{-1},F_0$ be the left dual exceptional collection to $E_0,\ldots,E_N$, so that
$F_{-m+1},\ldots,F_0$ is the left dual collection to $E_0,\ldots,E_{m-1}$. 
Let $\CC:=\langle E_0,\ldots,E_{m-1}\rangle$ and let $\la:\DD\to\CC$ denote the left adjoint functor to the inclusion (which exists
as an $A_\infty$-functor).
Then 
$$\la(F_{-N})[\lfloor \frac{N}{m}\rfloor(d-1)],\ldots,\la(F_{-i})[\lfloor\frac{i}{m}\rfloor(d-1)],\ldots,\la(F_{-m})[d-1],F_{-m+1},\ldots,F_0$$
is a part of the helix associated with the full exceptional collection $F_{-m+1},\ldots,F_0$ in $\CC$, 
and $\la$ induces an equivalence of directed $A_\infty$-endomorphism algebras 
$$\eend_{\scriptscriptstyle\rightarrow}(F_{-N},\ldots,F_0)\rTo{\sim}\eend_{\scriptscriptstyle\rightarrow}(\la(F_{-N}),\ldots,\la(F_{-m}),F_{-m+1},\ldots,F_0).$$
\end{prop}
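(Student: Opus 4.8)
The plan is to deduce Proposition \ref{recursion-Bside-prop} by combining the three preparatory lemmas of this subsection with an inductive mutation argument. First I would set up a semiorthogonal decomposition that isolates the initial segment. Writing $N = qm + \ell$ with $0 \le \ell < m$, the exceptional collection $E_0,\ldots,E_N$ gives rise to a semiorthogonal decomposition $\DD = \langle \CC, \CC_1, \ldots, \CC_q\rangle$ (possibly after trimming the last block when $\ell = 0$), where $\CC = \CC_0 = \langle E_0,\ldots,E_{m-1}\rangle$ and $\CC_j = \langle E_{jm}, E_{jm+1}, \ldots, E_{(j+1)m - 1}\rangle = \tau^{jm}(\CC)$ for $j \ge 1$ (with $\CC_q$ possibly shorter). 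Each $\CC_j$ is equivalent to $\CC$ via $\tau^{jm}$, so in particular carries the same Serre functor up to this twist. The hypothesis $\Hom(E_i,E_j) = 0$ for $j - i > m$ immediately gives $\Hom(\CC_i,\CC_j) = 0$ for $j > i + 1$, which is the first bullet needed to apply Lemma \ref{fully-faithful-lem}.

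Next I would verify the second bullet of Lemma \ref{fully-faithful-lem}, namely that $\rho_i|_{\CC_{i+1}}: \CC_{i+1} \to \CC_i$ is fully faithful for each $i$, where $\rho_i: \DD \to \CC_i$ is the right adjoint to the inclusion. By applying $\tau^{-im}$ it suffices to treat $i = 0$, and this is exactly the content of Lemma \ref{perfect-pairing-rho-lem}(ii): the perfect pairing property for $E_0,\ldots,E_m$ together with the $\tau$-commutativity relation \eqref{tau-commutativity} gives that $\rho_0$ is fully faithful on $\langle E_{m+1},\ldots,E_{m+\ell}\rangle$, i.e. on $\CC_1$. More precisely, one should check that the hypotheses of Lemma \ref{perfect-pairing-rho-lem} hold with $l$ taken large enough to cover all the blocks, using that the relation \eqref{tau-commutativity} is assumed for all pairs with $0 < m + i \le N$. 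Once Lemma \ref{fully-faithful-lem} applies, we obtain that $\rho_0 = \la'$ restricted to each subcategory $L_{\CC_1}\cdots L_{\CC_{i-1}}\CC_i$ is fully faithful and, crucially, that morphisms \emph{from left to right} in the mutated semiorthogonal decomposition $\DD = \langle L_{\CC_1}\cdots L_{\CC_{q-1}}\CC_q, \ldots, L_{\CC_1}\CC_2, \CC_1, \CC\rangle$ are preserved by $\rho_0$.

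Then I would identify the objects. The left dual collection $F_{-N},\ldots,F_0$ has $F_{-m+1},\ldots,F_0$ as the left dual to $E_0,\ldots,E_{m-1}$, generating $\CC$; and the remaining $F_{-i}$ for $i \ge m$ are obtained by iterated left mutations $F_{-i} = L_{E_0}\cdots L_{E_{i-1}}E_i$. Grouping the mutations block by block and using the transitivity property $L_{\langle \CC_1,\CC_2\rangle} \simeq L_{\CC_1} \circ L_{\CC_2}$, one rewrites $F_{-i}$ (for $i$ in the $j$-th block, $j \ge 1$) as $L_\CC L_{\CC_1}\cdots L_{\CC_{j-1}}(\text{something in } \CC_j)$. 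Applying $\la = \rho_0$ to $F_{-i}$ and using the iterated-$\rho$ isomorphism \eqref{iterated-rho-isomorphism} of Lemma \ref{fully-faithful-lem}, together with the isomorphism $\rho_0 L_{\CC_1}\cdots L_{\CC_{j-1}}|_{\CC_j} \simeq \rho_0\rho_1\cdots\rho_{j-1}|_{\CC_j}[j-1]$, one computes $\la(F_{-i})$ up to the shift $[j-1] = [\lfloor i/m\rfloor (d-1)/(d-1)\cdot\ldots]$ — here the shift bookkeeping uses that each step $\rho_{k}|_{\CC_{k+1}}$ carries the Serre-functor shift from Lemma \ref{perfect-pairing-rho-lem}(i), namely $\rho_k(E_{(k+1)m + r})[d] \simeq \SS_{\CC_k}(E_{km+r})$, so each block contributes a factor $[d-1]$ after accounting for the $[1]$ from \eqref{iterated-rho-isomorphism}. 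Iterating Lemma \ref{perfect-pairing-rho-lem}(i) $j$ times shows $\la(F_{-i})[j(d-1)]$ lies in the helix of $F_{-m+1},\ldots,F_0$, which is the first assertion with $j = \lfloor i/m\rfloor$ (and the $j=1$ block being $\la(F_{-m})[d-1]$).

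Finally, for the directed-endomorphism-algebra statement, I would observe that the directed algebra $\eend_{\scriptscriptstyle\rightarrow}(F_{-N},\ldots,F_0)$ records precisely the $\Hom$-spaces from later to earlier objects, and under the dual-collection ordering these correspond to morphisms from left to right in the mutated semiorthogonal decomposition $\langle L_{\CC_1}\cdots L_{\CC_{q-1}}\CC_q,\ldots,L_{\CC_1}\CC_2,\CC_1,\CC\rangle$ — exactly the morphisms Lemma \ref{fully-faithful-lem} shows are preserved by $\rho_0$. On the sub-block $F_{-m+1},\ldots,F_0$ the functor $\la$ is the identity (these objects lie in $\CC$), so $\eend_{\scriptscriptstyle\rightarrow}$ is preserved there trivially; on the rest one invokes the isomorphisms of Step 3 of Lemma \ref{fully-faithful-lem}, upgraded to the $A_\infty$-level using the dg-liftings of adjoints recalled at the start of Section \ref{exc-coll-basics-sec} and the fact (as in the proof of Lemma \ref{perfect-pairing-lem}(iii)) that the $A_\infty$-module structure over $\eend(\bigoplus F_{-m+1},\ldots,F_0)$ determines objects of $\CC^{op}$. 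The main obstacle I anticipate is the careful shift bookkeeping: keeping track of how the $[1]$'s from \eqref{iterated-rho-isomorphism} combine with the Serre shifts $[d]$ from Lemma \ref{perfect-pairing-rho-lem}(i) across $\lfloor i/m\rfloor$ blocks to produce exactly $\lfloor i/m\rfloor(d-1)$, and making sure the $\tau$-equivariance relation \eqref{tau-commutativity} really does propagate to all the mutated collections appearing at intermediate stages (as in the proof of Lemma \ref{perfect-pairing-rho-lem}(ii)) rather than just the first block. The passage from the triangulated statement to the $A_\infty$ equivalence of directed algebras is a second, more routine but still delicate, point.
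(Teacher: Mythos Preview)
Your proposal is essentially the paper's own proof: block decomposition $\CC_j=\tau^{jm}(\CC)$, verification of the hypotheses of Lemma~\ref{fully-faithful-lem} via Lemma~\ref{perfect-pairing-rho-lem}, computation of $\la(F_{-i})$ through the iterated-$\rho$ formula \eqref{iterated-rho-isomorphism} combined with the Serre identification \eqref{rho-S-1-isom-eq}, and identification of the left dual block-by-block (what the paper packages as Lemma~\ref{block-dual-coll-lem}). One notational slip: you write ``$\la=\rho_0$'', but these are the \emph{left} and \emph{right} adjoints respectively; the paper passes between them via Lemma~\ref{adjoints-mutation-lem}, which is also where the extra shift turning $[-di+i-1]$ into $[-di+i]$ appears---this is the honest source of the $(d-1)$ per block that you flag as the delicate point.
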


\Pf . Let $N=mN_0+r$, where $0\le r<m$, and let us set 
$$\CC_i:=\tau^{mi}(\CC)=\begin{cases}\langle E_{mi},\ldots,E_{mi+m-1}\rangle, & 0\le i<N_0, \\
\langle E_{mN_0},\ldots,E_{mN_0+r} \rangle, & i=N_0.\end{cases}$$
Note that $\CC=\CC_0$ and $\CC_i\sub\CC^{\perp}$ for $i>1$. 
Let also $\rho_i$ denote the right adjoint functor to the inclusion of $\CC_i$.

\noindent
First, we observe that by Lemma \ref{perfect-pairing-rho-lem}, the functor $\rho_0|_{\CC_1}$ is fully faithful and
$$\rho_0(E_{m+j})\simeq \SS_{\CC}(E_j)[-d]$$
for $j=0,\ldots,m-1$.
Using the autoequivalence $\tau$, we deduce that for each $i\le N_0$, the functor $\rho_i|_{\CC_{i+1}}$ is fully faithful 
and
\begin{equation}\label{rho-i-E-Serre-formula}
\rho_i(E_{m(i+1)+j})\simeq \SS_{\CC_i}(E_{mi+j})[-d].
\end{equation}
It follows that for $i<N_0$, the functor $\rho_{i-1}|_{\CC_i}:\CC_i\to \CC_{i-1}$ is an equivalence.


Thus, the conditions of Lemma \ref{fully-faithful-lem} are satisfied for our collection of categories $(\CC_i)$.
Hence, the functor $\rho_0$ preserves morphisms from left to right on the semiorthogonal subcategories 
$$L_{\CC_1}\ldots L_{\CC_{i-1}}\CC_i,\ldots,L_{\CC_1}\CC_2,\CC_1$$
and is fully faithful on each of them.
By Lemma \ref{adjoints-mutation-lem}, this is equivalent to the fact that the functor $\la$ preserves morphisms from left to right on
$$L_{\CC_0}L_{\CC_1}\ldots L_{\CC_{i-1}}\CC_i,\ldots, L_{\CC_0},L_{\CC_1}\CC_2,L_{\CC_0}\CC_1,\CC_0$$
and is fully faithful on each of these subcategories. 

In addition, using \eqref{iterated-rho-isomorphism} and \eqref{rho-i-E-Serre-formula} we compute 
$$\rho_0 L_{\CC_1}\ldots L_{\CC_{i-1}}(E_{mi+j})\simeq \rho_0\rho_1\ldots\rho_{i-1}(E_{mi+j})[i-1]\simeq \SS_{\CC}^i(E_j)[-di+i-1]$$
(we also used the fact that the equivalences $\rho_{i-1}|_{\CC_i}$ for $i<N_0$ commute with the Serre functors). 
Using Lemma \ref{adjoints-mutation-lem} we can rewrite this as
\begin{equation}\label{la-S-isom-eq}
\la L_{\CC_0}L_{\CC_1}\ldots L_{\CC_{i-1}}(E_{mi+j})\simeq \SS_{\CC}^i(E_j)[-di+i].
\end{equation}



By Lemma \ref{block-dual-coll-lem}, the left dual exceptional collection to $E_0,\ldots,E_N$ has form
$$L_{\CC_0}\ldots L_{\CC_{N_0-1}}\tau^{mN_0}(F_{-r+1},\ldots,F_0),L_{\CC_0}\ldots L_{\CC_{N_0-2}}\tau^{m(N_0-1)}(F_{-m+1},\ldots,F_0),\ldots,(F_{-m+1},\ldots,F_0).$$
Note that any fully faithful functor sends the left dual of an exceptional collection to the left dual of its image. 
Hence, by \eqref{la-S-isom-eq}, applying $\la$ to the above collection we get
$$\SS_\CC^{N_0}[N_0(1-d)](F_{-r+1},\ldots,F_0),\ldots,\SS_{\CC}^{N_0-1}[(N_0-1)(1-d)](F_{-m+1},\ldots,F_0),\ldots,F_{-m+1},\ldots,F_0$$
which is the part of the helix generated by $F_{-m+1},\ldots,F_0$ (up to shifts).

We also see from above that the map on directed $\Ext$'s (from left to right) of this collection, induced by $\la$, is an isomorphism.
\ed

\subsubsection{Recursion for categories of matrix factorizations}

Finally, we can prove the directed $A_\infty$-category $AT(a)$ is obtained from $AT(a-)$ by the recursion $\RR$
with $N=\mu^\vee(a)$.

\begin{thm}\label{B-recursion-thm} 
Let us start with the AT exceptional collection $E,E(1),\ldots,E(\mu^\vee(a-)-1)$
in $\MF_{\Ga_{a-}}(p_{a-})$,
extend it to a helix and take the segment $H_{-\mu^\vee(a)+1},\ldots, H_{-1},H_0$ such that $H_0=E$.
Now take the directed $A_\infty$-subcategory with the objects
$F_{-\mu^\vee(a)+1},\ldots, F_{-1},F_0$, where 
$$F_{-i}:=H_i[-\lfloor \frac{i}{m}\rfloor(n+2m(a-)-1)],$$
where $m(a-)=m(a_1,\ldots,a_{n-1})$ is determined by \eqref{Serre-shift-ma-eq}.
Then the directed $A_\infty$-category corresponding to the dual right exceptional collection to $F_{-\mu^\vee(a)+1},\ldots, F_{-1},F$ 
is equivalent to $AT(a)$.
\end{thm}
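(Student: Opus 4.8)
The plan is to deduce the statement by applying Proposition \ref{recursion-Bside-prop} to the AT exceptional collection of $p_a$ and then transporting the result back to $AT(a-)$ along the fully faithful functor $\Psi$ of Theorem \ref{emb-dual-thm}.

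\emph{Step 1: set up Proposition \ref{recursion-Bside-prop}.} I take $\DD=\MF_{\Ga_a}(p_a)$, the exceptional collection $E_i:=E(i)$ for $0\le i\le N:=\mu^\vee(a)-1$, the autoequivalence $\tau\colon M\mapsto M(1)$ (so $\tau(E_i)=E_{i+1}$), and $m:=\mu^\vee(a-)$. I then verify the four hypotheses. The vanishing $\Hom^*(E(i),E(j))=0$ for $m<j-i<d(a)$ and the perfect pairing property for the initial segment $(E,E(1),\dots,E(m))$ — with $\Hom^d(E,E(m))=\k$ — are exactly the Gorenstein/Frobenius features of the $\Ext$-algebra $\BB_a$ recalled in Section \ref{Ext-sec} (its socle sits in degree $\mu^\vee(a-)\bmod d(a)$); and $d=n+2m(a-)$ follows from Lemma \ref{Serre-Bside-lem} and Serre duality, since $\SS E\simeq E(\mu^\vee(a-))[n+2m(a-)]$. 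The only hypothesis requiring a genuine argument is the compatibility \eqref{tau-commutativity} of $\tau$ with compositions: here I would use the $\tau$-action to identify each $\Hom^*(E(i),E(j))$ (for $0\le j-i<d(a)$) with the graded piece of $\BB_a$ of degree $(j-i)\tau$, so that all compositions among the $E(i)$ become products in $\BB_a$; then \eqref{tau-commutativity} reduces to graded-commutativity of $\BB_a$, which holds because, by \eqref{AT-Ext-algebra-eq}, $\BB_a$ is a quotient of a polynomial algebra — the only point to check being that the Koszul/$A_\infty$ signs match, which is immediate in the even case (all $x_i$ have even cohomological degree) and a short computation in the odd case.

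\emph{Step 2: translate via $\Psi$.} Proposition \ref{recursion-Bside-prop} now yields the left dual collection $F_{-N},\dots,F_0$ of $E,E(1),\dots,E(\mu^\vee(a)-1)$ and, setting $\CC:=\langle E,E(1),\dots,E(m-1)\rangle$ with left adjoint $\la\colon\DD\to\CC$, tells us that the collection $\la(F_{-N})[\lfloor N/m\rfloor(d-1)],\dots,\la(F_{-m})[d-1],F_{-m+1},\dots,F_0$ is (a shift of) a segment of the helix generated by $F_{-m+1},\dots,F_0$ in $\CC$, and that $\la$ induces an equivalence $\eend_{\scriptscriptstyle\rightarrow}(F_{-N},\dots,F_0)\simeq\eend_{\scriptscriptstyle\rightarrow}(\la(F_{-N}),\dots,\la(F_{-m}),F_{-m+1},\dots,F_0)$ of directed $A_\infty$-categories. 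By Theorem \ref{emb-dual-thm}, $\Psi$ is a fully faithful dg-functor carrying the AT-collection of $p_{a-}$ onto the left dual collection $F_{-m+1},\dots,F_0$ of the initial segment of the AT-collection of $p_a$; hence it induces an equivalence $AT(a-)\simeq\eend_{\scriptscriptstyle\rightarrow}(F_{-m+1},\dots,F_0)$ and identifies $Tw(AT(a-))$ with $Tw(\CC)$ compatibly with helices. By Lemma \ref{Serre-Bside-lem} the helix of $AT(a-)$ is, up to shifts, the one with general term $\la(F_{-i})$, so the length-$\mu^\vee(a)$ segment $H_{-\mu^\vee(a)+1},\dots,H_0$ corresponds under $\Psi$ to $\la(F_{-N})[\lfloor N/m\rfloor(d-1)],\dots,\la(F_{-m})[d-1],F_{-m+1},\dots,F_0$. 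Twisting each $H_{-i}$ by $-\lfloor i/m\rfloor(d-1)=-\lfloor i/m\rfloor(n+2m(a-)-1)$ — the prescription $F_{-i}:=H_{-i}[-\lfloor i/m\rfloor(n+2m(a-)-1)]$ of the statement — cancels exactly these Serre shifts, so the directed $A_\infty$-subcategory on $F_{-\mu^\vee(a)+1},\dots,F_0$ becomes, via $\la$, equivalent to $\eend_{\scriptscriptstyle\rightarrow}(F_{-N},\dots,F_0)$, the directed $A_\infty$-category of the left dual of the AT-collection of $p_a$.

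\emph{Step 3: take the right dual.} For a full exceptional collection the passage to twisted complexes recovers the ambient category, so $Tw$ of $\eend_{\scriptscriptstyle\rightarrow}(F_{-N},\dots,F_0)$ is equivalent to $\MF_{\Ga_a}(p_a)$ with the defining collection going to $F_{-N},\dots,F_0$; therefore the right dual exceptional collection to $F_{-\mu^\vee(a)+1},\dots,F_0$ inside $Tw(\CC')$ corresponds to the right dual of $F_{-N},\dots,F_0$, which is the AT-collection $E,E(1),\dots,E(\mu^\vee(a)-1)$ itself. Hence the directed $A_\infty$-category it spans is $AT(a)$ up to shifts, as claimed. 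I expect the only delicate points to be the verification of \eqref{tau-commutativity} from the multiplicative structure of $\BB_a$ and the bookkeeping of grading shifts and helix-ordering conventions (matching $d-1$ with $n+2m(a-)-1$ and tracking the per-block twists $\lfloor i/m\rfloor(d-1)$); the conceptual content is entirely contained in Proposition \ref{recursion-Bside-prop} and Theorem \ref{emb-dual-thm}, everything else being formal manipulation with the semiorthogonal decompositions and mutations of Sections \ref{exc-coll-basics-sec}--\ref{B-side-recursion-sec}.
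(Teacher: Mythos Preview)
Your proposal is correct and follows essentially the same route as the paper: both proofs reduce the statement to Proposition~\ref{recursion-Bside-prop} applied to the AT-collection in $\MF_{\Ga_a}(p_a)$, verify its hypotheses via the structure of the $\Ext$-algebra $\BB_a$ (commutativity for~\eqref{tau-commutativity}, the Gorenstein/Frobenius property for the perfect pairing), and then transport via the fully faithful functor of Theorem~\ref{emb-dual-thm}. The one minor difference is in identifying the shift $d=n+2m(a-)$: you obtain it conceptually from $\SS E\simeq E(\mu^\vee(a-))[n+2m(a-)]$ and Serre duality, whereas the paper computes the $\wt{L}$-degree of the socle of $\BB_a$ directly and observes it equals $\ell_{\SS}$---but both arguments ultimately rest on Lemma~\ref{Serre-Bside-lem}, so this is the same computation viewed from two angles.
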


\Pf . Using Theorem \ref{emb-dual-thm} and Proposition \ref{recursion-Bside-prop} we get the statement with
$$F_{-i}:=H_i[-\lfloor \frac{i}{m}\rfloor(D(a)-1)],$$
where $D(a)$ is the unique integer such that $\Hom^{D(a)}(E,E(\mu^\vee(a-)))\neq 0$ in $\MF_{\Ga_a}(p_a)$.
Note that Equation \eqref{tau-commutativity} holds in our case, 
due to the commutativity of the $\Ext$-algebra of the AT collection. The perfect pairing property also follows from
the structure of the $\Ext$-algebra (see Sec.\ \ref{Ext-sec}).

It remains to check the equality
$$D(a)=n+2m(a-).$$
To this end we observe that $D(a)$ is determined as follows. If $n$ is even then we should have
$$(a_1-1)\ov{x}_1+(a_3-1)\ov{x}_3+\ldots+(a_{n-1}-1)\ov{x}_{n-1}=\mu^\vee(a-)\ov{x}_1+D(a)T.$$
If $n$ is odd then we have
$$T-\ov{x}_1+(a_2-1)\ov{x}_2+(a_4-1)\ov{x}_4+\ldots+(a_{n-1}-1)\ov{x}_{n-1}=-\mu^\vee(a-)\ov{x}_1+D(a)T.$$
But using the relations $a_i\ov{x}_i=2T-\ov{x}_{i+1}$, we immediately see that in both cases the left-hand side is equal
to $\ell_S=nT-\ov{x}_1-\ldots-\ov{x}_n$. Hence, the assertion follows from Lemma \ref{Serre-Bside-lem}.
\ed
\appendix
\section{Mathematica code}\label{appa}
We provide a simple Mathematica code in Figure \ref{code} for the readers who want to experiment with the results in Section \ref{van-sph-sec}. We stress that we do not use such numerical approximations in our argument. We did use this experimentation to come up with the arguments.

\begin{figure}
\includegraphics[width=\textwidth]{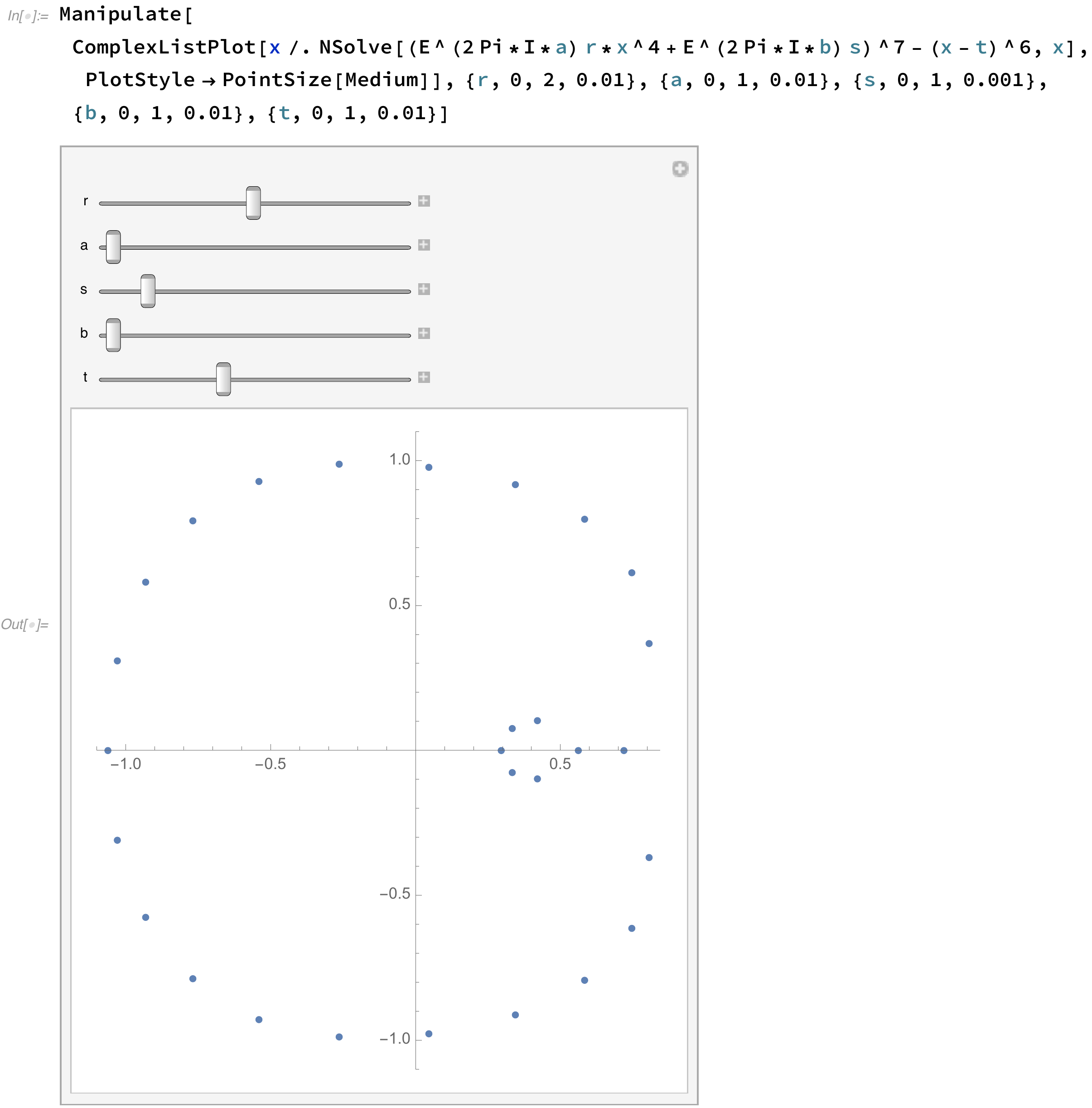}
\label{code}
\end{figure}

\bibliographystyle{plain}
\bibliography{chain}


\end{document}